\def\ifpdf\input{#.pdf_t}\else\input{#.pstex_t}\fi1{\ifpdf\input{#1.pdf_t}\else\input{#1.pstex_t}\fi}
\newtheorem{theorem}{Theorem}
\newtheorem{proposition}[theorem]{Proposition}
\newtheorem{cor}[theorem]{Corollary}
\newtheorem{lemma}[theorem]{Lemma}
\newtheorem{remark}[theorem]{Remark}
\newcommand{\dist}{\mathrm{d_h}}
\newcommand{\distp}{\mathrm{d'_h}}
\newcommand{\distpp}{\mathrm{d''_h}}
\newcommand{\Vol}{\mathrm{vol}}
\newcommand{\calA}{\mathcal{A}}
\newcommand{\calB}{\mathcal{B}}
\newcommand{\calC}{\mathcal{C}}
\newcommand{\calL}{\mathcal{L}}
\newcommand{\calO}{\mathcal{O}}
\newcommand{\calP}{\mathcal{P}}
\newcommand{\calU}{\mathcal{U}}
\newcommand{\calQ}{\mathcal{Q}}
\newcommand{\calR}{\mathcal{R}}
\newcommand{\calS}{\mathcal{S}}
\newcommand{\NN}{\mathbb{N}}
\newcommand{\EE}{\mathbb{E}}
\newcommand{\VV}{\mathbb{V}}
\newcommand{\HH}{\mathbb{H}}
\newcommand{\RR}{\mathbb{R}}
\renewcommand{\Pr}{\mathbf{P}}
\newcommand{\Ex}{\mathbf{E}}
\newcommand{\elllow}{\ell_{\mathrm{low}}}
\newcommand{\ellbdr}{\ell_{\mathrm{bdr}}}
\newcommand{\ellmid}{\ell_{\mathrm{mid}}}
\newcommand{\ellmax}{\ell_{\mathrm{max}}}
\newcommand{\ellmin}{\ell_{\mathrm{min}}}
\newcommand{\unifmod}{\mathrm{Unf}}
\newcommand{\poimod}{\mathrm{Poi}}
\newcommand{\myatop}[2]{\genfrac{}{}{0pt}{}{#1}{#2}}
\begin{document}

\title{Spectral Gap of Random Hyperbolic Graphs and Related Parameters}

\author{Marcos Kiwi\thanks{Depto.~Ing.~Matem\'{a}tica \&
  Ctr.~Modelamiento Matem\'atico (CNRS UMI 2807), U.~Chile. 
  Beauchef 851, Santiago, Chile, Email: \texttt{mkiwi@dim.uchile.cl}. Gratefully acknowledges the support of 
  Millennium Nucleus Information and Coordination in Networks ICM/FIC P10-024F
  and CONICYT via Basal in Applied Mathematics.} \\
\and 
Dieter Mitsche\thanks{Universit\'{e} de Nice Sophia-Antipolis, Laboratoire J-A Dieudonn\'{e}, Parc Valrose, 06108 Nice cedex 02, Email: \texttt{dmitsche@unice.fr}. The main part of this work was performed during a stay at Depto.~Ing.~Matem\'{a}tica \& Ctr.~Modelamiento Matem\'atico (CNRS UMI 2807), U.~Chile, and the author would like to thank them for their hospitality.}}

\maketitle 
 
\begin{abstract}
Random hyperbolic graphs have been suggested as a promising model 
  of social networks.
A few of their fundamental parameters have been studied.
However, none of them concerns their spectra.
We consider the random hyperbolic graph model 
  as formalized by~\cite{GPP12} and 
  essentially determine the spectral gap of their normalized Laplacian.
Specifically, we establish that with high probability 
  the second smallest eigenvalue of the 
  normalized Laplacian of the giant component of an $n$-vertex 
  random hyperbolic graph is at least
  $\Omega(n^{-(2\alpha-1)}/D)$, 
  where 
  $\frac12<\alpha<1$ is a model parameter and $D$ is the 
  network diameter (which is known to be at most polylogarithmic in $n$).
We also show a matching (up to a polylogarithmic factor) upper bound of 
  $n^{-(2\alpha-1)}(\log n)^{1+o(1)}$.

As a byproduct we conclude that the conductance upper bound on the 
  eigenvalue gap obtained via Cheeger's inequality is essentially tight.
We also provide a more detailed picture of the collection of vertices
  on which the bound on the conductance is attained, in particular 
  showing that for all subsets whose volume is 
  $O(n^{\varepsilon})$ for $0<\varepsilon< 1$ 
  the obtained conductance is with high probability 
  $\Omega(n^{-(2\alpha-1)\varepsilon+o(1)})$. 
Finally, we also show consequences of our result for the minimum and 
  maximum bisection of the giant component.
\end{abstract}

\section{Introduction}
It has been empirically observed that many networks, in particular
  so called social networks, are typically scale-free and exhibit 
  non-vanishing clustering coefficient.
Several models of random graphs exhibiting either scale freeness or
  non-vanishing clustering coefficient have been proposed. 
A model that seems to naturally exhibit both properties is 
  the one introduced rather recently by Krioukov et al.~\cite{KPKVB10}
  and referred to as random hyperbolic graph model, which 
  is a variant of the classical random geometric graph model
  adapted to the hyperbolic plane.
The resulting graphs have key properties observed in large 
  real-world networks. 
This was convincingly demonstrated by Bogu\~n\'{a} et al.~in~\cite{BPK10}
  where a maximum likelihood fit of the 
  autonomous systems of the internet graph in hyperbolic 
  space is computed.
The impressive quality of the embedding obtained is an
  indication that hyperbolic geometry underlies 
  important real networks.
This partly explains the considerable interest the model has
  attracted since its introduction.

Formally, the random hyperbolic graph model $\unifmod_{\alpha,C}(n)$ is defined
  in~\cite{GPP12} as described next: for $\alpha> \frac12$, $C\in\RR$, $n\in\NN$, set $R=2\log n+C$ ($\log$ denotes here and throughout the paper the natural logarithm), and 
  build $G=(V,E)$ with vertex set $V$ a subset of $n$ points
  of the hyperbolic plane $\HH^2$ chosen as follows:
\begin{itemize}
\item For each $v\in V$, polar coordinates $(r_{v},\theta_{v})$ are 
  generated identically and independently distributed with joint
  density function $f(r,\theta)$, with $\theta_{v}$ chosen uniformly 
  at random in the interval $[0,2\pi)$ and $r_{v}$ with density:
\begin{align*}
f(r) & := \begin{cases}\displaystyle
   \frac{\alpha\sinh(\alpha r)}{C(\alpha,R)}, &\text{if $0\leq r< R$}, \\
   0, & \text{otherwise},
  \end{cases}
\end{align*}
where $C(\alpha,R)=\cosh(\alpha R)-1$ is a normalization constant.

\item For $u,v\in V$, $u\neq v$, there is an edge with endpoints 
  $u$ and $v$ provided the distance (in the hyperbolic plane) between
  $u$ and $v$ is at most $R$, i.e.,  
  the hyperbolic distance
  between two vertices whose native representation
  polar coordinates are $(r,\theta)$ and $(r',\theta')$, denoted by 
  $\dist:=\dist(r_{u},r_{v},\theta_{u}-\theta_{v})$,
  is such that  $\dist\leq R$ where $\dist$ is obtained by solving 
\begin{equation}\label{eqn:coshLaw}
\cosh \dist := \cosh r\cosh r'-\sinh r\sinh r'\cos(\theta{-}\theta').
\end{equation}
\end{itemize}
The restriction $\alpha>\frac12$ and the role of $R$, informally speaking,
  guarantee that the resulting graph has bounded average degree (depending
  on $\alpha$ and $C$ only): intuitively, 
if $\alpha<\frac12$, then the degree sequence is so 
  heavy tailed that this is impossible, and if $\alpha>1$, then
  as the number of vertices grows,
  the largest component of a random hyperbolic graph has sublinear 
  order~\cite[Theorem~1.4]{BFM15}.
In fact, although some of our results hold for a wider range of $\alpha$, 
  we will always assume $\frac12<\alpha<1$, since as 
  already discussed, this is the most interesting regime.
  
\medskip
A common way 
  of visualizing the hyperbolic plane $\HH^2$ is via its
  native representation where the choice for 
  ground space is $\RR^2$.
Here, a point of $\RR^2$ with polar coordinates $(r,\theta)$ has 
  hyperbolic distance to the origin $O$ equal to its Euclidean distance~$r$.
In the native representation, an instance of $\unifmod_{\alpha,C}(n)$
  can be drawn by mapping a vertex $v$ to the point in $\RR^2$ with polar
  coordinate $(r_v,\theta_v)$ and drawing edges as straight lines.
Clearly, the graph drawing will lie within $B_{O}(R)$ 
  (see Figure~\ref{fig:hyperGraph}).

\begin{figure}[ht]
\begin{center}
\scalebox{0.75}{\includegraphics{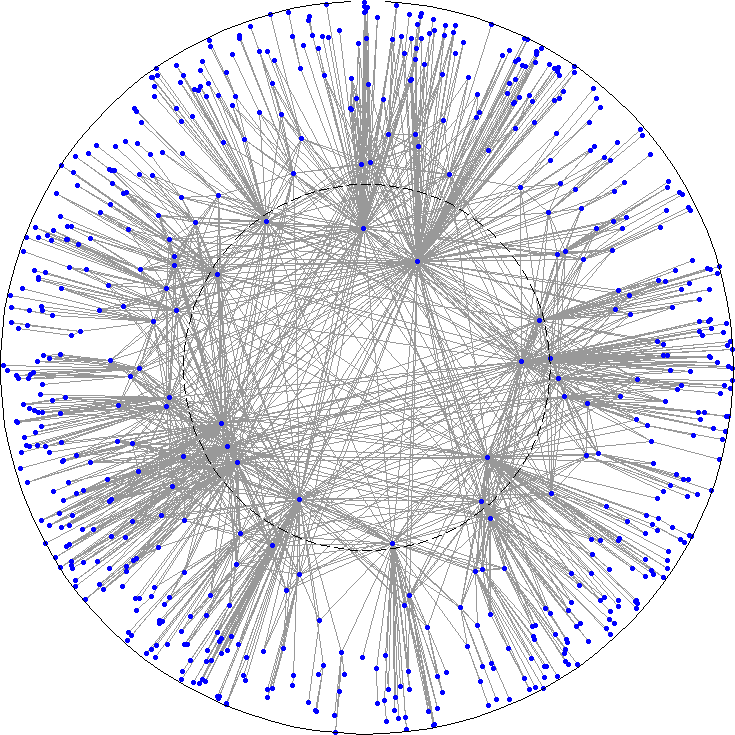}}
\end{center}
\caption{Native representation of the largest connected component 
  (with $621$ vertices) of an instance of 
  $\unifmod_{\alpha,C}(n)$ with $\alpha=0.55$, $C=2.25$ and $n=740$.
The solid (respectively, segmented) circle is the boundary of 
  $B_{O}(R)$ (respectively, $B_{O}(\frac{R}{2})$).}\label{fig:hyperGraph}
\end{figure}

\medskip
The adjacency, Laplacian, and normalized Laplacian
  are three well-known matrices associated to a graph, 
  all of whose spectrum encode important information 
  related to fundamental graph parameters.
For non-regular graphs, such as a random hyperbolic graph $G=(V,E)$
  obtained from $\unifmod_{\alpha,C}(n)$, 
  arguably the most relevant associated matrix is the 
  normalized Laplacian $\calL_G$. Note
  that $\calL_G$ is positive semidefinite and has 
  smallest eigenvalue $0$.
Certainly, the most interesting parameter of 
  $\calL_G$ is its eigenvalue gap $\lambda_1(G)$.
Since for $\frac12<\alpha<1$,
  a typical occurrence of $G$ has $\Theta(|V|)$ isolated vertices,
  the eigenvalue $0$ of $G$ has high multiplicity
  and thus $\lambda_1(G)=0$.
On the other hand, it is known that for the aforesaid range 
  of $\alpha$, most likely the 
  graph $G$ has a component of linear order~\cite[Theorem~1.4]{BFM15} (see also Theorem~\ref{thm:volTobias1} and Corollary~\ref{cor:volTobias} below) 
  and all other components are of
  polylogarithmic order~\cite[Corollary 13]{km15}, 
  which justifies referring to the
  linear size component as \emph{the giant component}.
Thus, the most basic non-trivial question about the 
  spectrum of random hyperbolic graphs is to determine the 
  spectral gap of their giant component.
Implicit in the proof of~\cite[Theorem 1.4]{BFM15} (once more, see also Theorem~\ref{thm:volTobias1} and Corollary~\ref{cor:volTobias} below) is that the giant component of a
  random hyperbolic graph $G$ is the one
  that contains all vertices whose radial coordinates are at most $\frac{R}{2}$,
  which we onward refer to as the \emph{center component} of the
  hyperbolic graph and denote by $H:=H(G)$.

\medskip
The preceding discussion motivates our study of the magnitude of the 
  second largest eigenvalue $\lambda_1=\lambda_1(H)$ of 
  the normalized Laplacian matrix $\calL_{H}$ of the center
  component $H$ of $G$ chosen according to $\unifmod_{\alpha,C}(n)$.
Formally, denoting by $d(v)$ 
  the degree of $v$ in $G$ (which equals $v$'s degree in $H$), 
  the \emph{normalized Laplacian of $H$} is 
  the (square) matrix $\calL_H$ whose rows and columns 
  are indexed by the vertex set of $H$ and whose $(u,v)$-entry takes the value
\[
\calL_H(u,v) := 
\begin{cases}
1, & \text{if $u=v$}, \\
-\displaystyle\frac{1}{\sqrt{d(u)d(v)}}, 
   & \text{if $uv$ is an edge of $H$}, \\
0, & \text{otherwise}.
\end{cases}
\]
Alternatively, $\calL_H := I-D_H^{-1/2}A_HD_H^{-1/2}$, where 
  $A_H$ denotes the adjacency matrix of $H$ and $D_H$ is 
  the diagonal matrix whose $(v,v)$-entry equals~$d(v)$.
It is well known that $\calL_H$ is positive semi-definite and 
  its smallest eigenvalue equals $0$ with geometric multiplicity $1$
  (given that $H$ is by definition connected).
Note that the stochastic matrix associated to the random walk 
  in $H$ is $P_{H}:=D_{H}^{-1}A_{H}=D_{H}^{-1/2}(I-\calL_{H})D_{H}^{1/2}$.
Hence, results concerning the spectra of $\calL_H$ easily translate
  into results about the spectra of $P_{H}$ and thence has implications
  concerning the rate of convergence towards the stationary distribution
  of such random walks and related Markov processes.

\begin{figure}[ht]
\begin{center}
\scalebox{0.75}{\includegraphics{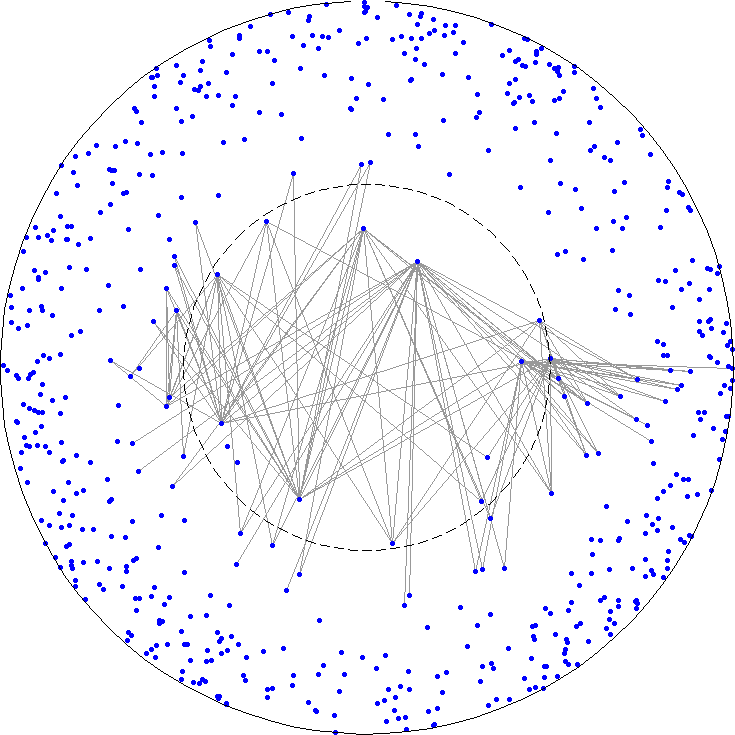}}
\end{center}
\caption{Cut induced in the graph of Figure~\ref{fig:hyperGraph} by 
  vertices of polar coordinate between $0$ and~$\pi$ (angles 
  measured relative to a horizontal axis passing through $\HH^2$'s origin).}\label{fig:cut}
\end{figure}

One often used approach for bounding 
 $\lambda_1(H)$ for a connected graph $H=(U,F)$ is via the so called Cheeger inequality.
To explain this, recall that for $S\subseteq U$,
  the \emph{volume} of $S$, denoted $\Vol(S)$,
  is defined as the sum of the degrees of 
  the vertices in $S$, i.e., $\Vol(S):=\sum_{v\in S}d(v)$.
Also, recall that the \emph{cut} induced by $S$ in $H$, denoted by
  $\partial S$, 
  is the set of graph edges with exactly one endvertex 
  in $S$, i.e., 
  $\partial S:=\{uv \in F: |\{u,v\}\cap S|=1\}$
  (see Figure~\ref{fig:cut}).
The \emph{conductance} of $S$ in $H$, 
  $\emptyset \subsetneq S\subsetneq U$, is defined as 
\begin{equation}\label{eqn:setConductance}
h(S) := \frac{|\partial S|}{\min\{\Vol(S),\Vol(U\setminus S)\}},
\end{equation}
and the \emph{conductance} of $H$ is
 $h(H) := \min\big\{ h(S) : \emptyset \subsetneq S \subsetneq U \}$. 
Cheeger's inequality (see e.g.~\cite[\S 2.3]{chung97})
  states that for an arbitrary connected graph~$G$, 
\begin{equation}\label{eqn:cheeger}
\frac{1}{2}h^2(G)\leq \lambda_1(G) \leq 2h(G),
\end{equation}
and often provides an effective way for bounding the eigenvalue gap
  of graphs.
Our main result gives a stronger characterization of $\lambda_1(H)$ than the 
  one obtained through Cheeger's inequality.
In fact, we show that $\lambda_1(H)$ 
  essentially matches the upper bound given by~\eqref{eqn:cheeger},
  i.e., $\lambda_1(H)$ equals $h(H)$ up to a small polylogarithmic
  factor.
As a byproduct, we obtain an almost tight bound on the conductance
  of the giant component of  random hyperbolic graphs.

Despite the fact that in the original model of Krioukov et al.~\cite{KPKVB10} $n$ points were chosen uniformly at random, it is from a probabilistic point of view arguably more natural to consider the Poissonized version of this model. Specifically, we consider a Poisson point process on the hyperbolic disk of radius $R$ and denote its point set by $\calP$. The intensity function at polar coordinates $(r,\theta)$ for 
  $0\leq r< R$ and $0 \leq \theta < 2\pi$ is equal to
\[
g(r,\theta) := \delta e^{\frac{R}{2}}f(r,\theta)
\]
with $\delta=e^{-\frac{C}{2}}$.
Throughout the paper we denote this model by $\poimod_{\alpha,C}(n)$.
Note in particular that $\int_{0}^R \int_{0}^{2\pi} g(r,\theta) d\theta dr 
  = \delta e^{\frac{R}{2}}=n$, and thus  $\mathbb{E}|{\calP}|=n.$ 
  The main advantage of defining $\calP$ as a Poisson point process is
  motivated by the following two properties: the number of points of
  $\calP$ that lie in any region $A \cap B_O(R)$ follows a Poisson
  distribution with mean given by $\int_A g(r,\theta) drd\theta=n
  \mu(A \cap B_O(R))$, and the numbers of points of $\calP$ in disjoint
  regions of the hyperbolic plane are independently distributed.
  Moreover, by conditioning $\calP$ upon the event $|{\calP}|=n$, we
  recover the original distribution. Therefore, since
  $\Pr(|{\calP}|=n-k)=\Theta(1/\sqrt n)$ for any $k=O(1)$, any event
  holding in $\calP$ with probability at least $1-o(f_n)$ must hold in
  the original setup with probability at least $1-o(f_n \sqrt n)$, and
  in particular, any event holding with probability at least
  $1-o(1/\sqrt{n})$ holds a.a.s.~in the
  original model. Also, an event holding w.e.p.~in 
  $\poimod_{\alpha,C}(n)$ also holds w.e.p.~in $\unifmod_{\alpha,C}(n)$.  
Henceforth, unless stated otherwise, our results will be presented in the Poissonized model only; the corresponding results for the uniform model follow by the above considerations.

\bigskip
\noindent\textbf{Notation.}
All asymptotic notation in this paper is respect to $n$. Expressions given in terms of other variables such as $O({R})$ are still asymptotics with respect to $n$, since these variables still depend on $n$.
We say that an event holds \emph{asymptotically almost surely (a.a.s.)}, if it holds with probability tending to $1$ as $n \to \infty.$ 
We say that an event holds \emph{with extremely high probability
    (w.e.p.)}, if for a fixed (but arbitrary)
  constant $C' > 0$, there exists an $n_0:=n_{0}(C')$
  such that for every $n\geq n_0$ the event holds with
  probability at least $1-n^{-C'}$. 
Throughout the paper, denote by $\upsilon:=\upsilon(n)$ a function 
  tending to infinity arbitrarily slowly with $n$.
  By a union bound, we get that the union of polynomially (in $n$) many events that hold w.e.p.~is also an event that holds w.e.p.  
  For $N\in\NN$, we denote the set 
  $\{1,\ldots,N\}$ by $[N]$.
For a graph $G=(V,E)$ with $S, S' \subseteq V$ and $S \cap S'=\emptyset$, we denote by $E(S,S')$ the set of edges having one endvertex in $S$, and one endvertex in $S'$. For $v \in V$, we refer to the neighborhood of $v$ inside $S$ by $N_{S}(v)$, i.e., $N_{S}(v)=\{w \in S: vw \in E\}$.
Finally, 
  we will often consider a subset $S$ of vertices of a connected component 
  of a given graph in which case $\overline{S}$ will denote 
  its complement with respect to the vertex set of the component.

\subsection{Main contributions}
The following theorem is the main result of this paper.
It bounds from below the spectral gap of random hyperbolic graphs.
\begin{theorem}\label{th:main}
If $H$ is the center component of 
  $G$ chosen according to $\poimod_{\alpha,C}(n)$
  and $D(H)$ denotes the diameter of $H$, then 
  w.e.p., 
\[
\lambda_1(H) = \Omega(n^{-(2\alpha-1)}/D).
\]
\end{theorem}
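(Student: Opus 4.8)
The plan is to apply the canonical--paths (Poincar\'e) lower bound on the spectral gap, in the form due to Diaconis--Stroock and Sinclair, to the random walk $P_H=D_H^{-1}A_H$ on $H$; since $P_H=D_H^{-1/2}(I-\calL_H)D_H^{1/2}$, the quantity that inequality bounds below is exactly $\lambda_1(H)$. Concretely, the walk is reversible with stationary distribution $\pi(v)=d(v)/\Vol(V(H))$ and with $\pi(u)P_H(u,v)=1/\Vol(V(H))$ on every edge, so if to each ordered pair $(x,y)$ of vertices of $H$ we assign a path $\gamma_{xy}$ in $H$ from $x$ to $y$, then
\[
\lambda_1(H)\ \geq\ \left(\ \max_{e\in E(H)}\ \frac{1}{\Vol(V(H))}\sum_{(x,y):\,e\in\gamma_{xy}}|\gamma_{xy}|\,d(x)\,d(y)\ \right)^{-1}.
\]
Bounding each $|\gamma_{xy}|$ by $D=D(H)$ and using $\Vol(V(H))=\Theta(n)$ w.e.p.\ (Theorem~\ref{thm:volTobias1} and Corollary~\ref{cor:volTobias}), it suffices to construct a path system for which, w.e.p.,
\[
\max_{e\in E(H)}\ \sum_{(x,y):\,e\in\gamma_{xy}}d(x)\,d(y)\ =\ O\!\left(n^{2\alpha}\right),
\]
since this already yields $\lambda_1(H)=\Omega\!\big(n/(D\cdot n^{2\alpha})\big)=\Omega\!\big(n^{-(2\alpha-1)}/D\big)$.

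The paths will be built from the radial structure of $H$. The w.e.p.\ facts to be used are: the innermost region $K:=\{v:r_v\le R/2\}$ is a clique of size $N=\Theta(n^{1-\alpha})$; every vertex of $H$ admits an (essentially) radially decreasing path to $K$ of length $O(\log n)\le D$, obtained by stepping through consecutive radial layers $\layer_i$ (vertices with $r_v$ near $R-i$); the number of vertices with $r_v$ near $R-i$ is $\Theta(ne^{-\alpha i})$, the total degree of the vertices of $H$ with $r_v>R-i$ is $\Theta(n)$, and degrees are concentrated around $e^{(R-r_v)/2}$ away from the outermost layers. For each $x\in V(H)$ fix a radially decreasing path $P_x$ from $x$ to a landmark $\ell(x)\in K$ taken to be the \emph{first} vertex of $P_x$ in $K$ (so $P_x$ uses no edge of $K$), and set $\gamma_{xy}:=P_x\cdot\{\ell(x),\ell(y)\}\cdot\overline{P_y}$, with the obvious truncation when $\ell(x)=\ell(y)$ and $\overline{P_y}$ the reverse of $P_y$. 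The one non-routine requirement is that the $P_x$'s and the assignment $x\mapsto\ell(x)$ be chosen \emph{globally balanced}: each layer-$i$ vertex should carry only $O(e^{\alpha i})$ units of inward flow and forward $O(e^{(\alpha-1/2)i})$ of it along each of its $\Theta(e^{i/2})$ more-central incident edges, and each clique vertex $v$ should have $\Delta_v:=\sum_{x:\ell(x)=v}d(x)=O(n^\alpha)$ (possible since $\sum_{v\in K}\Delta_v=\Vol(V(H))=\Theta(n)$ is to be spread over $N=\Theta(n^{1-\alpha})$ vertices). Feasibility of such a system can be shown by a Hall/max-flow argument using that inward routing from any $x$ reaches a constant-angular-width window of $\Theta(n^{1-\alpha})$ clique vertices and the volume estimates above.

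Given a balanced path system, the congestion is bounded edge by edge. A clique edge $e=\{a,b\}$ lies on $\gamma_{xy}$ only when $\{\ell(x),\ell(y)\}=\{a,b\}$ (the $P_x$'s avoid $K$), so $\sum_{(x,y):\,e\in\gamma_{xy}}d(x)d(y)=2\Delta_a\Delta_b=O(n^{2\alpha})$; this is the binding constraint, reflecting that the $\Theta(n^2)$ total flow is funnelled through the $\Theta(n^{2-2\alpha})$ edges of $K$. For an edge $e=\{u,w\}$ with $u$ strictly more central than $w$ and $w\in\layer_i$, the edge appears on $\gamma_{xy}$ only via $P_x$ or $P_y$, so $\sum_{(x,y):\,e\in\gamma_{xy}}d(x)d(y)\le 2\,\Vol(V(H))\cdot\!\!\sum_{x:\,e\in P_x}\!\!d(x)$, and by balance the inner sum is $O(e^{(\alpha-1/2)i})$, whence $\sum_{(x,y):\,e\in\gamma_{xy}}d(x)d(y)=O(ne^{(\alpha-1/2)i})=O(n^{1/2+\alpha})=o(n^{2\alpha})$ because $\alpha>\tfrac12$ and $i\le R/2$. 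A union bound over the polynomially many edges and structural events then gives the required congestion bound w.e.p., and hence the theorem.

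The main obstacle is exactly this: constructing the globally balanced path system and proving the uniform $O(n^{2\alpha})$ congestion bound, i.e.\ controlling simultaneously for every edge the total weight $\sum d(x)d(y)$ of pairs routed through it. This rests on the fine layer/cone combinatorics of the center component --- the reason for the carefully tuned radial thresholds $\elllow,\ellbdr,\ellhgh$ and the cone estimates --- on degree concentration, and on a Hall-type feasibility argument for the landmark assignment. Two further technical points are the outermost layers, where degrees are too small to concentrate and some vertices are not in $H$ at all (and so must be handled separately), and the verification that every vertex of $H$ really does admit a short, essentially radially decreasing path to $K$.
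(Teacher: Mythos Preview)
Your proposal uses the \emph{canonical path} method (one path per ordered pair), and this is precisely the approach the paper explicitly rules out before giving its own argument. The obstruction is the two highest--degree vertices. W.e.p.\ the innermost vertex has radius $r_{\min}=R-\tfrac{1}{\alpha}\log n+O(1)$ and hence degree $\Theta(n^{1/(2\alpha)})$; in fact there are, w.e.p., several vertices with degree of this order. Take two such vertices $s,t$. Whatever single path $\gamma_{st}$ you choose, the congestion on \emph{any} edge $e\in\gamma_{st}$ already satisfies
\[
\sum_{(x,y):\,e\in\gamma_{xy}} d(x)d(y)\ \geq\ d(s)\,d(t)\ =\ \Theta\!\big(n^{1/\alpha}\big),
\]
and $n^{1/\alpha}>n^{2\alpha}$ whenever $\alpha<1/\sqrt{2}$. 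So the bound $O(n^{2\alpha})$ you need cannot hold for any canonical path system in this range of $\alpha$. Correspondingly, your ``balanced landmark'' claim $\Delta_v=\sum_{x:\ell(x)=v}d(x)=O(n^{\alpha})$ is infeasible: since each $x$ has a single landmark, $\Delta_{\ell(s)}\ge d(s)=\Theta(n^{1/(2\alpha)})$, which exceeds $n^{\alpha}$ for $\alpha<1/\sqrt{2}$. The averaging argument (``total $\Theta(n)$ spread over $\Theta(n^{1-\alpha})$ clique vertices'') and the Hall/max--flow feasibility sketch only control the \emph{average} $\Delta_v$, not the maximum; the single extremely heavy demand $d(s)$ cannot be met by a single landmark of capacity $n^{\alpha}$.

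What the paper does instead is use the full multicommodity flow bound (Sinclair), splitting the demand $d(s)d(t)/\Vol(U)$ for each pair over \emph{many} paths rather than one. Concretely, for $s\in\calP_k$, $t\in\calP_{k'}$ with $k,k'\le\ellmax$, the flow is spread evenly over all length--$3$ paths $s\!\to\!\calP_{\widetilde k}\!\to\!\calP_{\widetilde{k'}}\!\to\!t$, where $\widetilde k,\widetilde{k'}$ are layers strictly \emph{outside} $B_O(\ellmid)$ (not the inner clique). This is the key device: a high--degree inner vertex $s$ has $\Theta(|\calP_{\widetilde k}|)$ choices for its first hop, so its weight is diffused over many edges and the per--edge load drops below $O(n^{2\alpha-1})$. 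Vertices outside $B_O(\ellmax)$ are attached via short radial ``end segments'' (this is the part of your sketch that does survive, and is where the factor $D$ enters). Your outline of radially decreasing paths and a central hub is the right geometric picture, but to get the stated bound across the whole range $\tfrac12<\alpha<1$ you must allow the flow between a single pair to split over many routes; the paper's layer map $\ell\mapsto\widetilde\ell$ and the case analysis of ``spread out / belt / middle / belt incident'' edges is exactly the bookkeeping that replaces your clique--edge computation.
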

We also have the following complementary result. 
We remark that a similar upper bound, slightly less precise 
  but in the more general setup of geometric inhomogeneous random 
  graphs, was obtained in~\cite{BKL1}. 
\begin{lemma}\label{lem:upperBound}
Let $H=(U,F)$ be the center component of 
  $G=(V,E)$ chosen according to $\poimod_{\alpha,C}(n)$ or $\unifmod_{\alpha,C}(n)$.
Then, a.a.s.~$h(H)\le \upsilon n^{-(2\alpha-1)}\log n.$
\end{lemma}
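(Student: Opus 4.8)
The plan is to exhibit an explicit cut $S$ of the center component $H$ whose conductance is $O(\upsilon n^{-(2\alpha-1)}\log n)$; since $h(H)$ is the minimum over all cuts, this upper bound follows immediately. The natural candidate is an angular sector: fix an angle $\phi$ and let $S$ be the set of vertices of $H$ with angular coordinate in $[0,\phi)$. Geometrically, the boundary $\partial S$ consists of edges crossing one of the two rays $\theta=0$ or $\theta=\phi$; each such crossing edge joins two vertices lying close to a common ray, and since an edge of length at most $R$ between points at radii $r,r'$ forces $\theta$-coordinates within roughly $2e^{(R-r-r')/2}$ of each other, the crossing edges come in limited supply. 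The idea is to choose $\phi$ of constant size (say $\phi=\pi$, as in Figure~\ref{fig:cut}) so that $\Vol(S)$ and $\Vol(\overline S)$ are each a constant fraction of $\Vol(H)=\Theta(n)$, reducing the problem to bounding $|\partial S|$.

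The key steps, in order, would be: (i) Record the standard first-moment facts about $\poimod_{\alpha,C}(n)$: the expected number of vertices at radius $\geq r$ is $\Theta(n e^{-\alpha(R-r)})$, the expected number in the center ball $B_O(R/2)$ is $\Theta(n^{1-\alpha})$ with typical degree $\Theta(n^{1-\alpha})$ up to polylog, and $\Vol(H)=\Theta(n)$ w.e.p.\ (using Theorem~\ref{thm:volTobias1}/Corollary~\ref{cor:volTobias}). (ii) Using rotational symmetry and the independence properties of the Poisson process, compute $\EE|\partial S|$ for $S$ the sector $[0,\phi)$: conditioning on the radii, the probability that a vertex at radius $r$ has a neighbour ``across'' the ray $\theta=0$ is controlled by the angular half-width $\theta_R(r) \asymp e^{(R-2r)/2} = \Theta(n e^{-r})$ of the connection region (capped at $\pi$), so the expected number of edges crossing a single ray is at most $\int_0^R \int_0^R n^2 e^{-\frac12(R-r)}e^{-\frac12(R-r')} \min\{1,\theta_R(\ldots)\}\,dr\,dr'$ type integral; the dominant contribution comes from pairs of low-radius vertices and evaluates to $O(n^{2(1-\alpha)}\log n)$ — more precisely I expect $\Theta(n^{2(1-\alpha)})$ with possibly a $\log n$ correction from the $r+r'\approx R$ boundary of the integration region. (iii) Apply a concentration/Markov argument to turn the expectation bound into an a.a.s.\ (or w.e.p.) bound on $|\partial S|$; since we only need an upper bound on the minimum conductance, even Markov's inequality with the $\upsilon$ slack suffices, giving $|\partial S| \le \upsilon n^{2(1-\alpha)}\log n$ a.a.s. (iv) Combine: $h(H)\le h(S) = |\partial S|/\min\{\Vol(S),\Vol(\overline S)\} \le \upsilon n^{2(1-\alpha)}\log n / \Theta(n) = O(\upsilon n^{1-2\alpha}\log n)$, as claimed, absorbing constants into $\upsilon$.

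The main obstacle I anticipate is \emph{step (ii)}: correctly setting up and evaluating the double integral for $\EE|\partial S|$, in particular handling the min with $\pi$ (the angular window saturates for very small radii, which is exactly where the vertex density is lowest, so one must check this region is negligible) and pinning down whether the true order is $n^{2(1-\alpha)}$ or $n^{2(1-\alpha)}\log n$ — the stated lemma allows the extra $\log n$, so I would aim for the crude bound and not optimize. A secondary, more technical point is that $H$ is defined as the center component rather than literally $B_O(R/2)$, so $\partial S$ may include a few edges incident to high-radius vertices that happen to lie in $H$; but such vertices are sparse (expected number $O(n^{1-\alpha})$ at radius $\ge R/2$, each of bounded degree w.e.p.\ by \cite{km15}-type estimates), so their contribution to $|\partial S|$ is of strictly lower order and is harmlessly absorbed. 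Finally, one should note the lemma is stated for \emph{both} $\poimod_{\alpha,C}(n)$ and $\unifmod_{\alpha,C}(n)$; the Poissonization remarks in the introduction let us prove it in the Poisson model and transfer, the only caveat being that an a.a.s.\ (not w.e.p.) statement transfers only with the $\sqrt n$ loss — harmless here since the bound on $|\partial S|$ can be taken to hold w.e.p.\ via the standard Chernoff bound for Poisson sums, leaving only the $\Theta(1/\sqrt n)$ conditioning cost, which is $o(1)$.
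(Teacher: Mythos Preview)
Your approach matches the paper's: take the half-disk $\Pi$, both sides have volume $\Theta(n)$ by Corollary~\ref{cor:volTobias}, the expected number of crossing edges is $O(n^{2(1-\alpha)}\log n)$ via the double integral over $(r_u,r_v)$ (the paper splits explicitly at $r_u+r_v=R$, calling the two pieces $P'$ and $P''$), and then Markov with the $\upsilon$ slack finishes.

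One misconception to correct: in your ``secondary technical point'' you have the structure of $H$ inverted---the center component is the \emph{giant} component with $\Theta(n)$ vertices, almost all of which lie at radius $>R/2$; it is $B_O(R/2)$ that contains only $O(n^{1-\alpha})$ vertices. This does not break the argument, but for a different reason than the one you give: the crossing-edge expectation can simply be computed in the full graph $G$ (as the paper does), and $|\partial_H(U\cap\Pi)|\le |E_G(\Pi,V\setminus\Pi)|$ trivially. Also, no Poisson-to-uniform transfer is needed: the paper computes the expectation directly in $\unifmod_{\alpha,C}(n)$ (the case $N=n$) and separately in $\poimod_{\alpha,C}(n)$ by summing over $|\calP|=N$, and Markov gives a.a.s.\ in each model.
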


Whereas Theorem~\ref{th:main} gives a global lower bound on the conductance of a random hyperbolic graph, we obtain additional information from the next theorem. By classifying subsets of vertices according to their structure and their volume, we can show the following theorem:
\begin{theorem}\label{thm:conductancia}
Let $H=(U,F)$ be the center component of 
  $G=(V,E)$ chosen according to $\poimod_{\alpha,C}(n)$, and let 
  $0 < \varepsilon < 1.$ 
W.e.p., for every set $S \subseteq U$ with 
  $\Vol(S)=O(n^{\varepsilon})$, we have $h(S) =\Omega(n^{-(2\alpha-1)\varepsilon+o(1)}).$
\end{theorem}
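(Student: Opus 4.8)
The plan is to reduce the statement to a lower bound on the number of edges leaving $S$, and to harvest those edges from two sources: the enormous neighbourhoods of any low-radius vertex that $S$ happens to contain, and a multi-scale isoperimetric estimate inside the outer annulus where $S$ is otherwise forced to live. Since $\Vol(S)=O(n^{\varepsilon})=o(n)$ while $\Vol(H)=\Theta(n)$ w.e.p.\ (Corollary~\ref{cor:volTobias}), we have $\min\{\Vol(S),\Vol(\overline S)\}=\Vol(S)$, so $h(S)=|\partial S|/\Vol(S)$ and it suffices to prove $|\partial S|\ge\Vol(S)\cdot n^{-(2\alpha-1)\varepsilon+o(1)}$ for every $S\subseteq U$ with $\Vol(S)\le C_0 n^{\varepsilon}$; since moreover $|\partial S|=\Vol(S)-2e(S)$ (with $e(S)$ the number of edges of $H$ inside $S$), this is the same as showing $S$ spans few internal edges. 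Throughout I would condition on the standard w.e.p.\ events: $d(v)=\Theta(ne^{-r_v/2})$ whenever $ne^{-r_v/2}$ exceeds a suitable polylogarithmic threshold (and $d(v)$ is polylogarithmic otherwise); $|B_O(\rho)\cap V|=\Theta(ne^{-\alpha(R-\rho)})$ for all relevant $\rho$, and likewise for the point counts in angular boxes; and the description of $H$ recalled in the introduction.

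First I would dispose of the case that $S$ reaches small radius. Let $v^{*}\in S$ have minimum radial coordinate $r^{*}$. By~\eqref{eqn:coshLaw}, $v^{*}$ is adjacent to every vertex of $B_O(R-r^{*})\cap V$, each of which lies in $H$ (being a neighbour of $v^{*}\in H$) and hence is either in $S$ or the far endpoint of an edge of $\partial S$; so $|\partial S|\ge|B_O(R-r^{*})\cap V|-|S|\ge c\,ne^{-\alpha r^{*}}-\Vol(S)$. If $c\,ne^{-\alpha r^{*}}\ge 2\Vol(S)$ this already yields $h(S)=\Omega(1)$. Otherwise every vertex of $S$ has $r_v\ge r^{*}\ge\rho_{0}:=\tfrac1\alpha\log\!\big(cn/(2\Vol(S))\big)=\tfrac{1-\varepsilon}{\alpha}\log n-O(1)$, i.e.\ $S$ lies in the annulus $A:=\{v:\rho_{0}\le r_v<R\}$ and, equivalently, every degree in $S$ is $O\big((n^{2\alpha-1}\Vol(S))^{1/(2\alpha)}\big)$. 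A parallel volume estimate shows $S$ cannot contain more than half of $B_O(\rho)\cap V$ for any $\rho\le R-\rho_{0}$, so every $v\in S$ with $r_v\le R-\rho_{0}$ is adjacent to all of $B_O(\rho_{0})\cap V\subseteq\overline S$ and thus contributes $\Omega(n^{2-2\alpha-\varepsilon})$ edges to $\partial S$; this ``always-adjacent'' bookkeeping supplies the coarsest-scale part of the bound below, so that the real work concerns the short edges of $S$ in the band hugging the boundary circle.

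For that I would decompose $A$ dyadically into radial bands and sort the edges of $H$ by their angular span $\ell\asymp e^{(R-r-r')/2}$, noting that, after rescaling angular coordinates by $\ell^{-1}$, the edges of span $\asymp\ell$ resemble (up to constant factors) a geometric graph on the circle $\RR/2\pi\RR$ joining points at unit distance, with vertex density uniform up to constants. Such graphs obey a sharp vertex-isoperimetric inequality: a vertex set occupying arcs of total angular measure $\mu\le\pi$ has edge-boundary at least of order its own volume times $\min\{1,\ell/\mu\}$, so the conductance deteriorates only once the set is angularly spread over $\gg 1$ arc-lengths. Putting $\mu^{*}:=\Vol(S)/n\asymp n^{\varepsilon-1}$ (the angular width of the sector of volume $\Vol(S)$) and $\ell^{*}\asymp\mu^{*}$ for the critical scale, using that the volume constraint forbids $S$ to occupy more than an $O(\mu^{*})$ share of any band near $\ell^{*}$, and summing the per-scale estimates, I would obtain $|\partial S|\ge\Vol(S)\cdot n^{-(2\alpha-1)\varepsilon+o(1)}$, the exponent $(2\alpha-1)\varepsilon$ arising from $\ell^{*}\asymp n^{\varepsilon-1}$ together with the radial density exponent $\alpha$; this matches, up to $n^{o(1)}$, the conductance of a thin angular sector of volume $\Vol(S)$, which is therefore essentially extremal.

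The hard part is this last step. Because $S$ is an arbitrary vertex set — no radial monotonicity, possibly scattered angularly over the whole circle with many holes — one cannot simply project to angles: the cyclic geometric-graph isoperimetric inequality has to be established in a form robust to $S$ being spread out or holey (each of which only increases the boundary, but proving this needs a compression/rearrangement argument on arcs), and the varying edge-lengths across radial bands have to be reconciled so that summing the per-scale bounds costs only an $n^{o(1)}$ factor rather than a polynomial one. A secondary difficulty is that the position of the critical band inside $A$ depends on the regime of $(\alpha,\varepsilon)$ — notably on the relative order of $\tfrac{1-\varepsilon}{\alpha}$, $1-\alpha$ and $\tfrac1{1+\alpha}$ — so a short case split is needed to identify which band carries the bound and to handle the extreme sub-case in which $S$ hugs the boundary circle, all relevant edges share a single short span, and the always-adjacent edges are of no use.
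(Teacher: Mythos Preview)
Your proposal identifies a plausible strategy but leaves its core as an unproven assertion. After the (correct) reduction to $S\subseteq\{r\ge\rho_0\}$, everything hinges on the ``cyclic geometric-graph isoperimetric inequality'' you claim scale by scale: that a set of angular measure $\mu$ has, at scale $\ell$, edge-boundary of order its own volume times $\min\{1,\ell/\mu\}$. You never define the angular measure of a finite point set (its angular projection has measure zero; taking $\Vol(S)/n$ as a proxy is precisely what must be justified, and fails outright when $S$ consists of isolated points scattered around the circle), you do not prove the inequality at any scale, and you concede that the compression/rearrangement argument needed to make it robust and to sum across scales without polynomial loss is not supplied. Since the entire lower bound on $|\partial S|$ rests on this, there is at present no proof.

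There is a second, structural gap concerning the quantifier ``w.e.p.\ for every $S$''. With exponentially many subsets one cannot union-bound over $S$; the randomness must be spent on polynomially many events and the remaining argument must be deterministic in $S$. You do condition on degree and point-count concentration, but your isoperimetric step still depends on how $S$ distributes angularly and across bands. The paper's proof is organised precisely to avoid this: all randomness is absorbed by point counts in a polynomial family of sectors and layers (Lemmas~\ref{lem:earlylayersfull}--\ref{lem:Conetreatment2}), and then a deterministic \textsc{Sector-Accounting} covering, run vertex by vertex over $S$, exhibits for each $v\in S$ a truncated sector $\Upsilon_v$ that either has $h_{\Upsilon_v}(S)\ge n^{-(2\alpha-1)\varepsilon+o(1)}$ or overlaps heavily with already-accounted regions, so that the accounted regions carry all but an $n^{o(1)}$ share of $\Vol(S)$. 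Your sketch supplies no analogue of this deterministic covering, so the passage from ``fixed $S$'' to ``all $S$'' is unjustified.
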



  
  We also obtain the following corollary regarding minimum and maximum sizes of arbitrary bisectors (recall that a bisection of a graph is a bipartition of its vertex 
  set in which the number of vertices in the two parts differ by at most $1$, and 
  its size is the number of edges which go across the two parts):
\begin{cor}\label{cor:cut}
Let $H=(U,F)$ be the giant component of
  $G=(V,E)$ chosen according to $\poimod_{\alpha,C}(n)$. 
Then, the following statements hold:
\begin{itemize}[(i)]
\item\label{it:bisection1}
W.e.p., the minimum bisection of $H$ is 
   $b(H)=\Omega\big(n^{2(1-\alpha)}/D)$, where $D:=D(H)$ is the 
   diameter of $H$.
\item\label{it:bisection2}   For any $\xi> 0$,
  with probability at least $1-o(n^{-1+\xi})$, the maximum bisection of $H$ is $B(H)=\Theta(n)$.
\end{itemize}
\end{cor}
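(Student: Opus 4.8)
The plan is to derive both statements from the earlier results — specifically, the lower bound on $\lambda_1(H)$ from Theorem~\ref{th:main} (equivalently, via Cheeger's inequality~\eqref{eqn:cheeger}, a lower bound on $h(H)$), the conductance estimate of Theorem~\ref{thm:conductancia}, and the volume estimates for the center component referenced through Theorem~\ref{thm:volTobias1} and Corollary~\ref{cor:volTobias}. For part~\eqref{it:bisection1}, observe that a bisection is in particular a cut $\partial S$ with $|S|$ and $|\overline S|$ each of size $(|U|\pm 1)/2 = \Theta(n)$; hence $b(H) \ge \min_{|S|=\lfloor |U|/2\rfloor} |\partial S|$. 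The idea is to relate $|\partial S|$ to $h(S)\cdot\min\{\Vol(S),\Vol(\overline S)\}$. Since the volumes of the two halves are each $\Theta(n)$ (the total volume of $H$ is $\Theta(n)$ because $G$ has bounded average degree and $H$ captures a constant fraction of it), we get $|\partial S| \ge h(H)\cdot \Theta(n)$. Combining with $h(H) = \Omega\big(\sqrt{\lambda_1(H)}\big) = \Omega\big(\sqrt{n^{-(2\alpha-1)}/D}\big)$ from Theorem~\ref{th:main} and~\eqref{eqn:cheeger} would, however, only give $\Omega(n^{1-\alpha+1/2}/\sqrt D)$; to get the claimed $\Omega(n^{2(1-\alpha)}/D)$ one should instead use directly that $\lambda_1(H) \le 2 h(H)$ is the \emph{wrong} direction, so I would appeal to the lower bound on $h(H)$ that the proof of Theorem~\ref{th:main} in fact establishes en route (a set of volume $\Theta(n)$ has $h(S) = \Omega(n^{-(2\alpha-1)}/D)$), giving $|\partial S| = \Omega(n^{-(2\alpha-1)}/D)\cdot \Theta(n) = \Omega(n^{2(1-\alpha)}/D)$, uniformly over all balanced $S$, w.e.p.

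For part~\eqref{it:bisection2}, the goal is a matching upper and lower bound $B(H) = \Theta(n)$. The upper bound $B(H) = O(n)$ is immediate since $H$ has $\Theta(n)$ edges (bounded average degree times $\Theta(n)$ vertices), so no cut can exceed the total number of edges. For the lower bound $B(H) = \Omega(n)$, the natural approach is a probabilistic/random-partition argument: take a uniformly random balanced bipartition $(S,\overline S)$ of $U$; each edge is cut with probability roughly $1/2$, so $\Ex[|\partial S|] = \Theta(|F|) = \Theta(n)$, and by concentration (the number of cut edges is a function of the random bipartition with bounded differences, or one can use a second-moment/negative-correlation argument) there exists a balanced bipartition with $|\partial S| = \Omega(n)$. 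The probability bound $1-o(n^{-1+\xi})$ should come from the fact that the underlying structural events about $H$ (its vertex set, edge count, degree sequence) hold with that probability — here one must be careful that the center component is determined w.e.p.\ and that $|F| = \Theta(n)$ holds with probability $1-o(n^{-1+\xi})$, which follows from Chernoff-type bounds on the Poisson number of edges; alternatively this is exactly the failure probability inherited from Theorem~\ref{thm:volTobias1}/Corollary~\ref{cor:volTobias}.

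The main obstacle is getting the constants and the probability quantifier right in part~\eqref{it:bisection1}: the bound must hold \emph{simultaneously} for all balanced bipartitions (since $b(H)$ is a minimum over exponentially many sets), so one cannot simply invoke Cheeger's inequality as a black box but must use the genuinely uniform statement ``every $S$ with $\Vol(S) = \Theta(n)$ has $|\partial S| = \Omega(n^{-(2\alpha-1)}/D)$'' that is proved inside Theorem~\ref{th:main} (and whose small-volume refinement is Theorem~\ref{thm:conductancia}). A secondary subtlety is confirming that $\Vol(H) = \Theta(n)$ w.e.p.\ — i.e.\ that not only $|U| = \Theta(n)$ but also $\sum_{v\in U} d(v) = \Theta(n)$ — which follows from the bounded-average-degree property of the model together with the fact that isolated and small-component vertices (which are excluded from $H$) carry only a constant fraction of the total degree; once this is in hand, both parts are short deductions.
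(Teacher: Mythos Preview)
For part~(i) you have misread Cheeger's inequality. The inequality $\lambda_1(H)\le 2h(H)$, which you dismiss as ``the wrong direction'', is exactly what you need: it rearranges to $h(H)\ge\frac12\lambda_1(H)$, so Theorem~\ref{th:main} gives $h(H)=\Omega(n^{-(2\alpha-1)}/D)$ directly, linearly and not with a square root. This is precisely the paper's argument. Your claimed bound $h(H)=\Omega(\sqrt{\lambda_1(H)})$ is not a consequence of~\eqref{eqn:cheeger}; the left inequality $\frac12 h^2\le\lambda_1$ yields only the upper bound $h=O(\sqrt{\lambda_1})$. Your proposed workaround---extracting a conductance lower bound ``en route'' from the proof of Theorem~\ref{th:main}---does not work either: that proof bounds $\lambda_1$ directly via multicommodity flows (Theorem~\ref{thm:flowEmbed}) and never produces an intermediate estimate of the form $h(S)=\Omega(n^{-(2\alpha-1)}/D)$ for large-volume $S$, while Theorem~\ref{thm:conductancia} only covers sets of sublinear volume. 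Finally, your concern about uniformity over exponentially many balanced $S$ is unfounded: $h(H)$ is a single graph parameter, and once the event $h(H)\ge c$ holds, the inequality $|\partial S|\ge c\cdot\min\{\Vol(S),\Vol(\overline S)\}$ holds for \emph{every} $S$ by definition; no union bound is needed.

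For part~(ii) your random-bisection argument is correct, genuinely different from the paper's, and in fact yields a stronger conclusion. The paper constructs an explicit bisection---the $\lceil |U|/2\rceil$ vertices of smallest radial coordinate---and shows it cuts $\Omega(n)$ edges via a geometric analysis together with a second-moment (Chebyshev) bound; the Chebyshev step is the source of the weaker $1-o(n^{-1+\xi})$ probability. Your argument needs only $|F|=\Theta(n)$, which holds w.e.p.\ (the lower bound from connectivity and $|U|=\Theta(n)$ via Corollary~\ref{cor:volTobias}, the upper bound from $\Vol(U)=O(n)$ via Lemma~\ref{lem:VolUpperBound}); on that event a uniformly random balanced bipartition has expected cut size $(\tfrac12+o(1))|F|$, so by the first-moment method some balanced bipartition achieves $|\partial S|=\Omega(n)$ deterministically---no concentration is needed. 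Hence your route gives $B(H)=\Theta(n)$ w.e.p., stronger than the corollary as stated.
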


\subsection{Related work}
Although the random hyperbolic graph model was relatively
  recently introduced~\cite{KPKVB10}, 
  quite a few papers have analyzed
  several of its properties.
However, none of them deals
  with the spectral gap of these graphs. In~\cite{GPP12}, the degree distribution, the maximum degree and global 
  clustering coefficient were determined.
The already mentioned paper of~\cite{BFM15} characterized the existence of
  a giant component as a function of $\alpha$; very recently, more
  precise results including a law of large numbers for the largest
  component in these networks was established
  in~\cite{FMLaw}. 
The threshold in terms of $\alpha$ for the connectivity of random
  hyperbolic graphs was given in~\cite{BFM13b}. 
Concerning diameter and graph distances,
  except for the aforementioned papers of~\cite{km15} and~\cite{fk15},
  the average distance of two points belonging to the giant component
  was investigated in~\cite{ABF}. 
Results on the global clustering coefficient of the so called
  binomial model of random hyperbolic graphs were obtained
  in~\cite{CF13}, and on the evolution of graphs on more general
  spaces with negative curvature in~\cite{F12}.

The model of random hyperbolic graphs in the regime where $\frac12 < \alpha < 1$, is very similar to two different models
studied in the literature: the model of inhomogeneous long-range
percolation in $\mathbb{Z}^d$ as defined in~\cite{Remco}, and the
model of geometric inhomogeneous random graphs, as introduced
in~\cite{BKL2}. In both cases, each vertex is given a weight, and
conditionally on the weights, the edges are independent (the presence
of edges depending on one or more parameters). In~\cite{Remco} the
degree distribution, the existence of an infinite component and the
graph distance between remote pairs of vertices in the model of
inhomogeneous long-range percolation are analyzed. On the other hand,
results on typical distances, diameter, clustering coefficient,
separators, and existence of a 
  giant component in the model of geometric
inhomogeneous graphs were given in~\cite{BKL1,BKL2}, and bootstrap
percolation in the same model was studied in~\cite{KL}. Both models
are very similar to each other, and similar results were obtained in
both cases; since the latter model assumes vertices in a toroidal
space, it generalizes random hyperbolic graphs.  

\subsection{Organization}
In Section~\ref{sec:overview}, we give an overview of the general proof strategy. In Section~\ref{sec:prelim}, we collect some known general useful
  results and establish a couple of 
  new ones  concerning random hyperbolic graphs
  that we later rely on.
In Section~\ref{sec:gap}, we determine up to polylogarithmic factors 
  both the conductance and
  the eigenvalue gap of the giant component of
  random hyperbolic graphs.
In Section~\ref{sec:conductance}, we essentially show that only linear
  size vertex sets $S$ of the giant component of random hyperbolic graphs
  can induce ``small bottlenecks''  measured in terms of 
  conductance, i.e., if $h(S)$
  is approximately
  equal to the conductance of the giant component $H$, then $S$ must contain essentially
  a constant fraction of $H$'s vertices.
In Section~\ref{sec:bisec}, we derive results concerning related graph 
  parameters such as minimum and maximum bisection as well as maximum 
  cuts of random hyperbolic graphs.
Finally, in Section~\ref{sec:conclusion}, we discuss some questions our 
  result naturally raises as well as possible future research directions.



\section{Overview of the proof of the main theorems}\label{sec:overview}
The proof of Theorem~\ref{th:main} is based on the so
  called multicommodity flow method.
Specifically, it is based on the fact that $\lambda_1(H)$ can by its variational 
  characterization be bounded from below as a function of a suitably 
  defined multicommodity flow defined on $H$. 
Roughly speaking, we aim for finding a flow between all pairs of
  vertices consisting of not too long paths, and moreover these paths
  are defined in such a way that no single edge has too much flow
  going through it. 
We point out that the classical canonical path technique of routing
  the flow through one single path cannot give the claimed result, hence
  we have to split the flow through different edges. 
Our main task therefore consists in defining such a flow by
  exploiting properties of the hyperbolic model. 
In a nutshell, for pairs of vertices ``close'' to the center we
  route the flow evenly through paths of length $3$ all of whose
  vertices are also relatively close to the center.
We then extend the flow to pairs of vertices where at least one
  vertex is ``far'' from the center by attaching a 
  ``shortest'' path from
  each such vertex into the center area; from there on the same
  strategy of length $3$ paths as before is applied. 
A crucial ingredient on which the analysis relies 
  concerns properties of the mentioned ``shortest'' paths
  implied by the metric of the underlying hyperbolic space.
The corresponding
  upper bound of Lemma~\ref{lem:upperBound} is easier, by Cheeger's
  inequality it is enough to find an
  upper bound on the conductance of $H$. The latter can be obtained by
  considering the set of vertices of $H$ that belong to a half
    disk of
  $B_O({R})$.

In order to obtain Theorem~\ref{thm:conductancia}, we decompose the graph in a way that takes into account the underlying geometry.
Informally said, the decomposition establishes the existence
  of regions $\calR$ of $B_O({R})$ such that for sets of vertices $S$ whose volume is $O(n^\varepsilon)$ for some $0 < \epsilon < 1$ the following holds: 
  \begin{inparaenum}[(i).-]
  \item $\calR$ covers a significant fraction of the edges 
  incident to $S$, and 
  \item the fraction of vertices of $\calR$ that belong
  to $S\cap\calR$ and to $\overline{S}\cap\calR$ are both non-trivial,
  or both $\Vol(S\cap\calR)$ and $\Vol(\overline{S}\cap\calR)$ are 
  a non-trivial fraction of $\Vol(\calP\cap\calR)$. In either case, the number of cut edges of $\partial S$ within $\calR$ is relatively 
  large. 
  \end{inparaenum}
The main task is to classify sets $S$ according to their shape so that corresponding regions $\calR$ can be found.

Additional technical contributions are derived in the process of establishing both theorems.
We show that w.e.p.~the volume of $H$ is linear in $n$, and that moreover, the volume of a not too small sector is w.e.p.~at most proportional to its angle, provided that inside the sector there is no vertex very close to the origin (see Lemma~\ref{lem:VolUpperBound} for details). 
Whereas this result is not surprising, we hope that it will turn out to be useful in other applications as well.


\section{Preliminaries}\label{sec:prelim}
In this section we collect some of the known properties as well as derive some additional ones concerning 
  random hyperbolic graph model.
We also state for future reference some known approximations 
  for different terms concerning distances,
  angles, and measure estimates that are useful in their study.

\medskip
By the hyperbolic law of cosines~\eqref{eqn:coshLaw}, 
  the hyperbolic triangle formed by the geodesics 
  between points $p'$, $p''$, and  $p$, with opposing side segments of length 
  $\distp$, $\distpp$, and $\dist$ respectively,
  is such that the angle formed at $p$ is:
\begin{equation}\label{eqn:angle}
\theta_{\dist}(\distp,\distpp) = 
\arccos\Big(\frac{\cosh \distp\cosh \distpp-\cosh \dist}{\sinh \distp\sinh \distpp}\Big).  
\end{equation}
Clearly, $\theta_{\dist}(\distp,\distpp) = \theta_{\dist}(\distpp,\distp)$. 
Next, we state a very handy approximation for $\theta_{\dist}(\cdot,\cdot)$.
\begin{lemma}[{\cite[Lemma 3.1]{GPP12}}]\label{lem:aproxAngle}
If $0\leq\min\{\distp,\distpp\}\leq \dist\leq \distp+\distpp$, then
\[
\theta_{\dist}(\distp,\distpp) = 2e^{\frac{1}{2}(\dist-\distp-\distpp)}\big(1+\Theta(e^{\dist-\distp-\distpp})\big).
\]
\end{lemma}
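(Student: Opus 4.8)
The plan is to derive Lemma~\ref{lem:aproxAngle} directly from the exact formula~\eqref{eqn:angle} by carefully estimating the ratio inside the $\arccos$ and then using the Taylor expansion $\arccos(1-x)=\sqrt{2x}\,(1+\Theta(x))$ as $x\to 0^+$. First I would rewrite the argument of $\arccos$ in~\eqref{eqn:angle} as $1$ minus an error term, namely
\[
\frac{\cosh\distp\cosh\distpp-\cosh\dist}{\sinh\distp\sinh\distpp}
= 1 - \frac{\cosh\dist - \cosh(\distp-\distpp)}{\sinh\distp\sinh\distpp},
\]
using the identity $\cosh(\distp-\distpp)=\cosh\distp\cosh\distpp-\sinh\distp\sinh\distpp$. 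Call the subtracted quantity $x$. The hypothesis $0\le\min\{\distp,\distpp\}\le\dist\le\distp+\distpp$ is exactly what guarantees $0\le x\le 1$ (so that the angle is well defined and in $[0,\pi/2]$ it is not needed, but nonnegativity of $x$ and $x\le$ something small is what we want): indeed $\dist\ge|\distp-\distpp|$ forces $\cosh\dist\ge\cosh(\distp-\distpp)$, hence $x\ge 0$, and $\dist\le\distp+\distpp$ will make $x$ small.

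Next I would estimate $x$ asymptotically. Writing each hyperbolic function via exponentials, $\cosh t=\tfrac12 e^t(1+e^{-2t})$ and $\sinh t=\tfrac12 e^t(1-e^{-2t})$, the numerator $\cosh\dist-\cosh(\distp-\distpp)$ is dominated by $\tfrac12 e^{\dist}$ (the correction factors being $1+O(e^{-2\dist})$ and the whole $\cosh(\distp-\distpp)$ term being lower order when $\dist$ is not too small; the borderline case $\dist$ bounded needs to be handled by noting the statement's $\Theta$ absorbs constants), while the denominator $\sinh\distp\sinh\distpp$ is $\tfrac14 e^{\distp+\distpp}(1-e^{-2\distp})(1-e^{-2\distpp})$. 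Dividing, one gets
\[
x = 2e^{\dist-\distp-\distpp}\bigl(1+\Theta(e^{\dist-\distp-\distpp})+\Theta(e^{-2\distp})+\Theta(e^{-2\distpp})\bigr).
\]
The three error contributions all have to be folded into a single $\Theta(e^{\dist-\distp-\distpp})$ term; this is legitimate because $\dist-\distp-\distpp\le -2\min\{\distp,\distpp\}$ when $\dist\le\max\{\distp,\distpp\}$... more carefully, $\dist-\distp-\distpp = (\dist-\distp)-\distpp \le \distpp-\distpp\cdot$ hmm — the clean bound is $\dist\le\distp+\distpp$ gives $e^{\dist-\distp-\distpp}\le 1$, and comparing $e^{-2\distp}$ with $e^{\dist-\distp-\distpp}$ reduces to checking $\dist\ge\distpp-\distp$, i.e. $\dist\ge|\distp-\distpp|$ again (taking $\distp\le\distpp$), which holds by hypothesis. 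So each of $e^{-2\distp},e^{-2\distpp}$ is $O(e^{\dist-\distp-\distpp})$, and the error terms collapse as claimed.

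Finally I would plug $x$ into $\arccos(1-x)$. Using $\arccos(1-x)=\sqrt{2x}+O(x^{3/2})=\sqrt{2x}\,(1+O(x))$ for $x\in[0,1]$, and substituting $x=2e^{\dist-\distp-\distpp}(1+\Theta(e^{\dist-\distp-\distpp}))$, we obtain
\[
\theta_{\dist}(\distp,\distpp)=\sqrt{4e^{\dist-\distp-\distpp}}\,\bigl(1+\Theta(e^{\dist-\distp-\distpp})\bigr)
= 2e^{\frac12(\dist-\distp-\distpp)}\bigl(1+\Theta(e^{\dist-\distp-\distpp})\bigr),
\]
using that $\sqrt{1+\Theta(y)}=1+\Theta(y)$ for bounded $y$. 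The main obstacle I anticipate is bookkeeping the error terms uniformly — in particular making sure that the $\cosh(\distp-\distpp)$ piece of the numerator and the $(1-e^{-2\distp})(1-e^{-2\distpp})$ factor of the denominator are genuinely absorbed into $\Theta(e^{\dist-\distp-\distpp})$ across the whole parameter range allowed by the hypothesis (including the regime where $\dist$, $\distp$, $\distpp$ are all bounded, where the $\Theta$ is doing real work to hide constants), rather than only in the asymptotic regime where all three are large. Everything else is a routine expansion of hyperbolic functions and of $\arccos$ near $1$.
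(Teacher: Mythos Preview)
The paper does not supply its own proof of this lemma; it is quoted from \cite[Lemma~3.1]{GPP12} and used as a black box throughout. Your derivation is the standard one and is correct in outline: rewrite the argument of $\arccos$ as $1-x$ via the identity $\cosh(\distp-\distpp)=\cosh\distp\cosh\distpp-\sinh\distp\sinh\distpp$, expand $x$ in exponentials to obtain $x=2e^{\dist-\distp-\distpp}\bigl(1+\Theta(e^{\dist-\distp-\distpp})\bigr)$, and finish with $\arccos(1-x)=\sqrt{2x}\,(1+O(x))$.

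One bookkeeping point you flag but do not fully pin down: the $\cosh(\distp-\distpp)$ contribution to the numerator produces a relative error of order $e^{|\distp-\distpp|-\dist}$, and absorbing this into $\Theta(e^{\dist-\distp-\distpp})$ actually requires $\max\{\distp,\distpp\}\le\dist+O(1)$, which is strictly stronger than the stated hypothesis $\min\{\distp,\distpp\}\le\dist$. Relatedly, you tacitly invoke the triangle condition $\dist\ge|\distp-\distpp|$ (needed for the angle to be defined at all), which does not literally follow from the hypothesis as written. This is more an imprecision in how the lemma is quoted than a flaw in your method; in every use the paper makes of it (cf.~Remark~\ref{rem:aproxAngle} and the applications that follow) the extra condition holds comfortably.
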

\begin{remark}\label{rem:aproxAngle}
We will use the previous lemma also in this form: let
  $p'$ and $p''$ be two points at distance $\dist$ from each other
  such that $r_{p'},r_{p''} > \frac{R}{2}$ and $\min\{r_{p'},r_{p''}\} \leq \dist \leq R$.
Then, taking $\distp=r_{p'}$
  and $\distpp=r_{p''}$ in Lemma~\ref{lem:aproxAngle}, we get
\[
\theta_{\dist}(r_{p'},r_{p''}) 
  := 2e^{\frac{1}{2}(\dist - r_{p'} - r_{p''})}\big(1+\Theta(e^{\dist-r_{p'}-r_{p''}})\big).
\]
Note also that $\theta_{\dist}(r_{p'},r_{p''})$, for fixed $r_{p'},r_{p''} > \frac{R}{2}$, 
  is increasing as a function of $\dist$ (for $\dist$ satisfying the constraints). 
Below, when aiming for an upper bound, we always use $\dist=R$.
\end{remark}

\medskip
Throughout, we will need estimates for measures of 
  regions of the hyperbolic plane, and more specifically, for regions obtained by performing some set algebra
  involving a few balls.
For a point $p$ of the hyperbolic plane $\HH^2$, 
  the ball of radius $\rho$ centered at $p$ will be denoted by
  $B_{p}(\rho)$, i.e., $B_{p}(\rho) := \{q\in\HH^2 : \dist(p,q)\leq \rho\}$.

Also, we denote by $\mu(S)$ the measure of a set 
  $S \subseteq \HH^2$, i.e.,
\[
\mu(S) := \int_{S}f(r,\theta)drd\theta.
\]
Next, we collect a few results for such measures.
\begin{lemma}[{\cite[Lemma~3.2]{GPP12}}]\label{lem:muBall} 
If $0\leq \rho< R$, then
\begin{equation}\label{eq:muBall1}
\mu(B_{O}(\rho)) = e^{-\alpha(R-\rho)}(1+o(1)).
\end{equation}
Moreover, if $p\in B_{O}(R)$ is such that $r_p=r$, then for 
  $C_{\alpha}:=2\alpha/(\pi(\alpha-\frac{1}{2}))$,
\begin{equation}\label{eq:muBall2}
\mu(B_{p}(R)\cap B_{O}(R)) = C_{\alpha}e^{-\frac{r}{2}}\big(1+O(e^{-(\alpha-\frac{1}{2})r}+e^{-r})\big).
\end{equation}
\end{lemma}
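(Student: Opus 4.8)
The plan is to reduce both statements to one‑dimensional integrals of the radial density $f$ and then estimate these integrals via standard hyperbolic asymptotics, using Lemma~\ref{lem:aproxAngle} for the angular half‑width of the relevant lunes. For the first identity this is immediate: $B_O(\rho)$ is the Euclidean disk $\{(r,\theta):0\le r\le\rho\}$, and integrating out the (uniform) angular variable,
\[
\mu(B_O(\rho))=\int_0^\rho f(r)\,dr=\frac{1}{C(\alpha,R)}\int_0^\rho\alpha\sinh(\alpha r)\,dr=\frac{\cosh(\alpha\rho)-1}{\cosh(\alpha R)-1}.
\]
Since $R=2\log n+C\to\infty$ we have $\cosh(\alpha R)-1=\frac{1}{2}e^{\alpha R}(1+o(1))$, and likewise $\cosh(\alpha\rho)-1=\frac{1}{2}e^{\alpha\rho}(1+o(1))$ whenever $\rho$ grows with $n$; dividing yields $e^{-\alpha(R-\rho)}(1+o(1))$ (for bounded $\rho$ one simply keeps the exact expression above).

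For the second identity I would use the rotational invariance of $\mu$ and of $B_O(R)$ to assume $p=(r,0)$, so that the quantity depends only on $r=r_p$. By~\eqref{eqn:coshLaw}, a point $q=(r',\theta')$ with $r'\le R$ lies in $B_p(R)$ iff $\cosh r\cosh r'-\sinh r\sinh r'\cos\theta'\le\cosh R$; the left‑hand side increases with $|\theta'|\in[0,\pi]$ and equals $\cosh(r+r')$ at $\theta'=\pi$, so every $\theta'$ qualifies when $r'<R-r$, while for $R-r\le r'\le R$ exactly those with $|\theta'|\le\theta_R(r,r')$ qualify, where $\theta_R(r,r')$ is the angle at $O$ given by~\eqref{eqn:angle} with opposite side $R$. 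Integrating the density over $\theta'$ and then over $r'$ gives
\[
\mu(B_p(R)\cap B_O(R))=\mu\big(B_O(R-r)\big)+\frac{1}{\pi}\int_{R-r}^{R}f(r')\,\theta_R(r,r')\,dr'.
\]
By the first part, $\mu(B_O(R-r))=e^{-\alpha r}(1+o(1))=e^{-\frac{r}{2}}\cdot O(e^{-(\alpha-\frac{1}{2})r})$ is absorbed into the claimed error. On $[R-r,R]$ one has $r+r'\ge R$, so Lemma~\ref{lem:aproxAngle} applies with $\dist=R$, $\distp=r$, $\distpp=r'$ and gives $\theta_R(r,r')=2e^{\frac{1}{2}(R-r-r')}\big(1+\Theta(e^{R-r-r'})\big)$; together with $f(r')=\alpha e^{-\alpha(R-r')}(1+o(1))$ on the relevant part of this range, the leading part of the integrand is $\frac{2\alpha}{\pi}e^{(\frac{1}{2}-\alpha)R}e^{-\frac{r}{2}}e^{(\alpha-\frac{1}{2})r'}$, whose integral over $[R-r,R]$ equals $\frac{2\alpha}{\pi(\alpha-\frac{1}{2})}e^{-\frac{r}{2}}\big(1-e^{-(\alpha-\frac{1}{2})r}\big)=C_\alpha e^{-\frac{r}{2}}\big(1+O(e^{-(\alpha-\frac{1}{2})r})\big)$.

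What remains is to collect the error terms and show they amount to $O(e^{-(\alpha-\frac{1}{2})r}+e^{-r})$. The truncation factor $1-e^{-(\alpha-\frac{1}{2})r}$ contributes $O(e^{-(\alpha-\frac{1}{2})r})$. The relative error $\Theta(e^{R-r-r'})$ from Lemma~\ref{lem:aproxAngle} is $\Theta(e^{-r})$ on the part $r'\approx R$ that carries a $1-o(1)$ fraction of the integral, which produces the $e^{-r}$ term; near the inner endpoint $r'\approx R-r$ it degrades to $\Theta(1)$, but there the weight $e^{(\alpha-\frac{1}{2})r'}$ is exponentially smaller, and a direct estimate shows this slice of the annulus contributes only $O(e^{-\alpha r})=O(e^{-\frac{r}{2}}e^{-(\alpha-\frac{1}{2})r})$ --- this is exactly where the hypothesis $\alpha>\frac{1}{2}$ is used. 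The remaining corrections (from approximating $\sinh(\alpha R)$, $\cosh(\alpha R)$, and $\sinh(\alpha r')$ for small $r'$) are of order $e^{-\alpha R}$ and hence negligible. Putting everything together yields $\mu(B_p(R)\cap B_O(R))=C_\alpha e^{-\frac{r}{2}}\big(1+O(e^{-(\alpha-\frac{1}{2})r}+e^{-r})\big)$. I expect this last bookkeeping step to be the main obstacle: the angular approximation of Lemma~\ref{lem:aproxAngle} becomes useless near $r'=R-r$, so one must argue separately that this slice is negligible, and one has to verify that none of the $e^{-\alpha R}$, $e^{-r}$, $e^{-(\alpha-\frac{1}{2})r}$ corrections interact to produce something larger than claimed.
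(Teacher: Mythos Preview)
Your argument is correct and is essentially the standard proof of this result as carried out in~\cite{GPP12}; note that the present paper does not prove Lemma~\ref{lem:muBall} at all but simply quotes it from~\cite[Lemma~3.2]{GPP12}. Your decomposition into the inner disk $B_O(R-r)$ and the annulus $R-r\le r'\le R$, followed by the use of Lemma~\ref{lem:aproxAngle} for the angular half-width and the bookkeeping of the $e^{-(\alpha-\frac12)r}$ and $e^{-r}$ error terms, is exactly how the computation goes in the original reference.
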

A direct consequence of~\eqref{eq:muBall1} is:
\begin{cor}\label{cor:medidalb} 
If $0 \leq \rho'_O < \rho_O < R$, then
\[
\mu(B_{O}(\rho_{O})\setminus B_{O}(\rho'_{O}))
  = e^{-\alpha(R-\rho_{O})}(1-e^{-\alpha(\rho_{O}-\rho'_{O})}+o(1)).
\]
\end{cor}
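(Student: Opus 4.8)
The plan is to derive this directly from~\eqref{eq:muBall1} by a single subtraction; essentially no new idea is needed, only some care in combining error terms. First I would observe that, since $\rho'_O<\rho_O$, any point of $\HH^2$ at hyperbolic distance at most $\rho'_O$ from the origin is also at distance at most $\rho_O$, so $B_O(\rho'_O)\subseteq B_O(\rho_O)$. Because $\mu$ is a finite non-negative measure, this nesting gives
\[
\mu\big(B_O(\rho_O)\setminus B_O(\rho'_O)\big)=\mu\big(B_O(\rho_O)\big)-\mu\big(B_O(\rho'_O)\big).
\]

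Next I would apply~\eqref{eq:muBall1} of Lemma~\ref{lem:muBall} once with $\rho=\rho_O$ and once with $\rho=\rho'_O$, turning the right-hand side into $e^{-\alpha(R-\rho_O)}(1+o(1))-e^{-\alpha(R-\rho'_O)}(1+o(1))$. Factoring out $e^{-\alpha(R-\rho_O)}$ and writing $e^{-\alpha(R-\rho'_O)}=e^{-\alpha(R-\rho_O)}\,e^{-\alpha(\rho_O-\rho'_O)}$, the claimed identity
\[
\mu\big(B_O(\rho_O)\setminus B_O(\rho'_O)\big)=e^{-\alpha(R-\rho_O)}\big(1-e^{-\alpha(\rho_O-\rho'_O)}+o(1)\big)
\]
follows, provided the two error terms really do collapse into a single $o(1)$ inside the parenthesis.

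This last point is the only thing that needs an argument, and it is a mild one: the two occurrences of $o(1)$ coming from~\eqref{eq:muBall1} are a priori different functions of $n$, but each tends to $0$, and after factoring out $e^{-\alpha(R-\rho_O)}$ the second one is multiplied by $e^{-\alpha(\rho_O-\rho'_O)}\le 1$ (using $\alpha>0$ and $\rho_O>\rho'_O$); hence their combined contribution is bounded in absolute value by a function that is $o(1)$, which is exactly what the displayed statement asserts. So the main (and in fact only) obstacle is purely bookkeeping, which is why the result is recorded as a corollary of~\eqref{eq:muBall1}; its utility will be in quickly estimating the $\mu$-measure of annuli centered at the origin.
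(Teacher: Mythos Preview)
Your proposal is correct and matches the paper's approach exactly: the paper records this corollary as ``a direct consequence of~\eqref{eq:muBall1}'' without further argument, and your subtraction-and-factoring is precisely the intended one-line derivation. Your extra paragraph on combining the two $o(1)$ terms is more careful than the paper bothers to be, but it is correct and appropriate.
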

Sometimes we will require the following stronger version 
  of~\eqref{eq:muBall2}.
\begin{lemma}[{\cite[Lemma 4]{km15}}]\label{lem:muBallInterGen}
If $r_{p}\leq \rho_{p}$ and $\rho_{O}+r_{p}\geq \rho_{p}$, then for 
    $C_{\alpha}:=2\alpha/(\pi(\alpha-\frac{1}{2}))$
\[
\mu(B_{p}(\rho_{p})\cap B_{O}(\rho_{O})) 
  = C_{\alpha}\big(e^{-\alpha(R-\rho_{O})-\frac{1}{2}(\rho_{O}-\rho_p+r_p)}\big)
        + O(e^{-\alpha(R-\rho_{p}+r_{p})}).
\]
\end{lemma}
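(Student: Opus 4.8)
The plan is to evaluate the measure directly in polar coordinates centred at the origin~$O$, following the same route that yields the special case $\rho_{O}=\rho_{p}=R$ of~\eqref{eq:muBall2}. Write $\theta_{p}$ for the angular coordinate of $p$. Since $f(r,\theta)=f(r)/(2\pi)$ is independent of~$\theta$, it suffices to find, for each radius~$r$, the angular extent of $\{\theta:(r,\theta)\in B_{p}(\rho_{p})\}$. By the hyperbolic law of cosines the distance from $p$ to $(r,\theta)$ increases from $|r-r_{p}|$ to $r+r_{p}$ as $|\theta-\theta_{p}|$ runs over $[0,\pi]$, so this set is the whole circle when $r\le\rho_{p}-r_{p}$, is empty when $r>\rho_{p}+r_{p}$, and otherwise is the arc $|\theta-\theta_{p}|\le\theta_{\rho_{p}}(r,r_{p})$ with the half-angle given by~\eqref{eqn:angle}. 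Using the hypothesis $\rho_{O}\ge\rho_{p}-r_{p}$, this gives
\[
\mu\big(B_{p}(\rho_{p})\cap B_{O}(\rho_{O})\big)
 =\mu\big(B_{O}(\rho_{p}-r_{p})\big)+\frac{1}{\pi}\int_{\rho_{p}-r_{p}}^{\,\bar r}f(r)\,\theta_{\rho_{p}}(r,r_{p})\,dr,
\qquad \bar r:=\min\{\rho_{O},\rho_{p}+r_{p}\}.
\]
By~\eqref{eq:muBall1} the first summand equals $e^{-\alpha(R-\rho_{p}+r_{p})}(1+o(1))$, which is of the order of the additive error term claimed in the statement and can be set aside.

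For the integral I would substitute the two available asymptotics. On the range of integration $\min\{r,r_{p}\}\le r_{p}\le\rho_{p}\le r+r_{p}$, so Lemma~\ref{lem:aproxAngle} applies and gives $\theta_{\rho_{p}}(r,r_{p})=2e^{\frac12(\rho_{p}-r-r_{p})}\big(1+\Theta(e^{\rho_{p}-r-r_{p}})\big)$; also $f(r)=\frac{\alpha\sinh(\alpha r)}{\cosh(\alpha R)-1}=\alpha e^{-\alpha(R-r)}\big(1+O(e^{-2\alpha r}+e^{-\alpha R})\big)$. Keeping only the leading factors, the integral becomes
\[
\frac{2\alpha}{\pi}\,e^{-\alpha R+\frac12(\rho_{p}-r_{p})}\int_{\rho_{p}-r_{p}}^{\bar r}e^{(\alpha-\frac12)r}\,dr
 =\frac{2\alpha}{\pi(\alpha-\frac12)}\,e^{-\alpha R+\frac12(\rho_{p}-r_{p})}\Big(e^{(\alpha-\frac12)\bar r}-e^{(\alpha-\frac12)(\rho_{p}-r_{p})}\Big),
\]
using $\alpha-\frac12>0$. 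In the relevant regime $\bar r=\rho_{O}$ (i.e.\ when $B_{O}(\rho_{O})$ does not already contain $B_{p}(\rho_{p})$), rearranging the exponent turns the first term into $C_{\alpha}e^{-\alpha(R-\rho_{O})-\frac12(\rho_{O}-\rho_{p}+r_{p})}$, the claimed main term, while the second term equals $C_{\alpha}e^{-\alpha(R-\rho_{p}+r_{p})}$ and is again absorbed by the error; the case $\bar r=\rho_{p}+r_{p}$, where $B_{p}(\rho_{p})\subseteq B_{O}(\rho_{O})$, is handled by the same evaluation with $\bar r$ replacing $\rho_{O}$.

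The remaining, and main, technical point is to check that every multiplicative error discarded along the way contributes only $O(e^{-\alpha(R-\rho_{p}+r_{p})})$ overall. Each such correction amounts to inserting an extra factor $e^{\rho_{p}-r-r_{p}}$, $e^{-2\alpha r}$, or $e^{-\alpha R}$ into the integral above; in the first two cases the resulting exponent of $e^{r}$ is negative (here one uses $\alpha<1$ and $\alpha>\frac12$ respectively), so the integral is dominated by its lower endpoint $r=\rho_{p}-r_{p}$, and a short computation — using $r_{p}\le\rho_{p}$ for the $e^{-2\alpha r}$ term — shows that each such contribution is $O(e^{-\alpha(R-\rho_{p}+r_{p})})$, while the $e^{-\alpha R}$ term is trivially of lower order. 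Assembling the surviving main term with the absorbed pieces then yields the identity. The delicate part is exactly this bookkeeping near $r=\rho_{p}-r_{p}$, where Lemma~\ref{lem:aproxAngle} is only order-accurate; the rest is routine.
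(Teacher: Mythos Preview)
The paper does not give its own proof of this lemma; it is quoted verbatim from~\cite{km15} and used as a black box. So there is no in-paper argument to compare against.

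That said, your proof is correct and is exactly the natural route one would expect the cited source to take: integrate $f(r)/(2\pi)$ over the lune in polar coordinates around $O$, split off the full-disc contribution $\mu(B_O(\rho_p-r_p))$ using $\rho_O\ge\rho_p-r_p$, apply Lemma~\ref{lem:aproxAngle} to the half-angle on the remaining annulus, and evaluate the resulting elementary integral. The main term emerges from the upper limit $\bar r=\rho_O$, and every discarded correction is governed by the lower limit $r=\rho_p-r_p$, hence of size $O(e^{-\alpha(R-\rho_p+r_p)})$.

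Two small remarks. First, your parenthetical ``here one uses $\alpha<1$ and $\alpha>\frac12$ respectively'' overstates what is needed: the extra factor $e^{\rho_p-r-r_p}$ makes the $r$-exponent $\alpha-\tfrac32$, and the factor $e^{-2\alpha r}$ makes it $-\alpha-\tfrac12$, both negative for every $\alpha>0$; the hypotheses $\tfrac12<\alpha<1$ are not the operative ones here (the place $\alpha>\tfrac12$ genuinely enters is in making $\int e^{(\alpha-\frac12)r}\,dr$ dominated by its \emph{upper} endpoint so that the main term is indeed main). Second, the case $\bar r=\rho_p+r_p$ yields a formula with $\rho_p+r_p$ in place of $\rho_O$; the lemma as stated is really meant for the regime $\rho_O\le\rho_p+r_p$, which is how it is used in the paper. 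None of this affects the validity of your argument.
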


\medskip
At several places in this paper we need the following concentration bound.
\begin{theorem}{\cite[Corollary~A.1.14]{as08}}\label{thm:Chernoff}
Let $Y$ be the sum of mutually independent indicator random variables, $\mu=\Ex(Y).$ 
For all $\epsilon > 0$, there is a $c_{\epsilon}>0$ that 
  depends only on $\epsilon$ such that
$$
\Pr(|Y-\mu| > \epsilon \mu) < 2e^{-c_{\epsilon}\mu}.
$$
\end{theorem}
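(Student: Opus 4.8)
The plan is to prove the upper and lower tail bounds separately by the exponential moment (Bernstein--Chernoff) method and then combine them by a union bound, taking $c_\epsilon$ to be the smaller of the two exponents produced. Throughout I may assume $\mu>0$, since for $\mu=0$ all $X_i$ vanish and the statement is vacuous.

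First I would write $Y=\sum_{i=1}^{m}X_i$ with the $X_i$ independent and $\{0,1\}$-valued, and set $p_i:=\Pr(X_i=1)$, so that $\mu=\sum_{i}p_i$. For the upper tail, for any $t>0$ Markov's inequality applied to $e^{tY}$ together with independence gives
\[
\Pr\big(Y>(1+\epsilon)\mu\big)\le e^{-t(1+\epsilon)\mu}\prod_{i=1}^{m}\Ex\big(e^{tX_i}\big)
= e^{-t(1+\epsilon)\mu}\prod_{i=1}^{m}\big(1+p_i(e^t-1)\big),
\]
and using $1+x\le e^{x}$ the product is at most $e^{\mu(e^t-1)}$. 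Optimizing the exponent over $t>0$ by taking $t=\log(1+\epsilon)$ yields
\[
\Pr\big(Y>(1+\epsilon)\mu\big)\le e^{-\mu\,\phi^{+}(\epsilon)},\qquad \phi^{+}(\epsilon):=(1+\epsilon)\log(1+\epsilon)-\epsilon ,
\]
where $\phi^{+}(\epsilon)>0$ for every $\epsilon>0$ (it vanishes together with its derivative at $\epsilon=0$ and is strictly convex). For the lower tail I would first note that $Y\ge 0$ forces $\Pr(Y<(1-\epsilon)\mu)=0$ once $\epsilon\ge 1$, so we may assume $0<\epsilon<1$; applying the same argument to $e^{-tY}$ with $t=-\log(1-\epsilon)>0$ gives
\[
\Pr\big(Y<(1-\epsilon)\mu\big)\le e^{-\mu\,\phi^{-}(\epsilon)},\qquad \phi^{-}(\epsilon):=(1-\epsilon)\log(1-\epsilon)+\epsilon ,
\]
with $\phi^{-}(\epsilon)>0$ for $0<\epsilon<1$.

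To finish, set $c_\epsilon:=\min\{\phi^{+}(\epsilon),\phi^{-}(\epsilon)\}$ for $0<\epsilon<1$, and $c_\epsilon:=\phi^{+}(\epsilon)$ for $\epsilon\ge 1$; in either case $c_\epsilon>0$ depends only on $\epsilon$. A union bound over the events $\{Y>(1+\epsilon)\mu\}$ and $\{Y<(1-\epsilon)\mu\}$ then gives $\Pr(|Y-\mu|>\epsilon\mu)\le 2e^{-c_\epsilon\mu}$, as claimed.

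There is no genuine obstacle here: the only care needed is the routine bookkeeping of checking positivity of $\phi^{\pm}$ on the relevant ranges of $\epsilon$ and separating off the degenerate cases $\epsilon\ge 1$ (lower tail trivial) and $\mu=0$ (statement vacuous). The entire content is the standard exponential-moment computation, which is why the result is quoted rather than reproved in the paper.
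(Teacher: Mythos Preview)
Your argument is correct and is exactly the standard exponential-moment proof; the paper itself does not give a proof of this statement at all but simply cites it from Alon--Spencer, so there is nothing further to compare. (One microscopic quibble: your argument yields $\le$ rather than the strict $<$ in the statement, but replacing your $c_\epsilon$ by, say, $c_\epsilon/2$ fixes this when $\mu>0$, and the $\mu=0$ case is trivial as you note.)
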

For Poisson variables, we also need the following slightly stronger bound:
\begin{theorem}{\cite[Theorem~A.1.15]{as08}}\label{thm:AlonSpencer}
Let $P$ have Poisson distribution with mean $\mu$. For $0 < \epsilon < 1$, 
$$
\begin{array}{rcl}
\Pr(P \le \mu(1-\epsilon)) & \le & e^{-\epsilon^2 \mu/2},
\end{array}
$$
and for $\epsilon > 0$,
$$
\begin{array}{rcl}
\Pr(P \ge \mu(1+\epsilon)) & \le & \left( e^{-\epsilon}(1+\epsilon)^{-(1+\epsilon))}\right)^{\mu}.
\end{array}
$$
\end{theorem}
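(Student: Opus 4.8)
The plan is to run the classical exponential-moment (Chernoff) argument, made fully explicit by using the closed form of the moment generating function of a Poisson variable. The first step is to record that for $P$ distributed as $\mathrm{Poisson}(\mu)$ and any real $t$,
\[
\Ex\big[e^{tP}\big] = \sum_{k\ge 0} e^{tk}\,\frac{e^{-\mu}\mu^{k}}{k!} = e^{-\mu}\sum_{k\ge 0}\frac{(\mu e^{t})^{k}}{k!} = e^{\mu(e^{t}-1)},
\]
which is immediate from the power series of $\exp$. Everything else is a one-sided Markov inequality applied to $e^{tP}$ followed by an optimization over the sign-appropriate range of $t$.

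For the upper tail I would fix $\epsilon>0$ and $t>0$ and write $\Pr(P\ge\mu(1+\epsilon)) = \Pr(e^{tP}\ge e^{t\mu(1+\epsilon)}) \le e^{-t\mu(1+\epsilon)}\,\Ex[e^{tP}] = \exp\!\big(\mu(e^{t}-1-t(1+\epsilon))\big)$, then minimize the exponent over $t$; the minimizer is $e^{t}=1+\epsilon$, which exceeds $1$ and so is a genuine $t>0$, and substituting it back gives
\[
\Pr\big(P\ge\mu(1+\epsilon)\big) \le \exp\!\big(\mu\epsilon-\mu(1+\epsilon)\log(1+\epsilon)\big) = \Big(\frac{e^{\epsilon}}{(1+\epsilon)^{1+\epsilon}}\Big)^{\mu},
\]
the asserted bound. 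For the lower tail I would repeat the scheme with a negative multiplier: for $0<\epsilon<1$ and $t<0$, $\Pr(P\le\mu(1-\epsilon)) = \Pr(e^{tP}\ge e^{t\mu(1-\epsilon)}) \le \exp\!\big(\mu(e^{t}-1-t(1-\epsilon))\big)$, whose exponent is minimized at $e^{t}=1-\epsilon\in(0,1)$ (hence $t<0$ as required), yielding $\Pr(P\le\mu(1-\epsilon)) \le \big(e^{-\epsilon}(1-\epsilon)^{-(1-\epsilon)}\big)^{\mu}$.

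To turn the last expression into the stated form $e^{-\epsilon^{2}\mu/2}$, the remaining step is the elementary scalar inequality $(1-\epsilon)\log(1-\epsilon)\ge -\epsilon+\epsilon^{2}/2$ for $0<\epsilon<1$, which I would prove by expanding $(1-\epsilon)\log(1-\epsilon) = -\epsilon+\sum_{j\ge 2}\frac{\epsilon^{j}}{j(j-1)}$ and discarding the manifestly nonnegative terms beyond $j=2$. Then $e^{-\epsilon}(1-\epsilon)^{-(1-\epsilon)} = \exp\!\big(-\epsilon-(1-\epsilon)\log(1-\epsilon)\big) \le e^{-\epsilon^{2}/2}$, and raising to the $\mu$-th power gives $\Pr(P\le\mu(1-\epsilon))\le e^{-\epsilon^{2}\mu/2}$. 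There is essentially no obstacle here: the only points that deserve a moment's care are checking that the optimizing $t$ lands in the correct half-line in each case — since that is precisely what validates the one-sided Markov step — and the final scalar inequality, both of which are routine.
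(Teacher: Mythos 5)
The paper does not prove this statement; it simply cites Alon and Spencer. Your proposal supplies a complete, self-contained derivation by the standard exponential-moment method, which is in fact how the cited reference establishes it. The steps check out: both optimizers land in the required half-line ($t=\log(1+\epsilon)>0$ for the upper tail, $t=\log(1-\epsilon)<0$ for the lower, validating the one-sided Markov step each time), and the term-by-term identity $(1-\epsilon)\log(1-\epsilon)=-\epsilon+\sum_{j\geq 2}\epsilon^{j}/(j(j-1))$ correctly yields the scalar inequality needed to pass from $\bigl(e^{-\epsilon}(1-\epsilon)^{-(1-\epsilon)}\bigr)^{\mu}$ to $e^{-\epsilon^{2}\mu/2}$.

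One point worth flagging: your computation gives the upper-tail bound $\bigl(e^{\epsilon}(1+\epsilon)^{-(1+\epsilon)}\bigr)^{\mu}$, whereas the statement as printed in the paper reads $\bigl(e^{-\epsilon}(1+\epsilon)^{-(1+\epsilon)}\bigr)^{\mu}$. Your version is the correct one. The optimized exponent is $\mu\bigl(\epsilon-(1+\epsilon)\log(1+\epsilon)\bigr)$, which for small $\epsilon$ behaves like $-\mu\epsilon^{2}/2$ as it must; the printed form would give an exponent of order $-2\mu\epsilon$ and would understate the Poisson upper tail for small $\epsilon$, so it cannot be right. The sign in the paper's display (together with a stray closing parenthesis there) is a typographical error, and your derivation silently corrects it.
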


We immediately derive the following lemma:%
\begin{lemma}\label{lem:meanMeus}
Let $\calP$ be the vertex set of a graph chosen according to $\poimod_{\alpha,C}(n)$.
If $S\subseteq B_{O}(R)$ is such that $\mu(S) = \omega(\frac{1}{n}\log n)$, 
  then, w.e.p.~$|S\cap\calP|=n \mu(S) (1+o(1)).$
Otherwise, w.e.p.~$|S\cap\calP|\leq\upsilon\log n$.
\end{lemma}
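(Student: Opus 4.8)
The plan is to reduce everything to the fact that $|S\cap\calP|$ has a Poisson distribution and then invoke Theorem~\ref{thm:AlonSpencer}. Indeed, since $S\subseteq B_O(R)$ and $g(r,\theta)=\delta e^{R/2}f(r,\theta)$ with $\delta e^{R/2}=n$, the defining property of the Poisson point process recalled before the statement gives that $|S\cap\calP|$ is a Poisson random variable with mean $\lambda:=\int_S g(r,\theta)\,dr\,d\theta=n\,\mu(S)$. It remains to apply the Poisson tail bounds in the two regimes.

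\emph{Case $\mu(S)=\omega(\tfrac1n\log n)$.} Here $\lambda=n\mu(S)$ satisfies $\lambda/\log n\to\infty$. I would set $\epsilon_n:=(\log n/\lambda)^{1/3}$, so that $\epsilon_n\to0$ while $\epsilon_n^2\lambda=\log n\cdot(\lambda/\log n)^{1/3}=\omega(\log n)$. The lower-tail estimate of Theorem~\ref{thm:AlonSpencer} then gives $\Pr(|S\cap\calP|\le\lambda(1-\epsilon_n))\le e^{-\epsilon_n^2\lambda/2}\le n^{-C'}$ for all $n$ large. For the upper tail I would use the bound $\Pr(|S\cap\calP|\ge\lambda(1+\epsilon))\le e^{-\lambda\chi(\epsilon)}$ from Theorem~\ref{thm:AlonSpencer}, where $\chi(\epsilon):=(1+\epsilon)\log(1+\epsilon)-\epsilon$ satisfies the elementary inequality $\chi(\epsilon)\ge\epsilon^2/4$ for $\epsilon\in[0,1]$ (since $\chi(0)=\chi'(0)=0$ and $\chi''(\epsilon)=(1+\epsilon)^{-1}\ge\tfrac12$ there); this yields $\Pr(|S\cap\calP|\ge\lambda(1+\epsilon_n))\le e^{-\epsilon_n^2\lambda/4}\le n^{-C'}$ for all $n$ large. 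A union bound gives $|S\cap\calP|=\lambda(1+O(\epsilon_n))=n\mu(S)(1+o(1))$ w.e.p.

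\emph{Case $\mu(S)=O(\tfrac1n\log n)$.} Write $\lambda=n\mu(S)\le K\log n$ for a constant $K$ and all large $n$. For a fixed threshold $t$ the quantity $\Pr(\text{a Poisson variable of mean }\lambda\ge t)$ is nondecreasing in $\lambda$, so it suffices to bound it when the mean equals $K\log n$. Taking $t:=\upsilon\log n$ (which exceeds $\lambda$ once $\upsilon\ge K$), Theorem~\ref{thm:AlonSpencer} applied with $1+\epsilon=t/\lambda$ gives $\Pr(|S\cap\calP|\ge\upsilon\log n)\le (e\lambda/t)^{t}\le(eK/\upsilon)^{\upsilon\log n}$. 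The logarithm of the right-hand side is $\upsilon\log n\,(\log(eK)-\log\upsilon)$, which for $\upsilon\ge e^{2}K$ is at most $-\upsilon\log n\le -C'\log n$ for all $n$ large. Hence $|S\cap\calP|\le\upsilon\log n$ w.e.p.

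I do not expect a genuine obstacle: this is a routine application of Poisson concentration. The only points requiring a little care are that in the first case one cannot invoke Theorem~\ref{thm:Chernoff} directly, because the relative deviation $\epsilon_n$ must be taken tending to $0$, so the explicit exponential form of Theorem~\ref{thm:AlonSpencer} together with the estimate $\chi(\epsilon)\ge\epsilon^2/4$ on $[0,1]$ is needed; and that in the second case the conclusion must hold uniformly over all admissible $S$ (including those with $\mu(S)$ arbitrarily small, even $0$), which is taken care of by the monotonicity of the Poisson upper tail in the mean, so that the extremal case is $\mu(S)=\Theta(\tfrac1n\log n)$.
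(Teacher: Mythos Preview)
Your proof is correct and follows exactly the approach the paper intends: the paper states no proof, writing only ``We immediately derive the following lemma'' right after Theorem~\ref{thm:AlonSpencer}, so the Poisson tail bounds are precisely the intended tool. Your handling of both cases---choosing $\epsilon_n=(\log n/\lambda)^{1/3}$ in the first case to get a vanishing relative error while keeping $\epsilon_n^2\lambda=\omega(\log n)$, and using monotonicity of the Poisson upper tail in the mean in the second case---fills in the details cleanly.
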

Many of the proof arguments we will later put forth involve statements
  concerning sectors of the hyperbolic disk $B_{O}(R)$, in particular, 
  their size and volume.
The next two lemmas provide estimates for such quantities.
We first provide estimates 
  for the degree of vertices of~$G$ as a function of their radius.

\medskip 
Throughout the paper let $\nu':=2\log R+\omega(1) \cap o(\log R)$.

\begin{proposition}\label{prop:degreeBound}
Let $v$ be a vertex  of $G$ chosen according to $\poimod_{\alpha,C}(n)$. 
If $r_v\leq R-\nu'$, then 
  w.e.p.~$d(v)=\Theta(e^{\frac{1}{2}(R-r_v)})$, and if 
  $r_v> R-\nu'$, then w.e.p.~$d(v) \leq (\log n)^{1+o(1)}$. 
\end{proposition}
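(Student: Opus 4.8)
The plan is to compute the expected degree of $v$ as a function of $r_v$ and then apply concentration. The degree of $v$ is exactly the number of points of $\calP$ (other than $v$) that land in $B_v(R)\cap B_O(R)$, which by the defining property of the Poisson point process is a Poisson random variable with mean $n\,\mu(B_v(R)\cap B_O(R))$ (up to the negligible contribution of $v$ itself; formally one conditions on $v\in\calP$ and uses that the rest of $\calP$ is still a Poisson process). So the whole statement reduces to (i) estimating $\mu(B_v(R)\cap B_O(R))$ and (ii) a Chernoff/Poisson tail bound.

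For step (i), in the regime $r_v\le R-\nu'$ I would invoke Lemma~\ref{lem:muBall}, equation~\eqref{eq:muBall2}: since $r_v\le R-\nu'$ with $\nu'\to\infty$, the error term $O(e^{-(\alpha-\frac12)r_v}+e^{-r_v})$ is $o(1)$ precisely when $r_v\to\infty$; for the (few) vertices with $r_v=O(1)$ one has $\mu(B_v(R)\cap B_O(R))=\Theta(1)$ directly from the fact that this intersection contains $B_O(\Theta(1))$ which has constant measure, so in all cases $\mu(B_v(R)\cap B_O(R))=\Theta(e^{-r_v/2})$. Multiplying by $n=\delta e^{R/2}$ (equivalently using $R=2\log n+C$) gives expected degree $\Theta(e^{R/2}e^{-r_v/2})=\Theta(e^{\frac12(R-r_v)})$. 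For $r_v>R-\nu'$, the set $B_v(R)\cap B_O(R)$ only shrinks as $r_v$ grows, so its measure is at most $\mu(B_v(R)\cap B_O(R))$ evaluated at $r_v=R-\nu'$, which is $O(e^{-\frac12(R-\nu')})=O(e^{\nu'/2}/n)$; hence the expected degree is $O(e^{\nu'/2})=O(R^{1+o(1)})=(\log n)^{1+o(1)}$ by the choice $\nu'=2\log R+o(\log R)$.

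For step (ii), when $r_v\le R-\nu'$ the mean degree is $\Theta(e^{\frac12(R-r_v)})=\omega(\log n)$ (again using $\nu'=o(\log R)$, so $e^{\frac12(R-r_v)}\ge e^{\nu'/2}=R^{1+o(1)}$ — wait, this is only polylogarithmic, not $\omega(\log n)$ with a good constant). So here I would instead use Theorem~\ref{thm:AlonSpencer} (or Theorem~\ref{thm:Chernoff}) with $\epsilon=1/2$: the lower tail $\Pr(P\le\mu/2)\le e^{-\mu/8}$ and the upper tail are both $e^{-\Omega(\mu)}$, and since $\mu\ge e^{\nu'/2}=(\log n)^{\omega(1)/\log\log n}\cdots$ — here I need $\mu=\omega(\log n)$ for a w.e.p.\ statement, which fails when $R-r_v$ is merely of order $\nu'$. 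The clean fix, which I expect the authors use, is: for the lower bound $d(v)=\Omega(e^{\frac12(R-r_v)})$ and the two-sided bound one only needs w.e.p.\ when $e^{\frac12(R-r_v)}\ge C'\log n$; for the narrow band where $\nu'\le R-r_v$ but $e^{\frac12(R-r_v)}=O(\log n)$ one falls back on Lemma~\ref{lem:meanMeus}'s second alternative, giving $d(v)\le\upsilon\log n$, which is still $(\log n)^{1+o(1)}$ and is consistent with $d(v)=\Theta(e^{\frac12(R-r_v)})$ being $O(\log n)$ there. Then one takes a union bound over all $n$ vertices (Poisson, so w.e.p.\ the total count is $O(n)$), which preserves w.e.p.

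The main obstacle is the boundary regime $R-r_v\approx\nu'$: the expected degree there is only polylogarithmic, so a naive Chernoff bound does not give $1-n^{-C'}$, and one must argue more carefully — either by noting the statement's conclusion in that regime is the weak bound $(\log n)^{1+o(1)}$ rather than a sharp $\Theta(\cdot)$, matching exactly what Lemma~\ref{lem:meanMeus} delivers when $\mu(S)=O(\frac1n\log n)$, or by absorbing this band into the "$r_v>R-\nu'$" case. Everything else is a routine substitution of $R=2\log n+C$ into Lemma~\ref{lem:muBall} plus a standard concentration-and-union-bound argument.
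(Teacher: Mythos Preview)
Your approach matches the paper's: estimate $\mu(B_v(R)\cap B_O(R))$ via Lemma~\ref{lem:muBall} and then invoke Lemma~\ref{lem:meanMeus}. However, what you flag as the ``main obstacle'' is not an obstacle. You computed $e^{\nu'/2}=R^{1+o(1)}$ and worried this is ``only polylogarithmic, not $\omega(\log n)$,'' but look again at the definition $\nu'=2\log R+\omega(1)\cap o(\log R)$: the additive $\omega(1)$ gives $e^{\nu'/2}=R\cdot e^{\omega(1)}=\omega(R)=\omega(\log n)$, and it is in the definition precisely for this reason. Hence for \emph{every} $r_v\le R-\nu'$ the expected degree $\Theta(e^{\frac12(R-r_v)})\ge\Theta(e^{\nu'/2})=\omega(\log n)$, the first alternative of Lemma~\ref{lem:meanMeus} applies uniformly, and the ``narrow band where $e^{\frac12(R-r_v)}=O(\log n)$'' you try to treat separately is empty. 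The paper's proof is accordingly two sentences with no boundary case split; for $r_v>R-\nu'$ it uses exactly your monotonicity argument (bounding by the measure at a reference point $w$ with $r_w=R-\nu'$, which is still $\omega(\frac{\log n}{n})$) and Lemma~\ref{lem:meanMeus} once more.
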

\begin{proof}
Assume first that $r_v\leq R-\nu'$. 
Note that $d(v)=|B_{v}(R)\cap \calP|$.
Since by~Lemma~\ref{lem:muBall} we have
  $\mu(B_{v}(R)\cap B_{O}(R))=\Theta(e^{-\frac{r_v}{2}})
     =\omega(\frac{\log n}{n})$, by~Lemma~\ref{lem:meanMeus} the first part of the claim follows.  
If $r_v\geq R-\nu'$, then
$\mu(B_v({R}) \cap B_O({R}))$ is bounded from above by 
  $\mu(B_w({R}) \cap B_O({R}))$ where $w$ is a point of $B_O({R})$ with $r_w=R-\nu'$. We have  
$\mu(B_w({R}) \cap B_O({R})) = \Theta(e^{\frac{\nu'}{2}}/n)=\omega(\frac{\log n}{n})$, and hence by~Lemma~\ref{lem:meanMeus}, w.e.p.
$d(v) \le n\mu(B_w({R}) \cap B_O({R})) = O(e^{\frac{\nu'}{2}})=(\log n)^{1+o(1)}.$
\end{proof}

  When working with a Poisson point process $\calP$, for a positive 
  integer $\ell$, we refer to the vertices of $G$ that belong to 
  $B_{O}(\ell)\setminus B_{O}(\ell-1)$ as the $\ell$-th \emph{band} or \emph{layer} and 
  denote it by $\calP_{\ell}:=\calP_{\ell}(G)$, i.e.,  
  $\calP_{\ell}=\calP(G)\cap B_{O}(\ell)\setminus B_{O}(\ell-1)$.
We also need estimates for the cardinality and the volume of the $\calP_\ell$'s.

Since our results are asymptotic, we may and will ignore floors in the following calculations, and assume that certain expressions such as $R-\frac{2\log R}{1-\alpha}$ or the like are integers, if needed.
In what follows, also let
  \begin{align*}
  \elllow & := \big\lfloor (1-\tfrac{1}{2\alpha})R\big\rfloor, \\
  \nu & :=\tfrac{1}{\alpha}\log R+\omega(1) \cap o(\log R).
  \end{align*}
\begin{proposition}\label{prop:bandBound}
  Let $G=(V,E)$ be chosen according
  to $\poimod_{\alpha,C}(n)$ and let $\calP_\ell:=\calP_{\ell}(G)$.
\begin{enumerate}[(i).-]
\item\label{prop:bandBoundPart1}
If $\ell\geq \elllow+\nu$,
  then w.e.p.~$|\calP_{\ell}|=\Theta(ne^{-\alpha(R-\ell)})$.
Moreover, if $\ell<\elllow+\nu$, then 
  w.e.p.~$|\calP_{\ell}|=O(e^{\alpha\nu})=(\log n)^{1+o(1)}$. 

\item\label{prop:bandBoundPart2}
If $\elllow+\nu\leq \ell\leq R-\nu'$, then 
  w.e.p.~$\Vol(\calP_{\ell})=\Theta(e^{\frac{1}{2}R-(\alpha-\frac{1}{2})(R-\ell)})$.
\end{enumerate}
\end{proposition}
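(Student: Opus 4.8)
The plan is to prove the two parts separately by computing the measure $\mu(\calP_\ell)$ of the $\ell$-th band — i.e.\ $\mu(B_O(\ell)\setminus B_O(\ell-1))$ — and the corresponding volume contribution, and then invoking the Chernoff-type concentration of Lemma~\ref{lem:meanMeus} together with Proposition~\ref{prop:degreeBound}.

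For part~\ref{prop:bandBoundPart1}: by Corollary~\ref{cor:medidalb} with $\rho_O=\ell$ and $\rho'_O=\ell-1$ we have $\mu(B_O(\ell)\setminus B_O(\ell-1)) = e^{-\alpha(R-\ell)}(1-e^{-\alpha}+o(1)) = \Theta(e^{-\alpha(R-\ell)})$. Thus $n\mu(\calP_\ell)=\Theta(ne^{-\alpha(R-\ell)})$. To apply the first (accurate) case of Lemma~\ref{lem:meanMeus} we need $n e^{-\alpha(R-\ell)} = \omega(\log n)$, equivalently $\alpha(R-\ell) \le \log n - \omega(\log\log n)$; recalling $R = 2\log n + C$, this holds precisely once $R-\ell \le \tfrac{1}{\alpha}\log n - \omega(\tfrac{1}{\alpha}\log\log n)$, and since $\elllow = (1-\tfrac1{2\alpha})R$ gives $R-\elllow = \tfrac{R}{2\alpha} = \tfrac1\alpha\log n + \tfrac{C}{2\alpha}$, the condition $\ell \ge \elllow+\nu$ with $\nu = \tfrac1\alpha\log R + \omega(1)$ is exactly what is needed to push $\mu(\calP_\ell)$ above the $\tfrac1n\log n$ threshold. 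So w.e.p.\ $|\calP_\ell| = n\mu(\calP_\ell)(1+o(1)) = \Theta(ne^{-\alpha(R-\ell)})$. For $\ell < \elllow+\nu$, monotonicity of $\mu(B_O(\cdot))$ gives $\mu(\calP_\ell) \le \mu(B_O(\elllow+\nu)) = e^{-\alpha(R-\elllow-\nu)}(1+o(1)) = \Theta(e^{\alpha\nu}/n)$, and the second case of Lemma~\ref{lem:meanMeus} yields w.e.p.\ $|\calP_\ell| \le \upsilon\log n$; but we can do slightly better by noting $n\mu(\calP_\ell) = O(e^{\alpha\nu})$ and applying the upper-tail bound of Theorem~\ref{thm:AlonSpencer} directly, giving w.e.p.\ $|\calP_\ell| = O(e^{\alpha\nu}) = e^{O(\log R)} = (\log n)^{1+o(1)}$, since $\nu = \tfrac1\alpha\log R + o(\log R)$.

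For part~\ref{prop:bandBoundPart2}: every vertex $v\in\calP_\ell$ has $\ell-1 < r_v \le \ell \le R-\nu'$, so by Proposition~\ref{prop:degreeBound} w.e.p.\ $d(v) = \Theta(e^{\frac12(R-r_v)}) = \Theta(e^{\frac12(R-\ell)})$ uniformly over the (polynomially many) vertices in this range — here I would take a union bound over the w.e.p.\ degree estimates. Hence $\Vol(\calP_\ell) = \sum_{v\in\calP_\ell} d(v) = \Theta(|\calP_\ell| \cdot e^{\frac12(R-\ell)})$, and combining with part~\ref{prop:bandBoundPart1}, since $\ell \ge \elllow+\nu$, this is $\Theta(n e^{-\alpha(R-\ell)} e^{\frac12(R-\ell)}) = \Theta(n e^{-(\alpha-\frac12)(R-\ell)})$; using $n = e^{\frac12(R-C)} = \Theta(e^{R/2})$ rewrites this as $\Theta(e^{\frac12 R - (\alpha-\frac12)(R-\ell)})$, as claimed.

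The main obstacle I anticipate is purely bookkeeping rather than conceptual: one must check carefully that the parameter $\nu = \tfrac1\alpha\log R + \omega(1)$ is exactly calibrated so that the threshold $\mu(\calP_\ell) = \omega(\tfrac1n\log n)$ in Lemma~\ref{lem:meanMeus} kicks in right at $\ell = \elllow + \nu$ — the slack between $\omega(1)$ in $\nu$ and the $\log n$ (versus $\log\log n$) in the threshold is what makes the two regimes meet cleanly, and getting the constants and the direction of the inequalities right there is the only delicate point. A secondary point to be careful about is that in part~\ref{prop:bandBoundPart2} the degree estimate of Proposition~\ref{prop:degreeBound} holds w.e.p.\ for each fixed vertex position, so one needs the number of vertices to be polynomial (which holds w.e.p.\ since $|\calP|=\Theta(n)$) to take the union bound and conclude the $\Vol$ estimate simultaneously for all of $\calP_\ell$.
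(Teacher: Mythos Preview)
Your proposal is correct and follows essentially the same approach as the paper. One minor simplification worth noting: in the regime $\ell < \elllow + \nu$, rather than sorting out which case of Lemma~\ref{lem:meanMeus} applies to the band itself (your upper bound $\mu(\calP_\ell)\le \Theta(e^{\alpha\nu}/n)$ does not by itself place you in the ``otherwise'' case, since $e^{\alpha\nu}/n = \omega(\tfrac{\log n}{n})$) or resorting to Theorem~\ref{thm:AlonSpencer}, the paper simply bounds $|\calP_\ell| \le |\calP \cap B_O(\elllow+\nu)|$ and observes that $\mu(B_O(\elllow+\nu)) = \Theta(e^{\alpha\nu}/n) = \omega(\tfrac{\log n}{n})$, so the \emph{first} (concentration) case of Lemma~\ref{lem:meanMeus} applies to this larger set and gives $O(e^{\alpha\nu})$ directly.
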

\begin{proof}
Note that $e^{\alpha\nu}=(\log n)^{1+o(1)}\cap\omega(\log n)$.
Consider the first part of the claim.
By~Lemma~\ref{lem:muBall} we have
  $\mu(B_{O}(\ell)\setminus B_{O}(\ell-1))=e^{-\alpha(R-\ell)}(1-e^{-\alpha})(1+o(1))$,
  which is $\omega(\frac{\log n}{n})$ if $\ell\geq\elllow+\nu$,
  so the result follows by applying Lemma~\ref{lem:meanMeus}.
Assume now that $\ell<\elllow+\nu$.
By~Lemma~\ref{lem:muBall} we have  
  that $\mu(B_{O}(\elllow+\nu))
  = e^{-\alpha (R-(\elllow+\nu))}(1+o(1))
  = \Theta(\frac{e^{\alpha\nu}}{n})=\omega(\frac{\log n}{n})$, so applying again
  Lemma~\ref{lem:meanMeus},
  w.e.p., $|\calP_\ell|\leq |\calP\cap B_{O}(\elllow+\nu)|
      =O(e^{\alpha\nu}).$
  
Since $\Vol(\calP_\ell)=\sum_{v \in \calP_\ell} d(v)$, and for each such vertex $v$, by Proposition~\ref{prop:degreeBound}, its degree is, w.e.p., 
  $\Theta(e^{\frac12(R-r_v)})$, the second part of the claim then follows
  easily from the first part. 
\end{proof}
  
Since the introduction of the random hyperbolic 
  graph model~\cite{KPKVB10}, it was pointed out that it gives
  rise to sparse networks, specifically constant average degree
  graphs (a fact that was soon after rigorously established in~\cite{GPP12}).
It follows that the expected volume of 
  random hyperbolic graphs is $\Theta(n)$, and thus their center component
  has, in expectation, volume $O(n)$.
A close inspection of~\cite{BFM15} (see Theorem~\ref{thm:volTobias1} below) 
  actually yields that the volume of 
  the center component is $\Omega(n)$ w.e.p.
In this paper, we aim for results that hold w.e.p.~and will require
  very sharp estimates not only for the 
  volume of the center component of random hyperbolic graphs but also
  for collections of vertices restricted to some regions of $B_{O}(R)$.
Next, we describe the regions we will be concerned about. Let $\Phi$ be a $\phi$-sector, that is, 
  $\Phi$ contains all points in 
  $B_O(R)$ making an angle of at most $\phi$ at the origin with an arbitrary 
  but fixed reference point. For a vertex $v$, we say that a $\phi$-sector 
  $\Phi$ is centered at $v$ if
  $v$ lies on the bisector of $\Phi$. 
Moreover, for a $\phi$-sector $\Phi$ and a 
  vertex $v\in\Phi$, we say that $\Upsilon:=\Phi\setminus B_{O}(r_v)$
  is a \emph{sector truncated at $v$}, and 
  if in addition $\Phi$ is centered at $v$, then we say 
  it is a \emph{sector truncated and centered at $v$}.
Our next result gives precise estimates for the volume of the center 
  component vertices that belong to sectors and truncated sectors. 
Although the result is not surprising we believe it is useful to isolate
  it not only for ease of reference later in this work, but also for
  reference in follow up work. 
However, we suggest the reader skip the proof at first reading,
  due to its rather technical nature.
\begin{lemma}\label{lem:VolUpperBound}
Let $H=(U,F)$ be the center component of $G=(V,E)$ chosen according to 
  $\poimod_{\alpha,C}(n)$.
Then, w.e.p.~$\Vol(U)=O(n)$.
Moreover, let $v\in\calP_{\ell}$ be  
  such that $\ell\le (1-\xi)R$ 
  for some arbitrarily small $\xi > 0$.
If $\Upsilon$ is a sector truncated at $v$
  of angle $\phi=\Omega(e^{-\frac{\ell}{2}})$, 
  then w.e.p.~$\Vol(\Upsilon)=O(\phi n)$.
\end{lemma}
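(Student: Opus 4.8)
The plan is to handle the two assertions separately, the first being an almost immediate consequence of known results and the second requiring a layer-by-layer estimate combined with a union bound over possible sectors.

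For the statement $\Vol(U) = O(n)$: by Proposition~\ref{prop:degreeBound}, every vertex $v$ with $r_v \le R - \nu'$ has degree $\Theta(e^{(R-r_v)/2})$ w.e.p., and every vertex with $r_v > R - \nu'$ has degree $(\log n)^{1+o(1)}$ w.e.p. Hence, up to constants, $\Vol(U) \le \Vol(\calP) = \sum_\ell \Vol(\calP_\ell)$, and I would split this sum at $\elllow + \nu$ and at $R - \nu'$. For $\ell \ge \elllow + \nu$ up to $R - \nu'$, Proposition~\ref{prop:bandBound}(ii) gives $\Vol(\calP_\ell) = \Theta(e^{R/2 - (\alpha - 1/2)(R-\ell)})$, a geometric series in $(R - \ell)$ dominated by its largest term at $\ell = R - \nu'$, which is $\Theta(e^{R/2}) = \Theta(n^{\,}/e^{C/2}) = \Theta(n)$ — wait, more precisely $e^{R/2} = e^{\log n + C/2} = \Theta(n)$, so the whole contribution is $O(n)$. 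For the low layers $\ell < \elllow + \nu$, Proposition~\ref{prop:bandBound}(i) bounds $|\calP_\ell|$, and since each such vertex has degree at most $(\log n)^{1+o(1)}$ (in fact $O(e^{R/2})$ trivially, but one gets much less), the total over the $O(R)$ low layers is $o(n)$. For the top layers $R - \nu' < \ell \le R$, each vertex has degree $(\log n)^{1+o(1)}$ and there are $O(n)$ of them total, contributing $n(\log n)^{1+o(1)}$ — this last bound is too weak; instead I would sum $|\calP_\ell| \cdot (\log n)^{1+o(1)}$ using $|\calP_\ell| = \Theta(n e^{-\alpha(R-\ell)})$ which is summable and gives $O(n e^{-\alpha \nu'}(\log n)^{1+o(1)}) = o(n)$ since $e^{\alpha \nu'} = R^{2\alpha + o(1)}$. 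So $\Vol(U) = O(n)$ w.e.p.

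For the truncated sector bound: fix $v \in \calP_\ell$ with $\ell \le (1-\xi)R$, and let $\Upsilon = \Phi \setminus B_O(r_v)$ with $\Phi$ a $\phi$-sector, $\phi = \Omega(e^{-\ell/2})$. I want $\Vol(\Upsilon) = O(\phi n)$ w.e.p. The measure of the annular-sector piece of $\Upsilon$ at radial band $[\ell', \ell'+1]$ is (up to constants) $\phi \cdot \mu(B_O(\ell') \setminus B_O(\ell'-1)) / (2\pi) \cdot 2\pi = \Theta(\phi \, e^{-\alpha(R-\ell')})$ by Lemma~\ref{lem:muBall}; multiplying by $n$ and by the typical degree $\Theta(e^{(R-\ell')/2})$ of a vertex in that band, the expected volume contribution from band $\ell'$ is $\Theta(\phi n \, e^{-\alpha(R-\ell')} e^{(R-\ell')/2}) = \Theta(\phi n \, e^{-(\alpha - 1/2)(R-\ell')})$, which summed over $\ell' \ge r_v \ge \ell - 1$ is a geometric series dominated by $\ell' = R - \nu'$ (the largest radial band with controlled degree) giving $\Theta(\phi n \, e^{-(\alpha-1/2)\nu'}) = o(\phi n)$, plus the outermost bands contributing (via Proposition~\ref{prop:bandBound}-type estimates) another $o(\phi n)$. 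The catch is that the number of vertices of $\calP$ in $\Upsilon \cap (\text{band } \ell')$ has mean $n \phi \, e^{-\alpha(R-\ell')} \cdot \Theta(1)$, and for this to concentrate via Lemma~\ref{lem:meanMeus} I need this mean to be $\omega(\log n / n \cdot n) = \omega(\log n)$; when $\phi e^{-\alpha(R-\ell')} n$ is small I instead use the w.e.p.\ upper bound $\upsilon \log n$ per band from Lemma~\ref{lem:meanMeus} and absorb the $O(R)$ such bands into the $o(\phi n)$ slack (here I use $\phi = \Omega(e^{-\ell/2})$ and $\ell \le (1-\xi)R$ to ensure $\phi n \ge \Omega(e^{\xi R/2 + (\ell-\ell)/2}) = \Omega(n^{\xi'})$ for some $\xi' > 0$, which dominates $R \cdot \upsilon \log n$).

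The main obstacle — and the reason a naive first-moment argument alone does not suffice — is that we are asserting a bound that holds \emph{simultaneously for all} sectors truncated at $v$, or at least the union bound over the relevant family must be controlled. I would discretize: it is enough to prove the bound w.e.p.\ for a single fixed sector and then take a union bound over a net of $\mathrm{poly}(n)$ boundary ray positions and over the $O(R)$ choices of the band $\ell$ containing $v$ (and over $v \in \calP$, which is also $O(n)$ many), since the volume of a sector is monotone in its angular width and a $\phi$-sector is sandwiched between two sectors from the net whose angles differ by $n^{-\Omega(1)}$; such a perturbation changes the expected volume by $O(n^{-\Omega(1)} \cdot n) = o(\phi n)$ because $\phi n = \Omega(n^{\xi'})$. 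Combining the monotonicity sandwich with the per-sector w.e.p.\ concentration and the polynomial union bound yields the claim. The one genuinely delicate point to get right is keeping track of the cutoff $R - \nu'$ where the degree estimate of Proposition~\ref{prop:degreeBound} degrades, and checking that the contribution of the bands in $(R - \nu', R]$ — where degrees are only bounded by $(\log n)^{1+o(1)}$ and vertex counts by $\Theta(n\phi e^{-\alpha(R-\ell')})$ — still sums to $o(\phi n)$; this works precisely because $e^{-(\alpha - 1/2)\nu'} = R^{-(2\alpha-1)+o(1)} \to 0$ faster than the $(\log n)^{1+o(1)}$ degree overhead grows.
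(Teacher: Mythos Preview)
Your argument has a genuine gap in the treatment of the outermost bands $R-\nu' < \ell \le R$, and this is precisely where the real work in the lemma lies.

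You claim that $\sum_{\ell>R-\nu'} |\calP_\ell|\cdot(\log n)^{1+o(1)}$, with $|\calP_\ell|=\Theta(ne^{-\alpha(R-\ell)})$, gives $O(ne^{-\alpha\nu'}(\log n)^{1+o(1)})=o(n)$. This is a geometric-series error: writing $k=R-\ell$, the sum is $\sum_{k=0}^{\nu'-1} ne^{-\alpha k}$, which is dominated by the $k=0$ term (i.e., $\ell=R$) and equals $\Theta(n)$, not $\Theta(ne^{-\alpha\nu'})$. Multiplying by the per-vertex degree bound $(\log n)^{1+o(1)}$ therefore yields only $n(\log n)^{1+o(1)}$, which is \emph{not} $O(n)$. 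The same error recurs verbatim in your sector argument, where you assert the bands in $(R-\nu',R]$ contribute $o(\phi n)$; they do not, by this method, since the per-vertex bound gives $\phi n(\log n)^{1+o(1)}$. The point is that almost all vertices sit in this boundary region, and while their \emph{expected} degree is $\Theta(1)$, the w.e.p.\ pointwise bound from Proposition~\ref{prop:degreeBound} is only $(\log n)^{1+o(1)}$; you cannot recover $O(n)$ by summing pointwise degree bounds.

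What is actually needed is a direct bound on the \emph{number of edges} with both endpoints in $B_O(R)\setminus B_O(R-\nu')$. The paper does this by fixing bands $R-\nu'\le i\le j\le R$, partitioning $B_O(R)$ into $N=\lceil 2\pi/\theta_R(i{-}1,j{-}1)\rceil$ sectors so that any $\calP_i$--$\calP_j$ edge lies in adjacent sectors, and then controlling the number of sector pairs $(\Phi_k,\Phi_{k'})$ in which $|\Phi_k\cap\calP_i|$ and $|\Phi_{k'}\cap\calP_j|$ exceed their means by dyadic factors $2^c,2^{\widetilde c}$ via Poisson tail bounds. Summing over $c,\widetilde c,i,j$ yields $|E(\calP_i,\calP_j)|=O(n)$ in total. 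The same sector-and-regularity argument, rescaled, handles the truncated-sector version. Your layer-by-layer plus discretization scheme is fine for the inner bands but does not substitute for this edge-counting step near the boundary.
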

\begin{proof}
Consider the first part of the lemma.
Let $\varepsilon'=\varepsilon'(\alpha) > 0$ be a sufficiently small constant and let $r_0=(1-\frac{1}{2\alpha}-\varepsilon')R$. By Lemma~\ref{lem:muBall},
  $\mu(B_O(r_0))=(1+o(1))e^{-\alpha(R-r_0)}=\Theta(e^{-(\frac12+\alpha\varepsilon')R})$. 
Hence, 
$|\calP\cap B_O(r_0)|$ is a Poisson random variable with mean 
  $t=\Theta(n^{-2\alpha \varepsilon'})$. 
Thus, by Theorem~\ref{thm:AlonSpencer}, for every $C'>0$ there exists 
  a sufficiently large constant $C''=C''(\alpha)>0$, so that 
\[
\Pr\big(|\calP\cap B_O(r_0)| \ge \tfrac{C''}{t} \EE(|\calP\cap B_O(r_0)|)\big) \le \big(\tfrac{3C''}{t}\big)^{-C''}=\Theta(n^{-2\alpha \varepsilon' C''}) \le n^{-C'}.
\]
Hence, w.e.p.~$|\calP\cap B_O(r_0)|  \le C''=O(1)$.
Thus, by Proposition~\ref{prop:degreeBound}, 
  w.e.p.~$\Vol(\calP\cap B_O(r_0))=O(n).$ 
Recall that $\nu=\frac{1}{\alpha}\log R+\omega(1) \cap o(\log R)$.
By the same argument, using Corollary~\ref{cor:medidalb} and 
  Proposition~\ref{prop:degreeBound}, the total contribution to the 
  volume of vertices $v$ with $r_0 \le r_v \le \elllow+\nu$ is w.e.p.,
\begin{align*}
& O\big(|\calP\cap B_O(\elllow+\nu)|\max_{v\not\in B_{O}(r_0)} d(v)\big)
  = O(ne^{-\alpha(R-\elllow-\nu)}e^{\frac12(R-r_0)}) 
  = O(n^{\frac{1}{2\alpha}+\varepsilon'}e^{\alpha \nu})  
  = o(n),
\end{align*}
where the last equality follows 
  for sufficiently small $\varepsilon' > 0$, since $\alpha>\frac12$.
Similarly, for vertices $v$ with $\elllow +\nu \le r_v \le R-\nu'$, 
  by Proposition~\ref{prop:degreeBound} and 
  Proposition~\ref{prop:bandBound} part~(\ref{prop:bandBoundPart1}), 
  the total volume of these vertices, using the formula for the sum of a geometric 
  series, is w.e.p., 
\[
\sum_{\ell=\elllow+\nu}^{R-\nu'} O(ne^{-\alpha(R-\ell)}e^{\frac12(R-\ell)})=O(n^{2(1-\alpha)}) \sum_{\ell=\elllow+\nu}^{R-\nu'} e^{(\alpha-\frac12)\ell}=O(ne^{-(\alpha-\frac12)\nu'})=o(n).
\] 
For the remaining volume, we may at the expense of a factor $2$ assume 
  that all remaining edges are incident to pairs of vertices
  in $B_{O}(R)\setminus B_{O}(R-\nu')$.
Fix integers $R-\nu' \le i \le j \le R$ and assume $v\in\calP_{i}$ and $w\in\calP_j$.
Partition $B_{O}(R)$ into 
  $N:=\lceil \frac{2\pi}{\theta_R(i-1,j-1)}\rceil$ sectors
  denoted (in clockwise order) by 
  $\Phi_1, \Phi_2, \ldots,\Phi_{N}$.
Observe that if $vw$ is an edge of $G$
  then $(v,w)$ besides belonging to $\calP_i\times\calP_j$ 
  also belongs to $\Phi_k\times\Phi_{k'}$ for some $k,k'\in [N]$ where
  $|k-k'|\leq 1$.
For given $i,j$, let $\mu_j^i=\frac{1}{N}\EE|\calP_j|$ and $\mu_i^j=\frac{1}{N}\EE|\calP_i|$.
For an integer $c\geq 1$, for either $b=i$ and $a=j$, or $b=j$ and $a=i$,
  say $\Phi_k\cap\calP_b$ is \emph{$c$-regular} if 
  $2^c\mu_{b}^{a}\le |\Phi_k\cap\calP_b|\le 2^{c+1}\mu_b^{a}$. 
Note that $\mu_b^a=\Theta(n^{2(1-\alpha)}e^{(\alpha-\frac12)b-\frac12 a})$
  and by Theorem~\ref{thm:AlonSpencer}, $\Phi_k\cap\calP_b$
  is $c$-regular with probability $e^{-\Omega(c 2^{c}\mu_b^a)}$. 
For any ordered pair $(i,j)$ and any $a,b$ as before we have 
  $(\frac{1}{\log n})^{O(1)}\le \mu_b^a \le (\log n)^{O(1)}$.
Hence, w.e.p., for every $b$ and every $k$, 
$|\Phi_k \cap \calP_b| = (\log n)^{O(1)}$.

Let $c, \widetilde{c} \ge 1$ be integers. 
In expectation, for $i < j$, there are 
 $Ne^{-\Omega(c2^{c}\mu_i^j+\widetilde{c}2^{\widetilde{c}}\mu_j^i)}$
  pairs of sectors $(\Phi_k, \Phi_{k'})$ with $|k-k'|\leq 1$ such that 
  $\Phi_k\cap\calP_i$ is $c$-regular and $\Phi_{k'}\cap\calP_j$
  is $\widetilde{c}$-regular. 
Clearly, for a fixed value of $k-k'$ $\in \{-1,0,1\}$, disjoint pairs
  of sectors $(\Phi_k, \Phi_{k'})$ are independent. 
Hence, if this expectation is $\omega(\log n)$, 
  by Theorem~\ref{thm:Chernoff}, for $i < j$, w.e.p.~there are $2Ne^{-\Omega(c2^{c}\mu_i^j+\widetilde{c}2^{\widetilde{c}}\mu_j^i)}$
  such pairs of sectors $(\Phi_k, \Phi_{k'})$, and this also holds after taking a union bound over the three possible
  values of $k-k'$. Otherwise, if the expectation is $O(\log n)$, then
  w.e.p., by Theorem~\ref{thm:Chernoff}, the number of such pairs is 
  at most $\upsilon \log n$, 
  and since for every $k$ and $b$, we have w.e.p. $|\Phi_k \cap \calP_b| =
  (\log n)^{O(1)}$, 
  the total number of edges between such pairs of
  sectors is w.e.p.~$(\log n)^{O(1)}$.
Similarly, w.e.p., there are $2Ne^{-\Omega(c2^{c}\mu_i^j)}$
  pairs of sectors $(\Phi_k, \Phi_{k'})$ with $|k-k'|\leq 1$ such that 
  $\Phi_k\cap\calP_i$ is $c$-regular
  and $|\Phi_{k'}\cap\calP_j|\leq 2\mu_j^i$ 
  or the expected number of such pairs of sectors is $O(\log n)$, 
  and as before, the number of edges between such pairs of sectors 
  is w.e.p.~$(\log n)^{O(1)}$. 
A similar argument suffices for handling the 
  case of pairs of sectors $(\Phi_k, \Phi_{k'})$ with $|k-k'|\leq 1$ such that 
  $|\Phi_k\cap\calP_i|\leq 2\mu_{i}^j$
  and $\Phi_{k'}\cap\calP_j$ is $\widetilde{c}$-regular.
For the remaining pairs of sectors $(\Phi_k, \Phi_{k'})$ we have 
  $|\Phi_k\cap\calP_j|\le 2\mu_j^i$ and $|\Phi_{k'}\cap\calP_i|\le 2\mu_i^j$.
Hence, for the number of edges between $\calP_i$ and $\calP_j$, we obtain 
  that w.e.p.,
\begin{align*}
& |E(\calP_i, \calP_j)| \le 
  \sum_{\myatop{k,k'\in [N]}{|k-k'|\leq 1}} |E(\Phi_k\cap\calP_i,\Phi_{k'}\cap\calP_j)|
\\  & \mbox{} \ 
  = O(N\mu_i^j\mu_j^i) \big(2^2\!+\!\!\sum_{c \ge 1} 2^{c{+}2}e^{-\Omega(c2^{c}\mu_i^j)}  
  \!+\!\!\sum_{\widetilde{c} \ge 1} 2^{\widetilde{c}+2}e^{-\Omega(\widetilde{c}2^{\widetilde{c}}\mu_j^i)}
  +\!\!\!\!\sum_{c \ge 1, \widetilde{c} \ge 1} \!\!\! 2^{c+\widetilde{c}{+}2}e^{-\Omega(c2^{c}\mu_i^j+\widetilde{c}2^{\widetilde{c}}\mu_j^i)}\big)\!+\!(\log n)^{O(1)}
\\  & \mbox{} \
= O\big(Nn^{4(1-\alpha)}e^{-(1-\alpha)(i+j)}\big)
  \sum_{c\geq 0}2^{c} e^{-\Omega(c2^{c}\mu_i^j)}
  \sum_{\widetilde{c}\geq 0}2^{\widetilde{c}} e^{-\Omega(\widetilde{c}2^{\widetilde{c}}\mu_j^i)}+(\log n)^{O(1)}.
\end{align*}
Now, for $i < j$, observe that since $\alpha <1$, $\mu_j^i=\Omega(1)$, and hence 
  $\sum_{\widetilde{c}\geq 0} 2^{\widetilde{c}} e^{-\Omega(\widetilde{c}2^{\widetilde{c}}\mu_j^i)}=O(1)$. 
On the other hand, let  $c^*=c^*(i,j):=\min\{c\in\NN : 2^{c}\mu_i^j\ge\frac12\}.$ 
Observe that we may ignore 
  values of $c$ smaller than $c^*$, as for 
  such pairs of sectors $(\Phi_k, \Phi_{k'})$ no vertices in 
  $\Phi_{k'}\cap\calP_i$ are present, and hence no edges are counted. 
Then, 
$\sum_{c \ge c^*} 2^{c}e^{-\Omega(c2^{c}\mu_i^j)} \le
2^{c^*}\sum_{c' \ge 0} 2^{c'} e^{-\Omega(c^* 2^{c'})} =O\left((2(1-\delta))^{c^*}\right)$   for some $\delta > 0$.
Thus, w.e.p.,
\begin{align*}
|E(\calP_i, \calP_j)| =O\big(e^{(\alpha-\frac12)(i+j)}n^{3-4\alpha})(2(1-\delta))^{c^*}+(\log n)^{O(1)}.
\end{align*}
The same calculations can also be applied for $i=j$ and $k \neq k'$. For $i=j$ and $k=k'$, edges within the same sector are counted. 
Hence, since $\mu_i^i=\Omega(1)$ and thus $\sum_{c\geq 0} 2^{2c+2}e^{-\Omega(c2^c\mu_i^i)}=O(1)$, we obtain w.e.p.~$|E(\calP_i, \calP_i)|  = O(e^{(2\alpha-1)i}n^{3-4\alpha})+(\log n)^{O(1)}$.
Hence, w.e.p.,
 \begin{align*}
& \sum_{R-\nu' \le i \le j \le R} |E(\calP_i, \calP_j)|= (\log n)^{O(1)}+\sum_{R-\nu' \le i \le j \le R} O\big(e^{(\alpha-\frac12)(i+j)}n^{3-4\alpha}(2(1-\delta))^{c^*}\big).
\end{align*}
Now, in order to bound the second right hand side term,
 write $i=R-\overline{\imath}$, $j=R-\overline{\jmath}$ with $0 \le \overline{\jmath} \le \overline{\imath} \le \nu'$. Observe that since 
  $2^{c^*}=\Theta(1+n^{-2(1-\alpha)}e^{\frac12j - (\alpha-\frac12)i)})=\Theta(1+e^{(\alpha-\frac12)\overline{\imath}-\frac12 \overline{\jmath}})$. 
Consider first pairs $(i,j)$ with $c^*=O(1)$. For such pairs,
  \begin{align*}
& n^{3-4\alpha}\sum_{R-\nu' \le i \le j \le R} e^{(\alpha-\frac12)(i+j)}
 =O(n) \sum_{0 \le \overline{\jmath} \le \overline{\imath} \le \nu'} e^{(\alpha-\frac12)(-\overline{\imath}-\overline{\jmath})}=O(n),
 \end{align*}
 where we used the formula for a geometric series.
Consider then pairs $(i,j)$ with $c^*=\omega(1)$.
For such a pair, $2^{c^*}=\Theta(e^{(\alpha-\frac12)\overline{\imath}-\frac12 \overline{\jmath}})$, we have $(2(1-\delta))^{c^*}=\Theta(e^{(1-\delta')\left((\alpha-\frac12)\overline{\imath}-\frac12 \overline{\jmath}\right)})$ for some $0 < \delta' < 1$.
 Hence, 
\begin{align*}
& n^{3-4\alpha}\sum_{R-\nu' \le i \le j \le R} O\big((2(1-\delta))^{c^*}e^{(\alpha-\frac12)(i+j)}\big)
  =O(n) \sum_{0 \le \overline{\jmath} \le \overline{\imath} \le \nu'} e^{-\alpha \overline{\jmath}}e^{-\delta'((\alpha-\frac12)\overline{\imath}-\frac12\overline{\jmath})}
\\ \mbox{} \qquad
& =O(n)\sum_{0 \le \overline{\imath} \le \nu'} e^{-\delta'(\alpha-\frac12)\overline{\imath}} \sum_{\overline{\jmath}=0}^{\overline{\imath}} e^{(-\alpha+\frac{\delta'}{2})\overline{\jmath}}=O(n) \sum_{0 \le \overline{\imath} \le \nu'}e^{-\delta'(\alpha-\frac12)\overline{\imath}} =O(n),
\end{align*}
where we again used the formula for a geometric series, thus finishing 
  the proof of the first part of the claimed result.

Now, consider the second part of the lemma, and let 
  $v\in\calP_{\ell}$ with $\ell=\lambda R \le (1-\xi)R$ for 
  some arbitrarily small $\xi > 0$. 
Since $\phi=\Omega(n^{-\lambda})$, we may
  partition $\Upsilon$ into $t=\Theta(\frac{\phi}{n^{-\lambda}})$
  subsectors $T_1,\ldots, T_t$ of angle $\Theta(n^{-\lambda})$ and
  bound the volume of each subsector $T_k$ separately. Let
  $\widehat{\lambda}$ be such that
  $1-\lambda-2\alpha(1-\widehat{\lambda})=-\varepsilon'$ for
  sufficiently small $\varepsilon'=\varepsilon'(\alpha) > 0$. Note
  that since $\alpha > \frac12$, for $\varepsilon'$ small enough, we
  have $1>\widehat{\lambda} > \lambda$. 
For a fixed $T_{k}$, consider first vertices $w\in\calP\cap T_k$ 
  with $\ell\le r_w\le\widehat{\ell}:=\lfloor\widehat{\lambda}R\rfloor$. 
Since the expected number
  of vertices of such radius inside $T_k$, by Lemma~\ref{lem:muBall} and the choice of angle for defining 
  $T_k$,
  is 
  $O(n^{-\varepsilon'})$, by
  the same reasoning as in the first part of the lemma, w.e.p.~there
  are $O(1)$ such vertices, and their total volume is, by
  Proposition~\ref{prop:degreeBound}, 
  w.e.p.~$O(1)e^{\frac12(R-\ell)}=O(n^{1-\lambda})$. 
Next, let  $\overline{\lambda}$ be such that
  $1-\lambda-2\alpha(1-\overline{\lambda})=\varepsilon'$. 
Note that $1>\overline{\lambda} >\widehat{\lambda}$ and 
  consider vertices $w \in \calP \cap T_k$ with 
  $\widehat{\ell}\le r_w\le \overline{\ell} := \lfloor \overline{\lambda}R \rfloor$. As in the first part of the lemma, the total contribution of these vertices to the volume of $T_k$ is, w.e.p.,
  $$
  O(e^{\frac12(R-\widehat{\ell})}n^{1-\lambda}e^{-\alpha(R-\overline{\ell})})=O(n^{1-\widehat{\lambda}+\varepsilon'})=O(n^{\frac{1-\lambda+\epsilon'}{2\alpha}+\varepsilon'})=o(n^{1-\lambda}),
  $$
  where the last equality follows by choosing $\varepsilon'=\varepsilon'(\alpha)$ sufficiently small.

 Next, let us  consider vertices $w \in \calP \cap T_k$ with 
  $\overline{\ell}\le r_w\le R-\nu'$. 
By the same argument as in the first part of the lemma, the total 
  volume of such vertices is w.e.p.,
\[
\sum_{\ell'=\overline{\ell}}^{R-\nu'} O(n^{1-\lambda} e^{-\alpha(R-\ell')}) e^{\frac12 (R-\ell')}=o(n^{1-\lambda}).
\]
As before, we may assume that the remaining edges 
  are incident to pairs of vertices 
  in $B_O(R) \setminus B_O(R-\nu')$, with at least one vertex inside $T_k$. 
Since most vertices indeed have all its neighbors inside $T_k$, we may in 
  fact also consider only pairs of vertices in $T_k\setminus B_O(R-\nu')$.
For these pairs, the argument is as before, we fix integers 
  $R-\nu'\le i\le j\le R$, and partition $T_k$ into 
  $\lceil\frac{\phi}{\theta_R(i-1,j-1)}\rceil$ sectors of equal angle. 
Since $\lambda< 1$, the same argument as in the first part, replacing 
  the number of sectors $N$ by $O(Nn^{-\lambda})$, shows that the number 
  of such edges is w.e.p.~$O(n^{1-\lambda})$. 
Hence, since $\Vol(\Upsilon)=\sum_{k}\Vol(T_k)$, and for each $k$, 
  w.e.p.~$\Vol(T_k)=O(n^{1-\lambda})$, we have 
  w.e.p.~$\Vol(\Upsilon)=O(tn^{1-\lambda})=O(\phi n)$, and the
  second part of the lemma 
  is finished as well.
\end{proof}
Recall that a $\pi$-sector is a $\phi$-sector with angle $\pi$, that 
  is a half disk.
Next, we combine our previous lemma with known facts about the giant 
  component of random hyperbolic graphs in order to observe that both
  the volume and the size of their center component are linear in $n$, 
  and that this holds even if one considers only the vertices 
  that belong to a fixed $\pi$-sector of $B_{O}(R)$.

  \begin{theorem}\label{thm:volTobias1}[Theorem~1.4 of~\cite{BFM15}]
  Let $H=(U,F)$ be the center component of $G=(V,E)$ chosen according to 
  $\poimod_{\alpha,C}(n)$. Let $\Pi$ is a $\pi$-sector, then
  w.e.p.~$|U\cap\Pi|=\Omega(n)$. Moreover, w.e.p. $H$ is the giant component of $G$.
  \end{theorem}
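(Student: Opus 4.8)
The plan is to deduce both assertions from the known existence of a linear-size giant component in random hyperbolic graphs, after upgrading that fact in two ways: localizing the giant to an arbitrary fixed $\pi$-sector, and strengthening ``a.a.s.''\ to ``w.e.p.''. First I would record why the center component is canonical: $\calP\cap B_O(\tfrac R2)$ spans a clique of $G$, since any two such points $u,v$ are at hyperbolic distance at most $r_u+r_v\le R$ by the triangle inequality, so all of them lie in a single component, which is by definition $H$. By Lemma~\ref{lem:muBall}, $\mu(B_O(\tfrac R2))=\Theta(e^{-\alpha R/2})=\Theta(n^{-\alpha})=\omega(\tfrac{\log n}{n})$, so Lemma~\ref{lem:meanMeus} gives that w.e.p.\ $|\calP\cap B_O(\tfrac R2)|=\Theta(n^{1-\alpha})\ge 1$; in particular $H$ is well defined and nonempty w.e.p.\ and contains $B_O(\tfrac R2)$.

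For the lower bound $|U\cap\Pi|=\Omega(n)$, the key observation is that the giant-component construction of~\cite{BFM15} is \emph{local in the angular coordinate}: the connected set of $\Omega(n)$ vertices it exhibits is assembled from points near radius $R$ --- whose $O(1)$ connections to one another and to $\calP\cap B_O(\tfrac R2)$ subtend vanishing angle --- together with the central clique. Hence, restricting that construction to $\Pi$ (shrunk by a vanishing angle to absorb boundary effects, which discards only $o(n)$ vertices) still yields a connected subgraph of $G$ with $\Omega(n)$ vertices in $\Pi$, each adjacent to a vertex of $\calP\cap B_O(\tfrac R2)\subseteq U$ and therefore lying in $U\cap\Pi$. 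Moreover, the concentration estimates used in~\cite{BFM15} are of Chernoff type (cf.\ Theorems~\ref{thm:Chernoff}--\ref{thm:AlonSpencer}) with failure probability $n^{-\omega(1)}$, which is exactly what lets one read off the conclusion w.e.p.\ rather than merely a.a.s.

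A more self-contained route to the weaker statement $|U|=\Omega(n)$ --- though not to the $\pi$-sector refinement --- would combine the w.e.p.\ bound $\Vol(U)=\Omega(n)$, which falls out of inspecting~\cite{BFM15}, with a band-decomposition estimate in the spirit of the proof of Lemma~\ref{lem:VolUpperBound}: for a sufficiently large constant $K$, the vertices of degree at least $K$ carry only an $O(K^{-(2\alpha-1)})$-fraction of $\Vol(G)$ w.e.p., so $\Vol(U\cap\{v:d(v)<K\})=\Omega(n)$ and hence $|U|\ge K^{-1}\Vol(U\cap\{v:d(v)<K\})=\Omega(n)$.

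Finally, to see that $H$ is the giant component: applying the lower bound above to $\Pi$ and to the complementary half-disk gives $|U|=\Omega(n)$ w.e.p., whereas by~\cite[Corollary~13]{km15} w.e.p.\ every component of $G$ other than $H$ has $O(\mathrm{polylog}\,n)$ vertices; so w.e.p.\ $H$ is the unique component of more than polylogarithmic --- in particular, maximum --- order. The step I expect to be the main obstacle is making precise the two refinements of~\cite[Theorem~1.4]{BFM15} invoked in the lower bound: that its giant-component construction can be confined to (slightly less than) any fixed half-disk, and that its probabilistic estimates are sharp enough to hold w.e.p., since~\cite{BFM15} states only an a.a.s.\ conclusion for the full disk. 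Once these are granted, everything else reduces to the measure and degree bounds already established in Section~\ref{sec:prelim}.
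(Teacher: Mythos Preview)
Your proposal takes essentially the same approach as the paper: both defer to a close inspection of \cite{BFM15}, noting that the proof there transfers to the Poisson model, holds w.e.p.\ (since the underlying concentration bounds are Chernoff-type), and can be restricted to a half-disk (since the construction is angularly local); you correctly flag these two refinements as the main obstacle, and the paper simply asserts them more tersely by pointing to specific items in \cite{BFM15} (equation~(4.21), Lemma~4.2, Lemma~4.1, equation~(4.3)). The only minor divergence is that for the second assertion you invoke \cite[Corollary~13]{km15} (polylogarithmic non-giant components) rather than going back to \cite{BFM15} as the paper does, but both are black-box citations to prior work, so this is a matter of taste rather than a genuine difference.
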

  \begin{proof}
  A close inspection of Theorem~1.4 part~(ii)
  of~\cite{BFM15} shows that it can also be performed in the model
  $\poimod_{\alpha,C}(n)$. 
Moreover, after suitably adapting the value of $C$ and thus of $T$ as defined in Section~4.2 of~\cite{BFM15}, equation (4.21) and then also Lemma~4.2 of~\cite{BFM15} in 
  fact hold w.e.p., and thus, the proof given there shows that 
  w.e.p.~$|U|=\Omega(n)$. 
The same proof holds also when restricting to one half of 
  $B_O(R)$, and hence w.e.p.~$|U \cap \Pi|=\Omega(n)$. For the second part of the corollary, once more a close inspection of the same theorem (Lemma~4.1, equations (4.3) and (4.21) of Theorem~1.4 of ~\cite{BFM15}) show that the claimed result holds in the Poisson model, and it holds w.e.p. 
  \end{proof}

An immediate consequence of Lemma~\ref{lem:VolUpperBound}
  and Theorem~\ref{thm:volTobias1} is the following:
\begin{cor}\label{cor:volTobias}
Let $H=(U,F)$ be the center component of $G=(V,E)$ chosen according to 
  $\poimod_{\alpha,C}(n)$.
Then, w.e.p.~$\Vol(U)=\Theta(n)$. 
Moreover, if $\Pi$ is a $\pi$-sector, then
  w.e.p.~$\Vol(U\cap\Pi)\ge |U \cap \Pi|=\Omega(n)$. 
\end{cor}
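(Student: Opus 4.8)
The plan is to read off the corollary almost verbatim from the two results it cites, so I would spend essentially no effort on new probabilistic arguments. First I would record that the upper bound is nothing but the first assertion of Lemma~\ref{lem:VolUpperBound}: w.e.p.~$\Vol(U)=O(n)$. So the only thing left to produce is a matching linear lower bound, and for this I would pass to a fixed $\pi$-sector $\Pi$ of $B_O(R)$ and apply Theorem~\ref{thm:volTobias1}, which gives that w.e.p.~$|U\cap\Pi|=\Omega(n)$ (and, in the same breath, that $H$ is the giant component, so there is no ambiguity in what ``the center component'' means).

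Next I would convert the vertex count into a volume bound. The key elementary observation is that $H$ is connected and, w.e.p., has $\Omega(n)\ge 2$ vertices, hence every vertex of $U$ has degree at least $1$ in $H$ (equivalently in $G$, since the two degrees agree); therefore $\Vol(S)=\sum_{v\in S}d(v)\ge|S|$ for every $S\subseteq U$. Applying this to $S=U\cap\Pi$ yields w.e.p.~$\Vol(U\cap\Pi)\ge|U\cap\Pi|=\Omega(n)$, which is exactly the second displayed claim of the corollary; applying it implicitly through $\Vol(U)\ge\Vol(U\cap\Pi)$ gives w.e.p.~$\Vol(U)=\Omega(n)$.

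Finally I would combine the two bounds: w.e.p.~$\Vol(U)=O(n)$ and w.e.p.~$\Vol(U)=\Omega(n)$, so by a union bound over these finitely many (in fact two) w.e.p.~events the conjunction $\Vol(U)=\Theta(n)$ holds w.e.p., and likewise the sector statement holds w.e.p. I do not anticipate any genuine obstacle here: all the work has already been done in Lemma~\ref{lem:VolUpperBound} (the delicate band-by-band and sector-by-sector estimates for the volume) and in Theorem~\ref{thm:volTobias1} (the adaptation of the giant-component analysis of~\cite{BFM15} to the Poissonized model and to w.e.p.~guarantees, including its restriction to a half-disk). The only minor points to be careful about are that the degree-at-least-one claim really does hold for \emph{every} vertex of the center component, and that intersecting the relevant w.e.p.~events preserves the w.e.p.~bound — both of which are routine.
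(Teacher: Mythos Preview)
Your proposal is correct and matches the paper's approach: the corollary is stated as an immediate consequence of Lemma~\ref{lem:VolUpperBound} (giving the $O(n)$ upper bound on $\Vol(U)$) and Theorem~\ref{thm:volTobias1} (giving $|U\cap\Pi|=\Omega(n)$), with the trivial observation $\Vol(U\cap\Pi)\ge |U\cap\Pi|$ since every vertex of the connected component $H$ has degree at least~$1$. The paper does not write out any further argument, and your derivation fills in exactly the intended steps.
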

Regarding the diameter of the center component, we have the following result:
\begin{theorem}\label{thm:fk1Diam}[Theorem~1 and Theorem~3 of~\cite{fk15}]
Let $H=(U,F)$ be the center component of $G=(V,E)$ chosen according to 
  $\poimod_{\alpha,C}(n)$ and let $D=D(H)$ denote its diameter. Then, w.e.p., 
  $$
  D=\Omega(\log n) \cap O((\log n)^{\frac{1}{1-\alpha}}).
  $$
  \end{theorem}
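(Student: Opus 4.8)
The plan is to derive both bounds from the corresponding results of~\cite{fk15}, which are stated there for the uniform model $\unifmod_{\alpha,C}(n)$ and only a.a.s. Two things need checking: that the arguments carry over to the Poissonized model, and that they can be strengthened to hold w.e.p. The passage to $\poimod_{\alpha,C}(n)$ is essentially routine, since every probabilistic ingredient used in~\cite{fk15} is a statement about the number, degree, or angular location of vertices inside a region of $B_O(R)$, and for the Poisson model these are exactly the estimates assembled in Section~\ref{sec:prelim} (Lemma~\ref{lem:muBall}, Propositions~\ref{prop:degreeBound} and~\ref{prop:bandBound}, Lemma~\ref{lem:meanMeus}, Corollary~\ref{cor:volTobias}), together with independence of counts in disjoint regions. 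The real issue, and the crux of the whole statement, is the error probability.

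For the lower bound $D=\Omega(\log n)$ I would exhibit two vertices of $H$ at graph distance $\Omega(\log n)$. Fix a constant $c$, and pick $u,v\in\calP_{\ell}$ with $\ell=R-c$ whose angular coordinates differ by at least $\pi/2$; such vertices exist w.e.p.~by Corollary~\ref{cor:medidalb} and Lemma~\ref{lem:meanMeus}, and by Proposition~\ref{prop:degreeBound} they have degree $\Theta(1)$. The geometric heart of the matter, which I would either quote from~\cite{fk15} or re-derive from Lemma~\ref{lem:aproxAngle}, is that a path of length $L$ all of whose vertices have radius at least $\rho$ can accumulate total angular displacement only $L\cdot O(e^{R/2-\rho})$, while the radial coordinate can decrease by at most a bounded amount from one vertex of the path to the next before the corresponding band is essentially empty (no vertex has radius below $(1-\tfrac{1}{2\alpha})R-\omega(1)$, by the estimate used in the proof of Lemma~\ref{lem:VolUpperBound}). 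Balancing how deep a $u$--$v$ path must descend against the number of steps this costs and the angular progress possible per step forces $\Omega(\log n)$ edges. The probabilistic inputs are precisely the band-occupancy statements of Proposition~\ref{prop:bandBound}, which hold w.e.p., plus a union bound over the $O(\log n)$ relevant bands and the $(\log n)^{O(1)}$ sectors per band, exactly as in the proof of Lemma~\ref{lem:VolUpperBound}.

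For the upper bound $D=O((\log n)^{1/(1-\alpha)})$, which is~\cite[Theorem~1]{fk15}, I would revisit that proof and check two points. First, its band decomposition, degree estimates, and measure computations coincide with the ones available here, so the argument runs verbatim: every vertex of $H$ is connected, in $O((\log n)^{1/(1-\alpha)})$ steps, to some vertex of radius at most $\tfrac{R}{2}+O(1)$ by repeatedly passing to a neighbour of substantially smaller radius, and the set of vertices of radius close to $\tfrac{R}{2}$ has constant diameter; both facts become deterministic once the (w.e.p.) band-occupancy estimates are in place, and crucially they only require that a suitable neighbour \emph{exist}, never a per-vertex degree estimate for boundary vertices. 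Second, and this is the main obstacle, every step of~\cite{fk15} asserted only a.a.s.~must be re-derived with failure probability $n^{-\omega(1)}$, or at least polynomially small (which by a union bound over polynomially many events suffices for w.e.p.): for steps of the form ``some region contains a vertex'' this is immediate from Lemma~\ref{lem:meanMeus} and Proposition~\ref{prop:bandBound}, but any step whose natural proof yields only a $1-o(1)$ probability (for instance ``most vertices of a band have the typical neighbourhood structure'') has to be boosted by replacing first-moment or Chebyshev arguments with the Chernoff-type bounds of Theorems~\ref{thm:Chernoff} and~\ref{thm:AlonSpencer}, possibly at the cost of absorbing an extra $(\log\log n)^{O(1)}$ factor into the stated $O((\log n)^{1/(1-\alpha)})$ bound.
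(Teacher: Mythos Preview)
Your proposal is correct and takes essentially the same approach as the paper: the paper's proof is in fact nothing more than the assertion that a close inspection of the arguments in~\cite{fk15} shows they hold w.e.p.\ and carry over to the Poissonized model, with no further detail. Your write-up is a considerably more fleshed-out version of exactly that program; the only minor divergence is that your lower-bound sketch (two boundary vertices at large angular separation) is not quite how Theorem~3 of~\cite{fk15} proceeds---there the $\Omega(\log n)$ bound comes from exhibiting, w.e.p., a pendant path of length $\Theta(\log n)$ inside $H$ (cf.\ the minimum-cut argument in Section~\ref{sec:bisec})---so if you follow your stated plan of quoting~\cite{fk15} you will end up with that construction rather than the one you outlined.
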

  \begin{proof}
  Again, the results stated in~\cite{fk15} are stated with smaller probability, but a close inspection of them shows that they hold w.e.p. The original results are stated in the uniform model, but again, they hold in the Poissonized model as well.
  \end{proof}
The following lemma is implicit in~\cite{BFM15}, we make it explicit here. 
\begin{lemma}~\label{lem:newvolume}
Let $H=(U,F)$ be the center component of 
  $G=(V,E)$ chosen according to $\poimod_{\alpha,C}(n)$. 
If $\Phi$ is a $\phi$-sector with 
  $\phi=\omega(\frac{1}{n}(\log n)^{\frac{1+\alpha}{1-\alpha}})$,
  then, w.e.p.~$\Vol(U\cap\Phi)\ge |U\cap\Phi| 
     =\Omega(\phi n(\log n)^{-\frac{2\alpha}{1-\alpha}})$. 
\end{lemma}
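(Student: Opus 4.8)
The plan is to lower bound $|U\cap\Phi|$ by exhibiting, inside the $\phi$-sector $\Phi$, a large collection of vertices that provably belong to the center component $H$. The natural candidates are vertices with small radial coordinate, since these are close to the origin and hence pairwise adjacent to one another and to the rest of the center component. Concretely, I would first dilate and contract $\Phi$ slightly to avoid boundary issues, then focus attention on the subsector $\Phi' \subseteq \Phi$ obtained by removing thin slivers of angle $\Theta(\phi)$ from each side, so that any two points of $\Phi'$ at radius at most roughly $\frac{R}{2}$ are well inside $B_O(R)$ of one another. The key quantitative input is the measure estimates of Lemma~\ref{lem:muBall} together with the band/volume estimates of Proposition~\ref{prop:bandBound}: the expected number of Poisson points of $\calP$ lying in $\Phi'$ with radius in a band $B_O(\ell)\setminus B_O(\ell-1)$ scales like $\phi\, n\, e^{-\alpha(R-\ell)}$, so once $\ell$ exceeds $\elllow + O(\log\log n)$ this expectation is $\omega(\log n)$ and Lemma~\ref{lem:meanMeus} (or Theorem~\ref{thm:Chernoff}) gives concentration w.e.p.

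The heart of the argument is to show these low-radius vertices of $\Phi$ actually lie in the \emph{center} component $H$ and not in some other component. Here I would invoke Theorem~\ref{thm:volTobias1}: w.e.p.\ the center component contains $\Omega(n)$ vertices in any fixed $\pi$-sector, and in particular (by the characterization recalled in the introduction, implicit in~\cite{BFM15}) it contains essentially all vertices of sufficiently small radial coordinate. More precisely, a vertex $v$ with $r_v$ bounded away from $R$ — say $r_v \le R - \Theta(\log\log n)$ in the right regime — has degree $\Theta(e^{(R-r_v)/2})$ by Proposition~\ref{prop:degreeBound}, its ball $B_v(R)$ has measure $\omega\!\big(\tfrac{1}{n}\log n\big)$, and by the geometry of $\HH^2$ a short "descent path" (decreasing the radius step by step) connects it into the region near the origin, which is a clique contained in $H$; this is exactly the kind of shortest-path-into-the-center argument sketched in Section~\ref{sec:overview}. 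Counting, the vertices of $\Phi'$ with radius between $\elllow + O(\log\log n)$ and some cutoff like $\frac{R}{2}$ number w.e.p.\ $\Omega\!\big(\phi n\, e^{-\alpha(R-R/2)}\big)$ up to the correction coming from the excluded low-radius bands; tracking the $e^{\alpha\nu}$-type factors with $\nu = \frac{1}{\alpha}\log R + \omega(1)$, where $R = \Theta(\log n)$, produces exactly a loss of $(\log n)^{-\frac{2\alpha}{1-\alpha}}$, matching the claimed bound. The hypothesis $\phi = \omega\!\big(\tfrac{1}{n}(\log n)^{\frac{1+\alpha}{1-\alpha}}\big)$ is precisely what is needed so that this count is $\omega(1)$ — indeed $\omega(\log n)$ — and the concentration inequalities apply.

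The main obstacle I anticipate is the connectivity step: ensuring that the counted vertices are genuinely in $H$ rather than in a small component, uniformly over \emph{all} $\phi$-sectors $\Phi$ simultaneously (w.e.p.\ requires a union bound over a polynomial family of sectors). Two points need care. First, a vertex far from the origin might fail to have a neighbor of strictly smaller radius inside $\Phi'$ if $\Phi'$ is too thin; this is why one cannot take $\phi$ arbitrarily small, and why the bands very close to $\elllow$ — which are too sparse to be reliably populated in a sector of angle $\phi$ — must be discarded, at the cost of the polylogarithmic factor. The correct cutoff radius for "guaranteed to connect downward within $\Phi$" must be calibrated against $\phi$ using Remark~\ref{rem:aproxAngle} for the angular width $\theta_R(r',r'')$ of edges. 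Second, one must confirm that the near-origin clique into which everything descends is a single connected object lying in $H$; this follows once $|\calP\cap B_O(\rho)|\ge 1$ for a suitable $\rho$ slightly below $\frac{R}{2}$, which holds w.e.p.\ by Theorem~\ref{thm:volTobias1} and the standing characterization of the center component. Modulo these geometric bookkeeping issues, the estimate is a direct consequence of the measure bounds already assembled in Section~\ref{sec:prelim}, which is why the lemma is labeled as essentially implicit in~\cite{BFM15}.
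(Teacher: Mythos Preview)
Your approach has a fundamental quantitative gap: by counting only vertices with radius at most roughly $R/2$, you obtain at best $\Theta(\phi n\, e^{-\alpha R/2}) = \Theta(\phi n^{1-\alpha})$ vertices, which is polynomially smaller than the target $\Omega(\phi n (\log n)^{-\frac{2\alpha}{1-\alpha}})$. Your claim that ``tracking the $e^{\alpha\nu}$-type factors\ldots\ produces exactly a loss of $(\log n)^{-\frac{2\alpha}{1-\alpha}}$'' is off by a factor of $n^{\alpha}$, not a polylog: the radial density is proportional to $\sinh(\alpha r)$, so essentially all vertices sit near the boundary $r\approx R$, not near the origin. To get a count of order $\phi n$ up to polylogarithms you must count vertices with $r$ close to $R$, and then argue that \emph{those} lie in $H$.

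The paper does exactly this. It fixes $\ellbdr := \lfloor R - \tfrac{2\log R}{1-\alpha}\rfloor$ and counts $\Phi\cap\calP_{\ellbdr}$, whose expected size is $\Theta(\phi n\, e^{-\alpha(R-\ellbdr)}) = \Theta(\phi n (\log n)^{-\frac{2\alpha}{1-\alpha}})$; the hypothesis on $\phi$ makes this $\omega(\log n)$, so Lemma~\ref{lem:meanMeus} gives concentration w.e.p. Membership in $H$ is then obtained by a layer-by-layer descent: every $v\in\calP_\ell$ with $\tfrac{R}{2}\le \ell\le\ellbdr$ has in expectation $\Theta(n e^{-\alpha(R-\ell)}e^{\frac12(R-2\ell)})=\Omega(\log^2 n)$ neighbors in $\calP_{\ell-1}$ (anywhere in $B_O(R)$, not restricted to $\Phi$), hence w.e.p.\ a path down to $B_O(R/2)\subseteq U$. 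In particular, your concern about keeping the descent inside a shrunk sector $\Phi'$ and about a union bound over all sectors is unnecessary: the lemma treats a single fixed $\Phi$, and only the counted vertices themselves need to lie in $\Phi$.
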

\begin{proof}
Let $\ellbdr:=\lfloor R-\frac{2\log R}{1-\alpha}\rfloor$.
Since $d(v) \ge 1$ for any $v \in U$, the inequality 
  $\Vol(U \cap \Phi) \ge |U \cap \Phi|$ is trivial. 
In order to show that 
  $|U\cap\Phi|=\Omega(\phi n(\log n)^{\frac{2\alpha}{1-\alpha}})$, 
  note that, using the lower bound on $\phi$, by Lemma~\ref{lem:muBall} 
  and Lemma~\ref{lem:meanMeus}, the number of vertices in 
  $\Phi \cap\calP_{\ellbdr}$ is 
  w.e.p.~$\Theta(\phi n (\log n)^{-\frac{2\alpha}{1-\alpha}})$. 
Note also that for every vertex $v\in \calP_{\ell}$
  with $\frac{R}{2}\le\ell\le \ellbdr$, 
  by Remark~\ref{rem:aproxAngle} and Corollary~\ref{cor:medidalb}, 
  the expected number of neighbors of
  $v$ inside $\calP_{\ell-1}$ is 
  $\Theta(ne^{-\alpha(R-\ell)}e^{\frac12(R-2\ell)})=\Omega(\log^2 n)$,
  and hence, by Lemma~\ref{lem:meanMeus} this holds w.e.p. 
Thus, all vertices $v\in \calP_{\ell}$ connect through
  consecutive layers to vertices that belong to $B_{O}(\frac{R}{2})$ and thus are part of the center component $H$. Hence,
  $|U\cap\Phi|=\Omega(\phi n(\log n)^{-\frac{2\alpha}{1-\alpha}})$. 
\end{proof}

To conclude this section, we make a final important observation that simplifies arguing about the center component (and thus
  the giant component) of random hyperbolic graphs. 
\begin{remark}\label{rem:HtoG}
The previous lemma shows that w.e.p.~all vertices in 
  $\calP\cap B_O(R-\frac{2\log R}{1-\alpha})$ in fact belong to 
  the center component, and hence, for each $\ell \le R-\frac{2\log R}{1-\alpha}$, w.e.p.~$\calP_{\ell}(G)=\calP_{\ell}(H).$
We will use this without further mention throughout the paper.
\end{remark}


\section{Spectral gap}\label{sec:gap}
The purpose of this section is to bound from below the spectral gap of 
  the center component~$H$ of a random hyperbolic graph, i.e.,
  proving Theorem~\ref{th:main}.
As we show next, this result is essentially tight. 
Indeed, we first prove Lemma~\ref{lem:upperBound} 
  by showing a simple upper bound for $\lambda_1(H)$ obtained
  via Cheeger's inequality, that is, via an upper bound on the graph 
  conductance of~$H$. 
We include the bound mainly for completeness sake.

\medskip
\noindent\textit{Proof of Lemma~\ref{lem:upperBound}.}
Let $\Pi$ be a $\pi$-sector. We have to show that 
  $h(\Pi) \le \upsilon n^{-(2\alpha-1)}\log n$.
Let $\calP$ be the set of vertices (points) if $G$ is chosen according 
  to $\poimod_{\alpha,C}(n)$, and let $\calU$ be the set of vertices (points) if $G$ is chosen according to $\unifmod_{\alpha,C}(n)$.
First, observe that by Corollary~\ref{cor:volTobias}
  w.e.p.~$\Vol(\Pi)=\Theta(n)$, $\Vol(\calP\setminus \Pi)=\Theta(n)$. 
Since Corollary~\ref{cor:volTobias} holds w.e.p., the same results clearly 
  hold in the uniform model as well. 
Hence, it suffices to show that a.a.s.~$|E(\Pi, \calU\setminus\Pi)|,|E(\Pi, \calP\setminus\Pi)| =n^{2(1-\alpha)}O(\log n)$. Define $\calU_{N}$ as a uniformly distributed set of $N$ points in the hyperbolic disk of radius $R=2 \log n+C$, i.e., $\calU_N$ equals $\calP$ conditioned on $|\calP|=N$.
We first determine the expected value of $|E(\Pi,\calU_N\setminus \Pi)|$.
Clearly, 
\[
\Ex(|E(\Pi,\calU_N\setminus \Pi)|)
  =2\binom{N}{2}
     \Pr(u\in \Pi,v\in \calU_N\setminus \Pi,\dist(r_u,r_v,\theta_u-\theta_v)\leq R).
\] 
We divide the computation of the latter probability into two 
  cases depending on whether or not $r_u+r_v\leq R$, and denote
  the corresponding probabilities by $P'$ and $P''$. 
Recalling that $C(\alpha,R)=\cosh(\alpha R)-1$ and 
  since $2\sinh x\sinh y=\cosh(x+y)-\cosh(x-y)$,
\begin{align*}
P' & =\frac{1}{4}\iint_{r_u+r_v\leq R} f(r_u)f(r_v)dr_vdr_u
    =\frac{\alpha^2}{4(C(\alpha,R))^2}\iint_{r_u+r_v\leq R}
      \sinh(\alpha r_u)\sinh(\alpha r_v)dr_vdr_u \\
& = 
  \frac{\alpha}{8(C(\alpha,R))^2}R\sinh(\alpha R)
  - \frac{1}{4C(\alpha,R)} = O(R)e^{-\alpha R} = n^{-2\alpha}O(\log n).
\end{align*}
Now, in order to compute $P''$, observe that if for
  $u\in \Pi$, $v\in \mathcal{U}_N\setminus \Pi$ with $r_u+r_v\geq R$,  
  we have $uv \in F$, then 
  either $\theta_u+(2\pi-\theta_v)\leq \theta_{R}(r_u,r_v)$ or 
  $\theta_v-\theta_u\leq \theta_{R}(r_u,r_v)$, where 
  $\theta_{R}(\cdot,\cdot)$ is 
  as defined in~\eqref{eqn:angle}.
Clearly, for $(\theta_u,\theta_v)\in [0,\pi)\times [\pi,2\pi)$ the area of both triangles defined by the aforestated two inequalities is $\theta_R(r_u,r_v)$, and hence the probability that $(\theta_u,\theta_v)$ satisfies one of the two inequalities is $\frac{1}{4\pi^2}\theta^{2}_R(r_u,r_v)$.
Thus, by Lemma~\ref{lem:aproxAngle}, 
\begin{align*}
P''& =\frac{1}{4\pi^2}\iint_{r_u+r_v\geq R} \theta^2_{R}(r_u,r_v)f(r_u)f(r_v)dr_udr_v \\
& 
  = \frac{\alpha^2}{\pi^2(C(\alpha,R))^2}
    \iint_{r_u+r_v\geq R} e^{R-r_u-r_v}(1+\Theta(e^{R-r_u-r_v}))\sinh(\alpha r_u)\sinh(\alpha r_v)dr_udr_v
\\ & 
  = \frac{\alpha^2e^{R}}{4\pi^2(C(\alpha,R))^2}
    \iint_{r_u+r_v\geq R} e^{-(1-\alpha)(r_u+r_v)}(1+O(e^{R-r_u-r_v}+e^{-2\alpha r_u}+ e^{-2\alpha r_v}))dr_udr_v 
\\ & 
  = O(R)e^{-\alpha R} = n^{-2\alpha}O(\log n).
\end{align*}
Summarizing,  for $N=n$ and the model $\unifmod_{\alpha,C}(n)$, 
  we have $\Ex(|E(\Pi,\calU\setminus \Pi)|)= O(n^{2(1-\alpha)}\log n)$. 
For the model $\poimod_{\alpha,C}(n)$,  
\begin{align*}
\Ex(|E(\Pi,\calP \setminus \Pi)|)
  &= \sum_{N \ge 0} \Ex(|E(\Pi,\calU_N\setminus \Pi)|)\Pr(|\calP|=N) 
= O(n^{-2\alpha}\log n) \sum_{N \ge 0}\binom{N}{2}e^{-n}\frac{n^N}{N!} \\
& = O(n^{2(1-\alpha)}\log n) \sum_{N \ge 2} e^{-n}\frac{n^{N-2}}{(N-2)!}
  = O(n^{2(1-\alpha)}\log n).
\end{align*}
In either case, the 
  desired statement follows by Markov's inequality.
\qed

\medskip
We now undertake the more challenging task of establishing a lower bound
  on the spectral gap of the center component of random hyperbolic graphs.
By Theorem~\ref{thm:fk1Diam}, 
  w.e.p.~the diameter of the giant component of a graph chosen according to
  $\unifmod_{\alpha,C}(n)$ is $O((\log n)^{\frac{1}{1-\alpha}})$
  when $\frac{1}{2}<\alpha<1$.
A well known relation between the spectral gap and the diameter of graphs
  (see for example~\cite[Lemma 1.9]{chung97}) establishes 
  that for a connected graph $G$ with diameter $D$ it holds
  that $\lambda_1(G)\geq 1/(D\Vol(V(G)))$.
Thus, since by Corollary~\ref{cor:volTobias}, w.e.p.~$\Vol(V(H))=\Theta(n)$, we 
  get that $\lambda_1(H)=\Omega(\frac{1}{n}(\log n)^{-\frac{1}{1-\alpha}})$.
  Since by Lemma~\ref{lem:upperBound} we have 
  $h(H)\le \upsilon n^{-(2\alpha-1)}\log n$, 
  the 
  lower bound on $\lambda_1(H)\ge \frac{1}{2}h^2(H)$ obtained from
  Cheeger's inequality (see~\eqref{eqn:cheeger}) 
  cannot be asymptotically tight when
  $\alpha > \frac{3}{4}$. 
Below, we prove a lower bound on $\lambda_1(H)$ which in fact establishes
  that up to polylogarithmic (in $n$) factors, the upper bound 
  given by Cheeger's inequality is asymptotically tight.

\medskip
In order to bound $\lambda_1(H)$ from below we rely on the multicommodity
  flow technique developed in~\cite{ds91,sinclair92}. 
The basic idea is to consider a multicommodity flow problem in the 
  graph and obtain lower bounds on $\lambda_1(H)$ in terms of 
  a measure of flows. 
Formally, a \emph{flow} in $H$ is a function $f$ mapping 
  a collection of (oriented) simple paths 
  $\calQ:=\calQ(H)$ in $H=(U,F)$ 
  to the positive reals which satisfies, for all $s,t\in U$, $s\neq t$,
  the following \emph{flow demand} constraint:
\begin{equation}\label{eqn:flow}
\sum_{q\in \calQ_{s,t}}f(q) = \frac{d(s)d(t)}{\Vol(U)},
\end{equation}
where $\calQ_{s,t}$ is the set of all (oriented) 
  paths $q\in\calQ$ from $s$ to $t$.
Clearly, an extension of $f$ to a function on oriented edges of $H$ 
  is obtained by setting $f(e)$ equal to the total flow routed
  by $f$ through the oriented edge $e$, i.e., 
  $f(e) := \sum_{q \ni e} f(q)$.

In order to measure the quality of the flow $f$ a function 
  on oriented edges, denoted $\overline{f}$, is defined by
\begin{equation}\label{eqn:elongFlow}
\overline{f}(e):=\sum_{q\in\calQ: q\ni e}f(q)|q|,
\end{equation}
where $|q|$ is the length (number of edges) of the path $q$.
The term $\overline{f}(e)$ is referred to as the 
  \emph{elongated flow} through $e$.
The flow's quality is captured by the quantity
  $\overline{\rho}(f):=\max_{e} \overline{f}(e)$, where the maximum 
  is taken over oriented edges.
The following result is the cornerstone of the multicommodity 
  flow method.
We include the claim's proof for several reasons;
  \begin{inparaenum}[(i)-]
  \item for concreteness sake, 
  \item due to its elegance and conciseness, and
  \item for clarity of exposition, because in all instances known to us, 
    the result is stated in the language of reversible Markov chains,
    and its interpretation in graph theoretic terms might not be 
    straightforward for the reader.
  \end{inparaenum}
\begin{theorem}[Sinclair~\cite{sinclair92}]\label{thm:flowEmbed}
If $f$ is a flow in a connected graph $H=(U,F)$, then
\[
\lambda_1(H) \geq \frac{1}{\overline{\rho}(f)}.
\]
\end{theorem}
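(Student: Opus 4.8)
The plan is to prove Theorem~\ref{thm:flowEmbed} via the variational characterization of $\lambda_1(H)$ together with a Cauchy--Schwarz argument that charges the ``cost'' of a test function along the paths of the flow. First I would recall that, writing $\calL_H = I - D_H^{-1/2}A_H D_H^{-1/2}$ and setting $g := D_H^{1/2}\varphi$ for an arbitrary vector $\varphi$ orthogonal to the eigenvector $D_H^{1/2}\ones$ belonging to eigenvalue $0$, one has the Rayleigh quotient expression
\[
\lambda_1(H) = \min_{\varphi \perp D_H^{1/2}\ones}
  \frac{\sum_{uv\in F}\bigl(\varphi(u)-\varphi(v)\bigr)^2}
       {\sum_{v\in U} d(v)\,\varphi(v)^2}.
\]
Equivalently, fixing any $\varphi$ (not necessarily orthogonal to anything) one gets
\[
\lambda_1(H)\,\Vol(U)\,\mathrm{Var}(\varphi)
  \le \Vol(U)\sum_{uv\in F}\bigl(\varphi(u)-\varphi(v)\bigr)^2,
\]
where $\mathrm{Var}(\varphi) := \frac{1}{\Vol(U)}\sum_{v}d(v)\varphi(v)^2 - \bigl(\frac{1}{\Vol(U)}\sum_v d(v)\varphi(v)\bigr)^2$; the point being that the left-hand side can be rewritten, by a standard identity, as $\frac{1}{2}\sum_{s,t\in U} d(s)d(t)\bigl(\varphi(s)-\varphi(t)\bigr)^2$. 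So it suffices to bound this pairwise sum by $\overline{\rho}(f)\sum_{uv\in F}(\varphi(u)-\varphi(v))^2$ for every $\varphi$.

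The key step is then the path-charging inequality. For each ordered pair $(s,t)$ and each path $q\in\calQ_{s,t}$, telescoping along $q$ gives $\varphi(s)-\varphi(t) = \sum_{e\in q}\bigl(\varphi(e^-)-\varphi(e^+)\bigr)$ (with the edges oriented along $q$), and Cauchy--Schwarz yields
\[
\bigl(\varphi(s)-\varphi(t)\bigr)^2
  \le |q|\sum_{e\in q}\bigl(\varphi(e^-)-\varphi(e^+)\bigr)^2.
\]
Multiplying by $f(q)$, summing over $q\in\calQ_{s,t}$, and using the flow demand constraint~\eqref{eqn:flow}, namely $\sum_{q\in\calQ_{s,t}}f(q)=d(s)d(t)/\Vol(U)$, we get
\[
\frac{d(s)d(t)}{\Vol(U)}\bigl(\varphi(s)-\varphi(t)\bigr)^2
  \le \sum_{q\in\calQ_{s,t}} f(q)\,|q|\sum_{e\in q}\bigl(\varphi(e^-)-\varphi(e^+)\bigr)^2.
\]
Now I would sum over all ordered pairs $(s,t)$ and interchange the order of summation so that the right-hand side is organized by oriented edge $e$: the coefficient of $\bigl(\varphi(e^-)-\varphi(e^+)\bigr)^2$ becomes exactly $\sum_{q\ni e} f(q)|q| = \overline{f}(e)$, the elongated flow. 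Since each undirected edge $uv$ of $F$ corresponds to the two orientations $e=(u,v)$ and $e=(v,u)$, and $\bigl(\varphi(e^-)-\varphi(e^+)\bigr)^2=(\varphi(u)-\varphi(v))^2$ for both, we obtain
\[
\sum_{s,t\in U} \frac{d(s)d(t)}{\Vol(U)}\bigl(\varphi(s)-\varphi(t)\bigr)^2
  \le \sum_{e} \overline{f}(e)\bigl(\varphi(e^-)-\varphi(e^+)\bigr)^2
  \le \overline{\rho}(f)\cdot 2\sum_{uv\in F}\bigl(\varphi(u)-\varphi(v)\bigr)^2.
\]
Combining this with the Rayleigh-quotient inequality above (which, after carefully tracking the factor of $2$ and the identity $\frac12\sum_{s,t}d(s)d(t)(\varphi(s)-\varphi(t))^2 = \Vol(U)\,\mathrm{Var}(\varphi)\cdot$ normalization) gives $\lambda_1(H)\,\mathrm{Var}(\varphi) \le \overline{\rho}(f)\sum_{uv\in F}(\varphi(u)-\varphi(v))^2 / \Vol(U)$ wait---I would instead phrase it cleanly as: $\lambda_1(H)\le \overline{\rho}(f)\cdot\frac{\sum_{uv\in F}(\varphi(u)-\varphi(v))^2}{\frac12\sum_{s,t}\frac{d(s)d(t)}{\Vol(U)}(\varphi(s)-\varphi(t))^2}$ rearranges to $\lambda_1(H)\ge 1/\overline{\rho}(f)$ once one checks the denominator equals the numerator of the Rayleigh quotient for the optimal $\varphi$. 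Concretely, taking $\varphi$ to be the eigenvector for $\lambda_1$ (lifted appropriately) makes $\sum_{uv\in F}(\varphi(u)-\varphi(v))^2 = \lambda_1(H)\sum_v d(v)\varphi(v)^2$ and $\frac12\sum_{s,t}\frac{d(s)d(t)}{\Vol(U)}(\varphi(s)-\varphi(t))^2 = \sum_v d(v)\varphi(v)^2$ (using $\sum_v d(v)\varphi(v)=0$), so the displayed inequality becomes $\sum_v d(v)\varphi(v)^2 \le \overline{\rho}(f)\,\lambda_1(H)\sum_v d(v)\varphi(v)^2$, i.e. $\lambda_1(H)\ge 1/\overline{\rho}(f)$, as desired.

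The main obstacle is bookkeeping rather than conceptual: one must get the normalization of the Rayleigh quotient, the factor of $2$ from counting each undirected edge as two oriented edges, and the identity relating $\sum_{s,t}d(s)d(t)(\varphi(s)-\varphi(t))^2$ to $\Vol(U)\sum_v d(v)\varphi(v)^2$ (valid precisely when $\sum_v d(v)\varphi(v)=0$) all consistent with each other. The cleanest route is to reduce at the outset to the case $\sum_v d(v)\varphi(v)=0$ (possible by translating $\varphi$ by a constant, which changes neither side), after which $\frac12\sum_{s,t}d(s)d(t)(\varphi(s)-\varphi(t))^2 = \Vol(U)\sum_v d(v)\varphi(v)^2$ is an elementary expansion, and then apply Cauchy--Schwarz and the edge-reindexing exactly as above. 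No probabilistic input is needed here; this is a purely deterministic spectral-graph-theory statement, and the only ``assumption'' used is connectedness of $H$, which guarantees $\calQ_{s,t}\neq\emptyset$ for all $s\neq t$ so that a flow exists and the demand constraints are satisfiable.
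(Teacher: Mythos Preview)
Your proof is correct and follows essentially the same route as the paper's: telescope along each path, apply Cauchy--Schwarz, reindex by oriented edge to produce $\overline{f}(e)$, and bound by $\overline{\rho}(f)$. The only difference is cosmetic: the paper starts directly from the variational characterization
\[
\lambda_1 = \inf_{\psi} \frac{\sum_{s,t:st\in F}(\psi(s)-\psi(t))^2}{\sum_{s,t\in U}(\psi(s)-\psi(t))^2\frac{d(s)d(t)}{\Vol(U)}}
\]
(with both numerator and denominator written as sums over ordered pairs), which makes the factor-of-$2$ and variance bookkeeping that you worried about disappear automatically, since the path-charging inequality bounds the denominator directly by $\overline{\rho}(f)$ times the numerator.
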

\begin{proof}
Recall (see e.g.~\cite[Eqn.~(1.5)]{chung97}) the following 
  characterization of
\[
\lambda_1:=\lambda_1(H) = \inf_{\psi} \frac{\sum_{s,t:st\in F}(\psi(s)-\psi(t))^2}{\sum_{s,t\in U}(\psi(s)-\psi(t))^2\frac{d(s)d(t)}{\Vol(U)}},
\]
where the infimum is taken over all non-constant functions $\psi:U\to\RR$.

For an oriented edge $e$, let $e^-$ and $e^+$ denote its start- and endvertices.
Note now that for any $\psi$ and any flow $f$ in $H$, 
  the denominator of the last displayed equation can be bounded from above as follows:
\begin{align*}
& \sum_{s,t\in U}(\psi(s)-\psi(t))^2\frac{d(s)d(t)}{\Vol(U)}
  = \sum_{s,t\in U}\sum_{q\in\calQ_{s,t}}f(q)\big(\sum_{e\in q}(\psi(e^-)-\psi(e^+))\big)^2 
\\ & \mbox{}\qquad
  \leq \sum_{q\in\calQ}f(q)|q|\sum_{e\in q}(\psi(e^-)-\psi(e^+))^2
  = \sum_{e}(\psi(e^-)-\psi(e^+))^2\overline{f}(e)
\\ & \mbox{}\qquad
  \leq \overline{\rho}(f)\sum_{e}(\psi(e^-)-\psi(e^+))^2
  = \overline{\rho}(f)\sum_{s,t:st\in F}(\psi(s)-\psi(t))^2,
\end{align*}
where the first inequality is by Cauchy-Schwarz, and 
  the second one by definition of $\overline{\rho}(f)$.
(Note that the first equality in the preceding displayed derivation
  requires that $\calQ_{s,t}$ is non-empty for all $s,t\in U$, which
  is indeed the case given that $H$ is connected.)
\end{proof}

A particular version of the multicommodity flow method, referred to 
  as the \emph{canonical path} method, consists in routing,
  for every pair of distinct vertices $s,t\in U$, 
  the required $d(s)d(t)/\Vol(U)$ flow demand via a single oriented
  path going from $s$ to $t$.
This simplified method cannot deliver as strong bounds on $\lambda_1(H)$ as the ones we claim.
Indeed, for the canonical path method, 
  the elongated flow on any edge used by a path carrying
  flow from $s$ to $t$ must be at least $d(s)d(t)/\Vol(U)$.
Taking $s$ and $t$ as the maximum degree vertices in $H$, 
  known results on the maximum degree of hyperbolic random graphs
  (see~\cite[Theorem~2.4]{GPP12}) 
  lead to bounds on elongated flows not smaller than
  $\Omega(n^{\frac{1}{\alpha}-1})$, and thence to bounds on $\lambda_{1}(H)$
  no better than $O(n^{1-\frac{1}{\alpha}})$, which would be worse than the claimed lower bound of $\Omega(n^{-(2\alpha-1)}/D)$ if $\alpha < \frac{1}{\sqrt{2}}$ (with some effort maybe one might be able to show that the method
  does not provide strong bounds even for larger values of $\alpha$).

\medskip
To simplify the exposition, we will use Theorem~\ref{thm:flowEmbed} in
  a slightly easily derived variant stated below. 
First, say that $\{\calQ',\calQ''\}$ is a 
  \emph{path consistent} partition of $\calQ:=\calQ(H)$ provided 
  there is a path oriented from $s$ to $t$ in $\calQ'$
  if and only if no such path is found in $\calQ''$, i.e.,
  for all $s,t\in U$, $s\neq t$,
  the set $\calQ'_{s,t}$ is non-empty if and only if $\calQ''_{s,t}$ is empty.
Moreover, for $\widetilde{\calQ}\subseteq\calQ$, we say 
  $\widetilde{f}:\calQ\to\RR_+$ is a $\widetilde{\calQ}$-flow provided
  $\widetilde{f}(q)=0$ if $q\not\in\widetilde{\calQ}$ and for every $s,t\in U$,
  $s\neq t$ such that $\widetilde{\calQ}_{s,t}$ is non-empty, the 
  following holds:
\begin{equation}\label{eq:flowCondition}
\sum_{q\in\widetilde{Q}_{s,t}} \widetilde{f}(q) = \frac{d(s)d(t)}{\Vol(U)}.
\end{equation}
We extend to $\widetilde{Q}$-flows,
  in the natural way, the notions of elongated flow and maximum
  elongated flow.
In order to more easily apply Theorem~\ref{thm:flowEmbed} we will
  construct a flow satisfying its hypothesis as a sum of 
  $\widetilde{Q}$-flows. 
Our next result validates such an approach.
\begin{cor}\label{cor:flowEmbed}
Let $H=(U,F)$ be a connected graph and $\{\calQ',\calQ''\}$ a path consistent
  partition of $\calQ:=\calQ(H)$.
Let $f',f'':\calQ\to\RR_+$ be such that $f'$ is a
  $\calQ'$-flow and $f''$ is a $\calQ''$-flow, then $f'+f''$ is a flow 
  in $H$ and
\[
\overline{\rho}(f'+f'') \leq \overline{\rho}(f')+\overline{\rho}(f'').
\]
\end{cor}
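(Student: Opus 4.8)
The plan is to verify both assertions directly from the definitions, with essentially no work beyond careful bookkeeping. Write $f := f'+f''$. First I would check that $f$ satisfies the flow demand constraint~\eqref{eqn:flow}. Fix $s,t\in U$ with $s\neq t$. Since $\{\calQ',\calQ''\}$ is path consistent, exactly one of $\calQ'_{s,t}$, $\calQ''_{s,t}$ is non-empty; say $\calQ'_{s,t}\neq\emptyset$ and $\calQ''_{s,t}=\emptyset$ (the other case is symmetric). Then $\calQ_{s,t}=\calQ'_{s,t}\cup\calQ''_{s,t}=\calQ'_{s,t}$, and since $f''$ is a $\calQ''$-flow it vanishes on every path outside $\calQ''$, in particular on every $q\in\calQ_{s,t}$. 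Hence
\[
\sum_{q\in\calQ_{s,t}}f(q)=\sum_{q\in\calQ'_{s,t}}\big(f'(q)+f''(q)\big)=\sum_{q\in\calQ'_{s,t}}f'(q)=\frac{d(s)d(t)}{\Vol(U)},
\]
the last equality being~\eqref{eq:flowCondition} for the $\calQ'$-flow $f'$. Thus $f$ is a flow in $H$.

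Second, I would establish that the elongated flow is additive. For any oriented edge $e$, using~\eqref{eqn:elongFlow} and that $f'(q)=0$ for $q\notin\calQ'$ while $f''(q)=0$ for $q\notin\calQ''$,
\[
\overline{f}(e)=\sum_{q\in\calQ:\,q\ni e}\big(f'(q)+f''(q)\big)|q|=\overline{f'}(e)+\overline{f''}(e).
\]
Taking the maximum over oriented edges and using subadditivity of the maximum yields
\[
\overline{\rho}(f)=\max_e\overline{f}(e)=\max_e\big(\overline{f'}(e)+\overline{f''}(e)\big)\le\max_e\overline{f'}(e)+\max_e\overline{f''}(e)=\overline{\rho}(f')+\overline{\rho}(f''),
\]
which is the claimed inequality.

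The argument is entirely elementary, and the only point that deserves a moment's attention is the first step: path consistency is precisely what guarantees that for each source--sink pair exactly one of the two partial flows carries the full demand while the other contributes nothing, so that the single unconditional constraint~\eqref{eqn:flow} for $f$ follows from the weaker constraints~\eqref{eq:flowCondition} imposed only on the non-empty restricted path sets of $f'$ and $f''$ separately. There is no real obstacle here; the corollary simply packages the linearity of $\overline{f}$ in $f$ together with this observation so that in the sequel a flow meeting the hypothesis of Theorem~\ref{thm:flowEmbed} may be assembled piecewise.
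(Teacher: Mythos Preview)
Your proof is correct and follows the same approach as the paper's, which gives only the one-line observation $\overline{\rho}(f'+f'')=\max_e(\overline{f'}(e)+\overline{f''}(e))\le\overline{\rho}(f')+\overline{\rho}(f'')$. You have simply been more careful than the paper in also verifying explicitly that $f'+f''$ is a flow, which the paper asserts but does not argue.
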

\begin{proof}
The result follows since
  $\overline{\rho}(f'+f'')= \max_{e}\big(\overline{f'}(e)+\overline{f''}(e)\big)
    \leq \overline{\rho}(f')+\overline{\rho}(f'')$.
\end{proof}

Key to our approach is the fact that w.e.p.~random hyperbolic graphs
  admit multicommodity flows of moderate maximum elongated flow.
To prove this assertion we associate to
  the center component $H$ of $G$ chosen according to 
  $\poimod_{\alpha,C}(n)$ a path consistent partition 
  $\{\calQ',\calQ''\}$ of 
  $\calQ:=\calQ(H)$.
The collection $\calQ'$ will consist of paths whose 
  endvertices are both ``sufficiently close'' to the origin $O$.
In contrast, $\calQ''$ will consist of the collection of paths one of 
  whose endvertices is not ``sufficiently close'' to the origin $O$.
We will fix the flow for path $q$ with endvertices $s$ and $t$, 
  so that it satisfies~\eqref{eqn:flow} while distributing 
  an equal amount of flow among all paths in $\calQ_{s,t}$.

In addition to the already defined quantities 
  $\elllow=\lfloor (1-\frac{1}{2\alpha})R\rfloor$ and 
  $\nu'=2\log R+\omega(1)\cap o(\log R)$, 
  the following quantities will also play an important role
  in the construction of $\calQ'$ and $\calQ''$:
\begin{align}
\ellmin & := \big\lceil(\alpha-\tfrac{1}{2})R+\nu'\big\rceil, 
  \label{def:lmin} \\
\ellmid & := \big\lfloor\tfrac{R}{2}\big\rfloor, 
  \label{def:lmid} \\
  \ellmax & := 
  \big\lfloor(\tfrac{3}{2}-\alpha)R-\nu'\big\rfloor. 
\end{align}
Observe that $\ellmin+\ellmax=R$.
For sufficiently large $n$, it always holds that 
  $\ellmin<\ellmid<\ellmax$ and $\ellmin<\elllow+\nu<\ellmid$.
From now on, we assume without further mention 
  that $n$ is large enough so that these inequalities hold.
Henceforth, for an integer $\ell\leq \ellmax$, we let 
\[
\widetilde{\ell}=\begin{cases}
     \ellmax, & \text{if $\ell < \ellmin$,} \\ 
     2\ellmid-\ell+1, & \text{if $\ellmin\leq\ell\leq\ellmid$,} \\ 
     \ellmid, & \text{if $\ell > \ellmid$}.
\end{cases}
\]
Note that $\frac{R}{2}\geq\ellmid=\frac{R}{2}+\Theta(1)$
  and $\ellmid\leq\widetilde{\ell}\leq\ellmax$.
Moreover, observe that $\ell\leq\ellmid$ if and only if 
  $\widetilde{\ell}>\ellmid$.
As before, often we shall ignore the floors/ceilings in the preceding 
  definitions, since it only introduces low order term approximations
  in our derivations. 
Recall that whenever referring to expressions such as 
  $R-\frac{\log R}{1-\alpha}$ or the like, when needed, we will also 
  assume that these are integers.

Details concerning $\calQ'$ as well as an associated $\calQ'$-flow
  are provided in the next section, and in the subsequent one analogous
  results concerning $\calQ''$ are discussed. 

\subsection{A $\calQ'$-flow}
For $s\in \calP_{k}$ and $t\in \calP_{k'}$ with $k,k'\leq\ellmax$, 
  let ${\calQ'}_{s,t}$ be the collection of length $3$ oriented paths
  from $s$ to $t$ whose first internal vertex belongs to $\calP_{\widetilde{k}}$ 
  and the other internal vertex is in $\calP_{\widetilde{k'}}$.
Also, let ${\calQ'}$ be the union of all such $\calQ'_{s,t}$'s.
We classify paths in $\calQ'$ as follows  (see Figure~\ref{fig:pathTypes}):
\begin{itemize}
\item \textbf{Type I:} both endvertices belong to $B_{O}(\ellmid)$
\item \textbf{Type II:} both endvertices belong to $B_{O}(\ellmax)\setminus B_{O}(\ellmid)$
\item \textbf{Type III:} one endvertex is in $B_{O}(\ellmid)$ and 
  the other one in $B_{O}(\ellmax)\setminus B_{O}(\ellmid)$
\end{itemize}

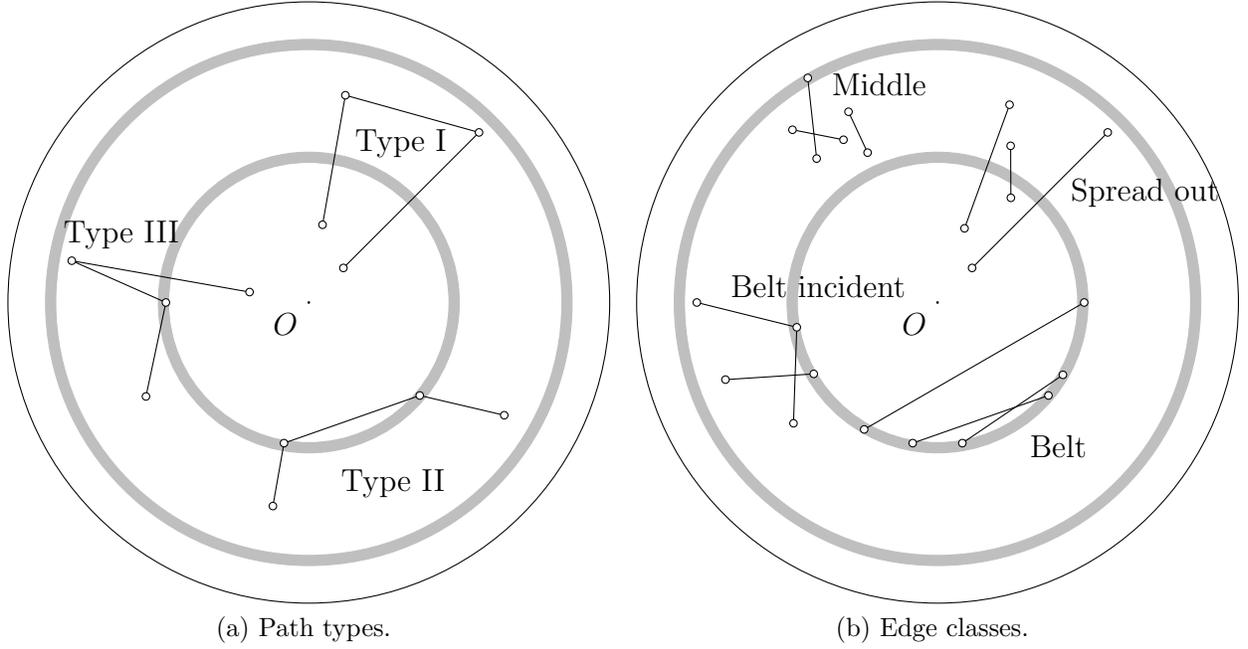
\begin{figure}[htbp!]
\centering
\subfloat[Path types.]{\label{fig:pathTypes}
\begin{tikzpicture}[scale=1.0]
\draw [-] (0,0) circle (4) {};
\draw [fill,gray!50] (0,0) circle (3.5) {};
\draw [fill,white] (0,0) circle (3.35) {};
\draw [fill,gray!50] (0,0) circle (2) {};
\draw [fill,white] (0,0) circle (1.85) {};
\draw [fill] (0,0) circle (0.01) {} node [below left] at (0,0) {$O$};

\draw [-] (45:0.65) -- (45:3.2) -- (80:2.8) -- (80:1.05);
\draw [fill=white] (45:0.65) circle (0.05) {};
\draw [fill=white] (45:3.2) circle (0.05) {};
\draw [fill=white] (80:2.8) circle (0.05) {};
\draw [fill=white] (80:1.05) circle (0.05) {};
\node at (60:2.45) {Type I};

\draw [-] (-30:3) -- (-40:1.925) -- (-100:1.9) -- (-100:2.75);
\draw [fill=white] (-30:3) circle (0.05) {};
\draw [fill=white] (-40:1.925) circle (0.05) {};
\draw [fill=white] (-100:1.9) circle (0.05) {};
\draw [fill=white] (-100:2.75) circle (0.05) {};
\node at (-65:2.65) {Type II};

\draw [-] (170:0.8) -- (170:3.2) -- (180:1.9) -- (210:2.5);
\draw [fill=white] (170:0.8) circle (0.05) {};
\draw [fill=white] (170:3.2) circle (0.05) {};
\draw [fill=white] (180:1.9) circle (0.05) {};
\draw [fill=white] (210:2.5) circle (0.05) {};
\node at (160:2.65) {Type III};
\end{tikzpicture}}\hfill
\subfloat[Edge classes.]{\label{fig:edgeTypes}
\begin{tikzpicture}[scale=1.0]
\draw [-] (0,0) circle (4) {};
\draw [fill,gray!50] (0,0) circle (3.5) {};
\draw [fill,white] (0,0) circle (3.35) {};
\draw [fill,gray!50] (0,0) circle (2) {};
\draw [fill,white] (0,0) circle (1.85) {};
\draw [fill] (0,0) circle (0.01) {} node [below left] at (0,0) {$O$};

\draw [-] (45:0.65) -- (45:3.2);
\draw [-] (70:2.8) -- (70:1.05);
\draw [-] (55:1.7) -- (65:2.3);
\draw [fill=white] (45:0.65) circle (0.05) {};
\draw [fill=white] (45:3.2) circle (0.05) {};
\draw [fill=white] (70:2.8) circle (0.05) {};
\draw [fill=white] (70:1.05) circle (0.05) {};
\draw [fill=white] (55:1.7) circle (0.05) {};
\draw [fill=white] (65:2.3) circle (0.05) {};
\node at (28:3.125) {Spread out};

\draw (-40:1.925) -- (-100:1.9);
\draw (-30:1.925) -- (-80:1.9);
\draw (0:1.95) -- (-120:1.95);
\draw [fill=white] (-30:1.925) circle (0.05) {};
\draw [fill=white] (-40:1.925) circle (0.05) {};
\draw [fill=white] (-100:1.9) circle (0.05) {};
\draw [fill=white] (-80:1.9) circle (0.05) {};
\draw [fill=white] (0:1.95) circle (0.05) {};
\draw [fill=white] (-120:1.95) circle (0.05) {};
\node at (-50:2.5) {Belt};

\draw [-] (180:3.2) -- (190:1.9) -- (220:2.5);
\draw [-] (200:3.0) -- (210:1.9);
\draw [fill=white] (180:3.2) circle (0.05) {};
\draw [fill=white] (190:1.9) circle (0.05) {};
\draw [fill=white] (220:2.5) circle (0.05) {};
\draw [fill=white] (200:3.0) circle (0.05) {};
\draw [fill=white] (210:1.9) circle (0.05) {};
\node at (172:1.6) {Belt incident};

\draw [-] (120:3.45) -- (130:2.5);
\draw [-] (115:2.8) -- (115:2.2);
\draw [-] (130:3.0) -- (120:2.5);
\draw [fill=white] (120:3.45) circle (0.05) {};
\draw [fill=white] (130:2.5) circle (0.05) {};
\draw [fill=white] (115:2.8) circle (0.05) {};
\draw [fill=white] (115:2.2) circle (0.05) {};
\draw [fill=white] (120:2.5) circle (0.05) {};
\draw [fill=white] (130:3.0) circle (0.05) {};
\node at (105:3.0) {Middle};
\end{tikzpicture}}
\caption{Illustration of path types and edge classes. 
Inner shaded rings correspond to 
  $B_{O}(\ellmid)\setminus B_{O}(\ellmid-1)$, outer shaded rings to $B_{O}(\ellmax)\setminus B_{O}(\ellmax-1)$ for
  $\alpha=5/8$.}
\end{figure}

Next, we relate the size of the $\calQ'_{s,t}$'s to the size of 
  certain collections of edges of $H=(U,F)$. 
This will be useful for estimating their size.
\begin{proposition}\label{prop:nonEmptyQ2}
If $v_g\in\calP_g$ and $v_h\in\calP_{h}$ with $g\leq h\leq \ellmax$, then 
\[
|\calQ'_{v_g,v_h}| = \begin{cases}
|E(\calP_{\widetilde{g}},\calP_{\widetilde{h}})|, 
  & \text{if $g,h\leq\ellmid$,} \\
|E(\calP_{\widetilde{g}},N_{\calP_{\ellmid}}(v_h))|, 
  & \text{if $g\leq \ellmid<h$,} \\
|N_{\calP_{\ellmid}}(v_g)|\cdot |N_{\calP_{\ellmid}}(v_h)|  
  & \text{if $g,h>\ellmid$.} 
\end{cases}
\]
\end{proposition}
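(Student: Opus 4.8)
The plan is to prove the identity in each of the three cases by exhibiting an explicit bijection between $\calQ'_{v_g,v_h}$ and the claimed set, the whole argument resting on one deterministic geometric fact: two points $u,v$ of $\HH^2$ with $r_u+r_v\le R$ are always adjacent. Indeed, by the hyperbolic law of cosines~\eqref{eqn:coshLaw}, $\cosh\dist(u,v)\le\cosh r_u\cosh r_v+\sinh r_u\sinh r_v=\cosh(r_u+r_v)$, so $\dist(u,v)\le r_u+r_v\le R$. The relay bands $\widetilde\ell$ are designed precisely around this: if $g\le\ellmid$ then $g+\widetilde g\le R$ (this is $\ellmin+\ellmax=R$ when $g<\ellmin$, and $g+\widetilde g=2\ellmid+1$, i.e.\ $\approx R$ under the floor conventions we agreed to ignore, when $\ellmin\le g\le\ellmid$), so $v_g$ is adjacent to \emph{every} vertex of $\calP_{\widetilde g}$; likewise $v_h$ is adjacent to all of $\calP_{\widetilde h}$ whenever $h\le\ellmid$; and since $2\ellmid\le R$, any two vertices of $B_O(\ellmid)$, in particular any two of $\calP_{\ellmid}$, are mutually adjacent. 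Since every band appearing here satisfies $g,h,\widetilde g,\widetilde h\le\ellmax\le R-\frac{2\log R}{1-\alpha}$, all the vertices in sight lie in $H$ on the (w.e.p.) event of Remark~\ref{rem:HtoG}, and the edge sets $E(\cdot,\cdot)$ are edge sets of $H$.

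With these ``free'' adjacencies in hand, a member of $\calQ'_{v_g,v_h}$ is precisely an oriented path $v_g\to a\to b\to v_h$ with $a\in\calP_{\widetilde g}$, $b\in\calP_{\widetilde h}$, the three edges present, and $v_g,a,b,v_h$ pairwise distinct. Recalling that $\ell\le\ellmid$ iff $\widetilde\ell>\ellmid$, the band memberships already force $a,b\notin\{v_g,v_h\}$, and $a\ne b$ holds automatically in any case in which $ab$ is required to be an edge; so the only thing that varies between cases is which of $v_ga\in F$, $ab\in F$, $bv_h\in F$ are automatic and which are genuine constraints.

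First I would handle $g,h\le\ellmid$: here $\widetilde g,\widetilde h>\ellmid$, and $v_ga\in F$, $bv_h\in F$ are both free, so the path is nothing more than the choice of its middle edge $ab\in E(\calP_{\widetilde g},\calP_{\widetilde h})$, giving $|\calQ'_{v_g,v_h}|=|E(\calP_{\widetilde g},\calP_{\widetilde h})|$. Next, $g\le\ellmid<h$: now $\widetilde h=\ellmid$, the adjacency $v_ga\in F$ is still free, but $bv_h\in F$ is the genuine constraint $b\in N_{\calP_{\ellmid}}(v_h)$; hence a path is exactly an edge $ab$ with $a\in\calP_{\widetilde g}$ and $b\in N_{\calP_{\ellmid}}(v_h)$, i.e.\ $|\calQ'_{v_g,v_h}|=|E(\calP_{\widetilde g},N_{\calP_{\ellmid}}(v_h))|$. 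Finally, $g,h>\ellmid$: here $\widetilde g=\widetilde h=\ellmid$, so $a,b\in\calP_{\ellmid}$ and therefore $ab\in F$ is automatic, while $v_ga\in F$, $bv_h\in F$ become $a\in N_{\calP_{\ellmid}}(v_g)$ and $b\in N_{\calP_{\ellmid}}(v_h)$; thus the path is an arbitrary pair $(a,b)\in N_{\calP_{\ellmid}}(v_g)\times N_{\calP_{\ellmid}}(v_h)$ and $|\calQ'_{v_g,v_h}|=|N_{\calP_{\ellmid}}(v_g)|\cdot|N_{\calP_{\ellmid}}(v_h)|$.

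I expect the only delicate point to be checking that the ``free'' adjacencies genuinely survive the floor/ceiling conventions in the definitions of $\ellmid,\ellmin,\ellmax$ and $\widetilde\ell$ (so that $g+\widetilde g\le R$ and $2\ellmid\le R$ on the nose), together with the bookkeeping needed in the degenerate sub-cases --- for instance when $\widetilde g=\widetilde h$ (both relay vertices in the same band, so one must fix an orientation convention for the middle ``edge'' so that it corresponds to a single oriented path), or when $g$ or $h$ equals $\ellmid$ so that an endpoint could \emph{a priori} coincide with a relay vertex. None of this is deep; it is simply where the stated equality could slip by lower-order terms if handled carelessly, and the substance of the proposition is the geometric observation that near the centre the model forces these length-$3$ connections to exist in abundance.
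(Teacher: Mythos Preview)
Your argument is correct and follows the same route as the paper's own proof: the paper also establishes the bijection between $\calQ'_{v_g,v_h}$ and the claimed edge set (respectively vertex-pair set) by using that whenever an endpoint lies in $B_O(\ellmid)$ it is automatically adjacent to every vertex in its relay band, and that $\calP_{\ellmid}$ induces a clique. The paper's proof is much terser---it writes out only the case $g,h\le\ellmid$ and dispatches the rest with ``handled similarly''---whereas you make the underlying geometric fact ($r_u+r_v\le R\Rightarrow$ adjacency) and the degenerate-case bookkeeping explicit; the latter caution is well placed, since the stated identities are indeed only exact up to the floor/ceiling and $a=b$ corrections you flag, which the paper silently absorbs into its later $\Theta(\cdot)$ estimates.
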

\begin{proof}
The claim holds for $g,h\leq\ellmid$ because for each 
  edge $e\in E(\calP_{\widetilde{g}},\calP_{\widetilde{h}})$
  there is a path in $\calQ'_{v_g,v_h}$ with node set 
  $\{v_g,e^-,e^+,v_h\}$ and the middle edge of any path 
  in $\calQ'_{v_g,v_h}$ belongs to 
  $E(\calP_{\widetilde{g}},\calP_{\widetilde{h}})$.
The remaining cases are handled similarly.
\end{proof}

As already mentioned, we will evenly split the flow that needs 
  to be sent from a vertex~$s$ to another vertex~$t$  among all 
  oriented paths connecting~$s$ to~$t$. 
This partly explains, at least when $s,t\in U\cap B_{O}(\ellmax)$,
  why we next estimate the number of paths in $\calQ'_{s,t}$.
\begin{proposition}\label{prop:neighBound}
W.e.p., For $g, h\leq\ellmax$ where $g\geq\ellmid$ the following hold:
\begin{enumerate}[(i).-]
\item\label{prop:neighBoundPart1}
If $v_g\in\calP_g$, then~$|N_{\calP_{\widetilde{h}}}(v_g)|
     =\Theta(e^{-(\alpha-\frac12)(R-\widetilde{h})}e^{\frac{1}{2}(R-g)})
     =\Theta(e^{-(\alpha-\frac12)(R-\widetilde{h})}d(v_g))$.  
In particular,
  $|N_{\calP_{\ellmid}}(v_g)|
     =\Theta(n^{-(\alpha-\frac12)}e^{\frac{1}{2}(R-g)})
     =\Theta(n^{-(\alpha-\frac12)}d(v_g))$.

\item\label{prop:neighBoundPart2}
$|E(\calP_{g},\calP_{\widetilde{h}})|
  =\Theta(ne^{-(\alpha-\frac12)(R-\widetilde{h})}e^{-(\alpha-\frac12)(R-g)})$.

\item\label{prop:neighBoundPart3}
If $S\subseteq\calP_{\ellmid}$, then $|E(\calP_{g},S)|
  =\Theta(|S|\sqrt{n}e^{-(\alpha-\frac12)(R-g)})$.
\end{enumerate}
\end{proposition}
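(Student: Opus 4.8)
The plan is to establish the first part by a direct computation of a measure in the hyperbolic plane, and then read off the second and third parts from it by summation. For the first part, fix $g\in[\ellmid,\ellmax]$, $h\le\ellmax$, and a vertex $v_g\in\calP_g$, and put $r:=r_{v_g}\in[g-1,g]$. Then $N_{\calP_{\widetilde h}}(v_g)$ is exactly the set of points of $\calP$ (other than $v_g$) lying in $W:=B_{v_g}(R)\cap(B_O(\widetilde h)\setminus B_O(\widetilde h-1))$, so the crux is to estimate $\mu(W)$. Since $g,\widetilde h\ge\ellmid$, all radii involved exceed $\tfrac{R}{2}-O(1)$, so a point $w$ at radius $r'\in[\widetilde h-1,\widetilde h]$ lies in $B_{v_g}(R)$ iff its angular distance to $v_g$ is at most $\theta_R(r,r')=\Theta(e^{\frac12(R-r-r')})$, where I invoke Remark~\ref{rem:aproxAngle} (the estimate is valid up to constants even on the boundary, since here $R-r-r'=O(1)$, so the angle is $O(1)$). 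Integrating this angular window against the radial density $f(r')=\Theta(e^{\alpha(r'-R)})$ over $r'\in[\widetilde h-1,\widetilde h]$ — a geometric integral dominated by $r'=\widetilde h$ — gives
\[
\mu(W)=\Theta\big(e^{-r/2}\,e^{-(\alpha-\frac12)(R-\widetilde h)}\big),
\]
which one can also obtain from Lemma~\ref{lem:muBallInterGen} by subtracting $\mu(B_{v_g}(R)\cap B_O(\widetilde h-1))$ from $\mu(B_{v_g}(R)\cap B_O(\widetilde h))$ (in the sub-range where $r+r'<R$ throughout $[\widetilde h-1,\widetilde h]$, which forces $g=\widetilde h=\ellmid+O(1)$, one uses instead that $B_O(\widetilde h)\subseteq B_{v_g}(R)$ together with Corollary~\ref{cor:medidalb}; the $\Theta$ is the same).

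Since $n=\delta e^{R/2}$, this yields $\EE|N_{\calP_{\widetilde h}}(v_g)|=n\mu(W)=\Theta(e^{\frac12(R-g)}e^{-(\alpha-\frac12)(R-\widetilde h)})$, and as $g\le\ellmax\le R-\nu'$, Proposition~\ref{prop:degreeBound} identifies $e^{\frac12(R-g)}$ with $\Theta(d(v_g))$; specializing $\widetilde h=\ellmid$ and noting $e^{-(\alpha-\frac12)(R-\ellmid)}=\Theta(n^{-(\alpha-\frac12)})$ gives the ``in particular'' claim. To upgrade this to a statement holding w.e.p.\ for all $v_g$ and all $(g,h)$ in range, I would note that the expectation above is minimized at $g=\ellmax$, $\widetilde h=\ellmid$, where, using $\ellmin=(\alpha-\tfrac12)R+\nu'$ and $\nu'=2\log R+\omega(1)$, it equals $\Theta(e^{\nu'/2})=\omega(\log n)$; hence $\mu(W)=\omega(\tfrac1n\log n)$ throughout. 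Conditioning on the position of $v_g$ leaves the rest of $\calP$ a Poisson process of the same intensity, so $|N_{\calP_{\widetilde h}}(v_g)|$ is then Poisson with mean $n\mu(W)$ and Lemma~\ref{lem:meanMeus} gives $|N_{\calP_{\widetilde h}}(v_g)|=(1+o(1))n\mu(W)$ w.e.p.; a union bound over the $O(R^2)$ pairs $(g,h)$ and over the (w.e.p.) $O(n)$ vertices of $\calP_g$ — legitimate since a union of polynomially many w.e.p.\ events is w.e.p. — finishes the first part.

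The second part follows by summation: $|E(\calP_g,\calP_{\widetilde h})|=\sum_{v\in\calP_g}|N_{\calP_{\widetilde h}}(v)|$ (or half of it when $g=\widetilde h$), and by the first part each term is, w.e.p.\ and with uniform implied constants, $\Theta(e^{\frac12(R-g)}e^{-(\alpha-\frac12)(R-\widetilde h)})$, while Proposition~\ref{prop:bandBound} part~(\ref{prop:bandBoundPart1}) gives $|\calP_g|=\Theta(ne^{-\alpha(R-g)})$ w.e.p.\ (valid since $g\ge\ellmid>\elllow+\nu$); multiplying and using $-\alpha(R-g)+\tfrac12(R-g)=-(\alpha-\tfrac12)(R-g)$ gives the stated estimate. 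For the third part, any $w\in\calP_{\ellmid}$ has radius exceeding $\ellmid-1\ge\tfrac{R}{2}-O(1)$ and $g\in[\ellmid,\ellmax]$, so the computation of the first part applies with source band $\ellmid$ and target band $g$, giving $|N_{\calP_g}(w)|=\Theta(e^{\frac12(R-\ellmid)}e^{-(\alpha-\frac12)(R-g)})=\Theta(\sqrt n\,e^{-(\alpha-\frac12)(R-g)})$ w.e.p.\ uniformly over $w\in\calP_{\ellmid}$ (here $e^{\frac12(R-\ellmid)}=\Theta(\sqrt n)$); summing over $w\in S$ and using that $|E(\calP_g,S)|$ equals $\sum_{w\in S}|N_{\calP_g}(w)|$ up to a factor $2$ yields $|E(\calP_g,S)|=\Theta(|S|\sqrt n\,e^{-(\alpha-\frac12)(R-g)})$ — and here no union bound over the choice of $S$ is needed, since the per-vertex estimate already holds for all of $\calP_{\ellmid}$ simultaneously.

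I expect the only real difficulty to lie in the measure estimate $\mu(W)=\Theta(e^{-r/2}e^{-(\alpha-\frac12)(R-\widetilde h)})$ of the first part: one has to check that Remark~\ref{rem:aproxAngle} (equivalently, the hypotheses of Lemma~\ref{lem:muBallInterGen}) is applicable across the whole relevant range of radii — the delicate case being $r_{v_g}+r_w$ within $O(1)$ of $R$, where the angle is a constant rather than $o(1)$ — and that the resulting expectation stays $\omega(\log n)$, so that Lemma~\ref{lem:meanMeus} turns it into a w.e.p.\ bound valid for every vertex at once. Everything after that, namely the second and third parts, is just careful summation with uniform implied constants.
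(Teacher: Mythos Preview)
Your proposal is correct and follows essentially the same approach as the paper: compute the measure of the region $B_{v_g}(R)\cap(B_O(\widetilde h)\setminus B_O(\widetilde h-1))$ via the angular estimate of Remark~\ref{rem:aproxAngle}, check that it is $\omega(\tfrac{\log n}{n})$ so that Lemma~\ref{lem:meanMeus} applies, and then obtain parts~(ii) and~(iii) by summing part~(i) over $\calP_g$ (using Proposition~\ref{prop:bandBound}) and over $S\subseteq\calP_{\ellmid}$, respectively. The only cosmetic difference is that the paper isolates the boundary case $g=\widetilde h=\ellmid$ by observing that $\calP_{\ellmid}$ induces a clique, whereas you absorb this into the measure computation by noting that when $r+r'<R$ the entire annulus lies in $B_{v_g}(R)$; both yield the same $\Theta$.
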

\begin{proof}
Consider the first part of the claim.
If $\widetilde{h}=g=\ellmid$, since $\calP_{\ellmid}$ induces 
  a clique in $H$, then $N_{\widetilde{h}}(v_g)=\calP_{\widetilde{h}}$.
Since by definition $\ellmid=R+\Theta(1)$
  and Proposition~\ref{prop:degreeBound} implies that 
  $d(v_g)=\Theta(\sqrt{n})$, the claim trivially holds by 
  Proposition~\ref{prop:bandBound} part~\eqref{prop:bandBoundPart1}.
Assume that $\widetilde{h}+g>2\ellmid\geq R$.  
Note that if a vertex in $\calP_{\widetilde{h}}$ is a neighbor 
  of $v_g\in\calP_g$ in $H$, then the small
  relative angle (in the interval $[0,\pi)$) between 
  such a vertex and $v_g$ is $O(\theta_{R}(g,\widetilde{h}))$,
  which by Lemma~\ref{lem:aproxAngle}, equals
  $\Theta(e^{\frac{1}{2}(R-g-\widetilde{h})})$.
Applying Lemma~\ref{lem:muBall}, we infer 
  that $\mu(B_{O}(\widetilde{h})\setminus B_{O}(\widetilde{h}-1))
    =e^{-\alpha(R-\widetilde{h})}(1-e^{-\alpha})(1+o(1))$.
Thus, for a sector $\Phi$ of $B_O(R)$ of 
  angle $\phi=\Theta(e^{\frac{1}{2}(R-g-\widetilde{h})})$,
\begin{align*}
& \mu(\Phi\cap B_{O}(\widetilde{h})\setminus B_{O}(\widetilde{h}-1)) 
  = \phi\mu(B_{O}(\widetilde{h})\setminus B_{O}(\widetilde{h}-1)) 
   = \Theta(\tfrac{1}{n})e^{-(\alpha-\frac{1}{2})(R-\widetilde{h})}e^{\frac{1}{2}(R-g)}. 
\end{align*}
Since $g\leq\ellmax$, $\widetilde{h}\geq\ellmid$ and
  because $\nu'=2\log R+\omega(1)$, recalling the definition 
  of $\ellmid$ and $\ellmax$, we deduce that
\[
e^{\frac12(R-g)-(\alpha-\frac12)(R-\widetilde{h})}
  \geq e^{\frac12(R-\ellmax)-(\alpha-\frac12)(R-\ellmid)}
  =\Omega(e^{\tfrac{\nu'}{2}})=\omega(\log n).
\]
We have established that 
  $\mu(\Phi\cap B_{O}(\widetilde{h})\setminus B_{O}(\widetilde{h}-1))
     =\omega(\frac{\log n}{n})$, so
  the desired conclusions follow
  by Proposition~\ref{prop:degreeBound} and Lemma~\ref{lem:meanMeus}.
The second part of~\eqref{prop:neighBoundPart1} follows immediately since 
  $\ellmid=\frac{R}{2}+\Theta(1)$.

Consider now the second part of the claim.
Note that $|E(\calP_{g},\calP_{\widetilde{h}})|
  =\sum_{v\in\calP_{g}}|N_{\calP_{\widetilde{h}}}(v)|$.
Since $g\geq\ellmid$, 
  the claim follows immediately 
  from the first part by a union bound and
  by Proposition~\ref{prop:bandBound} part~\eqref{prop:bandBoundPart1}.

For the last part of the claim, observe that 
  $|E(\calP_{g},S)|=\sum_{w\in S} |N_{\calP_{g}}(w)|$.
By part~\eqref{prop:neighBoundPart1}, a union bound over 
  the elements of $\calP_{\ellmid}$ yield that 
  w.e.p., for all $w\in S$
  it holds that $|N_{\calP_{g}(w)}|
    =\Theta(e^{-(\alpha-\frac12)(R-g)}e^{\frac12(R-\ellmid)})$.
The conclusion follows by definition of $\ellmid$.
\end{proof}

Next, we establish the main result of this section.
\begin{proposition}\label{prop:flowPart1}
Let $H=(U,F)$ be the center component of 
  $G=(V,E)$ chosen according to $\poimod_{\alpha,C}(n)$.
For all $q\in{\calQ'}_{s,t}$, let 
\[
{f'}(q):=
  \frac{d(s)d(t)}{\Vol(U)}\cdot\frac{1}{|{\calQ}'_{s,t}|}.
\]
Then, w.e.p.~${\calQ'}\subseteq\calQ(H)$, 
  ${f'}$ is a well defined ${\calQ'}$-flow and 
  $\overline{\rho}({f'})=O(n^{2\alpha-1})$.
\end{proposition}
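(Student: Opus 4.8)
The plan is to verify the three assertions in turn, working throughout on the w.e.p.\ event $\calG$ on which the conclusions of Propositions~\ref{prop:degreeBound}, \ref{prop:bandBound}, \ref{prop:nonEmptyQ2} and~\ref{prop:neighBound}, of Lemma~\ref{lem:VolUpperBound}, of Corollary~\ref{cor:volTobias}, and of Remark~\ref{rem:HtoG} all hold simultaneously; each holds w.e.p.\ and only polynomially many instances are involved (layer indices range over $O(R)$ values and the per-vertex estimates already contain their own union bound), so $\calG$ holds w.e.p.\ and it suffices to argue deterministically on it. On $\calG$ we settle $\calQ'\subseteq\calQ(H)$ and well-definedness as follows: every internal vertex of a path in $\calQ'$ has radial coordinate in $[\ellmid,\ellmax]$, and since $\alpha>\tfrac12$ we have $\ellmax=(\tfrac32-\alpha)R-\nu'\le R-\tfrac{2\log R}{1-\alpha}$ for $n$ large, so by Remark~\ref{rem:HtoG} each such vertex lies in $H$; the endvertices lie in $U$ by hypothesis, consecutive vertices are adjacent in $H$ by construction, and consecutive radial coordinates fall in distinct layers (by the definition of $\widetilde{\cdot}$ together with $\ellmin<\ellmid<\ellmax$), so after discarding the measure-zero configurations in which two of the four points of a path coincide, every path in $\calQ'$ is a simple path of $H$. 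For $f'$ to be a bona fide $\calQ'$-flow satisfying~\eqref{eq:flowCondition} we need only $\calQ'_{s,t}\neq\emptyset$ whenever $r_s,r_t\le\ellmax$, and Proposition~\ref{prop:nonEmptyQ2} expresses $|\calQ'_{s,t}|$ as an edge count or a product of neighbourhood sizes, each of which is at least $1$ --- in fact $\omega(\log n)$ --- on $\calG$ by Propositions~\ref{prop:bandBound} and~\ref{prop:neighBound}.

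The substantive part is bounding $\overline{\rho}(f')=\max_e\overline{f'}(e)$. Since every $q\in\calQ'$ has $|q|=3$, for an oriented edge $e$
\[
\overline{f'}(e)=3\!\!\sum_{q\in\calQ':\,q\ni e}\!\! f'(q)
=\frac{3}{\Vol(U)}\sum_{\substack{s\neq t\,:\ \exists\,q\in\calQ'_{s,t}\text{ with }q\ni e}} d(s)\,d(t)\,\frac{\bigl|\{q\in\calQ'_{s,t}:q\ni e\}\bigr|}{|\calQ'_{s,t}|},
\]
and on $\calG$ one has $\Vol(U)=\Theta(n)$ by Corollary~\ref{cor:volTobias}; hence it suffices to prove $\overline{f'}(e)=O(n^{2\alpha-1})$ for every oriented edge $e$. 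I would organise this around the four edge classes of Figure~\ref{fig:edgeTypes} --- spread out, belt, belt incident, middle --- together with the role $e$ plays in a path carrying flow through it (first, middle or last edge; orientation only costs a factor $2$) and the type (I, II or III, see Figure~\ref{fig:pathTypes}) of that path (an edge in none of these classes, e.g.\ one with both endpoints strictly inside $B_O(\ellmid)$, carries no flow and is trivially fine). For each of these finitely many cases one writes down explicitly the admissible endpoint pairs $(s,t)$ and the number $\bigl|\{q\in\calQ'_{s,t}:q\ni e\}\bigr|$, both governed by the elementary fact that a vertex at radius $r$ is adjacent to \emph{every} vertex of radius at most $R-r$ (from $\cosh\dist\le\cosh(r{+}r')$, so $\dist\le r{+}r'$), which is exactly what makes the counts in Proposition~\ref{prop:nonEmptyQ2} exact. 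Substituting the estimates of Propositions~\ref{prop:degreeBound}, \ref{prop:bandBound}, \ref{prop:neighBound} into the numerator and the exact $|\calQ'_{s,t}|$ of Proposition~\ref{prop:nonEmptyQ2} into the denominator, each case collapses --- after summing over the $O(R^2)$ admissible pairs of layer indices, using geometric-series bounds and $\ellmin+\ellmax=R$, $\ellmid=\tfrac R2+\Theta(1)$ --- to an expression such as $\Vol(\calP_j)\Vol(\calP_{j'})/(n\,|E(\calP_{g'},\calP_{h'})|)$ (belt edges in the middle of Type~I paths) or an analogue with some band quantities replaced by $|N_{\calP_{\ellmid}}(\cdot)|$ (edges incident to the outer region in Type~II/III paths), and a routine computation bounds each by $O(n^{2\alpha-1})$. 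The boundary layers $\ell\in\{\ellmin,\ellmid,\ellmax\}$ and $\ell<\elllow+\nu$, where the $\Theta$-estimates of the preliminary propositions change form (e.g.\ $\calP_{\ellmid}$ induces a clique, $|\calP_\ell|$ drops to $(\log n)^{1+o(1)}$), are handled as separate sub-cases and contribute only lower-order terms.

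The hard part will be (a) the bookkeeping of the routing: correctly determining, for each edge class, which pairs $(s,t)$ and how many paths traverse a fixed edge, so that the collapse into band quantities is valid; and (b) the $(\log n)^{O(1)}$ vertices very close to the origin, whose flow demand $d(s)d(t)/\Vol(U)$ can be polynomially large because the maximum degree is $\Theta(n^{1/(2\alpha)})$~\cite{GPP12} --- for these one must use both that w.e.p.\ there are only $(\log n)^{O(1)}$ of them (Proposition~\ref{prop:bandBound}) and that by the very definition of $\calQ'_{s,t}$ their demand is split over polynomially many essentially disjoint length-$3$ paths through the belt, so that no single edge absorbs more than its $O(n^{2\alpha-1})$ share. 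This $\calQ'$-flow is the first half of the construction; it will later be combined with a $\calQ''$-flow for the far vertices via Corollary~\ref{cor:flowEmbed}.
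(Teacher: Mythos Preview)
Your plan is essentially the paper's own proof: the same four edge classes (spread out, belt, middle, belt incident), the same case split by path type and edge position, and the same substitution of Propositions~\ref{prop:nonEmptyQ2}--\ref{prop:neighBound} into the elongated-flow expression, with $\Vol(U)=\Theta(n)$ from Corollary~\ref{cor:volTobias} to finish. One small terminological slip: in your parenthetical example you write ``belt edges in the middle of Type~I paths'', but belt edges (both endpoints in $\calP_{\ellmid}$) only arise as the middle edge of Type~II paths; the formula $\Vol(\calP_j)\Vol(\calP_{j'})/(n\,|E(\calP_{g'},\calP_{h'})|)$ you wrote down is actually the one for the \emph{middle} edge class (both endpoints in $B_O(\ellmax)\setminus B_O(\ellmid)$), which is the middle edge of Type~I paths --- so the calculation is right, only the label is off.
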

\begin{proof}
For $s,t\in B_{O}(\ellmax)$, 
  Proposition~\ref{prop:nonEmptyQ2} and Proposition~\ref{prop:neighBound},
  imply that $|\calQ''_{s,t}|\neq 0$.
Thus, $f'$ is well defined.
Moreover, by the way in which $f'$ is prescribed, 
  $\sum_{q\in\calQ'_{s,t}} f'(q)=d(s)d(t)/\Vol(U)$, so $f'$ is a flow.

We need to bound the elongated flow in the edges traversed by paths
  in ${\calQ}'$.
First, we identify which edges $e$ of $H$ are traversed.
Paths in $\calQ'$ traverse edges of $H$ whose endvertices are 
  in $B_{O}(\ellmax)$.
Moreover,
the endvertices of $e$ are not both in $B_{O}(\ellmid-1)$, and a path in ${\calQ}'$ either starts or ends with $e$ if and only if
  at least one of the endvertices of $e$ is in $B_{O}(\ellmid)$.
If follows that an edge $e$ traversed by a path in 
  ${\calQ}'$ can belong to one  
  of four edge classes described forthwith.
An upper bound on the elongated flow of the members of each of these 
  classes is separately derived below
  (recall that for an oriented edge $e$, the 
  expressions $e^-$ and $e^+$ denote its start- and 
  endvertices).

Since $\calQ''_{s,t}=\calQ''_{t,s}$ for every distinct $s,t\in U$,
  the elongated flow $\overline{f''}$ is the same for both orientations of 
  a given edge.
Thus, in our ensuing discussion we fix (arbitrarily) one of 
  the two possible orientations of $e$ when bounding
  its elongated flow. 

\medskip\noindent\textbf{Spread out edges}
(one endvertex of $e$ is in $B_O(\ellmid)$ and the other 
  one in $B_{O}(\ellmax)\setminus B_O(\ellmid)$):
The only possibility is that for some $k\leq\ellmid$, the edge $e$ 
  is incident to a vertex in $\calP_{k}$ and to another one in 
  $\calP_{\widetilde{k}}$.
Fix the orientation of $e$ so $e^-\in\calP_k$ and $e^+\in\calP_{\widetilde{k}}$.
Necessarily, $e$ is the first 
   edge of a Type I path in ${\calQ}'$ that traverses it.
Also,
\begin{align*}
\frac{\overline{{f}'(e)}}{3} & = 
\frac{d(e^-)}{\Vol(U)}\Big(
  \sum_{\ell\leq\ellmid} \sum_{t\in\calP_{\ell}}
    \frac{d(t)}{|{\calQ}'_{e^-,t}|}|N_{\calP_{\widetilde{\ell}}}(e^+)|
  +
  \sum_{\ellmid<\ell\leq\ellmax} \sum_{t\in\calP_{\ell}}
    \frac{d(t)}{|{\calQ}'_{e^-,t}|}|E(\{e^+\},N_{\calP_{\ellmid}}(t))|  
\Big).
\end{align*}
Let $S_1$ and $S_2$ be the first and second summands inside the 
  parenthesis of the right hand side above.

First, we bound $S_1$.
Assume $\ell\leq\ellmid$ and $t\in\calP_{\ell}$.
By Proposition~\ref{prop:nonEmptyQ2}, 
  $|{\calQ}'_{e^-,t}|=|E(\calP_{\widetilde{k}},\calP_{\widetilde{\ell}})|$.
Since $\widetilde{k}>\ellmid$,
  part~\eqref{prop:neighBoundPart2} 
  of Proposition~\ref{prop:neighBound} 
  applies, implying that  w.e.p., 
  $|N_{\calP_{\widetilde{\ell}}}(e^+)|/|E(\calP_{\widetilde{k}},\calP_{\widetilde{\ell}})| 
      = O(\frac{1}{n}e^{\alpha(R-\widetilde{k})})$. 
Hence, w.e.p.,
\[
S_1 = O\big(\tfrac{1}{n}e^{\alpha(R-\widetilde{k})})
  \sum_{\ell\leq\ellmid}\Vol(\calP_{\ell}).
\]

\smallskip
We now bound $S_2$ from above.
Assume $\ellmid<\ell\leq\ellmax$ and $t\in\calP_\ell$.
By Proposition~\ref{prop:nonEmptyQ2}, 
  $|\calQ'_{e^-,t}|=|E(\calP_{\widetilde{k}},N_{\calP_{\ellmid}}(t))|$.
Moreover, since $\widetilde{k}\geq\ellmid$, Proposition~\ref{prop:neighBound}
  part~\eqref{prop:neighBoundPart3} yields that,
  w.e.p., $|\calQ'_{e^-,t}|
    =\Theta(|N_{\calP_{\ellmid}}(t)|\sqrt{n}e^{-(\alpha-\frac12)(R-\widetilde{k})})$.
By part~\eqref{prop:neighBoundPart1} of the same proposition, we get that
  w.e.p.~$d(t)/|\calQ'_{e^-,t}|=\Theta(n^{-(1-\alpha)}e^{(\alpha-\frac12)(R-\widetilde{k})}))$.
Also, $\sum_{t\in\calP_\ell} |E(\{e^+\},N_{\calP_{\ellmid}}(t))|
   =|E(N_{\calP_{\ellmid}}(e^+),\calP_\ell)|$,
  so by Proposition~\ref{prop:neighBound}
  part~\eqref{prop:neighBoundPart1} and part~\eqref{prop:neighBoundPart3}, 
  w.e.p.~$|E(N_{\calP_{\ellmid}}(e^+),\calP_\ell)|
    =\Theta(n^{1-\alpha}e^{-(\alpha-\frac12)(R-\ell)}e^{\frac12(R-\widetilde{k})})$.
Recalling that by Proposition~\ref{prop:bandBound}, we know 
  that w.e.p.~$\Vol(\calP_{\ell})=\Theta(ne^{-(\alpha-\frac12)(R-\ell)})$
  for $\ellmid<\ell\leq\ellmax$, it follows that w.e.p.,
\begin{align*}
S_2 & = 
   \Theta(n^{-(1-\alpha)}e^{(\alpha-\frac12)(R-\widetilde{k})})
   \sum_{\ellmid<\ell\leq\ellmax}
   |E(N_{\calP_{\ellmid}}(e^+),\calP_{\ell})| 
    = \Theta(\tfrac{1}{n}e^{\alpha(R-\widetilde{k})})
    \sum_{\ellmid<\ell\leq\ellmax} \Vol(\calP_\ell).
\end{align*}
Summarizing, $\overline{f''}(e)
  =\frac{d(e^-)}{\Vol(U)}\Theta(\frac{1}{n}e^{\alpha(R-\widetilde{k})}\sum_{\ell\leq\ellmax}\Vol(\calP_\ell))$.  
Since the summation in this last expression is clearly at most 
  $\Vol(U)$ and observing that by Proposition~\ref{prop:degreeBound}, 
  w.e.p.~$d(e^-)=\Theta(ne^{-\frac{k}{2}})$,  
  we conclude that w.e.p.~$\overline{f''}(e)
  =\Theta(e^{-\frac{k}{2}+\alpha(R-\widetilde{k})})$.
Finally, recall that $k\leq\ellmid$ and $\alpha>\frac12$, 
  so $\alpha(R-\widetilde{k})-\frac12 k
     \leq\max\{(\alpha-\frac12)k,\alpha(R-\ellmax)\}
     \leq \alpha(R-\ellmax)$.
By definition of $\ellmax$ and since $\alpha<1$, we infer that 
  w.e.p.~$\overline{f''}(e)=O(e^{\alpha(R-\ellmax)})
    =O(n^{\alpha(2\alpha-1)}e^{\alpha \nu'})=o(n^{2\alpha-1})$.

\medskip\noindent\textbf{Belt edges} 
(both endvertices of $e$ in $B_{O}(\ellmid)\setminus B_{O}(\ellmid-1)$):
The only possibility is that $e$ is the middle edge of 
  a path in ${\calQ}'$ of Type II.
In particular,
\[
\frac{\overline{{f'}}(e)}{3} =   
  \frac{1}{\Vol(U)}\sum_{\ellmid<\ell,\ell'\leq\ellmax}\sum_{s\in N_{\calP_{\ell}}(e^-)}\sum_{t\in N_{\calP_{\ell'}}(e^+)}\frac{d(s)d(t)}{|{\calQ}'_{s,t}|}.
\]
By Proposition~\ref{prop:nonEmptyQ2}, 
  if $s\in\calP_\ell$ and $t\in\calP_{\ell'}$ with 
  $\ellmid<\ell,\ell'\leq\ellmax$, then
  $|{\calQ}'_{s,t}|=|N_{\calP_{\ellmid}}(s)|\cdot |N_{\calP_{\ellmid}}(t)|$.
By Proposition~\ref{prop:neighBound} 
  part~\eqref{prop:neighBoundPart1}, for $w\in\calP_{\ell}\cup\calP_{\ell'}$, 
  expressions like $d(w)/|N_{\calP_{\ellmid}}(w)|$ 
  equal, w.e.p., $\Theta(n^{\alpha-\frac12})$.
Since a vertex cannot have more neighbors than its degree, w.e.p.,
\begin{align*}
& \overline{{f'}}(e)= \frac{\Theta(n^{2\alpha-1})}{\Vol(U)}
  \sum_{\ellmid<\ell\leq\ellmax} |N_{\calP_{\ell}}(e^-)|
  \sum_{\ellmid<\ell'\leq\ellmax} |N_{\calP_{\ell'}}(e^+)|
  \leq \frac{\Theta(n^{2\alpha-1})}{\Vol(U)}d(e^-)d(e^+).
\end{align*}
By Proposition~\ref{prop:degreeBound}, w.e.p.~$d(e^-), d(e^+)=\Theta(\sqrt{n})$,
  so by Lemma~\ref{lem:VolUpperBound}, w.e.p., 
  $\overline{{f'}}(e)=O(n^{2\alpha-1})$.

\medskip\noindent\textbf{Middle edges}
(both endvertices of $e$ in $B_{O}(\ellmax)\setminus B_{O}(\ellmid)$):
Now, $e$ can only appear as the middle edge of a path in $\calQ'$ of 
  Type I.
Say  $e^-\in\calP_{\widetilde{k}}$ and $e^+\in\calP_{\widetilde{k'}}$ 
  for $k,k'\leq\ellmid$.
Note that if $e$ is traversed by some path in $\calQ'_{s,t}$, 
  then it must be the case that $s\in\calP_{\ell}$ for some $\ell$ such
  that $\widetilde{\ell}=\widetilde{k}$ (if $\widetilde{k}\neq\ellmax$
  there is only one such $\ell$, otherwise $\ell\leq\ellmin$).
A similar statement holds for $t$.
By Proposition~\ref{prop:nonEmptyQ2}, 
  for $s\in\calP_k$ and $t\in\calP_{k'}$, we have that 
  $|\calQ'_{s,t}|=|E(\calP_{\widetilde{k}},\calP_{\widetilde{k'}})|$,  and hence
\[
\frac{\overline{{f}'(e)}}{3} = 
  \frac{1}{\Vol(U)}\!
  \sum_{\ell:\widetilde{\ell}=\widetilde{k}}
  \sum_{\ell':\widetilde{\ell'}=\widetilde{k'}}
  \sum_{s\in\calP_\ell}\sum_{t\in\calP_{\ell'}}
  \frac{d(s)d(t)}{|{\calQ}'_{s,t}|}
  = \frac{1}{\Vol(U)}
    \cdot\frac{1}{|E(\calP_{\widetilde{k}},\calP_{\widetilde{k'}})|}
  \sum_{\ell:\widetilde{\ell}=\widetilde{k}}\Vol(\calP_{\ell})\!\!
  \sum_{\ell':\widetilde{\ell'}=\widetilde{k'}}\Vol(\calP_{\ell'}).
\]
Since $\widetilde{k},\widetilde{k'}>\ellmid$, 
  by Proposition~\ref{prop:neighBound} 
  part~\eqref{prop:neighBoundPart2}, recalling that $\ellmin+\ellmax=R$, 
  since $\ellmin < \elllow+\nu$ (where $\nu=\frac{1}{\alpha}\log R+\omega(1)\cap o(\log R)$) and the way in which 
  $\widetilde{k}$ is defined, w.e.p.,
\begin{align*}
&  |E(\calP_{\widetilde{k}},\calP_{\widetilde{k'}})|
  =\Theta\big(ne^{-(\alpha-\frac{1}{2})(R-\widetilde{k})}e^{-(\alpha-\frac12)(R-\widetilde{k'})}\big) 
  =\Omega\big(ne^{-(\alpha-\frac{1}{2})(\max\{k,\elllow+\nu\}+\max\{k',\elllow+\nu\})}\big).
\end{align*}
Also, by Proposition~\ref{prop:bandBound} part~\eqref{prop:bandBoundPart2}
  and definition of $\elllow$, w.e.p., 
\[
e^{(\alpha-\frac{1}{2})\max\{k,\elllow+\nu\}}\sum_{\ell:\widetilde{\ell}=\widetilde{k}}
  \Vol(\calP_\ell)
  = \begin{cases}
    O(ne^{-(\alpha-\frac12)(R-2k)}), 
    & \text{if $k\geq\elllow+\nu$,} \\
    O(n^{\frac{(2\alpha-1)^2}{2\alpha}}e^{(\alpha-\frac12)\nu}\Vol(U)), 
    & \text{if $k<\elllow+\nu$.}
    \end{cases}
\]
Since $k\leq\ellmid\leq\frac{R}{2}$
  and $\frac{2\alpha-1}{\alpha}<1$ (given that $\alpha<1$),
  by Lemma~\ref{lem:VolUpperBound},
  w.e.p., the case that dominates above is when $k<\elllow+\nu$, which
  in turn is $o(n^{\alpha+\frac12})$.
Hence, again using Lemma~\ref{lem:VolUpperBound},  w.e.p., $\overline{{f'}}(e)
  =o\big(\tfrac{1}{\Vol(U)}\cdot\tfrac{1}{n}\cdot (n^{\alpha+\frac12})^2\big)
  =o\big(\tfrac{n^{2\alpha}}{\Vol(U)}\big)=o(n^{2\alpha-1})$.

\medskip\noindent\textbf{Belt incident edges}
(one endvertex of $e$ in $\calP_{\ellmid}$ and the other one in 
  $B_{O}(\ellmax)\setminus B_{O}(\ellmid)$):
Fix the orientation of $e$ so $e^-\in\calP_k$ for $\ellmid<k\leq\ellmax$
  and $e^+\in\calP_{\ellmid}$.
Note that $e$ can be the first edge of either a 
  Type II or Type III path, or the middle edge of a Type III path.
  Each alternative gives rise to 
  one of the terms on the right hand side of the following identity:
\begin{align*}
\frac{\overline{{f'}}(e)}{3}  & =  
  \frac{d(e^-)}{\Vol(U)}
  \sum_{\ell\leq\ellmax}\sum_{t\in\calP_{\ell}}
  \frac{d(t)}{|{\calQ}'_{e^-,t}|}|E(\{e^+\},N_{\calP_{\widetilde{\ell}}}(t))| \\
& \qquad 
  + \frac{1}{\Vol(U)}\sum_{\ell\leq\ellmid:\widetilde{\ell}=k}\sum_{s\in\calP_{\ell}}
    \sum_{\ellmid<\ell'\leq\ellmax}\sum_{t\in N_{\calP_{\ell'}}(e^+)}
    \frac{d(s)d(t)}{|{\calQ}'_{s,t}|}.
\end{align*}
Let $S_1$ and $S_2$ be the first and second terms on the right hand
  side above.

First, we bound $S_1$.
Let $t\in\calP_{\ell}$ for $\ell\leq\ellmax$.
By Proposition~\ref{prop:nonEmptyQ2}, 
  if $\ell\leq\ellmid$, then
  $|{\calQ}'_{e^-,t}|
    =|E(N_{\calP_{\ellmid}}(e^-),\calP_{\widetilde{\ell}})|$
  and $N_{\calP_{\widetilde{\ell}}}(t)=\calP_{\widetilde{\ell}}$
  (in particular, $E(\{e^+\},N_{\calP_{\widetilde{\ell}}}(t))
     =N_{\calP_{\widetilde{\ell}}}(e^+)$).
Moreover, if $\ell>\ellmid$, then 
  $|{\calQ}'_{e^-,t}|=|N_{\calP_{\ellmid}}(e^-)|\cdot |N_{\calP_{\ellmid}}(t)|$
  and $\widetilde{\ell}=\ellmid$. 
Since vertices in $\calP_{\ellmid}$ induce a clique in $H$, we have
  $|E(\{e^+\},N_{\calP_{\widetilde{\ell}}}(t))|=|N_{\calP_{\ellmid}}(t)|$.
Thus,
\[
S_1 = 
  \frac{d(e^-)}{\Vol(U)}\Big(
  \sum_{\ell\leq\ellmid}\frac{|N_{\calP_{\widetilde{\ell}}}(e^+)|\cdot\Vol(\calP_{\ell})}{|E(N_{\calP_{\ellmid}}(e^-),\calP_{\widetilde{\ell}})|}
  + \frac{1}{|N_{\calP_{\ellmid}}(e^-)|}
  \sum_{\ellmid<\ell\leq\ellmax}\Vol(\calP_{\ell})
\Big).
\]
By parts~\eqref{prop:neighBoundPart1} and~\eqref{prop:neighBoundPart3}
  of Proposition~\ref{prop:neighBound}
  if $\ell\leq\ellmid$,
  then w.e.p.~$|E(N_{\calP_{\ellmid}}(e^-),\calP_{\widetilde{\ell}})|
     =|N_{\calP_{\ellmid}}(e^-)|\cdot |N_{\calP_{\widetilde{\ell}}}(e^+)|$.
By part~\eqref{prop:neighBoundPart1} of the same proposition, 
  w.e.p.~$d(e^-)/|N_{\calP_{\ellmid}}(e^-)|=\Theta(n^{-(\alpha-\frac12)})$.
It follows that w.e.p.,
\[
S_1 =
  \frac{\Theta(n^{-(\alpha-\frac12)})}{\Vol(U)}
  \sum_{\ell\leq\ellmax}\Vol(\calP_{\ell}).
\]
Since the $\calP_{\ell'}$'s are disjoint and contained in $U$, 
  we clearly have $\sum_{\ell'\leq\ellmax}\Vol(\calP_{\ell'})\leq\Vol(U)$.
Hence, w.e.p.~$S_1=O(n^{-(\alpha-\frac12)})=o(n^{2\alpha-1})$.

Now, we bound $S_2$.
Assume $t\in B_{O}(\ellmax)\setminus B_{O}(\ellmid)$
  and $s\in\calP_\ell$ with $\ell\leq\ellmid$.
By Proposition~\ref{prop:nonEmptyQ2},  it holds that 
  $|{\calQ}'_{s,t}|=|E(\calP_{\widetilde{\ell}},N_{\calP_{\ellmid}}(t))|$.
By Proposition~\ref{prop:neighBound} part~\eqref{prop:neighBoundPart3},
  w.e.p., $|E(\calP_{\widetilde{\ell}},N_{\calP_{\ellmid}}(t))|
     =\Theta(n^{1-\alpha}e^{-(\alpha-\frac12)(R-\widetilde{\ell})}d(t))$.
Hence, w.e.p.,
\begin{align*}
S_2 = 
& \frac{\Theta(n^{-(1-\alpha)})}{\Vol(U)}\sum_{\ell\leq\ellmid:\widetilde{\ell}=k}
    \Theta(e^{(\alpha-\frac12)(R-\widetilde{\ell})})\Vol(P_{\ell})
    \sum_{\ellmid<\ell'\leq\ellmax}|N_{\calP_{\ell'}}(e^+)|.
\end{align*}
Since the number of neighbors of a vertex is at most its degree and 
  given that, by Proposition~\ref{prop:degreeBound}, 
  w.e.p.~$d(e^+)=\Theta(\sqrt{n})$, we infer that w.e.p.,
\begin{align*}
S_2 & = 
    \frac{1}{\Vol(U)}\Theta(n^{\alpha-\frac12}e^{(\alpha-\frac12)(R-k)})
    \sum_{\ell\leq\ellmid:\widetilde{\ell}=k}\Vol(P_{\ell})
\end{align*}
Clearly,  $\sum_{\ell\leq\ellmid:\widetilde{\ell}=k}\Vol(P_{\ell})\leq\Vol(U)$.
Recalling that $k>\ellmid$, the definition of $\ellmid$ 
  and since $\alpha>\frac12$, 
  we conclude that w.e.p.~$S_2=O(n^{2\alpha-1})$.
\end{proof}

\subsection{A $\calQ''$-flow}
The collection $\calQ''$ will contain 
  paths between distinct vertices $s$ and $t$ of the 
  center component $H$ if and only if
  at most one of $s$ and $t$ belongs to $B_{O}(\ellmax)$.
Paths in $\calQ''$ will have a similar structure as in $\calQ'$; we informally describe
  it first for paths both of whose endvertices $s$ and $t$ belong to 
  $B_{O}(R)\setminus B_{O}(\ellmax)$.
Specifically, such paths will consist of three segments.
The first segment connects $s$ 
  to a vertex $s'$ in $\calP_{\ellmax}$.
We denote this segment by $q_{s,s'}$.
The last segment, connects a vertex $t'$ in $\calP_{\ellmax}$
  to $t$.
We denote it by $q_{t',t}$. 
The middle segment will be a path from $s'$ to $t'$
  belonging to $\calQ'_{s',t'}$ as defined in the previous section.
In fact, the collection of paths from $s$ to $t$, i.e., $\calQ''_{s,t}$, 
  will be paths that first traverse $q_{s,s'}$, then a path
  in $\calQ'_{s',t'}$ and finally the path $q_{t',t}$.
For $q\in\calQ''_{s,t}$, 
  we refer to $q_{s,s'}$ and $q_{t',t}$ as \emph{end segments} of $q_{s,t}$
  and to $q_{s',t'}$ as the \emph{middle segment} of $q$.
If only $s$ belongs to 
  $B_{O}(R)\setminus B_{O}(\ellmax)$, we let 
  $t'=t$ and $q_{t',t}$  be the length $0$ path of the single vertex
  $t$.
We define $s'$ and $q_{s',s}$ similarly if $t$ is 
  in $B_{O}(R)\setminus B_{O}(\ellmax)$.

In order to specify how $s'$ and $t'$ are chosen and paths 
  $q_{s,s'}$ and $q_{t',t}$ defined, we borrow from~\cite{fk15} 
  the following useful concept of ``betweenness''
  (recall that $\Delta\varphi_{p_0,p_1}$ denotes the small
  relative angle in $[0,\pi)$ between $p_0,p_1\in\HH^2$):
  say that vertex $p'$ lies \emph{between} vertices $p$ and $p''$ 
  if $\Delta\varphi_{p,p'}+\Delta\varphi_{p',p''}=\Delta\varphi_{p,p''}$.
Also, given a finite set $\calS\subseteq\HH^2$ and $p,p'\in\calS$
  we say that 
  \emph{$p''$ follows $p$ in $\calS$}, 
  if there is no $p'\in \calS\setminus\{p,p''\}$
  such that $p'$ is between $p$ and $p''$. 
Now, let $u_0, u_1\in\calP_{\ellmax+1}$ be such that 
  $u_1$ follows $u_o$ in $\calP_{\ellmax+1}$ and $s$ is between $u_0$ and $u_1$.
Consider a shortest path in $H$ (ties broken 
  arbitrarily) between $s$ and an element of $\{u_0,u_1\}$
  -- denote the latter element by $u_b$.
We will show that, w.e.p.~$u_b$ has a neighbor in $\calP_{\ellmax}$.
We denote by~$q_{s,s'}$ the oriented path that 
  starts at $s$, traverses the aforementioned shortest path 
  up to $u_b$ and ends in $u_b$'s closest neighbor, 
  henceforth denoted by $s'$, that belongs to $\calP_{\ellmax}$.
Similarly, define $t'$ and $q_{t,t'}$.
Let $q_{t',t}$ equal the latter but with the reverse orientation.

An important fact concerning the just described end segments of paths in 
  $\calQ''$ arises from a 
  key property of geometric graphs, which depending on the model,
  precludes the existence of some vertex-edge configurations. 
In~\cite{fk15}, for hyperbolic geometric graphs,
  two very simple forbidden configurations are identified
  (each one obtained as the contrapositive of the two claims stated 
  in the following result).
\begin{lemma}[{\cite[Lemma~9]{fk15}}]\label{lem:fk}
Let $G=(V,E)$ be a hyperbolic geometric graph.
Let $u,v,w\in V$ be vertices such that $v$ is between $u$ and $w$, and 
  let $uw\in E$.
\begin{enumerate}[(i).-]
\item\label{lem:fkPart1} 
  If $r_v\leq \min\{r_u,r_w\}$, then $\{uv, vw\}\subseteq E$.
\item\label{lem:fkPart2}
  If $r_w\leq r_v\leq r_u$, then $vw\in E$.
\end{enumerate}
\end{lemma}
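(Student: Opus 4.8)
The plan is to reduce both parts to a single monotonicity property of the angle function $\theta_R(\cdot,\cdot)$ from~\eqref{eqn:angle}. First I would record the standard reformulation of adjacency in terms of relative angles: combining the law of cosines~\eqref{eqn:coshLaw} with~\eqref{eqn:angle}, any $p,q\in B_O(R)$ satisfy $\Delta\varphi_{p,q}=\theta_{\dist(p,q)}(r_p,r_q)$, and since $\theta_{\cdot}(r_p,r_q)$ is increasing in its subscript for fixed radii (immediate from~\eqref{eqn:angle}, as $\cosh$ is increasing on $[0,\infty)$), this yields the equivalence $pq\in E\iff\dist(p,q)\le R\iff\Delta\varphi_{p,q}\le\theta_R(r_p,r_q)$. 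I adopt the convention $\theta_R(r_p,r_q):=\pi$ whenever the $\arccos$ argument drops below $-1$ (equivalently $r_p+r_q\le R$), a case in which $\dist(p,q)\le R$ holds irrespective of the angle, so the equivalence still stands (all relative angles being $<\pi$).

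The key claim I would then prove is radial monotonicity: for every fixed $\rho\in[0,R)$ the map $r\mapsto\theta_R(\rho,r)$ is non-increasing on $[0,R)$. Writing $\theta_R(\rho,r)=\arccos g(r)$ with $g(r)=\tfrac{\cosh\rho\cosh r-\cosh R}{\sinh\rho\sinh r}$, the quotient rule gives $g'(r)=\tfrac{\cosh R\cosh r-\cosh\rho}{\sinh\rho\,\sinh^2 r}>0$ for $r\in(0,R)$, because $\cosh R\cosh r\ge\cosh R>\cosh\rho$; hence $g$ is increasing and $\arccos g$ is decreasing. (On $[0,R)$ the value $g(r)$ never reaches $1$, since $g(r)\ge1$ would force $\cosh(r-\rho)\ge\cosh R$, i.e.\ $|r-\rho|\ge R$; and where $g(r)\le-1$ we are in the degenerate case above.)

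It remains to assemble these facts with the betweenness hypothesis. After a rotation we may take $\theta_u=0$ and assume $u,v,w$ pairwise distinct; $v$ lying between $u$ and $w$ means $\Delta\varphi_{u,v}+\Delta\varphi_{v,w}=\Delta\varphi_{u,w}$, so in particular $\Delta\varphi_{v,w}\le\Delta\varphi_{u,w}$ and $\Delta\varphi_{u,v}\le\Delta\varphi_{u,w}$. For part~(\ref{lem:fkPart2}): from $uw\in E$ we get $\Delta\varphi_{u,w}\le\theta_R(r_w,r_u)$, and since $r_v\le r_u$, radial monotonicity gives $\theta_R(r_w,r_v)\ge\theta_R(r_w,r_u)\ge\Delta\varphi_{u,w}\ge\Delta\varphi_{v,w}$, whence $vw\in E$. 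For part~(\ref{lem:fkPart1}): this same computation used only $r_v\le r_u$ and already gives $vw\in E$; exchanging the roles of $u$ and $w$ and invoking instead $r_v\le r_w$ gives $\theta_R(r_u,r_v)\ge\theta_R(r_u,r_w)\ge\Delta\varphi_{u,w}\ge\Delta\varphi_{u,v}$, whence $uv\in E$ as well.

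I do not foresee a genuine obstacle: all the content lies in the radial monotonicity of $\theta_R$, and the only mild care needed is the bookkeeping of the degenerate range where the $\arccos$ argument leaves $[-1,1]$ (harmless, as noted) together with the standing assumption that $u,v,w$ are distinct (the remaining cases being vacuous or producing loops).
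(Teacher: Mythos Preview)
Your argument is correct. The paper does not supply its own proof of this lemma; it is simply quoted from~\cite{fk15}, so there is no in-paper proof to compare against. Your reduction to the radial monotonicity of $r\mapsto\theta_R(\rho,r)$, together with the trivial angular inequalities coming from betweenness, is exactly the natural route and matches the standard proof of this fact. The only minor omissions are the degenerate case $\rho=0$ (where $\sinh\rho=0$ and the formula for $g$ is undefined, but adjacency is automatic since the distance from the origin to any point in $B_O(R)$ is below $R$) and the boundary case $\Delta\varphi=\pi$; both are harmless.
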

Our two following results establish, first,
  that w.e.p.~$q_{s,s'}$ with the 
  stated properties does indeed exist in $H$, and second,
  show that the end segment of a path in $\calQ''$ 
  exhibits a very useful property: it is essentially
  contained in a small angular sector to which $s'$ belongs to  
  and except for potentially one internal vertex the path is completely
  contained in $B_{O}(R)\setminus B_{O}(\ellmax)$.
\begin{lemma}\label{lem:links2}
Let $\ell\in\{\ellmax,\ellmax+1\}$.
W.e.p., for any two points $u_0,u_1\in\calP_{\ell}$ 
  such that $u_1$ follows $u_0$ in $\calP_{\ell}$
  it holds that 
  $\Delta\varphi_{u_0,u_1}\leq
     \frac{\upsilon}{n}e^{\alpha(R-\ell)}\log n$.
Moreover, w.e.p., every $u\in\calP_{\ellmax+1}$ has a neighbor
  $v\in\calP_{\ellmax}$ such that
  $\Delta\varphi_{u,v}\leq\frac{\upsilon}{n}e^{\alpha(R-\ellmax)}\log n$.
\end{lemma}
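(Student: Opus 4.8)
The plan is to prove both assertions with a single discretization scheme: cover the angular circle $[0,2\pi)$ by a family of polynomially many short arcs, and show that w.e.p.\ each short arc, intersected with the relevant layer annulus, already contains a point of $\calP$. Everything after that is deterministic, and since the random event being controlled is a union of polynomially many single-region events, w.e.p.\ is preserved. Throughout I use that for any region $S$ the count $|\calP\cap S|$ is Poisson with mean $n\mu(S)$, so $\Pr(|\calP\cap S|=0)=e^{-n\mu(S)}$, together with Corollary~\ref{cor:medidalb} and the fact that $\mu$ is uniform in the angular coordinate, so a width-$\phi$ sector of the annulus $B_O(\ell)\setminus B_O(\ell-1)$ has measure $\Theta(\phi\, e^{-\alpha(R-\ell)})$.

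For the first assertion, fix $\ell\in\{\ellmax,\ellmax+1\}$ and put $w:=\tfrac{\upsilon}{n}e^{\alpha(R-\ell)}\log n$; since $R-\ell\le R-\ellmax=(\alpha-\tfrac12)R+\nu'$ and $\alpha(2\alpha-1)<1$ for $\alpha<1$, we have $w=n^{-\Omega(1)}$. Partition $[0,2\pi)$ into $M=\Theta(1/w)=\mathrm{poly}(n)$ arcs $I_1,\dots,I_M$, each of width between $w/4$ and $w/2$, and let $S_j$ be the set of points whose radius lies in $(\ell-1,\ell]$ and whose angle lies in $I_j$. Then $n\mu(S_j)=\Theta(\upsilon\log n)=\omega(\log n)$, so $\Pr(|\calP\cap S_j|=0)=n^{-\Omega(\upsilon)}\le n^{-C'}$ for any fixed $C'$ and $n$ large; a union bound over $j\in[M]$ shows that w.e.p.\ every $I_j$ contains the angle of some vertex of $\calP_\ell$ (by Lemma~\ref{lem:meanMeus} one may instead say $|\calP\cap S_j|\ge1$). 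On this event, if $u_0,u_1\in\calP_\ell$ with $u_1$ following $u_0$ had $\Delta\varphi_{u_0,u_1}>w$, then the point-free arc between them (of length $>w\ge 2\cdot\tfrac w2$) would entirely contain some $I_j$, contradicting $|\calP\cap S_j|\ge1$. Hence $\Delta\varphi_{u_0,u_1}\le w=\tfrac{\upsilon}{n}e^{\alpha(R-\ell)}\log n$, as claimed.

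For the second assertion, run the same scheme with $\ell=\ellmax$ and target width $\phi:=\tfrac{\upsilon}{n}e^{\alpha(R-\ellmax)}\log n$: w.e.p., for a partition of $[0,2\pi)$ into $M=\Theta(1/\phi)=\mathrm{poly}(n)$ arcs of width in $[\phi/4,\phi/2]$, every arc contains the angle of some vertex of $\calP_{\ellmax}$. Now fix $u\in\calP_{\ellmax+1}$, let $I_j$ be the arc containing $\theta_u$, and pick $v\in\calP_{\ellmax}$ with $\theta_v\in I_j$, so $\Delta\varphi_{u,v}\le\phi/2\le\tfrac{\upsilon}{n}e^{\alpha(R-\ellmax)}\log n$, which is the claimed angular bound. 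It remains to check that $uv$ is in fact an edge. Since $r_u,r_v>\tfrac R2$ (as $\ellmax-1>\tfrac R2$ for large $n$, using $\alpha<1$), and $r_u+r_v\le 2\ellmax+1$, Remark~\ref{rem:aproxAngle} applies and tells us that $u,v$ are adjacent whenever $\Delta\varphi_{u,v}\le\theta_R(r_u,r_v)=\Theta\!\big(e^{\frac12(R-r_u-r_v)}\big)=\Omega\!\big(e^{\frac12(R-2\ellmax-1)}\big)=\Omega\big(n^{-2(1-\alpha)}e^{\nu'}\big)$. A routine substitution of $R=2\log n+C$ and $\ellmax=(\tfrac32-\alpha)R-\nu'$ gives $\phi=\Theta\big(\upsilon\log n\cdot e^{\alpha\nu'}\cdot n^{\alpha(2\alpha-1)-1}\big)$, and the exponent comparison $\big(\alpha(2\alpha-1)-1\big)-\big(-2(1-\alpha)\big)=(2\alpha-1)(\alpha-1)<0$ for $\tfrac12<\alpha<1$ shows $\phi=o\big(\theta_R(r_u,r_v)\big)$ uniformly; hence for $n$ large $\Delta\varphi_{u,v}\le\phi/2\le\theta_R(r_u,r_v)$, so $\dist(u,v)\le R$ and $uv\in E$ (and by Remark~\ref{rem:HtoG} both endpoints lie in $H$, so this is also a statement about $H$).

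There is no deep difficulty here; the two points requiring care are (i) carrying out the union bound over a \emph{fixed} family of angular arcs rather than over the random vertices — which the discretization is designed to enable — and (ii) the elementary arithmetic in the second part, namely that the target width $\phi$ is polynomially smaller than the maximal non-adjacency angle $\theta_R(\ellmax+1,\ellmax)$, an inequality that hinges on $\tfrac12<\alpha<1$ through the sign of $(2\alpha-1)(\alpha-1)$ and is what lets angular proximity within $\phi$ force an edge.
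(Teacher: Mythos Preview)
Your proof is correct and follows essentially the same approach as the paper. The paper argues, for each $u_0\in\calP_\ell$, that the nearby annular region of angular width $\frac{\upsilon}{n}e^{\alpha(R-\ell)}\log n$ has measure $\omega(\frac{\log n}{n})$ and hence is w.e.p.\ nonempty, then takes a union bound over the (random) points of $\calP_\ell$; your fixed discretization into $\mathrm{poly}(n)$ arcs is a minor variant that makes the union bound deterministic, but the idea is the same. For the second part both arguments verify that the resulting angular gap is $o(\theta_R(\ellmax,\ellmax-1))$ via the same exponent comparison.
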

\begin{proof}
Fix $u_0\in\calP_{\ell}$.
Let $R_{u_0}$ be the collection of points 
  $u\in B_{O}(\ell)\setminus B_{O}(\ell-1)$ 
  such that $0<\Delta\varphi_{u_0,u}\leq\frac{\upsilon}{n}e^{\alpha(R-\ell)}\log n$.
By~Lemma~\ref{lem:muBall} and by definition of $\ell$, 
\begin{align*}
& \mu(R_{u_0})
  =\frac{\upsilon}{n}e^{\alpha(R-\ell)}(\log n) e^{-\alpha (R-\ell)}(1-e^{-\alpha})(1+o(1))
  =\omega\big(\tfrac{\log n}{n}\big).
\end{align*}
Hence, by Lemma~\ref{lem:meanMeus} together with a union bound over all 
  $u_0\in\calP_{\ell}$, w.e.p., 
  $R_{u_0}$ is not empty for each $u_0\in\calP_{\ell}$. 

Consider now the second part of the claim.
Let $v_0,v_1\in\calP_{\ellmax}$ be such that $v_1$ follows $v_0$ in 
  $\calP_{\ellmax}$ and $u$ is between $v_0$ and $v_1$.
From the first part, we know that w.e.p.~$\Delta\varphi_{u,v_0}, 
  \Delta\varphi_{u,v_1}\leq\frac{\upsilon}{n}e^{\alpha(R-\ellmax)}\log n$.
By Lemma~\ref{lem:aproxAngle}, we have 
  $\theta_{R}(\ellmax,\ellmax-1)=\Theta(e^{\frac12{(R-2\ellmax)}})
    =\Omega(n^{-2(1-\alpha)})$.
By definition of $\ellmax$ it holds that 
  $\Delta\varphi_{u,v_b}\leq\frac{\upsilon}{n}e^{\alpha(R-\ellmax)}\log n
    =\upsilon n^{-(1-\alpha)(2\alpha+1)}e^{\alpha \nu'}\log n$.
Since $2\alpha+1>2$, we conclude that 
  w.e.p.~$\Delta\varphi_{u,v_b}=o(\theta_{R}(\ellmax,\ellmax-1))$, 
  implying that $u$ and $v_b$ are neighbors in $H$.
\end{proof}

The following result establishes the existence of end segments 
  with certain useful characteristics.
\begin{proposition}\label{prop:charactShortPath}
Let $H=(U,F)$ be the center component of a graph chosen 
  according to $\poimod_{\alpha,C}(n)$.
Let $D$ be the diameter of $H$.
W.e.p.~for every vertex $s\in B_{O}(R)\setminus B_{O}(\ellmax)$ of $H$,
  there is a path in $H$ of length at most $D+1$ with endvertices 
  $s$ and $s'\in\calP_{\ellmax}$
  all of whose  internal vertices, 
  except for at most one, lie outside $B_{O}(\ellmax)$ and 
  determine together with~$s'$ an angle at the origin which is 
  at most $\phi_{\max}:=(1+\frac{1}{e})\frac{\upsilon}{n}e^{\alpha(R-\ellmax)}\log n$.
\end{proposition}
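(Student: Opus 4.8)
The plan is to verify that the path $q_{s,s'}$ described just before the statement — namely a shortest $s$–$u_b$ path $P$ in $H$ followed by the edge $u_b s'$ — has all the asserted properties, and then to union-bound over the $O(n)$ choices of $s$.

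\emph{Routine ingredients.} Since $\alpha>\tfrac12$, $(\alpha-\tfrac12)R=\Theta(\log n)$ dominates $\tfrac{2\log R}{1-\alpha}=\Theta(\log\log n)$, so for $n$ large $\ellmax+1<R-\tfrac{2\log R}{1-\alpha}$; by Remark~\ref{rem:HtoG} both $\calP_{\ellmax}$ and $\calP_{\ellmax+1}$ are contained in $H$, so $u_0,u_1,u_b,s'$ are genuine vertices of $H$. Applying Lemma~\ref{lem:links2} with $\ell=\ellmax+1$ (first part) together with its second part, w.e.p.\ and simultaneously for all $s$ one has $\Delta\varphi_{u_0,u_1}\le\tfrac{\upsilon}{n}e^{\alpha(R-\ellmax-1)}\log n$ and $\Delta\varphi_{u_b,s'}\le\tfrac{\upsilon}{n}e^{\alpha(R-\ellmax)}\log n$; since $s$ lies between $u_0$ and $u_1$ this also gives $\Delta\varphi_{s,u_b}\le\Delta\varphi_{u_0,u_1}$. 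As $H$ is connected, a shortest $s$–$u_b$ path has length at most $D$, so $q_{s,s'}$ has length at most $D+1$, and the only internal vertex of $q_{s,s'}$ not lying on $P$ is $u_b\in\calP_{\ellmax+1}$, which lies outside $B_O(\ellmax)$. It remains to control the internal vertices of $P$.

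\emph{Easy case ($r_s\le\max\{r_{u_0},r_{u_1}\}$).} First, $u_0u_1\in F$: their radii sum to more than $R$ (as $2\ellmax>R$), and $\Delta\varphi_{u_0,u_1}\le\tfrac{\upsilon}{n}e^{\alpha(R-\ellmax-1)}\log n=o(\theta_R(\ellmax+1,\ellmax+1))\le\theta_R(r_{u_0},r_{u_1})$, exactly as in the proof of Lemma~\ref{lem:links2} (using the definition of $\ellmax$ and Lemma~\ref{lem:aproxAngle}). Since $s$ lies between $u_0$ and $u_1$, Lemma~\ref{lem:fk} applied to the edge $u_0u_1$ with $s$ as the middle vertex gives $su_0\in F$ or $su_1\in F$ — part~(i) when $r_s\le\min\{r_{u_0},r_{u_1}\}$ and part~(ii) when $r_s$ lies strictly between $r_{u_0}$ and $r_{u_1}$. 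Hence $s$ is adjacent in $H$ to some element of $\{u_0,u_1\}$, so $s u_b\in F$, $P$ has no internal vertices, and the angle condition reduces to $\Delta\varphi_{u_b,s'}\le\phi_{\max}$, which holds.

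\emph{Main case ($r_s>\max\{r_{u_0},r_{u_1}\}$) — the main obstacle.} Here $P$ genuinely travels inward and returns, and the crux is to establish: (a) \emph{angular confinement} — every vertex of $P$ makes an angle at the origin with $s'$ of at most $\phi_{\max}$; and (b) \emph{at most one} internal vertex of $P$ lies in $B_O(\ellmax)$. Granting (a), fact (b) follows by shortcutting: any vertex of $P$ in $B_O(\ellmax)$ of radius $\le\ellmin$ is adjacent in $H$ to every vertex of $P$ in $B_O(\ellmax)$ (radii summing to at most $R=\ellmin+\ellmax$ force adjacency), and any two vertices of $P$ in $B_O(\ellmax)$ whose radii sum to more than $R$ are also adjacent because their angular distance is at most $\phi_{\max}=o(\theta_R(\ellmax,\ellmax))$ — indeed $\phi_{\max}=n^{(2\alpha+1)(\alpha-1)+o(1)}$ by the definition of $\ellmax$, while $\theta_R(\ellmax,\ellmax)=n^{2(\alpha-1)+o(1)}$ by Lemma~\ref{lem:aproxAngle}, and $(2\alpha+1)(\alpha-1)<2(\alpha-1)$ since $\alpha>\tfrac12$. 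Since $P$ is shortest, these adjacencies force its $B_O(\ellmax)$-vertices to be mutually consecutive, and a short further case analysis (replacing a two-vertex inward excursion by a single vertex, once more using that sufficiently close low-radius vertices are adjacent to the flanking vertices outside $B_O(\ellmax)$) brings the count down to one; this last step is where the precise radii must be exploited. For (a) I would follow the approach of~\cite{fk15}: the forbidden configurations of Lemma~\ref{lem:fk} imply that along a shortest path the radial coordinate is first weakly decreasing and then weakly increasing and the angular coordinate is monotone, so $P$ stays inside the angular interval spanned by its endpoints $s$ and $u_b$, which has width at most $\Delta\varphi_{u_0,u_1}$; tracking this width together with the extra $\Delta\varphi_{u_b,s'}$ displacement needed to reach $s'\in\calP_{\ellmax}$ yields the stated bound $\phi_{\max}=(1+\tfrac1e)\tfrac{\upsilon}{n}e^{\alpha(R-\ellmax)}\log n$. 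Finally, all invoked events hold w.e.p., so a union bound over the $O(n)$ choices of $s$ completes the argument. The genuine difficulty is items~(a)–(b): the length and existence claims are immediate, but pinning down that a shortest path between two angularly very close vertices outside $B_O(\ellmax)$ stays in a narrow sector and touches $B_O(\ellmax)$ essentially once requires the geometric rigidity of Lemma~\ref{lem:fk}, in the spirit of the diameter analysis of~\cite{fk15}.
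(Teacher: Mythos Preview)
Your routine setup and the easy case are fine, but the main case has a genuine gap in claim~(a). You assert that Lemma~\ref{lem:fk} forces a shortest path to have monotone angular coordinate and unimodal radial coordinate, hence to stay inside the angular interval spanned by $s$ and $u_b$; this does not follow. Lemma~\ref{lem:fk} only yields a shortcut when the \emph{between} vertex has suitably small radius: for consecutive path vertices $a\to b\to c$ with, say, $c$ angularly between $a$ and $b$, part~(i) applied to the edge $ab$ gives $ac\in E$ only if $r_c\le\min\{r_a,r_b\}$, and part~(ii) only recovers $cb\in E$, which you already have. Nothing prevents the path from leaving the tiny wedge $[\theta_{u_0},\theta_{u_1}]$ through a vertex of large radius, and since $s$ and $u_b$ are angularly extremely close you are in effect claiming that a shortest path between two nearly-coincident outer vertices never exits their common wedge---a strong statement that \cite{fk15} does not supply. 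Your derivation of~(b) rests on~(a) and inherits the gap; even granting~(a), the reduction from ``at most two consecutive vertices in $B_O(\ellmax)$'' to ``at most one'' is left as an unperformed case analysis.

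The paper sidesteps global path structure entirely via a \emph{first-exit} argument. Let $w$ be the first internal vertex of the shortest path $\widetilde q$ lying in $B_O(\ellmax)$. If $w$ is between $u_0$ and $u_1$, then since $u_0u_1\in F$ and $r_w\le\ellmax<r_{u_0},r_{u_1}$, Lemma~\ref{lem:fk}(i) gives $wu_b\in F$ directly; shortest-ness to $\{u_0,u_1\}$ forces $w$ to be the penultimate vertex. If $w$ is not between $u_0,u_1$, the edge $\widetilde w w$ from its predecessor straddles some $u_b$, and Lemma~\ref{lem:fk}(ii) again gives $wu_b\in F$. The very same shortcutting, applied to the first vertex (if any) that leaves the interval $[u_0,u_1]$ while still outside $B_O(\ellmax)$, shows no such vertex can exist on a shortest path. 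Hence all internal vertices before $w$ are automatically outside $B_O(\ellmax)$ \emph{and} between $u_0,u_1$, with $w$ the single possible exception---no monotonicity is ever invoked, only the observation that the first violation of either condition immediately produces an edge to $u_b$.
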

\begin{proof}
Let $u_0$ and $u_1$ be as described in the beginning of this section,
  i.e., $u_0,u_1\in\calP_{\ellmax+1}$ such that $u_1$ follows $u_0$ in 
  $\calP_{\ellmax+1}$ and $s$ is between $u_0$ and $u_1$.
Consider a shortest path in $H$ between $s$ and an element
  of $\{u_0,u_1\}$, say $\widetilde{q}$.
Clearly, $\widetilde{q}$ exists because $H$ is connected.
The length of $\widetilde{q}$ is at most $D$.
Suppose that some internal vertex of $\widetilde{q}$ belongs 
  to $B_{O}(\ellmax)$.
Say $w$ is the first such vertex one encounters when moving along 
  $\widetilde{q}$ beginning at $s$. 
Assume first that $w$ is between $u_0$ and $u_1$.
By Lemma~\ref{lem:links2}, we know that $u_0u_1$ is an edge of $H$, so
  by Lemma~\ref{lem:fk} part~\eqref{lem:fkPart1}, $wu_b$ is an edge of 
  $G$ (and thus of $H$) for any $b\in\{0,1\}$.
Assume then that $w$ is not 
  between $u_0$ and $u_1$ (in particular $w\not\in\{u_0,u_1\}$).
Let $\widetilde{w}$ be the vertex right before $w$ when moving along 
  $\widetilde{q}$ from $s$ to $w$. 
Note that by the choice of $w$, we have that 
  $\widetilde{w} \not\in B_{O}(\ellmax)$. 
Moreover, we may assume that $\widetilde{w}$ and all other vertices before $\widetilde{w}$ when moving along $\widetilde{q}$ beginning at $s$ are between $u_0$ and $u_1$, as otherwise, in the path $\widetilde{q}$, instead of moving to the first vertex  not between $u_0$ and $u_1$, one could by Lemma~\ref{lem:fk} part~\eqref{lem:fkPart1} directly move to $u_b$ for some $b \in \{0,1\}$, contradicting the fact that $\widetilde{q}$ is a shortest path.
Let $b\in\{0,1\}$ be such that $u_b$ is between $w$ and $\widetilde{w}$. 
By Lemma~\ref{lem:fk} part~\eqref{lem:fkPart2}, the edge $wu_b$ belongs to 
  $G$, hence also to $H$.
In summary, all but at most one of $\widetilde{q}$'s internal
  vertices lie outside $B_{O}(\ellmax)$ and in between $u_0$ and $u_1$.
By Lemma~\ref{lem:links2}, it follows that 
  all but one of the vertices of $\widetilde{q}$ determine 
  an angle at the origin with $u_b$ which is at most 
  $\frac{\upsilon}{en}e^{\alpha(R-\ellmax)}\log n$.
Again by Lemma~\ref{lem:links2}, if we concatenate $\widetilde{q}$ with 
  the edge $u_bv$ where $v\in\calP_{\ellmax}$ is as in the statement
  of Lemma~\ref{lem:links2}, we obtain a path $q_{s,v}$ with the desired 
  properties.
\end{proof}
An immediate consequence of the previous result is that
  every path in $\calQ''$ has length at most $2D+5$ where $D$ is 
  the diameter of $H$.

For future reference, we next derive some useful volume estimates,
  one of which involves a natural extension of our neighborhood
  definition.
Specifically, for $w\in U$ consider the set of neighbors $W$
  that belong to $\calP_{\ell}$, i.e., $W=N_{\calP_{\ell}}(w)$.
Denote by $N_{\calP_{\ell'}}(W)$ the set of neighbors of vertices in 
  $W$ that belong to $\calP_{\ell'}$, i.e.,
  $N_{\calP_{\ell'}}(W):=\bigcup_{w\in W}N_{\calP_{\ell'}}(w)$.
\begin{lemma}\label{lem:upsilonRepl}
Let $H=(U,F)$ be the center component of $G=(V,E)$ chosen 
  according to $\poimod_{\alpha,C}(n)$.
Then 
  the following holds w.e.p.:
\begin{enumerate}[(i).-]
\item\label{lem:upsilonRepl1}
If $w\in\calP_{\ellmax}$, then~$\displaystyle\sum_{t\in U\setminus B_{O}(\ellmax): t'=w} d(t)
      = O(\upsilon e^{\alpha(R-\ellmax)}\log n)$.


\item\label{lem:upsilonRepl2}
If $w\in P_{g}$ for some $\ellmid\leq g\leq\ellmax$, then
\[
\sum_{t\in U\setminus B_{O}(\ellmax): t'\in N_{\calP_{\ellmax}}(W)} d(t) 
  = \begin{cases}
  \Theta(\sqrt{n}e^{\frac12(R-\ellmax)}), & 
     \text{if $W=\{w\}$ and $g=\ellmid$,} \\
  \Theta(\sqrt{n}e^{\frac12(R-g)}), & 
     \text{if $W=N_{\calP_{\ellmid}}(w)$.} \\
  \end{cases}
\]
\end{enumerate}
\end{lemma}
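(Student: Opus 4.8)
The plan is to reduce both statements to volume estimates for thin (possibly truncated) angular sectors, bounded from above by Lemma~\ref{lem:VolUpperBound} and from below by a matching first‑moment/concentration argument. For the first statement, the crucial fact is that, by Proposition~\ref{prop:charactShortPath} (and its proof, which also places the starting vertex $s$ within $O(\phi_{\max})$ of $s'$ at the origin), any $t\in U\setminus B_{O}(\ellmax)$ with $t'=w$ satisfies $\Delta\varphi_{t,w}=O(\phi_{\max})$, where $\phi_{\max}=(1+\tfrac1e)\tfrac{\upsilon}{n}e^{\alpha(R-\ellmax)}\log n$. Hence $\{t\in U\setminus B_{O}(\ellmax):t'=w\}\subseteq\Phi_{w}\setminus B_{O}(\ellmax)$ for a sector $\Phi_{w}$ of angle $O(\phi_{\max})$ containing $w$, so $\sum_{t:\,t'=w}d(t)\le\Vol(\Phi_{w}\setminus B_{O}(\ellmax))$. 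Now apply the second part of Lemma~\ref{lem:VolUpperBound} with $v=w\in\calP_{\ellmax}$: since $\ellmax=(\tfrac32-\alpha)R-\nu'$ and $\alpha>\tfrac12$ we have $\ellmax\le(1-\xi)R$ for a small fixed $\xi>0$, and, using $\tfrac1n=\Theta(e^{-R/2})$, $\phi_{\max}/e^{-\ellmax/2}=\Theta\big(\upsilon(\log n)\,e^{(\alpha-\frac12)(R-\ellmax)}\big)=\omega(1)$, so the angle requirement is met; this gives w.e.p.\ $\Vol(\Phi_{w}\setminus B_{O}(\ellmax))=O(\phi_{\max}n)=O(\upsilon e^{\alpha(R-\ellmax)}\log n)$. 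A union bound over the (w.e.p.\ polynomially many, by the first part of Proposition~\ref{prop:bandBound}) vertices $w\in\calP_{\ellmax}$ — formally, over a polynomial‑size family of fixed reference sectors of angle $\Theta(\phi_{\max})$ covering $B_{O}(R)$, one of which contains each $\Phi_{w}$ — completes the first statement.

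For the upper bound in the second statement, write $W'=N_{\calP_{\ellmax}}(W)$ and locate it angularly via Remark~\ref{rem:aproxAngle}/Lemma~\ref{lem:aproxAngle}. If $W=\{w\}$ and $g=\ellmid$, then $W'=N_{\calP_{\ellmax}}(w)$ lies in an interval around $w$ of angle $\phi^{*}:=\Theta(\theta_{R}(\ellmid,\ellmax))=\Theta(\sqrt{n}\,e^{-\ellmax/2})$. If $W=N_{\calP_{\ellmid}}(w)$ with $w\in\calP_{g}$: when $g=\ellmid$, $W=\calP_{\ellmid}$ (a clique) and w.e.p.\ $W'=\calP_{\ellmax}$, since every vertex of $\calP_{\ellmax}$ has a neighbour in $\calP_{\ellmid}$ (the relevant $\mu$‑mass is $\omega(\log n/n)$; apply Lemma~\ref{lem:meanMeus} and a union bound); when $g>\ellmid$, $N_{\calP_{\ellmid}}(w)$ — and hence $W'$, the additional $\theta_{R}(\ellmid,\ellmax)$‑thickening being of no larger order since $g\le\ellmax$ — lies in an interval of angle $\phi^{*}:=\Theta(\theta_{R}(\ellmid,g))=\Theta(\sqrt{n}\,e^{-g/2})$. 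In either thin‑sector case every $t$ with $t'\in W'$ lies in a sector of angle $\Theta(\phi^{*})$ (enlarged by the negligible $O(\phi_{\max})$) truncated at $\ellmax$, so Lemma~\ref{lem:VolUpperBound} yields $\sum_{t:\,t'\in W'}d(t)=O(\phi^{*}n)$, which is the claimed order $\Theta(\sqrt{n}e^{\frac12(R-\ellmax)})$, resp.\ $\Theta(\sqrt{n}e^{\frac12(R-g)})$; and in the case $W'=\calP_{\ellmax}$ one has directly $\sum_{t:\,t'\in W'}d(t)=\Vol(U\setminus B_{O}(\ellmax))\le\Vol(U)=O(n)=O(\sqrt{n}e^{\frac12(R-\ellmid)})$ by Corollary~\ref{cor:volTobias}.

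For the lower bound, first observe $\Vol(U\cap B_{O}(\ellmax))=o(n)$: summing $\Vol(\calP_{\ell})$ over $\elllow+\nu\le\ell\le\ellmax$ via the second part of Proposition~\ref{prop:bandBound} gives a geometric series dominated by $\ell=\ellmax$, namely $\Theta\big(e^{R/2-(\alpha-\frac12)(R-\ellmax)}\big)=\Theta\big(n^{\,1-2(\alpha-\frac12)^{2}}(\log n)^{-\Theta(1)}\big)=o(n)$, and the vertices with $r\le\elllow+\nu$ contribute $o(n)$ as in the proof of Lemma~\ref{lem:VolUpperBound}; hence, by Corollary~\ref{cor:volTobias}, $\Vol(U\setminus B_{O}(\ellmax))=\Theta(n)$, which settles the lower bound when $W'=\calP_{\ellmax}$. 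In the thin‑sector cases one shows $\{t\in U\setminus B_{O}(\ellmax):\theta_{t}\in I\}\subseteq\{t:\,t'\in W'\}$ for an angular interval $I$ of length $\Theta(\phi^{*})$ lying well inside the span of $W'$: for such a $t$, its $t'$ lies within $\phi_{\max}$ of $\theta_{t}$ by Proposition~\ref{prop:charactShortPath}, has radius at most $\ellmax$, and for the nearest vertex $u''\in W$ (which equals $w$ when $W=\{w\}$, and otherwise is at angular distance $O(\calP_{\ellmid}\text{-spacing})+\phi_{\max}\ll\theta_{R}(\ellmid,\ellmax)$) a short computation with Lemma~\ref{lem:aproxAngle} gives $\Delta\varphi_{t',u''}<\theta_{R}(r_{t'},r_{u''})$, whence $t'\in N_{\calP_{\ellmax}}(u'')\subseteq W'$. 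Therefore $\sum_{t:\,t'\in W'}d(t)\ge\Vol(U\cap\Psi_{I}\setminus B_{O}(\ellmax))$ for the $\Theta(\phi^{*})$‑sector $\Psi_{I}$ over $I$, and it remains to prove $\Vol(U\cap\Psi_{I}\setminus B_{O}(\ellmax))=\Omega(\phi^{*}n)$.

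This last estimate — the exact converse of Lemma~\ref{lem:VolUpperBound} for truncated sectors — is the main obstacle, because Lemma~\ref{lem:newvolume} only yields $\Omega(\phi^{*}n/\mathrm{polylog})$ and the ``safe'' vertices of $U$ (those with $r\le R-\tfrac{2\log R}{1-\alpha}$) carry only a $1/\mathrm{polylog}$ fraction of the volume of a $\phi^{*}$‑sector, so one genuinely needs the $U$‑vertices in the outer annulus $r\in[R-\tfrac{2\log R}{1-\alpha},R]$. I would obtain $\Vol(U\cap\Psi_{I}\setminus B_{O}(\ellmax))=\Omega(\phi^{*}n)$ by combining (a) the first‑moment computation $\EE[\Vol(\calP\cap\Psi_{I}\setminus B_{O}(\ellmax))]=\Theta(\phi^{*}n)$, obtained by integrating $\EE[d(v)]=\Theta(ne^{-r_{v}/2})$ against the intensity; (b) concentration of this quantity, treating the dependence among the degrees by the sector‑partitioning device used in the proof of Lemma~\ref{lem:VolUpperBound}; and (c) the fact that a constant fraction of this $\calP$‑volume lies in $U$, most plausibly by adapting the giant‑component argument of Theorem~\ref{thm:volTobias1} (which already shows $\Vol(U\cap\Pi)=\Omega(n)$ for a $\pi$‑sector $\Pi$) from a $\pi$‑sector to a $\phi^{*}$‑sector, uniformly over all the relevant (polynomially many) sectors and with no polylogarithmic loss. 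Verifying that this adaptation goes through w.e.p.\ simultaneously over all those sectors is, I expect, the delicate point of the proof.
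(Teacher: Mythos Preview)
For part~(i) and for the upper bounds in part~(ii), your argument is essentially the paper's: confine the relevant $t$'s to a truncated sector of angle $O(\phi_{\max})$ (respectively $\Theta(\phi_{\max}+\phi_W+\theta_R(\ellmid,\ellmax))$, where $\phi_W$ is the angular span of $W$), and then apply Lemma~\ref{lem:VolUpperBound}. Your separate handling of the subcase $g=\ellmid$, $W=N_{\calP_{\ellmid}}(w)$ (where $W'=\calP_{\ellmax}$ and one bounds by $\Vol(U)$) is a bit more explicit than the paper but harmless.

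Where you diverge from the paper is in the lower bound for part~(ii). The paper's proof only invokes Lemma~\ref{lem:VolUpperBound}, which gives $\Vol(\Upsilon)=O(\phi n)$, and then says the result ``follows as before''; the $\Theta(\phi n)$ appearing in the displayed chain in the proof of part~(i) is really only $O(\phi n)$, and no matching $\Omega(\phi n)$ for $\Vol(U\cap\Upsilon)$ on a thin truncated sector is ever established. You correctly recognise that proving the full $\Theta$ would require exactly what you sketch --- showing that a constant fraction of the $\calP$-volume of a thin outer sector lies in $U$ --- and that the available tools (Lemma~\ref{lem:newvolume}, Corollary~\ref{cor:volTobias}) only give this with polylogarithmic loss or for $\pi$-sectors. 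Your proposed route (first moment, concentration via the sector-partitioning device of Lemma~\ref{lem:VolUpperBound}, and an adaptation of the giant-component argument behind Theorem~\ref{thm:volTobias1} to $\phi^{*}$-sectors) is plausible, but step~(c) is genuinely nontrivial and is not carried out anywhere in the paper.

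In short: the paper's own proof establishes only the upper bound in part~(ii), and in fact only the upper bound is ever used downstream (in Proposition~\ref{prop:flowPart2}). So the $\Theta$ in the statement is stronger than what is proved or needed; you are being more scrupulous than the paper here, and the extra work you outline for the lower bound is real but, for the purposes of the paper, unnecessary.
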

\begin{proof}
For the first part, assume $t\in U\setminus B_{O}(\ellmax)$
  is such that $t'=w$.
By Proposition~\ref{prop:charactShortPath} (for $\phi_{\max}$ as defined there),
  w.e.p.~the angle at the origin determined by $t$ and $w$ 
  is at most $\phi:=\phi_{\max}$.
Thus, w.e.p.~$t$ must belong 
  to the $\phi$-sector centered at $w$, henceforth denoted by~$\Phi$,
  and thence to the truncated sector $\Phi\setminus B_{O}(\ellmax)$.
By Lemma~\ref{lem:VolUpperBound} we conclude that w.e.p.,
\[
\sum_{t\in U\setminus B_{O}(\ellmax): t'=w} d(t)
  \leq \sum_{t\in U\cap\Phi\setminus B_{O}(\ellmax)} d(t)
  = \Theta(\phi n) = O(\upsilon e^{\alpha(R-\ellmax)}\log n).
\]

For the second part, let 
  $\phi_W=\inf\{\phi : \text{there is a $\phi$-sector $\Phi\supseteq W$}\}$.
We proceed as in the first part.
Consider $t\in U\setminus B_{O}(\ellmax)$
  such that $t'$ is a neighbor of a vertex in $W$.
Note that the angle between a vertex in $W$ and one of its neighbors 
  in $\calP_{\ellmax}$ is $O(\theta_{R}(\ellmid,\ellmax))$.
As in the first part, the angle at the origin
  determined by $t\in U\setminus B_{O}(\ellmax)$ and $t'$ 
  is at most $\phi_{\max}$.
Hence, the angle at the origin 
  determined by $t$ and $w$ is $\phi
    :=\Theta(\phi_{\max}+\phi_W+\theta_R(\ellmid,\ellmax)).$
If $W=\{w\}$, then  $\phi_W=0$, and hence $\phi=\Theta(\theta_{R}(\ellmid,\ellmax))$, and the first result of the second part follows as before. Similarly, if
    $W=N_{\calP_{\ellmid}}(w)$, then $\phi_w=\Theta(\theta_R(\ellmid,g))$, and hence in this case, $\phi=\Theta(\theta_R(\ellmid,g))$. The argument is once again as in the first part.
\end{proof}

\medskip
The main result of this section is the following:
\begin{proposition}\label{prop:flowPart2}
Let $H=(U,F)$ be the center component of $G=(V,E)$ chosen 
  according to $\poimod_{\alpha,C}(n)$.
For all $q\in\calQ''_{s,t}$, let 
\[
f''(q) :=\frac{d(s)d(t)}{\Vol(U)}\cdot\frac{1}{|\calQ''_{s,t}|}.
\]
Then, w.e.p.~$\calQ''\subseteq\calQ(H)$, $f''$ is a well defined
  $\calQ''$-flow and $\overline{\rho}(f'')
     =O(D'n^{2\alpha-1})$.
\end{proposition}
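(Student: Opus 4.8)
The plan is to mirror the proof of Proposition~\ref{prop:flowPart1} as closely as possible, the two genuinely new ingredients being the control of end segments furnished by Proposition~\ref{prop:charactShortPath} and the demand-aggregation estimate of Lemma~\ref{lem:upsilonRepl}. First I would check that $f''$ is a well-defined $\calQ''$-flow: by Proposition~\ref{prop:charactShortPath}, w.e.p.\ every $s\in B_O(R)\setminus B_O(\ellmax)$ of $H$ has an end segment $q_{s,s'}$ terminating at some $s'\in\calP_{\ellmax}$, while by Propositions~\ref{prop:nonEmptyQ2} and~\ref{prop:neighBound} (precisely as in the proof of Proposition~\ref{prop:flowPart1}) the middle collection $\calQ'_{s',t'}$ is non-empty for all $s',t'\in B_O(\ellmax)$; hence $\calQ''_{s,t}\neq\emptyset$ whenever at most one of $s,t$ lies in $B_O(\ellmax)$, the demand constraint~\eqref{eq:flowCondition} holds by construction, and $\{\calQ',\calQ''\}$ is a path-consistent partition of $\calQ(H)$. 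Next I would invoke the remark following Proposition~\ref{prop:charactShortPath}, namely that every $q\in\calQ''$ has $|q|\le 2D+5$, to reduce the elongated flow to the flow: since $\overline{f''}(e)\le(2D+5)\,f''(e)$ for every oriented edge $e$, it suffices to bound $f''(e)$ by $n^{2\alpha-1}$ up to polylogarithmic factors, the factor $2D+5$ together with these polylogarithmic factors being exactly what $D'$ absorbs.

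The core step is to split the edges carrying $\calQ''$-flow into \emph{end-segment edges}, having at least one endvertex outside $B_O(\ellmax)$ (up to the single exceptional internal vertex permitted by Proposition~\ref{prop:charactShortPath}), and \emph{middle-segment edges}, having both endvertices in $B_O(\ellmax)$. For an end-segment edge $e$, the narrow-sector clause of Proposition~\ref{prop:charactShortPath} together with the consecutive-spacing bound of Lemma~\ref{lem:links2} confines every source $s$ whose path traverses $e$ to a bounded number of $\Theta(\phi_{\max})$-sectors attached to the endvertices of $e$; since the demand leaving a single $s$ totals at most $d(s)$, Lemma~\ref{lem:VolUpperBound} then gives $f''(e)=O(\phi_{\max}n)=\Theta(\upsilon e^{\alpha(R-\ellmax)}\log n)$, which equals $n^{\alpha(2\alpha-1)+o(1)}$ by the definitions of $\ellmax$ and $\nu'$, and is $o(n^{2\alpha-1})$ because $\alpha<1$. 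For a middle-segment edge $e$ I would re-run the case analysis of Proposition~\ref{prop:flowPart1}, observing first that every $\calQ'$-path occurring as a middle segment has an endpoint in $\calP_{\ellmax}$ and therefore passes through $\calP_{\ellmid}$, so that $e$ is a \emph{belt}, \emph{belt incident}, or \emph{spread out} edge and never a \emph{middle} edge; in each of these cases the estimate is the one from Proposition~\ref{prop:flowPart1} except that a demand $d(s')d(t')/\Vol(U)$ emanating from an endpoint $s'\in\calP_{\ellmax}$ is replaced by the aggregated demand $\bigl(\sum_{s\mapsto s'}d(s)\bigr)\bigl(\sum_{t\mapsto t'}d(t)\bigr)/\Vol(U)$, or, when exactly one of $s,t$ already lies in $B_O(\ellmax)$, by $\bigl(\sum_{s\mapsto s'}d(s)\bigr)d(t)/\Vol(U)$ with a Type~II or Type~III middle segment.

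The hard part will be checking that this substitution does not blow the belt-edge bound past $n^{2\alpha-1}$, since the aggregated demand $\Theta(\phi_{\max}^2 n^2)/\Vol(U)$ is polynomially larger than the point demand $\Theta(e^{R-\ellmax})/\Vol(U)$. The resolution is a cancellation within the belt-edge estimate itself: for a belt edge $e=(e^-,e^+)$ with $e^-,e^+\in\calP_{\ellmid}$, the contribution to $f''(e)$ from pairs $s,t$ both outside $B_O(\ellmax)$ equals
\[
\frac{1}{\Vol(U)}\Bigl(\sum_{s'\in N_{\calP_{\ellmax}}(e^-)}\frac{\sum_{s\mapsto s'}d(s)}{|N_{\calP_{\ellmid}}(s')|}\Bigr)\Bigl(\sum_{t'\in N_{\calP_{\ellmax}}(e^+)}\frac{\sum_{t\mapsto t'}d(t)}{|N_{\calP_{\ellmid}}(t')|}\Bigr),
\]
and using the first part of Lemma~\ref{lem:upsilonRepl} ($\sum_{s\mapsto s'}d(s)=O(\upsilon e^{\alpha(R-\ellmax)}\log n)$) and the first part of Proposition~\ref{prop:neighBound} ($|N_{\calP_{\ellmid}}(s')|=\Theta(n^{-(\alpha-\frac12)}e^{\frac12(R-\ellmax)})$ and $|N_{\calP_{\ellmax}}(e^-)|=\Theta(\sqrt n\,e^{-(\alpha-\frac12)(R-\ellmax)})$), the two factors $e^{\pm(\alpha-\frac12)(R-\ellmax)}$ cancel, so each parenthesized sum is $O(n^{\alpha})$ up to polylogarithmic factors, and the whole expression is $O(n^{2\alpha}/\Vol(U))=O(n^{2\alpha-1})$ up to polylogs since $\Vol(U)=\Theta(n)$ by Lemma~\ref{lem:VolUpperBound}. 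The belt-incident and spread-out edges are dealt with by the same bookkeeping, and in the mixed case one aggregated-demand factor is merely replaced by a single degree, which only helps; assembling the three edge classes gives $f''(e)=O(n^{2\alpha-1})$ up to polylogs for every middle-segment edge, and hence $\overline{\rho}(f'')=O(D'n^{2\alpha-1})$.
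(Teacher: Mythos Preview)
Your outline is essentially the paper's proof: split edges carrying $\calQ''$-flow into \emph{remote} (end-segment) edges and \emph{middle-segment} edges, observe that the middle segment is always a Type~II or Type~III $\calQ'$-path (so no ``middle'' edges occur), and re-run the case analysis of Proposition~\ref{prop:flowPart1} with demands aggregated via Lemma~\ref{lem:upsilonRepl}. Two points need tightening.

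First, $D'$ is defined to be exactly $2D+5$; it does \emph{not} absorb polylogarithmic factors. Your belt-edge computation invokes part~(\ref{lem:upsilonRepl1}) of Lemma~\ref{lem:upsilonRepl} to bound $\sum_{s\mapsto s'}d(s)$ separately for each $s'\in N_{\calP_{\ellmax}}(e^-)$ and then sums, producing each parenthesized factor as $O(n^{\alpha}\upsilon\log n)$ and hence $f''(e)=O\bigl(n^{2\alpha-1}(\upsilon\log n)^2\bigr)$, which is too large. The paper instead writes the factor as $\sum_{s:s'e^-\in F}d(s)/d(s')$, splits off the trivial contribution $d(e^-)=\Theta(\sqrt n)$ coming from $s\in B_O(\ellmax)\setminus B_O(\ellmid)$ (where $s'=s$), and for $s\notin B_O(\ellmax)$ applies part~(\ref{lem:upsilonRepl2}) of Lemma~\ref{lem:upsilonRepl} with $W=\{e^-\}$ and $g=\ellmid$ to obtain $\sum_{s:s'\in N_{\calP_{\ellmax}}(e^-)}d(s)=\Theta\bigl(\sqrt n\,e^{\frac12(R-\ellmax)}\bigr)$ in one stroke; dividing by $d(s')=\Theta\bigl(e^{\frac12(R-\ellmax)}\bigr)$ gives $O(\sqrt n)$ with no polylog loss. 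Part~(\ref{lem:upsilonRepl2}) is the aggregation estimate you actually need here; part~(\ref{lem:upsilonRepl1}) is only sharp enough for remote edges.

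Second, ``the same bookkeeping'' for belt-incident edges hides genuine work. A belt-incident edge $e$ with $e^-\in\calP_k$, $\ellmid<k\le\ellmax$, can be the first edge of a Type~II middle segment, the first edge of a Type~III middle segment, or the middle edge of a Type~III middle segment, and these three sub-cases each require a different instantiation of Lemma~\ref{lem:upsilonRepl} (and in the Type~III first-edge sub-case one must further distinguish $k=\ellmax$ from $k<\ellmax$, since only the former permits $s\notin B_O(\ellmax)$). The paper carries this out explicitly; your framework is correct, but the details are not a mechanical copy of the belt case.
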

\begin{proof}
Since $|\calQ''_{s,t}|=|\calQ'_{s',t'}|$,
  when at most one of $s$ and $t$ belongs to $B_{O}(\ellmax)$, 
  Proposition~\ref{prop:nonEmptyQ2} and Proposition~\ref{prop:neighBound},
  imply that $|\calQ''_{s,t}|\neq 0$.
Thus, $f''$ is well defined.
Moreover, by definition $\sum_{q\in\calQ''_{s,t}} f'(q)=d(s)d(t)/\Vol(U)$,
  so $f''$ is a flow.
  
\medskip
We next bound $\overline{\rho}(f'')$, i.e., the elongated
  flow $\overline{f''}(e)$ for each oriented edge $e$ traversed by some path 
  in $\calQ''$.
To facilitate the argument, we classify oriented edges $e$ of $H$ used
  by paths in $\calQ''$ and bound their elongated flows separately.
The edges traversed by middle segments of paths in $\calQ''$ are grouped 
  as in the proof of Proposition~\ref{prop:flowPart1}, i.e., 
  into spread out, belt and belt incident edges (so called middle edges, 
  i.e., edges with both endvertices in $B_{O}(\ellmax)\setminus B_{O}(\ellmid)$,
  are ignored because they are not traversed by paths in
  $\calQ''$).
The edges traversed by end segments of paths in $\calQ''$ will be
  referred to as \emph{remote edges}. 
These edges have at least one endvertex
  in $B_{O}(R)\setminus B_{O}(\ellmax)$.

For bounding elongated flows we use a trivial bound on the length
  of paths in $\calQ''$.
Specifically, we note that by construction end segments of paths 
  in $\calQ''$ have length at most $D+1$ where $D$ is the diameter of 
  the center component $H$.
Since every path in $\calQ'$ has length $3$, it follows 
  that, every path in $\calQ''$ has length
  at most $D':=2D+5$.

Let $e$ be an edge of $H$.
Since $\calQ''_{s,t}=\calQ''_{t,s}$ for every distinct $s,t\in U$,
 the elongated flow $\overline{f''}$ is the same for both orientations of $e$.
Thus, in our ensuing discussion we fix arbitrarily one of 
  the two possible orientations of $e$. 

\medskip\noindent
\textbf{Spread out edges} (one endvertex of $e$ in $B_{O}(\ellmid)$
  and the other one in $B_{O}(\ellmax)\setminus B_{O}(\ellmid)$):
Fix the orientation of $e$ so
  $e^-\in B_{O}(\ellmid)$ and $e^+\not\in B_{O}(\ellmid)$.
The only paths $q\in\calQ''$ that could traverse $e$ are those 
  whose middle segment traverses $e$.
This can happen only when $q$ is of Type I and its first
  edge is $e^-$ (in particular, the initial end segment of $q$ is the length $0$ path $\{e^-\}$).
Assume now that $s$ and $t$ are start- and endvertices of $q$.
Observe that 
  $t\not\in B_{O}(\ellmax)$ since otherwise
  $s,t\in B_{O}(\ellmax)$ contradicting the fact that $q\in\calQ''$. 
Moreover, it must be that 
\begin{inparaenum}[(i).-]
\item $s=e^-\in\calP_k$ for some $k\leq\ellmid$,
\item $q$'s middle segment must be a length $3$ path 
  with  $e^-$ and $t'$ as endvertices, and
\item one internal vertex of $q$'s middle segment is 
  $e^+$ and the other internal vertex belongs to 
  $N_{\calP_{\ellmid}}(e^+)\cap N_{\calP_{\ellmid}}(t')$.
\end{inparaenum}
Hence, there are at most 
  $|N_{\calP_{\ellmid}}(e^+)\cap N_{\calP_{\ellmid}}(t')|
    \leq |N_{\calP_{\ellmid}}(t')|$ feasible middle segments of~$q$.
Thence,
\begin{align*}
\overline{f''}(e) & \leq \frac{D'd(e^-)}{\Vol(U)}
   \sum_{t\in U\setminus B_{O}(\ellmax)}
   \sum_{q\in\calQ''_{e^-,t}: q\ni e} \frac{d(t)}{|\calQ''_{e^-,t}|} \\
&  \le \frac{D'd(e^-)}{\Vol(U)}
   \sum_{t\in U\setminus B_{O}(\ellmax): \exists q\in\calQ''_{e^-,t}, q\ni e}
      \frac{d(t)}{|\calQ''_{e^-,t}|}|N_{\calP_{\ellmid}}(t')|.
\end{align*}
Assume $t\in U\setminus B_{O}(\ellmax)$.
The way we built $\calQ''$, Proposition~\ref{prop:nonEmptyQ2}
  and Proposition~\ref{prop:neighBound} part~\eqref{prop:neighBoundPart3}, 
  imply that
  $|\calQ''_{e^-,t}|=|\calQ'_{e^-,t'}|
      =|E(\calP_{\widetilde{k}},N_{\calP_{\ellmid}}(t'))|
      =\Theta(|N_{\calP_{\ellmid}}(t')|
      e^{-(\alpha-\frac12)(R-\widetilde{k})}\sqrt{n})$.
Now, observe that if $q\in\calQ''_{e^-,t}$
  traverses $e$, then $t'$ is a neighbor of some vertex
  in $W:=N_{\calP_{\ellmid}}(e^+)$.
It follows that, w.e.p.,
\[
\overline{f''}(e) = \Theta\big(\tfrac{D'd(e^-)}{\Vol(U)}\cdot 
   \tfrac{1}{\sqrt{n}}
   e^{(\alpha-\frac12)(R-\widetilde{k})}\sum_{t\in U\setminus B_{O}(\ellmax): t'\in N_{\calP_{\ellmax}}(W)} d(t)\big).
\]
Also, by Proposition~\ref{prop:degreeBound}, 
  w.e.p.~$d(e^-)=\Theta(e^{\frac12(R-k)})$, so applying Lemma~\ref{lem:upsilonRepl} we deduce that w.e.p.~$\overline{f''}(e) 
   = O\big(\tfrac{D'}{\Vol(U)}ne^{-\frac{k}{2}}e^{\alpha(R-\widetilde{k})}\big)$.
Furthermore, by definition of $\ellmax$ and since $\alpha>\frac12$, 
  we have $\alpha(R-\widetilde{k})-\frac{k}{2}\leq \max\{(\alpha-\frac12)k,\alpha(R-\ellmax)\}=\alpha(R-\ellmax)$.
Since by Lemma~\ref{lem:VolUpperBound}, w.e.p.~$\Vol(U)=\Theta(n)$, 
  recalling that $\alpha<1$ and the definition of $\ellmax$,
  we conclude that w.e.p.~$\overline{f''}(e)=O(D'e^{\alpha(R-\ellmax)})
  = O(D'n^{\alpha(2\alpha-1)}e^{\alpha\nu'})=o(D'n^{2\alpha-1})$.

\medskip\noindent
\textbf{Belt edges} (both endvertices of $e$ in $\calP_{\ellmid}$):
The only paths $q\in\calQ''$ that could traverse $e$ are those 
  whose middle segment have $e$ as a middle edge.
This can happen only if $q$ is a Type II path.
Assume $q\in\calQ''_{s,t}$ traverses $e$.
Then, $s'$ must be a neighbor of $e^-\in\calP_{\ellmid}$ (in particular, 
  $s\not\in B_{O}(\ellmid)$).
Similarly, it must 
  be that $t'$ is a neighbor of $e^+\in\calP_{\ellmid}$ (in particular, 
  $t\not\in B_{O}(\ellmid)$).
By definition of $\calQ''$ and Proposition~\ref{prop:nonEmptyQ2},
  we have $|\calQ''_{s,t}|=|\calQ'_{s',t'}|=
     |N_{\calP_{\ellmid}}(s')|\cdot |N_{\calP_{\ellmid}}(t')|$.
Applying Proposition~\ref{prop:neighBound} 
  part~\eqref{prop:neighBoundPart1} and recalling 
  the definition of $\ellmid$, we get that 
  w.e.p.~$|\calQ''_{s,t}|=\Theta(n^{-(2\alpha-1)}d(s')d(t'))$.
Hence, w.e.p.,
\begin{align*}
\overline{f''}(e) & \leq \tfrac{D'}{\Vol(U)}
  \sum_{s,t\in U\setminus B_{O}(\ellmid)}\sum_{q\in\calQ'': q\ni e} 
  \frac{d(s)d(t)}{|\calQ''_{s,t}|} \\
&  
  \leq \Theta\big(\tfrac{D'}{\Vol(U)}n^{2\alpha-1}\big)
  \sum_{s\in U\setminus B_{O}(\ellmid) : s'e^-\in F}\frac{d(s)}{d(s')} 
  \sum_{t\in U\setminus B_{O}(\ellmid) : e^+t'\in F}\frac{d(t)}{d(t')}. 
\end{align*}
Note that $s'=s$ if $s\in U\cap B_{O}(\ellmax)$ and $s'\in\calP_{\ellmax}$
  otherwise.
Assume $s\in U\setminus B_{O}(\ellmax)$ is such that 
  $s'$ is a neighbor of $e^-$ in $H$.
By Proposition~\ref{prop:degreeBound}, 
  w.e.p.~$d(e^-)=\Theta(\sqrt{n})$
  and $d(s')=\Theta(e^{\frac12(R-\ellmax)})$.
By definition of $\ellmax$ and considering the 
  cases $s=s'$ and $s \neq s'$ separately (applying 
  Lemma~\ref{lem:upsilonRepl} in the latter), it follows that w.e.p.,
\begin{align*}
\sum_{s\in U\setminus B_{O}(\ellmid) : s'e^-\in F}\frac{d(s)}{d(s')}
  & \leq 
  d(e^-)
  +
  \Theta(e^{-\frac12(R-\ellmax)})\sum_{s\in U\setminus B_{O}(\ellmax): s'e^-\in F} d(s)
    = O(\sqrt{n}).
\end{align*}
The same argument shows that w.e.p.~$\sum_{t\in U\setminus B_{O}(\ellmid) : e^+t'\in F}\frac{d(t)}{d(t')}=O(\sqrt{n})$.
Applying Lemma~\ref{lem:VolUpperBound},
  w.e.p.~$\Vol(U)=\Theta(n)$, we conclude that 
  w.e.p.~$\overline{f''}(e) = O(D'n^{2\alpha-1})$.

\medskip\noindent
\textbf{Belt incident edges}
(one endvertex of $e$ in $\calP_{\ellmid}$ and the other one in 
  $B_{O}(\ellmax)\setminus B_{O}(\ellmid)$):
Let us fix the orientation of $e$ so $e^+\in\calP_{\ellmid}$.
Let $k>\ellmid$ be such that $e^-\in\calP_{k}$.

Let  $q\in\calQ''$ be a path that traverses $e$.
Since $e$ has both its endvertices in $B_{O}(\ellmax)$, $e$ must 
  belong to the middle segment of $q$.
By definition of $\calQ'$, one of the following must hold:
\begin{inparaenum}[(i).-]
\item\label{it:beltinc1} $e$ is the first edge of a Type II path, or
\item\label{it:beltinc2} $e$ is the first edge of a Type III path, or 
\item\label{it:beltinc3} $e$ is the middle edge of a Type III path.
\end{inparaenum}
Assume $q\in\calQ''_{s,t}$ where $s,t\in U$.
We make the following observations concerning each one of the 
  three situations just identified:
\begin{enumerate}[(i).-]
\item It must hold that $s'=e^-$ and $t\not\in B_{O}(\ellmid)$
  (otherwise, $q$ can not be of Type II).
By Proposition~\ref{prop:nonEmptyQ2}
  we have $|\calQ''_{s,t}|
    =|\calQ'_{e^-,t'}|=|N_{\calP_{\ellmid}}(e^-)|\cdot |N_{\calP_{\ellmid}}(t')|$.
Note also, that the paths in $\calQ''_{s,t}$ that traverse $e$
  are in one to one correspondence with $E(\{e^+\},N_{\calP_{\ellmid}}(t'))$,
  so there are $|N_{\calP_{\ellmid}}(t')|$ of them (since $\calP_{\ellmid}$
  induces a clique in $H$).
By Proposition~\ref{prop:neighBound} part~\eqref{prop:neighBoundPart1}, 
  we infer that, w.e.p.~the fraction of paths in $\calQ''_{s,t}$
  that traverse $e$ is $\Theta(n^{\alpha-\frac12}e^{-\frac12(R-k)})$.

\item 
It must hold that $s'=e^-$ and 
  $t\in\calP_{\ell}$ for some $\ell\leq\ellmid$.
In fact, $s\not\in B_{O}(\ellmax)$ so $e^-$ must belong to $\calP_{\ellmax}$
  (since otherwise both $s,t\in B_{O}(\ellmax)$ contradicting the
  fact that $q\in\calQ''$).
By Proposition~\ref{prop:nonEmptyQ2}, we now have
    $|\calQ''_{s,t}|=|\calQ'_{e^-,t}|
       =|E(N_{\calP_{\ellmid}}(e^-),\calP_{\widetilde{\ell}})|$.
So, by Proposition~\ref{prop:neighBound} part~\eqref{prop:neighBoundPart1}
  and~\eqref{prop:neighBoundPart3},
  w.e.p.~$|\calQ''_{s,t}|
     =\Theta(n^{1-\alpha}e^{-(\alpha-\frac12)(R-\widetilde{\ell})}e^{\frac12(R-\ellmax)})$.
Note also that the paths in $\calQ''_{s,t}$ that traverse $e$
  are in one to one correspondence with 
  $N_{\calP_{\widetilde{\ell}}}(e^+)$, so 
  by Proposition~\ref{prop:neighBound} part~\eqref{prop:neighBoundPart1},
  w.e.p., there are $\Theta(\sqrt{n}e^{-(\alpha-\frac12)(R-\widetilde{\ell})})$
  of them.

\item 
Now, it must hold that $s\in\calP_{\ell}$ for some $\ell\leq\ellmid$
  such that $\widetilde{\ell}=k$ and $t\in U\setminus B_{O}(\ellmax)$
  (since otherwise both $s,t\in B_{O}(\ellmax)$ contradicting the
  fact that $q\in\calQ''$).
By Proposition~\ref{prop:nonEmptyQ2} 
  we have that w.e.p.~$|\calQ''_{s,t}|=|\calQ'_{s,t'}|
     =|E(\calP_{k},N_{\calP_{\ellmid}}(t'))|$.
Hence, by Proposition~\ref{prop:neighBound} part~\eqref{prop:neighBoundPart1}
  and part~\eqref{prop:neighBoundPart3}, 
  w.e.p.~$|\calQ''_{s,t}|=\Theta(n^{1-\alpha}e^{-(\alpha-\frac12)(R-k)}d(t'))$.
  Moreover, 
if $t'e^+ \in F$, then there is exactly one path in $\calQ''_{s,t}$ that traverses $e$.
\end{enumerate}

\smallskip
The contribution of case~\eqref{it:beltinc1} to $\overline{f''}(e)$
  is, w.e.p.,
\begin{align*}
S_1 & :=  \frac{D'}{\Vol(U)} \sum_{s \in U\setminus B_{O}(\ellmid), s'=e^-} d(s) \sum_{t \in U \setminus B_O(\ellmid)}\frac{d(t)}{|\calQ''_{s,t}|}|N_{\calP_{\ellmid}}(t')|\\
&=\frac{D'}{\Vol(U)}O\big(
       n^{\alpha-\frac12}e^{-\frac12(R-k)}\sum_{s\in U\setminus B_{O}(\ellmid):s'=e^-}d(s)\sum_{t\in U\setminus B_{O}(\ellmid)}d(t)\big).
\end{align*}
Clearly, $\sum_{t \in U \setminus B_O(\ellmid)}d(t) \le \Vol(U).$ 
If $e^-\not\in\calP_{\ellmax}$, by Proposition~\ref{prop:degreeBound}, 
  w.e.p.~we have that $\sum_{s \in U\setminus B_{O}(\ellmid):s'=e^-}d(s)=d(e^-)=\Theta(e^{\frac12(R-k)})$. 
Hence, in this case, 
  $S_1=O(D'n^{\alpha-\frac12})=o(n^{2\alpha-1})$, since $\alpha > \frac12$.
Otherwise, that is,  if $e^-\in\calP_{\ellmax}$ (thence, $k=\ellmax$), by Proposition~\ref{prop:degreeBound}, Lemma~\ref{lem:upsilonRepl}
  and given that $\alpha >\frac12$,
  w.e.p.~$\sum_{s\in U\setminus B_{O}(\ellmid):s'=e^-}d(s)
    = d(e^-)+\sum_{s\in U\setminus B_{O}(\ellmax):s'=e^-}d(s)
    = O(\upsilon e^{\alpha(R-\ellmax)}\log n)$.
Hence, in this case, using that $\frac12<\alpha<1$, w.e.p.,
\[
S_1 = O(D'  n^{\alpha-\frac12}e^{(\alpha-\frac12)(R-\ellmax)}\upsilon \log n)=O(D'n^{\alpha(2\alpha-1)}e^{(\alpha-\frac12)\nu'}\upsilon \log n)=o(D'n^{2\alpha-1}).
\]

\smallskip
The contribution of case~\eqref{it:beltinc2} to $\overline{f''}(e)$
 is, w.e.p.,
\begin{align*}
S_2 & := \frac{D'}{\Vol(U)}O\Big(
       n^{\alpha-\frac12}e^{-\frac12(R-\ellmax)}
       \sum_{\ell\leq\ellmid}\Vol(\calP_{\ell})
       \sum_{s\in U\setminus B_{O}(\ellmax): s'=e^-}d(s)\Big).
\end{align*}
Clearly, $\sum_{\ell\leq\ellmid}\Vol(\calP_\ell)\leq\Vol(U)$.
By definition of $\ellmax$ and Lemma~\ref{lem:upsilonRepl},
  we get that 
  w.e.p.~$$S_2=O(D'n^{\alpha-\frac12}\upsilon e^{(\alpha-\frac12)(R-\ellmax)}\log n)
     = O(D'\upsilon n^{\alpha(2\alpha-1)}e^{(\alpha-\frac12)\nu'}\log n).$$
Since $\frac12<\alpha<1$, we conclude that w.e.p.~$S_2=o(D'n^{2\alpha-1})$.

\smallskip
The contribution of case~\eqref{it:beltinc3} to $\overline{f''}(e)$
  is, w.e.p.,
\begin{align*}
S_3 & := \frac{D'}{\Vol(U)}O\Big(
       n^{-(1-\alpha)}e^{(\alpha-\frac12)(R-k)}\sum_{\ell:\widetilde{\ell}=k}\Vol(\calP_{\ell})\sum_{t\in U\setminus B_{O}(\ellmax): e^+t'\in F}\frac{d(t)}{d(t')}\Big).
\end{align*}
By Proposition~\ref{prop:degreeBound}, w.e.p.~$d(t')=\Theta(e^{\frac12(R-\ellmax)})$.
So, by Lemma~\ref{lem:upsilonRepl}, it follows that w.e.p.,
\begin{align*}
\sum_{t\in U\setminus B_{O}(\ellmax) : e^+t'\in F}\frac{d(t)}{d(t')}
  &    = O(\sqrt{n}).
\end{align*}
Clearly, $\sum_{\ell:\widetilde{\ell}=k}\Vol(\calP_{\ell})\leq\Vol(U)$.
Since  $k>\ellmid$, we conclude that 
  w.e.p.~$S_3=O(D'n^{2\alpha-1})$.

\medskip\noindent
\textbf{Remote edges} (at least one endvertex of $e$ belongs to 
  $B_{O}(R)\setminus B_{O}(\ellmax)$):
Assume $q\in\calQ''$ traverses $e$.
Since no path in $\calQ'$ uses a vertex not in $B_{O}(\ellmax)$, 
  edge $e$ must be traversed by one of the end segments of $q$.
Note that there is an endvertex in $\calP_{\ellmax}$, say $v$, which is common
  to all end segments of paths in $\calQ''$ that traverse $e$. 
Since for $s\in U\setminus B_{O}(\ellmax)$
  and $t\in U$, the fraction of paths in $\calQ''_{s,t}$ that traverse $e$ is 
  trivially at most $1$, we infer that w.e.p.,
\begin{align*}
& \overline{f''}(e)
  \leq \frac{2D'}{\Vol(U)}\sum_{s\in U\setminus B_{O}(\ellmax)}
  \sum_{t\in U}\sum_{q\in\calQ''_{s,t}:q\ni e} \frac{d(s)d(t)}{|\calQ''_{s,t}|}
  \leq 2D'\sum_{s\in U\setminus B_{O}(\ellmax):s'=v}d(s).
\end{align*}
(The factor $2$ above follows from the fact that $v$ belongs to either the start- or end segment of a $\calQ''$-path that 
  traverses $e$.
By Lemma~\ref{lem:upsilonRepl}, the 
  definition of $\ellmax$ and since $\frac12<\alpha<1$, it follows
  that w.e.p.~$\overline{f''}(e)=O(D'\upsilon e^{\alpha\nu'}
     n^{\alpha(2\alpha-1)}\log n)=o(D'n^{2\alpha-1})$.
\end{proof}

\subsection{A $\calQ$-flow of moderate elongated length}
Below we derive the main theorem and a corollary that follows easily from
  the results of the previous sections and some results 
  found in the literature.

\medskip  
\noindent\textit{Proof of Theorem~\ref{th:main}}.
Let $H=(U,F)$ be the center component of 
  $G=(V,E)$ chosen according to $\poimod_{\alpha,C}(n)$. 
By~Corollary~\ref{cor:volTobias}, w.e.p.~$\Vol(U)=\Omega(n)$, 
  so the stated lower
  bound is a direct consequence of Corollary~\ref{cor:flowEmbed},
  Proposition~\ref{prop:flowPart1} and 
  Proposition~\ref{prop:flowPart2}.
\qed

\medskip
By Theorem~\ref{th:main}
  and Theorem~\ref{thm:fk1Diam} we immediately obtain the following:
\begin{cor}\label{cor:main}
If $J$ is the giant component of
  $G$ chosen according to $\poimod_{\alpha,C}(n)$, then w.e.p.,
\[
\lambda_1(J) =\Omega\big(n^{-(2\alpha-1)}/(\log n)^{\frac{1}{1-\alpha}}\big).
\]
\end{cor}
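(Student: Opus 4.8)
The plan is to read the corollary off directly from the two cited results, so the argument is essentially a substitution followed by a union bound. First I would invoke Theorem~\ref{thm:volTobias1} to note that w.e.p.\ the center component $H=(U,F)$ of $G$ is precisely the giant component $J$; hence $\lambda_1(J)=\lambda_1(H)$ and it suffices to lower bound $\lambda_1(H)$. Second, Theorem~\ref{th:main} supplies, w.e.p., the bound $\lambda_1(H)=\Omega(n^{-(2\alpha-1)}/D)$, where $D:=D(H)$ denotes the diameter of $H$. Third, Theorem~\ref{thm:fk1Diam} gives, w.e.p., $D=O((\log n)^{\frac{1}{1-\alpha}})$ (only its upper bound on $D$ is needed; the matching lower bound $D=\Omega(\log n)$ plays no role here). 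Substituting the diameter estimate into the spectral-gap bound and intersecting these three events yields, w.e.p.,
\[
\lambda_1(J)=\lambda_1(H)=\Omega\big(n^{-(2\alpha-1)}/(\log n)^{\frac{1}{1-\alpha}}\big),
\]
which is the claimed statement.

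The only point requiring a moment's care is the mode of convergence: the corollary is asserted w.e.p., not merely a.a.s. This causes no difficulty, since Theorem~\ref{th:main}, Theorem~\ref{thm:volTobias1} and Theorem~\ref{thm:fk1Diam} are each already phrased w.e.p.\ in the excerpt, and the intersection of a constant number of w.e.p.\ events is again a w.e.p.\ event (by the union bound, as recorded in the Notation paragraph). Thus no strengthening of the inputs is needed, and the error probability in the conclusion is at most three times $n^{-C'}$ for the relevant constant $C'$, which is still $n^{-C''}$ for a slightly smaller constant $C''>0$.

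Consequently there is no genuine obstacle in this proof: all the technical work has already been carried out in establishing Theorem~\ref{th:main} (and in the cited diameter bound of~\cite{fk15}), and the corollary is a one-line consequence obtained by inserting the worst-case polylogarithmic diameter into that bound.
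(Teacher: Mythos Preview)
Your proof is correct and follows the same approach as the paper, which simply states that the corollary is immediate from Theorem~\ref{th:main} and Theorem~\ref{thm:fk1Diam}. You are in fact slightly more careful than the paper in explicitly invoking Theorem~\ref{thm:volTobias1} to identify the giant component $J$ with the center component $H$, and in justifying the union bound for the w.e.p.\ conclusion, both of which the paper leaves implicit.
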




\section{Lower bound on the conductance}\label{sec:conductance}
In this section we will establish that the lower bound on the conductance 
  obtained in Section~\ref{sec:gap} can only be attained by relatively
  large sets.
In other words, our goal is to show Theorem~\ref{thm:conductancia}.
In order to derive the theorem, we first prove a few auxiliary lemmas. 
We begin by establishing that if for a fixed set $S\subseteq U$ there are two
  bands, both being relatively 
  far from the boundary of $B_{O}(R)$, one of them having a
  large fraction of $S$, and the other having a large fraction of
  $\overline{S}$, then $|\partial S|$ must be fairly large.

Henceforth, for $b\in\{0,1\}$ and $S\subseteq U$, 
  denote $S$ and $\overline{S}$ by $S^0$ and $S^1$, respectively. 
We fix the following parameter:
  $$\ellbdr := \lfloor R-\tfrac{2\log R}{1-\alpha}\rfloor.$$ 
Recall that Remark~\ref{rem:HtoG} guarantees that
  all vertices in $B_{O}(\ellbdr)$ are,
  w.e.p., part of the center component.

\medskip
\begin{lemma}\label{lem:earlylayersfull}
Let $H=(U,F)$ be the center component of $G=(V,E)$ chosen according to 
  $\poimod_{\alpha,C}(n)$.

Let $\omega_0$ be a function tending to infinity so that $\omega_0=e^{o(\log \log n)}$ but also $\omega_0=\omega(\upsilon)$,\footnote{The condition of $\omega_0=e^{o(\log \log n)}$ while at the same time $\omega_0=\omega(\upsilon)$ clearly implies a corresponding upper bound on $\upsilon$. Nevertheless, all previous results still hold.} and define
  $\epsilon:=\frac{1}{\omega_0} (\log n)^{- \frac{1+\alpha}{1-\alpha}}$.
  Let $\Phi$ be a sector of $B_{O}(R)$ of angle
  $\phi \ge \frac{\upsilon}{\epsilon\omega_0}=\frac{\upsilon}{n}(\log n)^{\frac{1+\alpha}{1-\alpha}}$, and
 let $\ell_{\phi}:= \lceil\frac{1}{2}R+\log\frac{1}{\phi}-2\rceil$. Let $\ell_{\phi} < \ell^* \le \ellbdr$. 
If for some $b \in \{0,1\}$,
\[
\frac{|S^b\cap\Phi\cap\calP_{\ell^*}|}{\EE|\Phi\cap\calP_{\ell^*}|} - \frac{|S^b\cap\Phi\cap\calP_{\ell_{\phi}}|}{\EE|\Phi\cap\calP_{\ell_{\phi}}|} \ge \epsilon,
\] 
then w.e.p.~$|E(S^b \cap \Phi, S^{1-b} \cap \Phi)|=\Omega((\phi n)^{2(1-\alpha)}\frac{\upsilon}{\omega_0} (\log n)^{-\frac{2}{1-\alpha}})$.
The same conclusion holds if in the hypothesis the roles 
  of $\ell_{\phi}$ and $\ell^*$ are interchanged.
\end{lemma}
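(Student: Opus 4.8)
The plan is to exploit the layered structure of the center component inside $\Phi$. Throughout I condition on the w.e.p.\ events furnished by Lemma~\ref{lem:muBall}, Corollary~\ref{cor:medidalb}, Proposition~\ref{prop:bandBound}, Lemma~\ref{lem:meanMeus}, Lemma~\ref{lem:aproxAngle}, Proposition~\ref{prop:degreeBound}, Lemma~\ref{lem:newvolume} and Remark~\ref{rem:HtoG}, which give: (i) for every $\ell$ with $\ell_\phi\le\ell\le\ellbdr$ one has $\EE|\Phi\cap\calP_\ell|=\Theta(\phi n e^{-\alpha(R-\ell)})=\omega(\log n)$, hence $|\Phi\cap\calP_\ell|=(1+o(1))\EE|\Phi\cap\calP_\ell|$, and all these vertices lie in $H$; in particular $|\calP_{\ell_\phi}\cap\Phi|=\Theta((\phi n)^{1-\alpha})$; (ii) for every $\ell\le\ell_\phi$ the set $\calP_\ell\cap\Phi$ induces a clique in $H$ (this is exactly the role of the constant $-2$ in $\ell_\phi$: by Lemma~\ref{lem:aproxAngle}, $\theta_R(\ell,\ell)\ge\theta_R(\ell_\phi,\ell_\phi)>\phi$); (iii) for $\ell_\phi<\ell\le\ellbdr$, each $v\in\calP_\ell\cap\Phi$ has $\Theta(d_\ell)$ neighbours in $\calP_{\ell-1}$, all lying within angular distance $O(\theta_R(\ell,\ell-1))$ of $v$, where $d_\ell:=n^{2(1-\alpha)}e^{-(1-\alpha)\ell}$, and $E_\ell:=|E(\calP_\ell\cap\Phi,\calP_{\ell-1}\cap\Phi)|=\Theta((\phi n)^{2(1-\alpha)}e^{(2\alpha-1)(\ell-\ell_\phi)})$ is non-decreasing in $\ell$ since $\alpha>\tfrac12$. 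By the symmetry $b\leftrightarrow 1-b$ I may assume the hypothesis reads $p_{\ell^*}\ge p_{\ell_\phi}+\epsilon$, writing $p_\ell:=|S^b\cap\Phi\cap\calP_\ell|/\EE|\Phi\cap\calP_\ell|$.

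First I would settle the ``balanced clique'' case. Put $K:=\calP_{\ell_\phi}\cap\Phi$, $m:=|K|=\Theta((\phi n)^{1-\alpha})$, $a:=|S^b\cap K|$, and $\delta:=\tfrac{\upsilon}{\omega_0}(\log n)^{-\frac2{1-\alpha}}=o(1)$. Since $K$ is a clique of $H$ inside $\Phi$, it contributes $a(m-a)$ cut edges, so if $\min\{a,m-a\}\ge c_0 m\delta$ for a suitable constant $c_0>0$ then $a(m-a)\ge\tfrac{c_0}{2}m^2\delta=\Omega((\phi n)^{2(1-\alpha)}\delta)$ and we are done. If instead $m-a<c_0 m\delta$ then $p_{\ell_\phi}\ge 1-o(1)$, so the hypothesis forces $|\Phi\cap\calP_{\ell^*}|\ge(1+\epsilon-o(1))\EE|\Phi\cap\calP_{\ell^*}|$; Theorem~\ref{thm:AlonSpencer} applied to the bands $\ell_\phi$ and $\ell^*$ rules this out w.e.p.\ unless $\EE|\Phi\cap\calP_{\ell_\phi}|=O(\log n/\epsilon^2)$, and in that remaining regime $(\ell_\phi,\ellbdr]$ spans only $O(\log\log n)$ bands, where the telescoping below (applied to that short range) still wins because the target carries the extra factor $\delta\ll\epsilon$. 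Hence I may assume $a<c_0 m\delta$, i.e.\ $p_{\ell_\phi}=O(\delta)$; with the hypothesis this gives $p_{\ell^*}\ge\epsilon/2$.

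The core is a per-band cut estimate. For $\ell_\phi<\ell\le\ell^*$ write $q_\ell:=|S^b\cap\Phi\cap\calP_\ell|/|\Phi\cap\calP_\ell|$. Parametrising $\Phi$ by angle and using (iii) — each band-$\ell$ vertex reaches band $\ell-1$ only through an angular window of width $O(\theta_R(\ell,\ell-1))\ll\phi$ — I would show that w.e.p.
\[
\big|E(S^b\cap\Phi\cap\calP_\ell,\,S^{1-b}\cap\Phi\cap\calP_{\ell-1})\big|+\big|E(S^{1-b}\cap\Phi\cap\calP_\ell,\,S^b\cap\Phi\cap\calP_{\ell-1})\big|\ \ge\ c_1\,|q_\ell-q_{\ell-1}|\,E_\ell ,
\]
the point being that the two consecutive bands carry $S^b$-fractions differing by $|q_\ell-q_{\ell-1}|$ and, the bipartite adjacency between them being angularly local, there is an angular region of relative width $\gtrsim|q_\ell-q_{\ell-1}|$ each of whose band-$\ell$ vertices sends essentially all of its $\Theta(d_\ell)$ down-edges across the cut. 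Summing over $\ell$ and using that these edge sets are pairwise disjoint and contained in $\Phi$,
\[
|E(S^b\cap\Phi,S^{1-b}\cap\Phi)|\ \ge\ c_1\!\!\sum_{\ell=\ell_\phi+1}^{\ell^*}\!\!|q_\ell-q_{\ell-1}|E_\ell\ \ge\ c_1 E_{\ell_\phi+1}\!\!\sum_{\ell=\ell_\phi+1}^{\ell^*}\!\!|q_\ell-q_{\ell-1}|\ \ge\ c_1 E_{\ell_\phi+1}\,|q_{\ell^*}-q_{\ell_\phi}| .
\]
Now $E_{\ell_\phi+1}=\Theta((\phi n)^{2(1-\alpha)})$; by (i), $q_{\ell_\phi}=O(\delta)$, while a crude Alon--Spencer bound on the single band $\ell^*$ gives $q_{\ell^*}\ge\tfrac12 p_{\ell^*}\ge\epsilon/4$, whence $|q_{\ell^*}-q_{\ell_\phi}|\ge\epsilon/8$. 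Therefore $|E(S^b\cap\Phi,S^{1-b}\cap\Phi)|=\Omega((\phi n)^{2(1-\alpha)}\epsilon)=\Omega((\phi n)^{2(1-\alpha)}\delta)$ since $\epsilon/\delta=\log n/\upsilon\to\infty$, which is the asserted bound; the interchanged hypothesis is handled identically with $S^b$ and $S^{1-b}$ exchanged.

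The step I expect to be the real obstacle is the per-band estimate: it is an isoperimetric/transportation statement for the (random) geometric bipartite graph between two consecutive bands, and making it hold w.e.p.\ requires controlling two effects. The first is the sector boundary: band-$\ell$ vertices within one angular window of $\partial\Phi$ lose down-edges, which is only lower order away from $\partial\Phi$ but must be bounded — using the clique structure~(ii) for the $O(1)$ bands just above $\ell_\phi$, and with extra care when $\alpha$ is close to $1$, where $E_\ell$ grows rapidly. The second is the non-negligible fluctuation of the individual band sizes (which is precisely why the hypothesis is phrased with expectations, and why the weaker factor $\delta$, rather than $\epsilon$, is what the conclusion needs to claim).
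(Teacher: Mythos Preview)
Your route is genuinely different from the paper's. You attempt a per–band isoperimetric inequality
\[
\big|E(S^b\cap\Phi\cap\calP_\ell,\,S^{1-b}\cap\Phi\cap\calP_{\ell-1})\big|
+\big|E(S^{1-b}\cap\Phi\cap\calP_\ell,\,S^b\cap\Phi\cap\calP_{\ell-1})\big|
\ \ge\ c_1\,|q_\ell-q_{\ell-1}|\,E_\ell
\]
for \emph{every} $\ell\in(\ell_\phi,\ell^*]$ and then telescope. The paper instead applies pigeonhole once: since $\ell^*-\ell_\phi\le R$, some consecutive pair satisfies $|q_\ell-q_{\ell-1}|\ge\tilde\epsilon:=\epsilon/R$; the entire argument is then carried out for that single pair. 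For that pair the paper subdivides $\Phi$ into $N=\Theta(\phi\,\theta_R(\ell,\ell)^{-1}\upsilon^{-1}\log n)$ equiangular pieces $\Phi_i$, small enough that $\Phi_i\cap(\calP_{\ell-1}\cup\calP_\ell)$ is a clique; writes the cut inside each $\Phi_i$ as $(\delta_i m+\delta'_i m')((1-\delta_i)m+(1-\delta'_i)m')$; and minimises the sum of these concave quadratics over the polytope $\frac1N\sum_i(\delta'_i-\delta_i)\ge\tilde\epsilon$, obtaining $\Omega(\tilde\epsilon m m' N)$ at an extreme point. This yields the stated bound directly, with no telescoping and no separate ``balanced clique'' case analysis.

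The gap in your write-up is precisely the per-band estimate. Your justification --- ``there is an angular region of relative width $\gtrsim|q_\ell-q_{\ell-1}|$ each of whose band-$\ell$ vertices sends essentially all of its down-edges across the cut'' --- is not correct as stated: $S^b$ is an \emph{arbitrary} subset, not an angular interval, so no such region need exist. A near-regularity argument of the form $e(S_A,\overline{S_B})+e(\overline{S_A},S_B)=e(S_A,B)+e(A,S_B)-2e(S_A,S_B)\ge |e(S_A,B)-e(A,S_B)|$ does give $\gtrsim |q_\ell-q_{\ell-1}|E_\ell$ \emph{provided} degrees into the neighbouring band are uniform to within $o(|q_\ell-q_{\ell-1}|)$; but for $\ell$ close to $\ell_\phi$ a constant fraction of vertices lie within one adjacency-window of $\partial\Phi$ and lose a constant fraction of their down-degree, so this additive error can swallow the main term when $|q_\ell-q_{\ell-1}|$ is small. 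The paper's subdivision-into-clique-pieces plus polytope minimisation is exactly a robust proof of (a version of) your per-band inequality, and you could plug it in at each $\ell$; if you do, your telescoping would in fact save the factor $R\sim\log n$ lost by the paper's pigeonhole. Your separate treatment of the case $m-a<c_0 m\delta$ is also unnecessary once the per-band estimate is proved uniformly --- the paper never splits into such cases.
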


\begin{proof}
Define $\tilde{\epsilon}:=\frac{\epsilon}{R}.$
First, for some $\ell$ with $\ell_{\phi} < \ell \le \ellbdr$, we bound 
  from below $|E(S^b \cap \Phi, S^{1-b} \cap \Phi)|$ 
  under the assumption 
\begin{equation}\label{eqn:earlylayersfull}
\frac{|S^b\cap\Phi\cap\calP_{\ell}|}{\EE|\Phi\cap\calP_{\ell}|} - \frac{|S^b\cap\Phi\cap\calP_{\ell-1}| }{\EE|\Phi\cap\calP_{\ell-1}|} \ge \tilde{\epsilon}.
\end{equation}
Consider an angle equipartition 
  $\Phi_1,\ldots,\Phi_N$ of $\Phi$ 
 where 
  $N:=\big\lceil \frac{\phi}{\theta_R(\ell,\ell)}\upsilon^{-1}\log n\big\rceil$.
Since 
\begin{align}
& |E(S^b\cap\Phi, S^{1-b} \cap \Phi)|
     \geq |E(S^b\cap\Phi\cap (\calP_{\ell-1}\cup\calP_{\ell}), 
     S^{1-b}\cap\Phi\cap (\calP_{\ell-1}\cup\calP_{\ell}))| \nonumber
\\ & \mbox{} \qquad \label{eq:earlyLayerCutBound}
  \geq\sum_{i\in [N]} |E\left(S^b\cap\Phi_i\cap\left(\calP_{\ell-1}\cup\calP_{\ell}\right),S^{1-b}\cap\Phi_i\cap\left(\calP_{\ell-1}\cup\calP_{\ell}\right)\right)|,
\end{align}
it suffices to bound from below the summation in the latter expression.

For $i\in [N]$, let $m_i:=|\Phi_i\cap\calP_{\ell-1}|$.
Also, let $m$ be the expected number of elements of 
  $\calP_{\ell-1}$ that belong
  to a given $\frac{2\pi}{N}$-sector of $\Phi$.
Define $m'_i$ and $m'$ similarly but replacing $\ell-1$ by $\ell$.
By Remark~\ref{rem:aproxAngle}, Corollary~\ref{cor:medidalb} 
  and Lemma~\ref{lem:meanMeus} and our upper bound on $\ell$, 
  since $$\EE m_i=\Theta\big(\tfrac{\phi n}{N}e^{-\alpha (R-\ell+1)}\big)
    =\Theta\big(\tfrac{\upsilon}{\log n}e^{(1-\alpha)(R-\ell)}\big)
    =\Omega(\upsilon\log n)=\omega(\log n)$$
  for every $i$, w.e.p., 
  $m_i=(1+o(1))m$ and $m'_i=(1+o(1))m'$.
Also, let $\delta_{i}$ denote the fraction of vertices in $S^b$ that 
  belong to $\Phi_i\cap\calP_{\ell-1}$, i.e.,
  $\delta_i=\frac{1}{m_i}|S^b\cap\Phi_i\cap\calP_{\ell-1}|$, 
  and define $\delta'_i$ similarly again replacing $\ell-1$ by $\ell$.
Since each $\Phi_i$ is a sector of angle $\frac{2\pi}{N}\le\theta_{R}(\ell,\ell)$, 
  if a pair of vertices belongs
  to $\Phi_i\cap (\calP_{\ell-1}\cup\calP_{\ell})$, 
  then they must be neighbors in $G$ (and thus also in $H$).
Hence, w.e.p., the $i$-th term of the summation in~\eqref{eq:earlyLayerCutBound}
  is $(1+o(1))(\delta_im+\delta'_im')((1-\delta_i)m+(1-\delta'_i)m')$.

Moreover, observe that the constraint in~\eqref{eqn:earlylayersfull}
  is equivalent to 
  $$
  \frac{\sum_{i \in [N]}\delta_i'm_i'}{\EE|\Phi\cap\calP_{\ell}|} - \frac{\sum_{i \in [N]}\delta_i m_i }{\EE|\Phi\cap\calP_{\ell-1}|} \ge \tilde{\epsilon},
  $$
and w.e.p.~it is stricter than the constraint
  $\frac{1}{N}\sum_{i \in [N]}(\delta_i'-\delta_i)\ge \tilde{\epsilon}(1+o(1))$.
Thus, a lower bound as the one we seek can be derived by bounding from below
  the optimum of the following problem:
\begin{equation*}
\begin{aligned}
& \min & &
  \sum_{i\in [N]}(\delta_im+\delta'_im')((1-\delta_i)m+(1-\delta'_i)m')
\\
& \text{s.t.}
& &
\frac{1}{N}\sum_{i \in [N]}(\delta_i'-\delta_i)
   \geq \tilde{\epsilon}(1+o(1)).
\end{aligned}
\end{equation*}
The minimum of a concave function over a bounded polyhedral 
  domain is attained at a vertex of the polytope.
It is not hard to see that any vertex of the polytope obtained by
  intersecting the hypercube and a half-space has all its
  coordinates equal to $0$ or $1$, except for at most one 
  coordinate.
It follows that the minimization problem stated above attains 
  its minimum when at most one among
  $\delta_1,\ldots,\delta_N,\delta'_1,\ldots,\delta'_N$ is 
  distinct from $0$ or $1$.

Now, if $\tilde{\epsilon}N \ge 2$, there must exist 
  at least $\tilde{\epsilon}N-1$ indices $i$ such that for these indices 
  $\delta_i$ is set to $1$ and $\delta'_i$ is equal to $0$. 
If $\tilde{\epsilon}N  < 2$,
  there exists one index $i$ such that $\delta_i - \delta'_i \ge (1+o(1))\tilde{\epsilon}N/2$. 
Since the function to be optimized is concave in each $\delta_i$ and $\delta'_i$, under this restriction the minimum is attained when for this index $i$ we have $\delta_i=(1+o(1))\tilde{\epsilon}N/2$ and $\delta'_i=0$, or $\delta_i=1$ and $\delta'_i=1-(1+o(1))\tilde{\epsilon}N/2$. 
%
In all cases, the value of the optimization problem is 
  $\Omega(\tilde{\epsilon}mm'N)$.
To conclude, note that $N=\Theta\big(\phi n\upsilon^{-1}e^{-(R-\ell)}\log n\big).$ 
By Corollary~\ref{cor:medidalb}, 
  we have $m'N=\Theta(\phi n e^{-\alpha(R-\ell)})$.
Moreover, $m=\Theta(m')$. 
Thus, w.e.p., $|E(S^b\cap\Phi, S^{1-b} \cap \Phi)|
    =\Omega(\tilde{\epsilon}\phi n e^{-(2\alpha-1)(R-\ell)}\frac{\upsilon}{\log n})$. 
 The conclusion of the lemma then
  follows from noting that $\ell^*-\ell_{\phi}\leq R=O(\log n)$, 
  and hence there must exist two consecutive values of $\ell-1$ and $\ell$ 
  whose difference in terms of the fractions of $S^b$ is at least 
  $\tilde{\epsilon}$.
Recalling that $\ell > \ell_{\phi}$ and that our lower bound on 
  $|E(S^b\cap\Phi, S^{1-b} \cap \Phi)|$ is increasing in $\ell$, we are done 
  for the first part. To conclude, observe that the roles of $\ell_{\phi}$ and $\ell^*$ can be 
  interchanged in the proof above.
\end{proof}

We extend the definition of $h(S)$ as follows: for a region $\calR \subseteq B_O({R})$ 
and a set $S$ with $\Vol(S)=O(n^{1-\varepsilon})$ for some $\varepsilon > 0$,  we set
$$
h_{\calR}(S)=\frac{|E(S \cap \calR, \overline{S})|+|E(\overline{S} \cap \calR, S)|}{\Vol(S \cap \calR)}.
$$

Suppose now that given a fixed set $S\subseteq U$ 
  we could find a collection $\calA$ of regions 
  of $B_{O}(R)$ such that 
  \begin{inparaenum}[(i).-]
  \item $h_{\calR}(S)$
  is moderately large for all $\calR\in\calA$,
  \item $\Vol(S\cap\cup_{\calR\in\calA}\calR)$ is a reasonably large 
  fraction of $\Vol(S)$, and
  \item no edge in $\partial S$ is counted more than $O(1)$ times 
  in $\sum_{\calR\in\calA}(|E(S\cap\calR,\overline{S})|+|E(\overline{S}\cap\calR, S)|)$.
  \end{inparaenum} 
Then, since w.e.p.~$\Vol(S) \le \Vol(\overline{S})$
  (note that by Corollary~\ref{cor:volTobias}, $\Vol(U)=\Omega(n)$, and by
  assumption $\Vol(S)=O(n^{\varepsilon})$), and noting 
  that for any positive numbers $a,b,c,d$ we have 
  $\frac{a+b}{c+d} \geq \min\{\frac{a}{b}, \frac{c}{d}\}$, 
  it will then follow that
\begin{align}\label{eq:conduct}
  & h(S)=\frac{|\partial S|}{\Vol(S)} =
    \Omega\Big(\frac{\sum_{\calR\in\calA}\Vol(S\cap\calR)}{\Vol(S)}\Big)\cdot
    \frac{\sum_{\calR\in\calA}\big(|E(S\cap\calR,\overline{S})|+|E(\overline{S}\cap\calR,S)|\big)}{\sum_{\calR\in\calA}\Vol(S\cap\calR)} \\ & \mbox{} \qquad \nonumber =
\Omega\Big(\frac{\sum_{\calR\in\calA}\Vol(S\cap\calR)}{\Vol(S)}\Big)\cdot
\min_{\calR\in\calA} h_{\calR}(S).
\end{align}
If we can do as above for an arbitrary set $S$ 
  such that $\Vol(S)=O(n^{1-\epsilon})$, then we would be done. 
Below, we develop such an approach.

\medskip
Next, we show that if there is a sufficient quantity of 
  vertices of a fixed set $S$ in a certain sector $\Phi$ of $B_{O}(R)$, 
  and all such vertices are relatively close to the 
  boundary of $B_{O}(R)$ (henceforth referred to as simply the boundary),
  then there must be a large (relative to $\Vol(S)$)
  number of edges between $S\cap\Phi$
  and $\overline{S}\cap\Phi$. 
The intuitive reason for this is the following: in most small
  angles inside the sector there must exist some vertex a bit further
  away from the boundary belonging to $\overline{S}$, and therefore
  within every such angle we find already one cut edge, therefore
  yielding a large total number of cut edges.

\begin{lemma}\label{lem:cuna}
Let $H=(U,F)$ be the center component of 
  $G=(V,E)$ chosen according to $\poimod_{\alpha,C}(n)$. 
Let $\epsilon$ and $\phi$ be as in Lemma~\ref{lem:earlylayersfull}.
If $S\subseteq U$ 
  and a $\phi$-sector $\Phi$ of $B_{O}({R})$  are such that $|S^b\cap\Phi|=\Omega(\EE|\Phi\cap\calP_{\ellbdr}|)$ 
  and $|S^{b}\cap\Phi\cap\calP_{\ellbdr}| 
      \leq \epsilon\EE|\Phi\cap\calP_{\ellbdr}|$
  for some $b\in\{0,1\}$, then 
  w.e.p.~$|E(S^b\cap\Phi, S^{1-b})|=\Omega(\epsilon\EE|\Phi\cap\calP_{\ellbdr}|)$. 
\end{lemma}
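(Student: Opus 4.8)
We may assume $\epsilon\EE|\Phi\cap\calP_{\ellbdr}|=\omega(1)$, i.e.\ $\phi n=\omega(\omega_0(\log n)^{(1+3\alpha)/(1-\alpha)})$; otherwise the right-hand side of the claimed bound tends to $0$ and there is essentially nothing to prove. The plan is then to count edges of $\partial S$ inside thin angular ``columns''. Set $W:=\theta_R(\ellbdr,\ellbdr)$ and partition $\Phi$ into $N:=\lceil\phi/W\rceil=\Theta(\phi n(\log n)^{-2/(1-\alpha)})$ equal sub-sectors $\Phi_1,\dots,\Phi_N$ (note $N\to\infty$ in the above regime). Two facts drive everything. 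First, by Lemma~\ref{lem:aproxAngle} and the monotonicity of $\theta_R(\cdot,\cdot)$ in each argument, any two points of one $\Phi_i$ whose radial coordinates are both at most $\ellbdr$ are at hyperbolic distance at most $R$, hence adjacent in $H$ (recall $\calP_\ell(G)=\calP_\ell(H)$ for $\ell\le\ellbdr$ by Remark~\ref{rem:HtoG}). Second, by Corollary~\ref{cor:medidalb} and Lemma~\ref{lem:meanMeus}, $\EE|\Phi_i\cap\calP_{\ellbdr}|=\Theta((\log n)^2)$ and $\EE|\Phi_i\cap B_O(\ellbdr)|=\Theta((\log n)^2)$, both $\omega(\log n)$, so w.e.p.\ (a union bound over the polynomially many $i$) these counts equal $(1\pm o(1))$ times their means for every $i$. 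Since the hypothesis gives $|S^b\cap\Phi\cap\calP_{\ellbdr}|\le\epsilon\EE|\Phi\cap\calP_{\ellbdr}|$ with $\epsilon=o(1)$, concentration yields $|\overline{S^b}\cap\Phi\cap\calP_{\ellbdr}|=(1-o(1))\EE|\Phi\cap\calP_{\ellbdr}|$, and the number of ``saturated'' columns --- those with $\Phi_i\cap\calP_{\ellbdr}\subseteq S^b$, each contributing $\Theta((\log n)^2)$ to $|S^b\cap\Phi\cap\calP_{\ellbdr}|$ --- is at most $\epsilon\EE|\Phi\cap\calP_{\ellbdr}|/\Theta((\log n)^2)=O(\epsilon N)=o(N)$.

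Now split according to where the $\Omega(\EE|\Phi\cap\calP_{\ellbdr}|)$ vertices of $S^b\cap\Phi$ lie. If a constant fraction of them lie in $B_O(\ellbdr)$, then, as at most $\epsilon\EE|\Phi\cap\calP_{\ellbdr}|=o(\EE|\Phi\cap\calP_{\ellbdr}|)$ of them are in $\calP_{\ellbdr}$ itself, $\Omega(\EE|\Phi\cap\calP_{\ellbdr}|)$ of them lie in $B_O(\ellbdr-1)$. Each column holds only $O((\log n)^2)$ points of $B_O(\ellbdr)$, so $\Omega(\EE|\Phi\cap\calP_{\ellbdr}|/(\log n)^2)=\Omega(N)$ columns each contain such a ``deep'' vertex of $S^b$; discarding the $o(N)$ saturated columns, $\Omega(N)$ columns contain both a deep vertex of $S^b$ and a vertex of $\overline{S^b}$ in $\calP_{\ellbdr}$, and these two are adjacent, contributing one edge of $\partial S$ per column. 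Distinct columns give distinct edges, so $|E(S^b\cap\Phi,S^{1-b}\cap\Phi)|=\Omega(N)$, and since $3\alpha>1$ for $\alpha>\tfrac12$ a direct computation shows $N=\Omega(\phi n(\log n)^{-2/(1-\alpha)})$ dominates the target $\Omega(\epsilon\EE|\Phi\cap\calP_{\ellbdr}|)=\Omega(\omega_0^{-1}\phi n(\log n)^{-(1+3\alpha)/(1-\alpha)})$.

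In the complementary case, $\Omega(\EE|\Phi\cap\calP_{\ellbdr}|)$ vertices of $S^b\cap\Phi$ lie outside $B_O(\ellbdr)$, near the boundary; this is the main obstacle. Here I would use that there are only $R-\ellbdr=\tfrac{2\log R}{1-\alpha}=\Theta(\log\log n)$ bands beyond $\ellbdr$, so by pigeonhole some band $\ell^\star>\ellbdr$ carries $\Omega(\EE|\Phi\cap\calP_{\ellbdr}|/\log\log n)$ of these vertices, together with the fact that every vertex in a band $\ell'$ with $\ellbdr<\ell'\le\ellbdr+\tfrac{\log R}{1-\alpha}(1-o(1))$ has, w.e.p., $\Theta(e^{(1-\alpha)(R-\ell')})=\omega(\log n)$ neighbours in band $\ell'-1$ (Remark~\ref{rem:aproxAngle}, Corollary~\ref{cor:medidalb}, Lemma~\ref{lem:meanMeus}). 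Descending band by band from $S^b\cap\Phi\cap\calP_{\ell^\star}$ to $\calP_{\ellbdr}$ --- which is w.e.p.\ $(1-o(1))$-filled by $\overline{S^b}$ --- each descent either meets $\overline{S^b}$, producing an edge of $\partial S$, or ends in $S^b\cap\calP_{\ellbdr}$; because the step-widths $\theta_R(\ell',\ell'-1)$ decay geometrically in $\ell'$, all descents started inside a fixed $W$-window remain there, so only $(\log n)^{O(1)}$ of them can terminate at, or use the last edge into, any one vertex of $\calP_{\ellbdr}$ (which has degree $(\log n)^{1/(1-\alpha)}$ by Proposition~\ref{prop:degreeBound}). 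Combining these multiplicities with $|S^b\cap\Phi\cap\calP_{\ellbdr}|\le\epsilon\EE|\Phi\cap\calP_{\ellbdr}|$ --- the crucial numeric input being $\epsilon\cdot(\log n)^{1/(1-\alpha)}=\omega_0^{-1}(\log n)^{-\alpha/(1-\alpha)}=o(1)$ --- shows that a constant fraction of the descents are of the first type and that these span $\Omega(\epsilon\EE|\Phi\cap\calP_{\ellbdr}|)$ distinct edges of $\partial S$; alternatively one may phrase this as extending the telescoping argument of Lemma~\ref{lem:earlylayersfull} to these bands just beyond $\ellbdr$, using the density jump of $S^b$ between band $\ell^\star$ and the $\overline{S^b}$-heavy band $\ellbdr$. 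Either way, the delicate step --- the one I expect to cost the most work --- is the multiplicity bookkeeping that prevents any cut edge from being charged more than $(\log n)^{O(1)}$ times; the case distinction and the comparison against the bound of Lemma~\ref{lem:earlylayersfull} are otherwise routine.
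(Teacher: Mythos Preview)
Your column decomposition and Case~1 are sound and give a bound that already dominates the target. The genuine gap is Case~2. Your band-by-band descent needs every vertex in band $\ell'$ to have $\omega(\log n)$ neighbours in band $\ell'-1$, which you correctly note holds only for $\ellbdr<\ell'\leq\ellbdr+\tfrac{\log R}{1-\alpha}(1-o(1))$; but $R-\ellbdr=\tfrac{2\log R}{1-\alpha}$, so the outer half of the range, bands $\ellbdr+\tfrac{\log R}{1-\alpha}\lesssim\ell'\leq R$, is untreated --- there a vertex has only $O(1)$ expected neighbours one band in, concentration fails, and you cannot descend. Since band sizes grow with $\ell$, a constant fraction of all boundary vertices (hence potentially all of $S^b\cap\Phi$) sit in these bad bands, and your pigeonhole gives no control on $\ell^\star$. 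Extending Lemma~\ref{lem:earlylayersfull} outward fails for the same reason: its proof needs $\omega(\log n)$ expected points per cell to concentrate. The multiplicity bookkeeping you flag as delicate never gets off the ground.

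The paper avoids radial case analysis altogether via the geometric betweenness lemma (Lemma~\ref{lem:fk}). Instead of a fixed angular grid, it partitions $\Phi$ by the actual consecutive pairs $v,w\in\calP_{\ellbdr}\cap\Phi$: these are w.e.p.\ adjacent, each wedge between them holds at most $\upsilon(\log n)^{(1+\alpha)/(1-\alpha)}=\upsilon/(\epsilon\omega_0)$ points of $\calP$, and since $|S^b\cap\Phi\cap\calP_{\ellbdr}|\leq\epsilon\,\EE|\Phi\cap\calP_{\ellbdr}|$ with $\omega_0=\omega(\upsilon)$, all but an $o(1)$-fraction of the $\Omega(\EE|\Phi\cap\calP_{\ellbdr}|)$ vertices of $S^b\cap\Phi$ lie in wedges with \emph{both} $v,w\in S^{1-b}$. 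For such a wedge, pick any $u\in S^b$ in it: if $r_u\leq\max\{r_v,r_w\}$ then Lemma~\ref{lem:fk} applied to the edge $vw$ gives $uv\in F$ or $uw\in F$; if $r_u>\max\{r_v,r_w\}$, then $u\in U$ has some neighbour, and if that neighbour $z$ lies in $S^b\cap B_O(\ellbdr-1)$ (whether between $v,w$ or not), Lemma~\ref{lem:fk} applied to the edge $uz$ with $v$ or $w$ intervening forces $vz\in F$ or $wz\in F$ --- a cut edge at \emph{any} radius $r_u$. This yields one cut edge per good wedge, hence $\Omega(\EE|\Phi\cap\calP_{\ellbdr}|)\cdot\tfrac{\epsilon\omega_0}{\upsilon}=\Omega(\epsilon\,\EE|\Phi\cap\calP_{\ellbdr}|)$ edges. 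The idea you are missing is exactly this: connectivity of $U$ plus betweenness replaces all layer-by-layer control near the boundary.
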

\begin{proof}
%
Recall that we say that 
  $v$ follows $w$ in $P_{\ellbdr}$ if $v,w\in\calP_{\ellbdr}$ 
  and there is no other vertex in $\calP_{\ellbdr}$ between $v$ and $w$. 
Our first goal is to find sufficiently many pairs $v,w \in \Phi\cap \calP_{\ellbdr}$ such that $v$ follows $w$, and moreover, $v$ and $w$ are in 
  $S^{1-b}$.
Note that w.e.p.~(again by Corollary~\ref{cor:medidalb} and 
  Lemma~\ref{lem:meanMeus}) we have $\Delta\varphi_{v,w} 
    \le \theta_R(\ellbdr-1,\ellbdr-1)\le \frac{\upsilon}{n}(\log n)^{\frac{2\alpha}{1-\alpha}+1}$.
Thus, w.e.p., by Lemma~\ref{lem:meanMeus}, the number of vertices in 
  $\calP\setminus\calP_{\ellbdr}$ 
  between $v$ and $w$ is $\upsilon (\log n)^{\frac{2\alpha}{1-\alpha}+1}
    =\upsilon(\log n)^{\frac{1+\alpha}{1-\alpha}}=\frac{\upsilon}{\epsilon\omega_0}$. 
Hence, since by hypothesis 
  $|S^b\cap\Phi\cap\calP_{\ellbdr}|\le\epsilon\EE|\Phi\cap P_{\ellbdr}|$,
  w.e.p.~there are $O(\epsilon\EE|\Phi\cap\calP_{\ellbdr}|)$ 
  pairs $v,w$ in $\Phi \cap \calP_{\ellbdr}$ so that $v$ follows $w$ and moreover both $v,w \in S^b$,
  each pair defining a region of $B_{O}(R)$ corresponding to 
  a sector with $v,w$ on its boundary.
Thus, by our choice of $\epsilon$ (recall that $\omega_0=\omega(\upsilon)$), 
  the number of vertices that belong to 
  $\calP\cap\Phi$ which are
  between two vertices in $S^b\cap\Phi\cap\calP_{\ellbdr}$ 
  is $o(\EE|\Phi\cap\calP_{\ellbdr}|)$.
The same holds also for those pairs $v,w$ where one belongs to 
  $S^b$ and the other to $S^{1-b}$. 
However, since $|S^b\cap\Phi|=\Omega(\EE|\Phi\cap\calP_{\ellbdr}|)$,
  most of the vertices in $S^b\cap\Phi$
  must be in regions between two vertices belonging
  to $S^{1-b}\cap\Phi\cap\calP_{\ellbdr}$.

Note also that, since by Lemma~\ref{lem:aproxAngle}, 
  $\theta_{R}(\ellbdr,\ellbdr)=\Theta(\frac{1}{n}(\log n)^{\frac{2}{1-\alpha}})$, and
  $\Delta\varphi_{v,w}\le \frac{\upsilon}{n}(\log n)^{\frac{1+\alpha}{1-\alpha}}
     =o(\frac{1}{n}(\log n)^{\frac{2}{1-\alpha}})$, 
  w.e.p.~vertices $v$ and $w$ are neighbors in $G$ and thus also in $H$.

Assume now that $v$ and $w$ belong to 
  $S^{1-b}\cap\Phi\cap\calP_{\ellbdr}$.
Suppose there exists $u\in S^{b}$ between $v$ and $w$ 
  with $r_v, r_w < r_u$ so that one of the following happens:
  \begin{inparaenum}[(i).-] 
  \item $u$ is adjacent to a vertex in $S^{1-b}$ 
    \item $u$ is adjacent to a vertex $z\in S^b\cap B_O(\ellbdr-1)$ 
  between $v$ and $w$, in which case, since $v$ and $w$ are adjacent, 
  by Lemma~\ref{lem:fk} part~\eqref{lem:fkPart1},
  the edges $vz$ and $wz$ 
  must also be present, or \item $u$ is adjacent to a vertex
  $z\in S^b\cap B_O(\ellbdr-1)$ 
  with $\theta_z \not\in [\theta_w, \theta_v]$ (since we assume $v$ follows $w$, we assume $\theta_v \ge \theta_w$),
  in which case, by Lemma~\ref{lem:fk} part~\eqref{lem:fkPart2},
  the edge $wz$ or the edge $vz$ also has to be present.
  \end{inparaenum} 
In all cases, for each of the aforementioned pair of vertices $v,w$ we obtain at
  least one edge going from $S^b\cap\Phi$ to $S^{1-b}$,
  and since, w.e.p., there are at least $\frac{\epsilon\omega_0}{\upsilon}\EE|\Phi\cap\calP_{\ellbdr}|$
  regions and every edge between $S^b$ and $S^{1-b}$ is counted at most
  twice, w.e.p.~$|E(S^b \cap \Phi, S^{1-b})|
     =\frac{\epsilon\omega_0}{\upsilon}\EE|\Phi\cap\calP_{\ellbdr}|
     =\Omega(\epsilon\EE|\Phi\cap\calP_{\ellbdr}|)$.
\end{proof}
The next lemma shows that if for a fixed choice of $S$, in a certain
  sector there is an important quantity of both $S$ and $\overline{S}$,
  then the sector's conductance is large. 
Intuitively, this can occur either because there exists one band
  having both large fractions of $S$ and $\overline{S}$, or there are
  two bands, one having a large fraction of $S$, the other having a
  large fraction of $\overline{S}$, or because most of $\overline{S}$
  is relatively close to the center, and most of $S$ is concentrated close 
  to the boundary, in which case we can apply Lemma~\ref{lem:cuna}.
\begin{lemma}\label{lem:Conetreatment}
Let $H=(U,F)$ be the center component of 
  $G=(V,E)$ chosen according to $\poimod_{\alpha,C}(n)$.
Let $\omega_0$, $\epsilon$, $\phi$ and $\ell_\phi$ be as in 
  Lemma~\ref{lem:earlylayersfull}.
Let $\Phi'$ be a  $(2\phi)$-sector of $B_O({R})$. 
  If $S\subseteq U$ is such that  
  $|S\cap\Phi'|,|\overline{S}\cap\Phi'|=\Omega(\EE|\Phi'\cap\calP_{\ellbdr}|)$,
  then for some $b \in \{0,1\}$, w.e.p.~$|E(S^b \cap \Phi', S^{1-b})|
      =\Omega((\log n)^{-\frac{4}{1-\alpha}}(\phi n)^{2(1-\alpha)}).$
\end{lemma}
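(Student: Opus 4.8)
The plan is to reduce, by a dichotomy on how $S$ and $\overline S$ are distributed radially inside $\Phi'$, to one of the two cut-production mechanisms already established in Lemmas~\ref{lem:earlylayersfull} and~\ref{lem:cuna}, and to treat only one residual configuration by a direct geometric estimate. First I record the relevant magnitudes. By Lemma~\ref{lem:muBall} and the definition of $\ellbdr$, $\EE|\Phi'\cap\calP_{\ellbdr}|=\Theta(\phi n(\log n)^{-\frac{2\alpha}{1-\alpha}})$, which is $\omega(\log n)$ because of the lower bound imposed on $\phi$; moreover $\EE|\Phi'\cap\calP_{\ell}|=\omega(\log n)$ for every layer $\ell_\phi\le \ell\le\ellbdr$, so by Lemma~\ref{lem:meanMeus} all these layers concentrate w.e.p.\ and in particular $f^0_\ell+f^1_\ell=1+o(1)$, where I abbreviate $f^c_\ell:=|S^c\cap\Phi'\cap\calP_\ell|/\EE|\Phi'\cap\calP_\ell|$. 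Crucially, the lower bounds delivered by Lemmas~\ref{lem:cuna} and~\ref{lem:earlylayersfull}, namely $\Omega(\epsilon\,\EE|\Phi'\cap\calP_{\ellbdr}|)$ and $\Omega((\phi n)^{2(1-\alpha)}\tfrac{\upsilon}{\omega_0}(\log n)^{-\frac{2}{1-\alpha}})$, both exceed the target $(\log n)^{-\frac{4}{1-\alpha}}(\phi n)^{2(1-\alpha)}$: for the first this follows from $\phi n\ge\upsilon(\log n)^{\frac{1+\alpha}{1-\alpha}}$ (which makes $\epsilon\phi n$ dominate $(\phi n)^{2(1-\alpha)}(\log n)^{-\frac{4}{1-\alpha}}$), and for the second from $\tfrac{\upsilon}{\omega_0}\ge\upsilon(\log n)^{-o(1)}\gg(\log n)^{-\frac{2}{1-\alpha}}$ (using $\omega_0=e^{o(\log\log n)}$). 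Hence it suffices either to apply one of these two lemmas to $S^b$ (for some $b$) in the $(2\phi)$-sector $\Phi'$ — both lemmas apply verbatim with ``$2\phi$'' in the role of ``$\phi$'' — or to deal with the one remaining configuration by hand.

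The dichotomy is the following. Both $S=S^0$ and $\overline S=S^1$ are large in $\Phi'$, i.e.\ each has $\Omega(\EE|\Phi'\cap\calP_{\ellbdr}|)$ vertices there. Fix $b\in\{0,1\}$. If $f^b_{\ellbdr}\le\epsilon$ then $S^b$ is large in $\Phi'$ and sparse in the outermost layer, so Lemma~\ref{lem:cuna} applies and gives $|E(S^b\cap\Phi',S^{1-b})|=\Omega(\epsilon\,\EE|\Phi'\cap\calP_{\ellbdr}|)$, which beats the target. Otherwise $f^b_{\ellbdr}>\epsilon$; if in addition some layer $\ell^*$ with $\ell_\phi<\ell^*\le\ellbdr$ satisfies $|f^b_{\ell^*}-f^b_{\ell_\phi}|\ge\epsilon$, then Lemma~\ref{lem:earlylayersfull} (in one of its two forms) applies to $S^b$ and $\Phi'$ and again beats the target. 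So we may assume that for \emph{both} $b=0$ and $b=1$ we have $f^b_{\ellbdr}>\epsilon$ and the radial profile $\ell\mapsto f^b_\ell$ is ``$\epsilon$-flat'' on $(\ell_\phi,\ellbdr]$ — note that by $f^0_\ell+f^1_\ell=1+o(1)$ flatness for $b=0$ is equivalent to flatness for $b=1$. Since $S^b$ is large in $\Phi'$ while its profile over $[\ell_\phi,\ellbdr]$ is essentially constant, most of the $S^b$-mass relevant to the cut must lie in the layers $[\ell_\phi,\ellbdr]$ at a density bounded away from $0$ and $1$; a small amount of extra care is needed for the outermost layers $(\ellbdr,R]$, where however the degrees are only $(\log n)^{1+o(1)}$ and can be absorbed. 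Thus the only case left is: in $\Phi'$ both $S$ and $\overline S$ occupy a $\Theta(\epsilon)$-fraction of every layer from $\ell_\phi$ up to $\ellbdr$ — informally, $S\cap\Phi'$ and $\overline S\cap\Phi'$ look like two complementary radial wedges of $\Phi'$.

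In this residual case I would argue directly, the governing geometric fact being that the cut between complementary radial wedges of $\Phi'$ is dominated by the layers $\ell$ near $\ell_\phi$. By Lemma~\ref{lem:aproxAngle} a vertex at radius $\ell$ sees an angular window of width $\Theta(e^{\frac12(R-2\ell)})$ within its own layer; this window equals the full width $\phi$ of $\Phi'$ precisely when $\ell\approx\ell_\phi$ (this is why $\ell_\phi$ is defined as it is), and just interior to $\ell_\phi$ the window contains $\Theta((\phi n)^{1-\alpha})$ vertices of $\calP_\ell$. Partitioning, at each layer $\ell\in[\ell_\phi,\ellbdr]$, the sector $\Phi'$ into the maximal $\calP_\ell$-cliques of angular width $\Theta(e^{\frac12(R-2\ell)})$, and forcing into each clique a $\Theta(\epsilon)$-fraction of both parts, one gets at layer $\ell$ a cross-clique edge count of order $\epsilon\,(n^{2-2\alpha}e^{-(1-\alpha)\ell})^2$ when $\theta_R(\ell,\ell)\le\phi$; summing this over $\ell$ (a geometric series dominated by its smallest index $\ell\approx\ell_\phi$, just as in the proof of Lemma~\ref{lem:earlylayersfull}) yields a lower bound $\Omega\big(\epsilon(\log n)^{-O(1)}(\phi n)^{2(1-\alpha)}\big)$, which still beats the target. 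The main obstacle is this last step. Unlike in Lemma~\ref{lem:earlylayersfull}, the relevant cut here runs ``across angles at a fixed radius'' rather than ``across radii at a fixed angle'', so the worst case is precisely a genuine radial dividing line (any interleaving of the two parts produces \emph{more} cut edges), and to rule out interleaving beating the clean configuration one must set up and solve a two-dimensional concave min-cut program over the angular--radial grid — whose optimal vertices are $0/1$ except on a thin transition — and then verify that the innermost layers dominate the resulting geometric sum. Carrying out that optimization (and the bookkeeping for the outermost layers mentioned above) is the technical heart of the proof.
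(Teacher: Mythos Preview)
Your reduction to Lemmas~\ref{lem:earlylayersfull} and~\ref{lem:cuna} is the right idea, and your comparisons of their outputs to the target are correct. The problem is your residual case, which arises only because you anchor the dichotomy at the wrong layer.

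You split at $\ellbdr$ and work inward; this leaves a residual configuration (both profiles $\epsilon$-flat and $f^b_{\ellbdr}>\epsilon$ for both $b$, in your notation) that you then propose to attack with a two-dimensional min-cut program over an angular--radial grid. But first, your description of this case is inaccurate: flatness together with $f^b_{\ellbdr}>\epsilon$ only gives $f^b_{\ell_\phi}>0$, not $\Theta(\epsilon)$ --- one can have $f^0_{\ell_\phi}=n^{-1}$ while $f^0_{\ellbdr}=\epsilon+n^{-1}$ --- so ``both parts occupy a $\Theta(\epsilon)$-fraction of every layer'' does not follow and the grid argument as sketched would not run. Second, and more to the point, the whole residual case dissolves once you recall \emph{why} $\ell_\phi$ is defined as it is: by Remark~\ref{rem:aproxAngle} one has $\theta_R(\ell_\phi,\ell_\phi)\ge(2+o(1))e\phi>2\phi$, so $\calP_{\ell_\phi}\cap\Phi'$ is a single clique. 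Hence if both $f^0_{\ell_\phi},f^1_{\ell_\phi}\ge\epsilon$, the cut already contains $\Omega\big((\epsilon\,\EE|\Phi'\cap\calP_{\ell_\phi}|)^2\big)=\Omega\big(\epsilon^2(\phi n)^{2(1-\alpha)}\big)$ edges, which beats the target. If instead $f^b_{\ell_\phi}<\epsilon$ for some $b$, then either some layer has $f^b_\ell\ge 2\epsilon$ and Lemma~\ref{lem:earlylayersfull} fires, or every layer has $f^b_\ell<2\epsilon$ --- in particular $f^b_{\ellbdr}<2\epsilon$ --- and Lemma~\ref{lem:cuna} fires (with threshold $2\epsilon$ in place of $\epsilon$, which is harmless). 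This is exactly the paper's three-case split, anchored at $\ell_\phi$ rather than $\ellbdr$; no grid optimization is needed at all.
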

\begin{proof}
Note that by Remark~\ref{rem:aproxAngle} every vertex $v\in\calP_{\ell_{\phi}}$ 
  is adjacent to every other vertex $v'\in\calP_{\ell_{\phi}}$ satisfying 
  $\Delta\varphi_{v,v'}\le\theta_{R}(\ell_{\phi},\ell_{\phi})$.
Thus, since $\theta_R(\ell_{\phi},\ell_{\phi})\ge(2+o(1))e\phi\ge 2\phi,$ 
  in particular any two vertices in $\calP_{\ell_{\phi}}\cap\Phi'$ are adjacent. 
By choice of $\ell_{\phi}$ and the lower bound on $\phi$, 
  w.e.p.~$|\Phi' \cap \calP_{\ell_{\phi}}|=(1+o(1))\EE|\Phi' \cap \calP_{\ell_{\phi}}|$. 
Thus, if for both $b=0$ and $b=1$ it holds that 
  $|S^b\cap\Phi'\cap\calP_{\ell_{\phi}}|
      \ge{\epsilon}\EE|\Phi'\cap\calP_{\ell_{\phi}}|$,
  then w.e.p.~$|E(S^b \cap \Phi', S^{1-b})|
    =\Omega(({\epsilon}\EE|\Phi'\cap\calP_{\ell_{\phi}}|)^2)$.
Otherwise, for some $b\in\{0,1\}$ we have 
  $|S^b\cap\Phi'\cap\calP_{\ell_{\phi}}|\leq {\epsilon}\EE|\Phi'\cap\calP_{\ell_{\phi}}|$. 
If there exists some $\ell_{\phi}\le\ell \le \ellbdr$ with $|S^b\cap\Phi'\cap\calP_{\ell}|\geq 2\epsilon\EE|\Phi'\cap\calP_{\ell}|,$
  by Lemma~\ref{lem:earlylayersfull}
  (applied with $\ell^*=\ell$),
  we get that 
  w.e.p.~$|E(S^b\cap\Phi', S^{1-b} \cap \Phi')| =
     \Omega((\phi n)^{2(1-\alpha)}\frac{\upsilon}{\omega_0}(\log n)^{-\frac{2}{1-\alpha}})=\Omega((\log n)^{-\frac{4}{1-\alpha}}(\phi n)^{2(1-\alpha)})$.
If not, then $|S^{1-b}\cap\Phi'\cap\calP_{\ellbdr}|
  \geq (1-2\epsilon)\EE|\Phi'\cap\calP_{\ellbdr}|.$
We apply Lemma~\ref{lem:cuna} (which we may since $|S\cap\Phi'|=\Omega(\EE|\Phi'\cap\calP_{\ellbdr}|)$), we obtain that
  w.e.p.~$|E(S^b \cap\Phi', S^{1-b})|
     =\Omega(\epsilon\EE|\Phi'\cap\calP_{\ellbdr}|).$
     
To conclude, observe that by our choice of $\ell_{\phi}$ and 
  Corollary~\ref{cor:medidalb}, we have
  $(\epsilon\EE|\Phi'\cap\calP_{\ell_{\phi}}|)^2=\Omega(\epsilon^2 (\phi n)^{2(1-\alpha)})=\Omega({\frac{1}{\omega_0^2} (\log n)^{-\frac{2(1+\alpha)}{1-\alpha}}}(\phi n)^{2(1-\alpha)})=\Omega((\log n)^{-\frac{4}{1-\alpha}}(\phi n)^{2(1-\alpha)})$, 
  where the latter equality holds by our assumption on $\omega_0$.
 Also, again by Corollary~\ref{cor:medidalb}, our choice of $\ellbdr$ and $\epsilon$, 
   we infer that $\epsilon\EE|\Phi'\cap\calP_{\ellbdr}|
   =\Omega(\frac{1}{\omega_0}(\phi n)(\log n)^{-\frac{1+3\alpha}{1-\alpha}})=\Omega((\log n)^{-\frac{4}{1-\alpha}}(\phi n)^{2(1-\alpha)})$, where the
latter equality follows from the fact that $\frac12 < \alpha < 1$ and by our assumption on $\omega_0$.
\end{proof}
A very similar lemma is the following:
\begin{lemma}\label{lem:Conetreatment1}
Let $H=(U,F)$ be the center component of $G=(V,E)$ chosen according to 
  $\poimod_{\alpha,C}(n)$.
Let $\omega_0$, $\phi$, $\ell_{\phi}$, $\epsilon$ be 
  as in Lemma~\ref{lem:earlylayersfull} and
  let $\Phi$ be a $\phi$-sector of $B_{O}®$.
There is a sufficiently large $C_1=C_1(\alpha)$ such that if $S\subseteq U$ satisfies 
\[
\Vol(S\cap\Phi)\ge C_1\EE|\Phi\cap\calP_{\ellbdr}|(\log n)^{\frac{1}{1-\alpha}},
 \quad\text{and}\quad
|\overline{S}\cap\Phi|=\Omega(\EE|\Phi\cap\calP_{\ellbdr}|),
\]
then for some $b\in\{0,1\}$, w.e.p.~$|E(S^b \cap \Phi, S^{1-b})|=\Omega((\log n)^{-\frac{4}{1-\alpha}}(\phi n)^{2(1-\alpha)})$.
\end{lemma}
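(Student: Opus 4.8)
The plan is to reduce to Lemma~\ref{lem:Conetreatment}. The hypotheses here differ from those of that lemma in only two respects: $\Phi$ is a $\phi$- rather than a $(2\phi)$-sector, and $S\cap\Phi$ is controlled through its \emph{volume} instead of its cardinality. The first difference is harmless: the width of the sector enters the proof of Lemma~\ref{lem:Conetreatment} only through the assertion that $\calP_{\ell_\phi}$ restricted to the sector induces a clique, and since $\theta_R(\ell_\phi,\ell_\phi)=(2+o(1))e\phi>2\phi$ this already holds for a $\phi$-sector. So the whole point is to deduce, w.e.p., the cardinality bound $|S\cap\Phi|=\Omega(\EE|\Phi\cap\calP_{\ellbdr}|)$; once this is in hand, since the hypothesis gives $|\overline S\cap\Phi|=\Omega(\EE|\Phi\cap\calP_{\ellbdr}|)$, the proof of Lemma~\ref{lem:Conetreatment} applies verbatim to $\Phi$ and yields the claim.

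To obtain the cardinality bound I would split $\Vol(S\cap\Phi)$ at the band $\ellbdr$. By Proposition~\ref{prop:degreeBound} every vertex of radius at least $\ellbdr$ has degree $O(e^{\frac12(R-\ellbdr)})=O((\log n)^{\frac1{1-\alpha}})$ w.e.p., so the contribution of bands of radius $>\ellbdr$ to $\Vol(S\cap\Phi)$ is $O((\log n)^{\frac1{1-\alpha}})\,|S\cap\Phi|$. On the other hand $\Vol(S\cap\Phi\cap B_O(\ellbdr))\le\Vol(\Phi\cap\calP\cap B_O(\ellbdr))$, which, exactly as in the proof of Lemma~\ref{lem:VolUpperBound}, is w.e.p. at most $O((\log n)^{\frac1{1-\alpha}}\EE|\Phi\cap\calP_{\ellbdr}|)$ coming from the bands $\elllow+\nu\le\ell\le\ellbdr$ (a geometric sum, by Proposition~\ref{prop:bandBound}, dominated by $\ell=\ellbdr$), plus the volume of the $(\log n)^{O(1)}$ vertices of radius below $\elllow+\nu$, the latter being $O(1)$ many of degree $O(\sqrt n)$ by the Poisson tail bound of Theorem~\ref{thm:AlonSpencer}. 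Hence, choosing $C_1=C_1(\alpha)$ large enough, one of the following holds w.e.p.: either at least half of $\Vol(S\cap\Phi)$ comes from bands $>\ellbdr$, in which case $|S\cap\Phi|=\Omega(C_1\EE|\Phi\cap\calP_{\ellbdr}|)$ and we are done (with an arbitrarily large implied constant); or at least half of $\Vol(S\cap\Phi)$ is carried by the $(\log n)^{O(1)}$ vertices of $S\cap\Phi$ of radius below $\elllow+\nu$, and in particular $S\cap\Phi$ contains a vertex $v$ with $r_v<\elllow+\nu$ and $d(v)=\Omega\big(C_1\EE|\Phi\cap\calP_{\ellbdr}|(\log n)^{\frac1{1-\alpha}}/(\log n)^{O(1)}\big)$.

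In the second case I would argue directly. Since $r_v<\elllow+\nu$, the vertex $v$ is adjacent to every vertex of $\calP$ inside $B_O(R-r_v)$, a set of size $\Omega(n^{2-2\alpha}/(\log n)^{1+o(1)})$ w.e.p. by Lemma~\ref{lem:muBall} and Lemma~\ref{lem:meanMeus}; hence $d(v)=\Omega(n^{2-2\alpha}/(\log n)^{1+o(1)})$, which (using $\phi\le1$) far exceeds the target $(\log n)^{-\frac4{1-\alpha}}(\phi n)^{2(1-\alpha)}$. Moreover $\theta_R(r_v,\ell_\phi)=\omega(\phi)$ (again since $r_v<\elllow+\nu$ and $\alpha<1$), so $v$ is adjacent to all of the clique $\calP_{\ell_\phi}\cap\Phi$. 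If $|N(v)\cap\overline S|\ge\frac12 d(v)$, then with $b=0$ we get $|E(S^0\cap\Phi,S^1)|\ge|E(\{v\},\overline S)|\ge\frac12 d(v)$, which already exceeds the target. Otherwise $v$ sends most of its edges into $S$, and comparing $d(v)$ with the w.e.p. maximum degree $\Theta(\sqrt n)$ forces $\phi=O(n^{-1/2}(\log n)^{O(1)})$; in this residual regime I would run the dichotomy from the proof of Lemma~\ref{lem:Conetreatment} on the pair $(S,\overline S)$ within $\Phi$ — the ``both sides of $\calP_{\ell_\phi}\cap\Phi$ balanced'' and ``some intermediate band balanced'' branches go through as there (via the clique, resp.~Lemma~\ref{lem:earlylayersfull}), and in the remaining branch $\overline S$ fills a $(1-o(1))$-fraction of $\calP_{\ellbdr}\cap\Phi$ and one applies Lemma~\ref{lem:cuna} to $\overline S$, which is legitimate because $|\overline S\cap\Phi|=\Omega(\EE|\Phi\cap\calP_{\ellbdr}|)$ is assumed.

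The main obstacle is exactly this conversion of the volume hypothesis into a statement about \emph{how many} vertices $S\cap\Phi$ has: a priori the volume of $S\cap\Phi$ could be hoarded by a handful of very deep, very high-degree vertices, which is precisely why the hypothesis carries the extra $(\log n)^{\frac1{1-\alpha}}$ factor — so that the geometric-sum and deep-band volume estimates can be overpowered by a large constant $C_1$ — and why the residual deep-vertex regime needs the separate, somewhat technical bookkeeping sketched above, where one has to verify that the cut bound emerging from the applicable branch of the dichotomy is never below the (now small) target $(\log n)^{-\frac4{1-\alpha}}(\phi n)^{2(1-\alpha)}$.
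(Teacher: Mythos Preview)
Your overall strategy---convert the volume hypothesis into a cardinality bound and then invoke the dichotomy of Lemma~\ref{lem:Conetreatment}---is on the right track, and the paper's proof does exactly this in its final sub-case. But the execution has a genuine gap.

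The error is in your volume estimate for the bands $\elllow+\nu\le\ell\le\ellbdr$ inside $\Phi$. You invoke Proposition~\ref{prop:bandBound}, but that proposition concerns the full disk, not a sector. In a $\phi$-sector the expected size of $\Phi\cap\calP_\ell$ is $\Theta(\phi n e^{-\alpha(R-\ell)})$, which for $\ell$ near $\elllow+\nu$ and small $\phi$ is $o(1)$; Lemma~\ref{lem:meanMeus} then only gives the w.e.p.\ bound $|\Phi\cap\calP_\ell|\le\upsilon\log n$. Multiplying this by the degree $e^{(R-\ell)/2}\approx n^{1/(2\alpha)}$ at those layers yields a contribution of order $n^{1/(2\alpha)}(\log n)^{O(1)}$, which for $\phi=n^{-c}$ with $c>(2\alpha-1)/(2\alpha)$ already exceeds your target $\phi n(\log n)^{-(2\alpha-1)/(1-\alpha)}$. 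So the geometric sum is \emph{not} dominated by $\ell=\ellbdr$, and your dichotomy ``outer bands vs.\ deep vertices below $\elllow+\nu$'' does not cover all cases. (Also, vertices of radius below $\elllow+\nu$ have degree up to $n^{1/(2\alpha)}$, not $O(\sqrt n)$.)

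The paper repairs this by replacing $\elllow+\nu$ with the threshold $r_\star:=2\log\frac1\phi+R-\ellbdr$. This choice does two things at once. First, the volume of $\Phi\cap\calP\cap(B_O(\ellbdr)\setminus B_O(r_\star))$ really is $O(\phi n(\log n)^{-(2\alpha-1)/(1-\alpha)})$---the $\upsilon\log n$ term in the $\max$ now contributes only $O(\upsilon\phi n(\log n)^{-\alpha/(1-\alpha)})$, which is absorbed. Second, any $v\in S\cap\Phi$ with $r_v\le r_\star$ satisfies $\theta_R(r_v,\ellbdr)\ge\phi$, hence is adjacent to \emph{every} vertex of $\Phi\cap\calP\cap B_O(\ellbdr)$. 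The paper places this ``deep vertex'' sub-case \emph{inside} the branch where one already knows $|S\cap\Phi\cap\calP_\ell|\le 2\epsilon\,\EE|\Phi\cap\calP_\ell|$ for all $\ell_\phi\le\ell\le\ellbdr$; consequently the neighbors of $v$ in $\Phi\cap B_O(\ellbdr)$ are automatically a $(1-2\epsilon)$-fraction in $\overline S$, and one reads off $|E(\{v\},\overline S)|\ge(1-2\epsilon)\EE|\Phi\cap\calP_{\ellbdr}|$ directly---no need for your further split on whether $N(v)$ lies mostly in $S$ or $\overline S$, whose ``otherwise'' branch does not conclude as written. If no such deep vertex exists, the volume computation forces at least half of $\Vol(S\cap\Phi)$ into $B_O(R)\setminus B_O(\ellbdr)$, where degrees are $O((\log n)^{1/(1-\alpha)})$, yielding $|S\cap\Phi|=\Omega(\EE|\Phi\cap\calP_{\ellbdr}|)$ and then Lemma~\ref{lem:cuna} applies with $b=0$.
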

\begin{proof}
As in  the proof of Lemma~\ref{lem:Conetreatment},
  if for both $b=0$ and $b=1$ it holds that 
  $|S^b\cap\Phi\cap\calP_{\ell_{\phi}}|=\epsilon\EE|\Phi\cap\calP_{\ell_{\phi}}|$,
  then w.e.p.~$|E(S^b \cap \Phi, S^{1-b})|
                  =\Omega((\epsilon\EE|\Phi\cap\calP_{\ell_{\phi}}|)^2)=\Omega((\log n)^{-\frac{4}{1-\alpha}}(\phi n)^{2(1-\alpha)})$.

Otherwise, suppose that for some $b\in\{0,1\}$ we have
  $|S^b\cap\Phi\cap\calP_{\ell_{\phi}}|\le\epsilon\EE|\Phi\cap\calP_{\ell_{\phi}}|$. 
If there exists some $\ell_{\phi}\le\ell \le\ellbdr$ such
  that $|S^b\cap\Phi\cap\calP_{\ell}|\geq 2\epsilon\EE|\Phi\cap\calP_{\ell}|,$ 
  by Lemma~\ref{lem:earlylayersfull}, 
  w.e.p.~$|E(S^b \cap \Phi, S^{1-b})|
     = \Omega((\log n)^{-\frac{4}{1-\alpha}}(\phi n)^{2(1-\alpha)})$.
If not and $b=1$, then Lemma~\ref{lem:cuna} can be applied 
  and hence, w.e.p.~$|E(S^b \cap \Phi, S^{1-b})|
    =\Omega(\epsilon\EE|\Phi\cap\calP_{\ellbdr}|)$.
So, assume $|S\cap\Phi\cap\calP_{\ell}|\le\epsilon\EE|\Phi\cap\calP_{\ell}|$
 for all $\ell_{\phi}\le\ell\le\ellbdr$
  and $|S\cap\Phi\cap\calP_{\ellbdr}|\le 2\epsilon\EE|\Phi\cap\calP_{\ellbdr}|$. 
If there exists a
  $v\in S\cap\Phi\cap B_O(2\log\frac{1}{\phi}+R-\ellbdr)$,
  then by Lemma~\ref{lem:aproxAngle}, the vertex $v$ is adjacent to every 
  vertex in $\calP\cap\Phi\cap B_O(\ellbdr)$.
By just counting edges between $v$ and 
  $\overline{S}\cap \Phi\cap B_O(\ellbdr)$ 
  we obtain for $b=0$ w.e.p.~$|E(S^b \cap \Phi, S^{1-b})|
     \geq |\overline{S}\cap \Phi\cap B_O(\ellbdr)|
     \geq (1-2\epsilon)\EE|\Phi\cap \calP_{\ellbdr}|$.
If no such vertex $v$ exists, then by Lemma~\ref{lem:muBall}, 
  Lemma~\ref{lem:meanMeus} and Proposition~\ref{prop:degreeBound}, 
  w.e.p.~the volume of $S\cap\Phi\cap B_{O}(\ellbdr)$ is at most 
  $\frac{C_1}{2}\phi n (\log n)^{-\frac{2\alpha-1}{1-\alpha}}
     \leq\frac{C_1}{2}\EE|\Phi\cap\calP_{\ellbdr}|(\log n)^{\frac{1}{1-\alpha}}$ 
  for $C_1$ large enough: 
indeed, by Lemma~\ref{lem:meanMeus} and Proposition~\ref{prop:degreeBound}, 
the volume is, w.e.p., at most
  \begin{align*}
 & \sum_{\ell=2\log\frac{1}{\phi}+R-\ellbdr}^{\ellbdr} \max\{\upsilon \log n, O(\phi ne^{-\alpha(R-\ell)}) \}\Theta(e^{\frac12(R-\ell)}).
  \end{align*}
Using $\max\{x,y\}\leq x+y$, $\alpha < 1$ and 
  the formula for a geometric series, we obtain a 
  $O(\phi n (\log n)^{\frac{1-2\alpha}{1-\alpha}}+\upsilon \phi n(\log n)^{-\frac{\alpha}{1-\alpha}})=O(\phi n (\log n)^{\frac{1-2\alpha}{1-\alpha}})$ bound on the volume.
Since every other vertex, once more by Proposition~\ref{prop:degreeBound},
  w.e.p.~has degree $O((\log n)^{\frac{1}{1-\alpha}})$, by our 
  assumption on $\Vol(S\cap\Phi)$, 
  w.e.p.~$|S\cap\Phi|= \Omega(\Vol(S\cap\Phi)(\log n)^{-\frac{1}{1-\alpha}})
      =\Omega(\EE|\Phi\cap\calP_{\ellbdr}|)$.
Applying Lemma~\ref{lem:cuna} with $b=0$ we get that 
  w.e.p.~$|E(S^b \cap \Phi, S^{1-b})|
=\Omega(\epsilon\EE|\Phi\cap\calP_{\ellbdr}|)$. The previous discussion and 
   similar observations as those in the last paragraph of the proof 
   of Lemma~\ref{lem:Conetreatment} yield the claim. 
\end{proof}

We use the previous lemma in roughly the following way: for a fixed 
  $S\subseteq U$, we start by applying the lemma with $\Phi$ a 
  sector with a relatively large angle so that inside it
  we cannot have only $S$ (the existence of such an angle follows from 
  the fact that we are interested solely in the cases where
  $\Vol(S)$ is sublinear in $n$), and then, in case we have not
  found dense spots of $S$, we half the previous sector, and continue
  recursively. 
Thus, we either detect subsectors of $S$, in
  which case the previous lemmas imply a large conductance, or conclude that there is no relatively large angle containing only $S$.
\begin{lemma}\label{lem:Conetreatment2}
Let $H=(U,F)$ be the center component of $G=(V,E)$ chosen according to 
  $\poimod_{\alpha,C}(n)$. 
Let  $\Phi$ be a sector of $B_{O}(R)$ of angle 
  $\phi$ with $\phi \ge \phi_0:=\frac{\upsilon}{n}(\log n)^{\frac{1+\alpha}{1-\alpha}}$. 
Let $j \ge 0$ be the largest integer such that 
  $2^{-j}\phi\ge\phi_0$ and, for $0\le i\le j$, let 
  $\Phi^{(i)}_{1},\ldots,\Phi^{(i)}_{2^i}$
  be an angular equipartition of $\Phi$.
Then, there is a constant $0 < C_2 < 1$ such 
  that w.e.p.~$|U\cap\Phi^{(j)}_{k}|\geq C_2\frac{\phi n}{2^j}(\log n)^{-\frac{2\alpha}{1-\alpha}}$ 
  for every $1\leq k\leq 2^j$.
Moreover, let $C_1>0$ be as in Lemma~\ref{lem:Conetreatment1} and consider 
  $S\subseteq U$ such that $|S\cap\Phi|
    \le \frac{C_2}{3}\EE|\Phi\cap\calP_{\ellbdr}|$
  and $\Vol(S\cap\Phi)\le C_1\phi n$. Then, w.e.p., for each $\Phi^{(j)}_{k}$ 
  one of the following holds:
\begin{enumerate}[(i).-]
\item\label{it:Conetreatment2Part1} 
there is $0\le i\le j$ and a $k'$ for which 
  $h_{\Phi^{(i)}_{k'}}(S)=\Omega\big((\log n)^{-\frac{4}{1-\alpha}}(\frac{2^i}{\phi n})^{2\alpha-1}\big)$ and   $\Phi^{(j)}_{k} \subseteq \Phi^{(i)}_{k'}$ 
  or
\item\label{it:Conetreatment2Part2} 
  $|S\cap\Phi^{(j)}_{k}|\le\frac{C_2}{3}\EE|\Phi^{(j)}_{k}\cap\calP_{\ellbdr}|$.
\end{enumerate}
\end{lemma}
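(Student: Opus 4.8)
The plan is to dispatch the volume lower bound directly from Lemma~\ref{lem:newvolume}, and then, for each $\Phi^{(j)}_k$, obtain the dichotomy by descending through its chain of dyadic ancestors and invoking Lemmas~\ref{lem:Conetreatment} and~\ref{lem:Conetreatment1} \emph{one level above} the point where $S$ first becomes ``too large''. For the first assertion, since $\phi/2^{j}\ge\phi_0=\frac{\upsilon}{n}(\log n)^{\frac{1+\alpha}{1-\alpha}}=\omega\big(\tfrac1n(\log n)^{\frac{1+\alpha}{1-\alpha}}\big)$, Lemma~\ref{lem:newvolume} applies to each $\Phi^{(j)}_k$; as there are $2^{j}\le 2\pi/\phi_0=O(n)$ of them, a union bound gives w.e.p.~$|U\cap\Phi^{(j)}_k|=\Omega\big(\tfrac{\phi n}{2^{j}}(\log n)^{-\frac{2\alpha}{1-\alpha}}\big)$ for every $k$, i.e.~the claimed bound for a suitable $C_2\in(0,1)$. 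Running the same argument over all sectors $\Phi^{(i)}_{k'}$ with $0\le i\le j$ (still $O(n)$ many), and using $\EE|\Phi^{(i)}_{k'}\cap\calP_{\ellbdr}|=\Theta\big(\tfrac{\phi n}{2^{i}}(\log n)^{-\frac{2\alpha}{1-\alpha}}\big)$ from Corollary~\ref{cor:medidalb} and the choice of $\ellbdr$, I may assume (shrinking $C_2$ if needed) that w.e.p.~$|U\cap\Phi^{(i)}_{k'}|\ge C_2\,\EE|\Phi^{(i)}_{k'}\cap\calP_{\ellbdr}|$ for all these sectors; a final union bound over them lets me also assume the conclusions of Lemmas~\ref{lem:Conetreatment} and~\ref{lem:Conetreatment1} hold for each of them. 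From here the argument is deterministic on this high-probability event.

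Now fix $k$ and let $\Phi=\Phi^{(0)}_{k_0}\supseteq\Phi^{(1)}_{k_1}\supseteq\cdots\supseteq\Phi^{(j)}_{k_j}=\Phi^{(j)}_k$ be the chain of dyadic ancestors; write $\psi_i:=\phi/2^{i}$ for the angle of $\Phi^{(i)}_{k_i}$ and $\beta_i:=\EE|\Phi^{(i)}_{k_i}\cap\calP_{\ellbdr}|=\Theta\big(\psi_i n(\log n)^{-\frac{2\alpha}{1-\alpha}}\big)$. I call level $i$ \emph{good} if $|S\cap\Phi^{(i)}_{k_i}|\le\frac{C_2}{3}\beta_i$ and $\Vol(S\cap\Phi^{(i)}_{k_i})\le C_1\psi_i n$; the hypotheses make level $0$ good. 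If every level $0,\dots,j$ is good, then $|S\cap\Phi^{(j)}_k|\le\frac{C_2}{3}\EE|\Phi^{(j)}_k\cap\calP_{\ellbdr}|$ and we are in the second alternative. Otherwise I take $i^{*}$ to be the smallest non-good level, so $1\le i^{*}\le j$, and I aim for the first alternative with the sector $\calR:=\Phi^{(i^*-1)}_{k_{i^*-1}}$ one level above. Note $\Phi^{(j)}_k\subseteq\calR$, the angle $\psi_{i^*-1}$ of $\calR$ satisfies $\psi_{i^*-1}\ge 2\psi_j\ge 2\phi_0$, and—since level $i^*-1$ is good—$\Vol(S\cap\calR)\le C_1\psi_{i^*-1}n$, while using $|U\cap\calR|\ge C_2\beta_{i^*-1}$ gives $|\overline{S}\cap\calR|\ge C_2\beta_{i^*-1}-\frac{C_2}{3}\beta_{i^*-1}=\Omega(\beta_{i^*-1})$.

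The reason to go one level up is precisely that $\Vol(S\cap\calR)\le C_1\psi_{i^*-1}n$, which is what converts a cut lower bound into a conductance bound of the stated order. Since level $i^{*}$ is not good, either $|S\cap\Phi^{(i^*)}_{k_{i^*}}|>\frac{C_2}{3}\beta_{i^*}$ or $\Vol(S\cap\Phi^{(i^*)}_{k_{i^*}})>C_1\psi_{i^*}n$. In the first case $|S\cap\calR|\ge|S\cap\Phi^{(i^*)}_{k_{i^*}}|>\frac{C_2}{6}\beta_{i^*-1}=\Omega(\beta_{i^*-1})$, so $\calR$, read as a $(2\phi')$-sector with $\phi'=\psi_{i^*-1}/2\ge\phi_0$, meets the hypotheses of Lemma~\ref{lem:Conetreatment}, yielding $|E(S^b\cap\calR,S^{1-b})|=\Omega\big((\log n)^{-\frac{4}{1-\alpha}}(\psi_{i^*-1}n)^{2(1-\alpha)}\big)$ for some $b$. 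In the second case $\Vol(S\cap\calR)\ge\Vol(S\cap\Phi^{(i^*)}_{k_{i^*}})>\frac{C_1}{2}\psi_{i^*-1}n$, which for large $n$ exceeds $C_1\beta_{i^*-1}(\log n)^{1/(1-\alpha)}=\Theta\big(\psi_{i^*-1}n(\log n)^{(1-2\alpha)/(1-\alpha)}\big)=o(\psi_{i^*-1}n)$ (here $\alpha>\frac12$), so together with $|\overline{S}\cap\calR|=\Omega(\beta_{i^*-1})$ this puts us in the hypotheses of Lemma~\ref{lem:Conetreatment1}, which gives the same cut lower bound. In either case, since the numerator of $h_{\calR}(S)$ is at least $|E(S^b\cap\calR,S^{1-b})|$ and $\Vol(S\cap\calR)\le C_1\psi_{i^*-1}n$,
\[
h_{\calR}(S)=\Omega\Big((\log n)^{-\frac{4}{1-\alpha}}(\psi_{i^*-1}n)^{2(1-\alpha)-1}\Big)=\Omega\Big((\log n)^{-\frac{4}{1-\alpha}}\big(\tfrac{2^{i^*-1}}{\phi n}\big)^{2\alpha-1}\Big),
\]
which is the first alternative with $i=i^*-1$, $k'=k_{i^*-1}$. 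A union bound over $k\in\{1,\dots,2^j\}$ then finishes the proof.

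I expect the main obstacle to be the bookkeeping that keeps this descent consistent: the constant $C_1$ must simultaneously serve as the threshold inside Lemma~\ref{lem:Conetreatment1}, as the upper bound on $\Vol(S\cap\Phi)$ in the hypothesis, and as the cutoff defining ``good level'' (and analogously for $C_2$), and one must verify that the polylog factors coming from Lemmas~\ref{lem:Conetreatment} and~\ref{lem:Conetreatment1} and from the $\EE|\Phi\cap\calP_{\ellbdr}|$ estimates combine to exactly $(\log n)^{-4/(1-\alpha)}$. The one genuinely load‑bearing inequality is $\frac{1-2\alpha}{1-\alpha}<0$, i.e.~$\alpha>\frac12$, which is what ensures that a volume $\Vol(S\cap\calR)$ of order $\psi_{i^*-1}n$ automatically clears the weaker threshold $C_1\,\EE|\calR\cap\calP_{\ellbdr}|(\log n)^{1/(1-\alpha)}$ needed to invoke Lemma~\ref{lem:Conetreatment1}.
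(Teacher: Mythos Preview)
Your proof is correct and follows essentially the same strategy as the paper's: a descent (equivalently, an induction on the dyadic depth $i$) that tracks both the cardinality condition $|S\cap\Phi^{(i)}_{k_i}|\le\frac{C_2}{3}\EE|\Phi^{(i)}_{k_i}\cap\calP_{\ellbdr}|$ and the volume condition $\Vol(S\cap\Phi^{(i)}_{k_i})\le C_1\psi_i n$, and at the first level where one fails invokes Lemma~\ref{lem:Conetreatment} (cardinality case) or Lemma~\ref{lem:Conetreatment1} (volume case). The only cosmetic difference is that in the volume case you apply Lemma~\ref{lem:Conetreatment1} to the parent sector $\calR=\Phi^{(i^*-1)}_{k_{i^*-1}}$, whereas the paper applies it to the child $\Phi^{(i)}_k$ itself (using that its volume is still at most $4C_1\psi_i n$); both versions give the same conductance bound.
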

\begin{proof}
The existence of $C_2$ is a direct consequence of 
  Lemma~\ref{lem:newvolume} and the fact that, 
  by Corollary~\ref{cor:medidalb} and Lemma~\ref{lem:meanMeus}, we have
  $\EE|U\cap\Phi^{(j)}_{k}|\geq \EE|\Phi^{(j)}_{k}\cap\calP_{\ellbdr}|
     =\Theta(\frac{\phi n}{2^j}(\log n)^{-\frac{2\alpha}{1-\alpha}})$.

We show, by induction on $i$, $0\leq i\le j$,  
  that at recursion depth $i$ we have for all $1\leq k\leq 2^i$ either
    $|S\cap\Phi_{k}^{(i)}|\le\frac{C_2}{3}\EE|\Phi^{(i)}_{k}\cap\calP_{\ellbdr}|$ 
  and  $\Vol(S\cap\Phi_{k}^{(i)})\leq 2C_1\frac{\phi n}{2^i}$, 
  or $\Phi_{k}^{(i)}\subseteq\Phi^{(i')}_{k'}$ and 
  $h_{\Phi^{(i')}_{k'}}(S)
     =\Omega\big (\log n)^{-\frac{4}{1-\alpha}}(\frac{2^{i'}}{\phi n})^{2\alpha-1}\big)$
  for some $0\leq i'<i$ and $1\leq k'\leq 2^{i'}$.
By hypothesis and since $\Phi^{(0)}_{1}=\Phi$, the claim holds for $i=0$.
Assume it is true for $i-1$.
Let $k',k$ be such that $\Phi^{(i)}_{k}\subseteq \Phi^{(i-1)}_{k'}$
  with 
  $|S\cap\Phi_{k'}^{(i-1)}|\le\frac{C_2}{3}\EE|\Phi_{k'}^{(i-1)}\cap\calP_{\ellbdr}|$ 
  and  $\Vol(S\cap\Phi_{k'}^{(i-1)})\leq 2C_1\frac{\phi n}{2^{i-1}}$.
If $|S\cap\Phi^{(i)}_{k}|\ge\frac{C_2}{3}\EE|\Phi^{(i)}_{k}\cap\calP_{\ellbdr}|$, 
  then also $|S\cap\Phi^{(i-1)}_{k}|=\Omega(|\Phi^{(i-1)}_{k}\cap\calP_{\ellbdr}|),$
  and hence by Lemma~\ref{lem:Conetreatment} applied 
  with  $\Phi'=\Phi^{(i-1)}_{k'}$
  we get that for some $b \in \{0,1\}$
  w.e.p.~$|E(S^{b}\cap\Phi^{(i-1)}_{k}, S^{1-b})|
      =\Omega((\log n)^{-\frac{4}{1-\alpha}}(\frac{\phi n}{2^{i-1}})^{2(1-\alpha)})$.
Since $\Vol(S\cap\Phi^{(i-1)}_{k'})
        \leq 2C_1\frac{\phi n}{2^{i-1}}$,
  it follows that, 
  w.e.p.~$h_{\Phi^{(i-1)}_{k}}(S)
      = \Omega\big( (\log n)^{-\frac{4}{1-\alpha}}(\frac{2^{i-1}}{\phi n})^{2\alpha-1} \big)$. 
Otherwise, if it happens that 
  $|S\cap\Phi^{(i)}_{k}|\le\frac{C_2}{3}|\Phi^{(i)}_{k}\cap\calP_{\ellbdr}|$
  and also $\Vol(S\cap\Phi^{(i)}_{k}) > 2C_{1}\frac{\phi n}{2^{i}}$, then 
  first note that still 
  $\Vol(S\cap\Phi^{(i)}_{k})\leq \Vol(S\cap\Phi_{k'}^{(i-1)})
    \leq 4C_1\frac{\phi n}{2^{i}}$ must hold.
In this case, applying Lemma~\ref{lem:Conetreatment1} to $\Phi^{(i)}_k$
  we get that for some $b \in \{0,1\}$ 
  w.e.p.~$|E(S^b\cap\Phi^{(i)}_{k}, S^{1-b})|
      =\Omega((\log n)^{-\frac{4}{1-\alpha}}(\frac{\phi n}{2^i})^{2(1-\alpha)})$ and thus  
  $h_{\Phi^{(i)}_{k}}(S)=\Omega\big( (\log n)^{-\frac{4}{1-\alpha}}(\frac{2^{i}}{\phi n})^{2\alpha-1} \big)$.
This completes the induction since the only remaining possibility is that
  $|S\cap\Phi_{k}^{(i)}|\le\frac{C_2}{3}\EE|\Phi_{k}^{(i)}\cap\calP_{\ellbdr}|$ 
  and $\Vol(S\cap\Phi_{k}^{(i)})\leq 2C_1\frac{\phi n}{2^i}$.
\end{proof}

Now we are ready to prove Theorem~\ref{thm:conductancia}. 
We show that every set $S \subseteq U$ with $\Vol(S)=O(n^{\varepsilon})$ has the desired conductance. Roughly speaking, the argument goes as follows. We start with sufficiently large angles that cannot contain only $S$. Either we find the desired number of cut edges for subsectors of these sectors via Lemma~\ref{lem:Conetreatment2} part~\eqref{it:Conetreatment2Part1},
  or for the remaining vertices we will find in a not too small angle around them sufficiently many vertices in $S$ and in $\overline{S}$, and hence we can also find relatively many edges between $S$ and $\overline{S}$.

\bigskip
\noindent\textit{Proof of Theorem~\ref{thm:conductancia}:}
We will show that w.e.p.~for all sets $S$ with
$\Vol(S)=O(n^{\varepsilon})$ for some $0 < \varepsilon < 1$ we have
$h(S)=\Omega(n^{-(2\alpha-1)\varepsilon+o(1)})$. 
We will consider an arbitrary, but fixed set $S$ and only at the very end of the proof take into account all possible sets $S$.

Let $\ell_0=(1-\xi)R$ for some $\xi=\xi(n)$ tending to $0$ sufficiently slowly with $n$.
Consider $C_1$ and $C_2$ as in Lemma~\ref{lem:Conetreatment1} and Lemma~\ref{lem:Conetreatment2}, respectively (recall that $C_1$ should be thought of as a sufficiently large and $C_2$ as a small constant).
Fix a set $S$ such that $\Vol(S)=O(n^{\varepsilon})$. 
Hence, there exists a sufficiently large $C'>0$ so that we can partition 
  $B_O(R)$ into $\phi$-sectors,
  $\phi:=C'n^{-(1-\varepsilon)}(\log n)^{\frac{2\alpha}{1-\alpha}}$  so that
  w.e.p.~in each such sector $\Phi$ we have 
  $|S\cap\Phi|\le\frac13|U\cap\Phi|$, 
  $|S\cap\Phi|\le\frac{C_2}{3}\EE|\Phi\cap\calP_{\ellbdr}|$ 
  and $\Vol(S\cap\Phi)\le C_1\phi n.$
To each of these sectors, we apply
  Lemma~\ref{lem:Conetreatment2} with $\phi_0:=\theta_{R}(\ell_0,\ell_0)$.
Thus, w.e.p., every sector $\Phi$ of angle 
  $2^{-j}\phi$, $\phi_0\leq 2^{-j}\phi<2\phi_0$ 
  arising from the application of the lemma is \emph{accounted for}, i.e., $h_\Phi(S)=\Omega( (\log n)^{-\frac{4}{1-\alpha}}(\phi n)^{1-2\alpha})=\Omega(n^{-(2\alpha-1)\varepsilon+o(1)})$, or 
$|S\cap\Phi|\le\frac{C_2}{3}|\Phi\cap\calP_{\ellbdr}|$. 
Let $\calO$ be the collection of all sectors $\Phi$ associated to $S$ which are accounted for. 
Similarly, we say that a truncated sector $\Upsilon_v$ centered at $v \in S$ is accounted for, if $h_{\Upsilon_v}(S)=\Omega(n^{-(2\alpha-1)\varepsilon+o(1)}).$

Next, we iteratively build two additional collections of regions,
  denoted by
  $\calA$ and $\calC$: $\calA$ will be the set of sectors (truncated or not) that are accounted for, and $\calC$ will be the set of regions that are ``compensated", i.e., these regions will not be accounted for, but we will show that their total volume is only slightly larger than the volume of the collection of regions that is accounted for.
Initially, $\calA=\calO$, i.e., 
  $\calR\in\calA$ if and only if $\calR$ is a $\Phi_k^{(j)}$ for 
  which the conditions of 
  part~\eqref{it:Conetreatment2Part1} hold
  and $\calC=\emptyset$.
The iterative process that updates $\calA$ and $\calC$ proceeds
  as described next: 

\medskip
\noindent\textsc{Sector-Accounting}
\begin{enumerate}[(i).-]
\item\label{it:iterativeProc}
  Stop if $S\setminus\mathlarger{\cup}_{\calR\in \calA\cup\calC}\calR=\emptyset$. 
  Otherwise, let $v$ be the vertex in $S\setminus\mathlarger{\cup}_{\calR\in \calA\cup\calC}\calR$ 
  closest to the origin and assume $\ell$ is such that $v\in\calP_\ell$. 
  \item\label{it:iterativeProc2} If $\ell \le \ell_0$, then 
  let $\Upsilon_v$ be the sector truncated and centered at $v$
  of angle $2\theta_{R}(\ell,R)$ 
  \begin{enumerate}
 \item\label{step2a}  If  $\mu(\Upsilon_v\cap \cup_{\calR\in\calA}\calR) < \frac12\mu(\Upsilon_v)$, then add $\Upsilon_v$ to $\calA$ and go to~Step~\eqref{it:iterativeProc}.
    \item\label{step2b} If $\mu(\Upsilon_v\cap \cup_{\calR\in\calA}\calR)\geq\frac12\mu(\Upsilon_v)$ and $\Vol(S\cap\Upsilon_v \cap \cup_{\calR\in\calA}\calR) =o\big((\log n)^{-\frac{2\alpha}{1-\alpha}}\Vol(S\cap\Upsilon_v)\big)$, then add $\Upsilon_v$
  to $\calA$ and go to~Step~\eqref{it:iterativeProc}.
  \item\label{step2c} If $\mu(\Upsilon_v\cap \cup_{\calR\in\calA}\calR)\geq\frac12\mu(\Upsilon_v)$ and $\Vol(S\cap\Upsilon_v \cap \cup_{\calR\in\calA}\calR) =\Omega\big((\log n)^{-\frac{2\alpha}{1-\alpha}} \Vol(S\cap\Upsilon_v)\big)$, then add $\Upsilon_v$ to $\calC$ and go to~Step~\eqref{it:iterativeProc}.
  \end{enumerate}
\item\label{it:iterativeProc3} If $\ell > \ell_0$, then let $\Upsilon_v$ be 
  the sector truncated and centered at $v$ of angle $2\theta_{R}(\ell_0,\ell_0)$.
\begin{enumerate}
  \item\label{step3a}  If $\Upsilon_v\cap \cup_{\calR\in\calA}\calR = \emptyset$, then add $\Upsilon_v$ to $\calA$ and go to~Step~\eqref{it:iterativeProc}.
  \item\label{step3b}  If  $\Upsilon_v\cap \cup_{\calR\in\calA}\calR \neq \emptyset$, then add $\Upsilon_v$ to $\calC$ and go to~Step~\eqref{it:iterativeProc}.
  \end{enumerate}
\end{enumerate}

We claim that if a region $\calR$ ends up in $\calA$, then 
  it is accounted for.
The claim holds at the start of the process by definition
  of $\calO$.

Now, if $v$ is such that $\Upsilon_v$ was added to $\calA$ in
Step~\eqref{step2a}, then at the moment $\Upsilon_v$ was added,
  at least a constant fraction of the sectors of angle
  $2^{-j}\phi$ intersecting $\Upsilon_v$ did
  not belong to $\calO$. 
For each such sector $\Phi_k^{(j)} \not\in\calO$, by 
  Lemma~\ref{lem:Conetreatment2} part~\eqref{it:Conetreatment2Part2},
  we have 
  $|S\cap\Phi_k^{(j)}|\le\frac{C_2}{3}\EE|\Phi_k^{(j)}\cap\calP_{\ellbdr}|\le \frac13\EE|\Phi\cap\calP_{\ellbdr}|$. 
Note that $v$ is adjacent to every vertex in $\calP_{\ellbdr}\cap\Upsilon_v$ 
  (since $\theta_R(\ell,\ellbdr)\geq\theta_R(\ell,R)$) and
  at least a constant fraction of these belong to $\overline{S}$. 
Hence, w.e.p.~we obtain $|E(\{v\},\overline{S})|
   =\Omega\big(n\theta_R(\ell,R)(\log n)^{-\frac{2\alpha}{1-\alpha}}\big)$.  
Also, since by Lemma~\ref{lem:VolUpperBound},
  w.e.p.~$\Vol(S\cap\Upsilon_v)=O(n\theta_R(\ell,R))$, we obtain
  w.e.p.~$h_{\Upsilon_v}(S)\ge (\log n)^{-\frac{2\alpha}{1-\alpha}}$, 
  and $\Upsilon_v$ is accounted for.

Similarly, consider a vertex $v$ such that $\Upsilon_v$ was added
  to $\calA$ in Step~\eqref{step2b}.
Let $\calA_v$ be the collection of regions belonging to $\calA$ just 
  before $\Upsilon_v$ was added to it.
Since by Lemma~\ref{lem:newvolume}, 
  w.e.p.~$|U \cap \Upsilon_v\cap\cup_{\calR\in\calA_v}\calR| 
     =\Omega\big((\log n)^{-\frac{2\alpha}{1-\alpha}} n\theta_R(\ell,R)\big)$
  and by assumption together with Lemma~\ref{lem:VolUpperBound}, 
  $\Vol(S\cap \Upsilon_v\cap \cup_{\calR\in\calA_v}\calR) 
    =o\big((\log n)^{-\frac{2\alpha}{1-\alpha}}\Vol(S\cap\Upsilon_v)\big)
    =o\big((\log n)^{-\frac{2\alpha}{1-\alpha}}n \theta_R(\ell,R)\big)$,
  at least a constant fraction of the vertices in 
  $U \cap\Upsilon_v\cap\cup_{\calR\in\calA}\calR$
  must belong to $\overline{S}$. 
Since these are all adjacent to $v$, by
  counting the edges from $v$ to these, by analogous calculations as
  in the previous case, we obtain w.e.p.~$h_{\Upsilon_v}(S)=\omega(1)$, 
  and $\Upsilon_v$ is accounted for.

Next, consider a vertex $v$ such that $\Upsilon_v$ was added to $\calA$ 
  in Step~\eqref{step3a}.
Again, let $\calA_v$ be the collection of regions belonging to $\calA$ just 
  before $\Upsilon_v$ was added to it.
Consider all vertices in $\calP_{\ellbdr}\cap\Upsilon_v$.
Recall that $\theta_R(\ellbdr,\ellbdr)=\Theta(\frac{1}{n}(\log n)^{\frac{2}{1-\alpha}})$. The expected number of vertices in $\calP_{\ellbdr}$ in 
  a sector of angle
  $\phi_1:=\upsilon \frac{1}{n}(\log n)^{\frac{2\alpha}{1-\alpha}+1}$ is 
  $\upsilon \log n$, and by Theorem~\ref{thm:Chernoff} this holds w.e.p. 
Hence, w.e.p.~the maximal angular distance between any two vertices 
  $v,w\in\calP_{\ellbdr}$ such that $v$ follows $w$ in $\calP_{\ellbdr}$ is at 
  most $\phi_1$. 
Since $\phi_1 < \theta_R(\ellbdr,\ellbdr)$, w.e.p.~any pair of such 
  vertices is adjacent.
Moreover, by Remark~\ref{rem:HtoG}, w.e.p., every vertex in 
  $\calP_{\ellbdr}$ belongs to $U$. 
Thus, $\calP_{\ellbdr}\cap\Upsilon_v$ induces a connected component in $H$. 
Also, since the expected number of vertices in $\calP_{\ellbdr}\cap\Upsilon_v$ 
  is 
  $\Theta(\theta_R(\ell_0,\ell_0)\EE|\calP_{\ellbdr}|)
    =\omega(\log n)$, 
  this holds w.e.p. 
By assumption of this case, $\Upsilon_v\cap\cup_{\calR\in\calA_v}\calR=\emptyset$, 
  and by Lemma~\ref{lem:Conetreatment2} part~\eqref{it:Conetreatment2Part2},
  at least a constant fraction of the 
  vertices in $\calP_{\ellbdr}\cap\Upsilon_v$ belongs to $\overline{S}$.
If at least one of the vertices in $\calP_{\ellbdr}\cap\Upsilon_v$
  belongs to $S$,  w.e.p.~we have that $\calP_{\ellbdr}\cap\Upsilon_v$ induces
  a connected component in $H$ with vertices both in $S$ and $\overline{S}$
  and $|E(S\cap\Upsilon_v,\overline{S})|\ge 1$,
  and since by Lemma~\ref{lem:VolUpperBound}, 
  w.e.p.~$\Vol(S\cap\Upsilon_v)=O(n\theta_{R}(\ell_0,\ell_0))=O(n^{2\xi})$,
  we obtain $h_{\Upsilon_v}(S)=\Omega(n^{-2\xi})$. 
The same argument applies if $v$ is adjacent to a vertex in $\overline{S}$.
If $\calP_{\ellbdr}\cap\Upsilon_v\subseteq \overline{S}$ 
  and $r_v \le \ellbdr$, 
  by Lemma~\ref{lem:fk} part~\eqref{lem:fkPart1},
  given that $\Upsilon_v$ is centered at $v$, w.e.p., $v$ lies between
  a pair of vertices of the connected 
  component of $H$ induced by 
  $\calP\cap\Upsilon_v\cap B_{O}(\ell)\setminus B_{O}(\ell-1)$, so
  $v$ is adjacent to a vertex in 
  $\overline{S} \cap \calP_{\ellbdr} \cap \Upsilon_v$ as well, and the same 
  conclusion holds. 
If $r_v > \ellbdr$, then since $v$ is in $U$, it must be connected by a path to 
  a vertex in $\overline{S}$, and either we find on this path,
  by Lemma~\ref{lem:fk} part~\eqref{lem:fkPart1} 
  (in case the path uses only vertices with radius larger than $\ellbdr$) or by  Lemma~\ref{lem:fk} part~\eqref{lem:fkPart2}
  otherwise,  an edge between vertices in 
  $S\cap\Upsilon_v$ and $\overline{S}$ or
  between vertices in $S$ and $\overline{S}\cap\Upsilon_v$.
In both cases we have
  w.e.p.~$h_{\Upsilon_v}(S) 
     = \Omega(n^{-2\xi})=\Omega(n^{-(2\alpha-1)\varepsilon+o(1)})$ 
  by our assumption on $\xi$ tending to $0$, and in all cases $\Upsilon_v$ is accounted for.
To conclude, note that each edge is counted at most six times for the conductance of different regions in $\calA$: in order for an edge to be counted for the conductance of a region $\calR$ belonging to $\calA$, by definition of $h(S)$ and $h_{\calR}(S)$ (see~\eqref{eqn:setConductance} and~\eqref{eq:conduct}), at least one of its endpoints must belong to it. First, since the sectors $\Phi$ which are accounted for by Lemma~\ref{lem:Conetreatment2} are disjoint, each point $p \in B_O({R})$ can appear in at most one such sector. Next, let $\calR \in \calA$ be the first region in which $p$ appears in~\textsc{Accounting-Sectors}: since $\calR$ is connected, it has a bisector, and we may 
  assume without loss of generality that $p$ is to the left of the bisector of $\calR$ (here and below ``to the left'' is understood as preceding in a counter-clockwise ordering; ``to the right'' is defined analogously).
Since no vertex $v$ with $v \in \calR$ is chosen in the algorithm after having added $\calR$ to $\calA$, and since 
  the measures of the regions added to $\calA$ are 
  non-increasing 
  during the algorithm (and hence at any radial distance the width of the next region is at most as big as the previous one), no region $\Upsilon_v$ added to $\calA$ after $\calR$, and with $v$ to the right of~$\calR$ can contain $p$. 
  If $v$ is to the left of $\calR$, 
  then $\Upsilon_v$ can contain 
  $p$, but $p$ is now to the right 
  of the bisector of $\Upsilon_v$. 
Hence, any region to the left of $\Upsilon_v$ cannot contain $p$ anymore. Summarizing, we may associate each point 
  $p \in B_O({R})$ to at most $1$ region in $\calO$ and $2$ regions in $\calA\setminus\calO$, i.e., to at most $3$ regions $\calR\in\calA$, 
  and hence a cut edge is counted at most six times.

Next, let $\calC'$ be the collection of regions added 
  to $\calC$ in Step~\eqref{step2c}. 
By definition, for every region $\calR'\in\calC'$ we have 
  $\Vol(S\cap \calR')
   =O((\log n)^{\frac{2\alpha}{1-\alpha}}\sum_{\calR\in\calA}\Vol(S\cap\calR))$. By the same argument as above, each point $p \in B_O({R})$ can be contained in at most two regions $\calR' \in \calC'$.
Thus, in particular any point $p \in \calR$ for $\calR \in \calA$ 
  is contained in
   at most two regions $\calR' \in \calC'$, and we obtain
  $\Vol(S\cap \cup_{\calR' \in \calC'} \calR')=O((\log n)^{\frac{2\alpha}{1-\alpha}}\sum_{\calR\in\calA}\Vol(S\cap\calR))$. 
The same argument also applies when adding regions to $\calC$ in 
  Step~\eqref{step3b}: as before, 
  every point $p\in\calR$ for $\calR\in\calA$ is contained
  in at most two regions $\calR' \in \calC$. 
Since for every such region $\calR'$ we have 
  $\Vol(S \cap \calR')=O(n^{2\xi})$, 
  and since $\xi$ tends to $0$ slowly enough so that 
  $n^{2\xi} \ge (\log n)^{\frac{2\alpha}{1-\alpha}}$, we obtain
\begin{align*}
& \Vol(S)
\leq \sum_{\calR\in\calA}\big(O((\log n)^{\frac{2\alpha}{1-\alpha}})\Vol(S\cap\calR)+O(n^{2\xi})\big) =
  O(n^{2\xi})\sum_{\calR\in\calA}\Vol(S\cap\calR).
\end{align*}
Hence, by~\eqref{eq:conduct}, since 
  $h_{\calR}(S)=\Omega(n^{-(2\alpha-1)\varepsilon+o(1)})$
  for $\calR\in\calA$, and since $n^{\xi}=n^{o(1)}$ by our assumption on $\xi$, 
  we are done for this set $S$. 

So far we have considered one single fixed set $S$. A close inspection of all probabilistic events in Lemma~\ref{lem:earlylayersfull}
  through Lemma~\ref{lem:Conetreatment2} 
  shows that they depend either on the angle chosen, or on single vertices or pairs of vertices, but not on the whole set of vertices belonging to $S$. The starting angles chosen in Lemma~\ref{lem:Conetreatment2} can also be chosen to be the same for all $S$, so that altogether for all $S$ only polynomially many angles are used. Hence, only a union bound over polynomially many events is needed, and all properties given in all lemmata for one $S$ hold simultaneously for all choices of $S$. The proof of the theorem is finished.
\qed

\section{Bisections and cuts}\label{sec:bisec}
In this section we derive some consequences of the previous 
  sections' results.

\medskip
\noindent\textit{Proof of Corollary~\ref{cor:cut}.} 
Let $H=(U,F)$ be the giant component of
  $G=(V,E)$ chosen according to $\poimod_{\alpha,C}(n)$. 
First, note that by Corollary~\ref{cor:volTobias}, w.e.p.~$|U|=\Theta(n)$, 
  and hence for any bisection of $\{S,U \setminus S\}$, w.e.p.~we 
  have $\Vol(S)=\Theta(n),\Vol(U \setminus S)=\Theta(n)$.
By~definition of conductance (see~\eqref{eqn:setConductance}) we have
  $h(S)=\Theta(\frac{1}{n}|\partial(S)|).$  
Recalling Cheeger's inequality (see~\eqref{eqn:cheeger}), for any
  graph $G$ its conductance $h(G)$ satisfies $h(G)\ge\frac{1}{2}\lambda_1(G).$
Therefore, by Corollary~\ref{cor:main}, 
  for any bisection $\{S,U \setminus S\}$, w.e.p.,
$$
\frac{|\partial(S)|}{n} = \Omega(h(H))
  =\Omega\Big(\frac{1/D}{n^{2\alpha-1}}\Big),
$$
and hence for any $S$ with $|S|=\lceil\frac12|U|\rceil$ we must have 
  $|\partial(S)|=\Omega(n^{2(1-\alpha)}/D),$ so the first 
  part of the claimed result follows. 

For the second part, observe that since by Lemma~\ref{lem:VolUpperBound}, 
  w.e.p.~$\Vol(U)=O(n)$, clearly $B(H)=O(n).$ On the other hand, consider the 
  bisection $\{S,U\setminus S\}$ with $S$ consisting of those 
  $\lceil \frac12|U|\rceil$ vertices of $H$ with minimal radial coordinate $r_u$. 
By Lemma~\ref{lem:muBall} and Lemma~\ref{lem:meanMeus}, there exists
  a large constant $C_1$ such that the number of vertices 
  in $B_{O}(R-C_1)$ is w.e.p.~smaller than $\varepsilon n\le\frac14|U|$ 
  for small enough $\varepsilon$. 
Thus, there exists $C_1' < C_1$ such that w.e.p.~all vertices 
  $v \in S$ belong to $B_{O}(R-C'_1)$.
Moreover, for every fixed $0 < \delta < \frac12$, by 
  Corollary~\ref{cor:medidalb}, w.e.p.~there exists a 
  constant $c_1=c_1(\delta)$ with $C_1 > c_1> C_1'$ 
  such that
  a $\delta$-fraction of the vertices in $S$ belong to 
  $B_{O}(R)\setminus B_{O}(R-c_1)$. 

Let now $\calB:=\calP\cap B_O(R-C_1)\setminus B_O(R-C_1-1)$ and 
  $\calB':=\calP\cap B_O(R-C_1')\setminus B_O(R-c_1)$. Recall that $\ellbdr:=\lfloor R-\frac{2\log R}{1-\alpha}\rfloor$.
By Lemma~7 of~\cite{km15}, for each vertex  $u \in \calB$ 
  there is a positive probability  to be connected through a path of vertices of decreasing radii 
  (with all internal vertices of the path belonging to $B_{O}(R-C_1-1)$)
  to a vertex  in
  $\calP\cap B_O(\ellbdr)$, and moreover, 
  by Remark~\ref{rem:HtoG},
  w.e.p.~every vertex in $\calP \cap B_O(\ellbdr)$
  belongs to $H$. 
W.e.p., $|\calB|=\Theta(n)$, and so~$\EE|U \cap \calB|=\Theta(n)$, 
 and since for any two vertices at angular distance 
  $\frac{1}{n}(\log n)^{\omega(1)}$ 
  the events of having such a
  path to a vertex in $\calP\cap B_O(\ellbdr)$ are
  independent, $\VV|U \cap \calB|=n(\log n)^{\omega(1)}$,   and hence, by Chebyshev's inequality, with probability 
   $1-O(n^{-1+\xi})$ for any small constant $\xi > 0$,  we have 
  $|U \cap \calB|=\Theta(n).$

Next, for each vertex $u \in \calB$, by 
  Lemma~\ref{lem:muBallInterGen},
  there exists a non-zero probability 
  $P$ 
  that $u$ has at least one neighbor in $\calP\cap B_O(R-C_1')$.
By applying
  Lemma~\ref{lem:muBallInterGen} one more time, 
  there is positive probability $P' < P$
  that it has at least one neighbor in $\calP\cap B_O(R-c_1)$,  and hence, for each $u \in\calB$, there is positive
  probability (at least $P-P'$) to have at least one neighbor in 
  $\calB'$. 
For any two vertices $u,u'\in\calB$ such that $\Delta\varphi_{u,u'}\ge\frac{C_2}{n}$ with $C_2$ 
  sufficiently large, the corresponding events of
  having at least one neighbor in~$\calB'$
  are independent.  
Therefore, by the same argument as before, by Chebyshev's inequality, 
  with probability at least $1-O(n^{-1+\xi})$, we have
  $|E(\calB,\calB')|=\Omega(n)$.
Moreover, since $C_1 > c_1 > C_1'$, for every vertex $u\in\calB$,  
  the events of having a path of vertices of decreasing radii starting from $u$ to 
  $\calP \cap B_O(\ellbdr)$ 
  with all internal vertices of the path inside $B_{O}(R-C_1-1)$
  and of having an edge 
  between $u$ and a vertex in $\calB'$ are independent.
Hence, recalling that we have already shown that $|U\cap\calB|=\Theta(n)$
  with probability at least $1-O(n^{-1+\xi})$ for any small constant $\xi > 0$, 
  we obtain with probability at least $1-O(n^{-1+\xi})$ that
  $|E(U \cap \calB,\calB')|=\Omega(n)$,
  and thus $B(H)=\Omega(n)$, so the second part of the statement follows. 
\qed

\medskip
The related questions regarding the minimum and maximum cut size of $H$
  (i.e., minimum and maximum number of edges between the two parts of a non-trivial partition of the vertex set of $H$, respectively) follow easily from 
  results proved here and in the literature.
For the minimum cut,  
   by the proof of Theorem~3
  of~\cite{fk15}, w.e.p.~there exists a path of length 
  $\Theta(\log n)$ starting at a vertex $u$ having no other neighbor. 
Hence, w.e.p.~there will be a leaf $u$ in $H$, and therefore, by
  considering the cut set $\{u\},$ we obtain $mc(H)=1$.
For the maximum cut, note that by Lemma~\ref{lem:VolUpperBound}, 
  w.e.p.~$\Vol(U)=\Theta(n)$, and hence $MC(H)=O(n)$. 
For a maximum bisection, as shown above, the bound is attained, and hence 
  $MC(H)=\Theta(n).$



\section{Conclusion and outlook}\label{sec:conclusion}
In this paper we have, up to a polylogarithmic factor, shown that the
  conductance of the giant component of a random hyperbolic graph is
  $\Theta(n^{-(2\alpha-1)})$, and the same holds for the spectral gap of
  the normalized Laplacian of the giant component of such a graph. 
We have established that there are relatively small bottlenecks that disconnect large fractions of vertices of the graph's giant component, and we also showed that for smaller sets of vertices, the conductance of such sets, is compared to larger sets, bigger. 

Given the fundamental nature of the two parameters studied in this 
  paper, i.e., spectral gap and conductance, their determination should
  contribute to the understanding of the random hyperbolic graph
  model, and in particular, to the understanding of issues concerning well 
  known related topics
  such as the spread of information,
  mixing time of random walks, and similar phenomena 
  in such a model.
It is widely believed that social networks are fast mixing (see 
  for example the discussion in~\cite{MYK10}) and that rumors spread
  fast in such networks.
Given the interest in random hyperbolic graphs as a model of 
  networks that exhibit common properties of social networks, 
  it is natural to ask whether fast mixing and rumor spreading
  does indeed occur. 
The low conductance and the 
  spectral gap we establish do not give evidence that it is so.

\bibliographystyle{alpha}
\bibliography{biblio} 


\end{document}
